\def\N{\mathcal{N}}
\def\NN{\mathbb{N}}
\def\TT{\mathbb{T}}
\def\PP{\mathbb{P}}
\def\DD{\mathbb{D}}
\def\WW{\mathbb{W}}
\def\MM{\mathbb{M}}
\def\F{\mathcal{F}}
\def\U{\mathcal{U}}
\def\C{\mathcal{C}}
\def\D{\mathcal{D}}
\def\V{\mathcal{V}}
\def\A{\mathcal{A}}
\def\B{\mathcal{B}}
\def\P{\mathcal{P}}
\def\M{\mathcal{M}}
\def\PP{\mathbb{P}}
\def\Psf{\mathsf{P}}
\def\Qsf{\mathsf{Q}}
\renewcommand{\L}[0]{\mathcal{L}}
\newcommand{\cs}{2^{\NN}}
\newcommand{\bstr}{2^{<\NN}}
\newcommand{\converge}{\!\!\downarrow}
\newcommand{\diverge}{\!\!\uparrow}
\newcommand{\ISig}{\mathsf{I}\Sigma^0}
\newcommand{\Robinson}{\mathsf{Q}}
\newcommand{\IPi}{\mathsf{I}\Pi^0}
\newcommand{\IDelta}{\mathsf{I}\Delta^0}
\newcommand{\BSig}{\mathsf{B}\Sigma^0}
\newcommand{\CSig}{\mathsf{C}\Sigma}
\newcommand{\WF}{\mathsf{WF}}
\newcommand{\card}{\operatorname{card}}
\newcommand{\qvdash}{\operatorname{{?}{\vdash}}}
\newcommand{\nqvdash}{\operatorname{{?}{\nvdash}}}
\newcommand{\uh}{\upharpoonright}
\newcommand{\Sub}[1]{{#1}\mbox{-}\mathsf{Subset}}
\newcommand{\RCA}{\mathsf{RCA}}
\newcommand{\WKL}{\mathsf{WKL}}
\newcommand{\COH}{\mathsf{COH}}
\newcommand{\RT}{\mathsf{RT}}
\newcommand{\GST}{\mathsf{GST}_1}
\def\qt#1{``#1''}%
\newcommand{\finsub}{\subseteq_{\mathtt{fin}}}
\newtheorem{theorem}{Theorem}
\numberwithin{theorem}{section}
\newtheorem{maintheorem}[theorem]{Main Theorem}
\newtheorem{lemma}[theorem]{Lemma}
\newtheorem{proposition}[theorem]{Proposition}
\theoremstyle{definition}
\newtheorem{definition}[theorem]{Definition}
\newtheorem{corollary}[theorem]{Corollary}
\newtheorem{remark}[theorem]{Remark}
\newtheorem{statement}[theorem]{Statement}
\newtheorem{question}[theorem]{Question}
\newtheorem*{rep@theorem}{\rep@title}
\newcommand{\newreptheorem}[2]{%
\newenvironment{rep#1}[1]{%
 \def\rep@title{#2 \ref{##1}}%
 \begin{rep@theorem}}%
 {\end{rep@theorem}}}
\author{Quentin Le Hou\'erou \and Ludovic Levy Patey \and Ahmed Mimouni}
\title{The reverse mathematics of the pigeonhole hierarchy}
\begin{document}

\maketitle

\begin{abstract}
The infinite pigeonhole principle for $k$ colors ($\RT^1_k$) states, for every  $k$-partition $A_0 \sqcup \dots \sqcup A_{k-1} = \NN$, the existence of an infinite subset~$H \subseteq A_i$ for some~$i < k$. This seemingly trivial combinatorial principle constitutes the basis of Ramsey's theory, and plays a very important role in computability and proof theory. In this article, we study the infinite pigeonhole principle at various levels of the arithmetical hierarchy from both a computability-theoretic and reverse mathematical viewpoint. We prove that the hierarchy of pigeonhole principles induced by restricting instances to levels of the arithmetic hierarchy is strict over~$\RCA_0$ using an elaborate iterated jump control construction, and study its first-order consequences.
This is part of a large meta-mathematical program studying the computational content of combinatorial theorems.
\end{abstract}

\section{Introduction}

The infinite pigeonhole principle for $k$ colors ($\RT^1_k$) states, for every $k$-partition $A_0 \sqcup \dots \sqcup A_{k-1} = \NN$, the existence of an infinite subset~$H \subseteq A_i$ for some~$i < k$. In particular, for~$k = 2$, it is equivalent to the existence, for every set $A \subseteq \NN$, of an infinite set~$H \subseteq A$ or $H \subseteq \overline{A}$. This statement is the basis of Ramsey's theorem, and more generally of Ramsey's theory. Recall that Ramsey's theorem for $n$-tuples states, for every $k$-coloring of $[\NN]^n$, the existence of an infinite set~$H \subseteq \NN$ such that the coloring is monochromatic on~$[H]^n$. Here, $[H]^n$ denotes the set of all subsets of~$H$ of size~$n$. The classical proof of Ramsey's theorem consists of reducing a $k$-coloring of $[\NN]^{n+1}$ to a $k$-coloring of~$[\NN]^n$ by $\omega$ many successive applications of the pigeonhole principle. In the base case, Ramsey's theorem for singletons is nothing but $\RT^1_k$.

In reverse mathematics, the pigeonhole principle plays an important role, both for theoretical and technical reasons. Reverse mathematics is a foundational program at the intersection of computability theory and proof theory, whose goal is to find the optimal axioms to prove theorems from \qt{ordinary mathematics}. The study of Ramsey's theorem for pairs received a particular attention as its strength escaped the empirical structural observation of reverse mathematics~\cite{hirschfeldt2015slicing}. As it turns out, the computability-theoretic study of Ramsey's theorem for pairs is closely related to the study of the pigeonhole principle. Technically, all the computability-theoretic and proof-theoretic subtleties of Ram\-sey's theorem for pairs are already present in the study of $\RT^1_2$ \cite{cholak2001strength,dzhafarov2009ramsey,liu2012rt22,liu2015cone}.

In this article, we study the pigeonhole principle viewed both as a statement in second-order arithmetic and as a mathematical problem, formulated in terms of \emph{instances} and \emph{solutions}. An instance of~$\RT^1_2$ is a set~$A$, and a solution is an infinite subset~$H \subseteq A$ or $H \subseteq \overline{A}$. We study a hierarchy of pigeonhole principles based on the complexity of their instances, and use the frameworks of computability theory and reverse mathematics to give lower and upper bounds to the complexity of a solution. We extend several known results to higher levels of the arithmetic hierarchy, using a unifying framework of iterated jump control for the pigeonhole principle initially developed by Monin and Patey~\cite{monin201pigeons,monin2021weakness}. This answers multiple open questions from Benham et al.~\cite{benham2024ginsburg}. In particular, we prove that every $\Sigma^0_2$ instance of $\RT^1_2$ admits a solution whose jump is computable in any PA degree over $\emptyset'$. This result is obtained by an asymmetric construction whose dividing line is new in computability theory. This can be put in contrast with weak K\"onig's lemma, where the restriction to $\Delta^0_1$ trees is strictly weaker than the restriction to $\Sigma^0_2$ trees over $\RCA_0$, and whose restriction to $\Delta^0_n$ trees is strictly weaker than $\Sigma^0_n$ trees over computable reducibility.

\subsection{Reverse mathematics}

Reverse mathematics is a foundational program started by Harvey Friedman~\cite{friedmanSystemsSecondOrder1975,friedmanSystemsSecondOrder1976}.
This program can be considered as a partial realization of Hilbert's program~\cite{simpson1988partial}, and as an answer to the crisis of foundations. It uses the framework of subsystems of second-order arithmetic, with a base theory, $\RCA_0$, capturing \qt{computable mathematics}. More precisely, here is a formal description of its axioms.

\emph{Robinson arithmetic} $\Robinson$ (Peano arithmetic without induction) is composed of the following axioms:
\newlength \Colsep
\setlength \Colsep{10pt}

\noindent \begin{minipage}{\textwidth}
\begin{minipage}[t] [3cm] [c]{\dimexpr0.4 \textwidth-0.4 \Colsep \relax}
\begin{enumerate}
\item [(1)] $x + 1 \neq 0$
\item [(2)] $x = 0 \vee \exists y\ (x = y + 1)$
\item [(3)] $x + 1 = y + 1 \rightarrow x = y$
\item [(4)] $x + 0 = x$
\end{enumerate}
\end{minipage} \hfill
\begin{minipage}[t] [3cm] [c]{\dimexpr0.6 \textwidth-0.6 \Colsep \relax}

\begin{enumerate}
\item [(5)] $x + (y + 1) = (x+y)+1$
\item [(6)] $x \times 0 = 0$
\item [(7)] $x \times (y+1) = (x \times y) + x$
\item [(8)] $x < y \leftrightarrow \exists z\ (z \neq 0 \wedge x + z = y)$
\end{enumerate}
\end{minipage}
\end{minipage}

A second-order formula is \emph{arithmetic} if it contains only first-order quantifiers (with second-order parameters allowed).  Arithmetic formulas admit a natural classification based on the number of alternations of their quantifiers in prenex normal form. A formula is $\Sigma^0_0$ (or $\Pi^0_0$) if it contains only bounded first-order quantifiers, that is, quantifiers of the form $\forall x \leq t$ or $\exists x \leq t$ where~$x$ is a first-order variable and $t$ is an arithmetic term. A formula is $\Sigma^0_n$ if it is of the form $\exists x_0 \forall x_1 \dots Q x_{n-1} \varphi(x_0, \dots, x_{n-1})$, where~$\varphi$ is a $\Sigma^0_0$-formula and $Q$ is $\forall$ if $n$ is even, and $\exists$ otherwise. $\Pi^0_n$ formulas are defined accordingly, starting with a universal first-order quantifier. A set~$A \subseteq \NN$ is $\Sigma^0_n$ or $\Pi^0_n$ if it is definable by a $\Sigma^0_n$ or $\Pi^0_n$ formula, respectively. It is $\Delta^0_n$ if it is simultaneously $\Sigma^0_n$ and $\Pi^0_n$. By a theorem from G\"odel, $\Delta^0_1$ sets are exactly the computable ones.
The \emph{$\Delta^0_1$-comprehension scheme} is defined for every $\Sigma^0_1$-formula $\varphi(x)$ and every $\Pi^0_1$-formula $\psi(x)$ as 
$$
\forall x(\varphi(x) \leftrightarrow \psi(x)) \to \exists Z \forall x(x \in Z \leftrightarrow \varphi(x))
$$
The left-hand part of the implication ensures that the predicate defined by $\varphi(x)$ is $\Delta^0_1$, while the right-hand part is the classical comprehension axiom for~$\varphi(x)$.
Based on the correspondence between $\Delta^0_1$ and computable sets, the $\Delta^0_1$-comprehension scheme restricts the comprehension scheme to sets which can be obtained computably from their parameters.

Last, the $\Sigma^0_1$-induction scheme is defined for every $\Sigma^0_1$-formula $\varphi(x)$ as
$$
[\varphi(0) \wedge \forall x(\varphi(x) \rightarrow \varphi(x+1))] \to \forall y \varphi(y)
$$
In general, the induction scheme for $\Sigma^0_n$-formulas is equivalent to a bounded version of the $\Sigma^0_n$-comprehension scheme, that is, the existence of every initial segment of the $\Sigma^0_n$-set (see H\'ajek and Pudl\'ak~\cite{hajek2017metamathematics}). Restricting the induction therefore corresponds to restrict\-ing the complexity of the finite sets of the model.
$\RCA_0$ is composed of Robinson arithmetic, together with the $\Delta^0_1$-comprehension scheme, and the $\Sigma^0_1$-induction scheme.

Models of second order arithmetic are of the form $\M = (M, S, +, \times, <, 0, 1)$, where $M$ is the first-order part, representing the integers in the model, and $S \subseteq \P(M)$ is the second-order part, representing the sets of integers. An \emph{$\omega$-model} is a structure $\M = (M, S, +, \times, <, 0, 1)$ whose first-order part~$M$ consists of the standard integers~$\omega$, together with the standard operations $+$, $\times$ and the natural order~$<$. Thus, $\omega$-models are fully specified by their second-order part~$S$. In particular, $\omega$-models of $\RCA_0$ admit a nice characterization in terms of Turing ideals. A \emph{Turing ideal}~$S \subseteq \P(\omega)$ is a non-empty collection of sets which is downward-closed under Turing reducibility, and closed under the effective join $X \oplus Y = \{ 2n : n \in X \} \cup \{ 2n+1 : n \in Y \}$. 
In particular, $\RCA_0$ admits a minimal $\omega$-model (with respect to inclusion) whose second-order part is exactly the collection of all computable sets.

Among models of second-order arithmetic, $\omega$-models are of particular interest due to their connection with classical computability theory, and most proofs of non-implications over~$\RCA_0$ consist of building a Turing ideal satisfying the left-hand side but not the right-hand side. This will also be the case in this article.

Since the beginning of reverse mathematics, many theorems from the core of mathematics have been studied, and some empirically observed structural phenomena emerged: there exist four main subsystems of second-order arithmetic, linearly ordered by logical strength, such that the vast majority of mathematics is either equivalent to one of these systems over~$\RCA_0$, or already provable over~$\RCA_0$. See Simpson~\cite{simpson2009subsystems} or Dzhafarov and Mummert~\cite{dzhafarov2022reverse} for an introduction to reverse mathematics and these main systems. Most of the theorems studied in reverse mathematics are statements of the form $(\forall X)[\phi(X) \rightarrow (\exists Y)\psi(X, Y)]$. Any such statement can be seen as a problem $\Psf$, whose instances are sets~$X$ such that $\phi(X)$, and whose solutions to~$X$ are any set~$Y$ such that~$\psi(X,Y)$ holds. These statements are called \emph{$\Pi^1_2$-problems}.

\subsection{Hierarchy of pigeonhole principles}

The structure property observed in reverse mathematics, and known as the \qt{Big Five}, admits a few counter-examples, mostly coming from Ramsey theory. Historically, the first natural theorem escaping this phenomenon is Ramsey's theorem for pairs ($\RT^2_2$), which was proven not to be even linearly ordered with the above-mentioned main subsystems. The computability-theoretic and proof-theoretic study of Ramsey's theorem for pairs raised many long-standing open questions, each of them requiring the development of new techniques and breakthroughs in computability theory~\cite{seetapun1995strength,cholak2001strength,liu2012rt22,monin2021srt}. The computability-theoretic study of combinatorial theorems from Ramsey theory is still currently the most active branch of research in reverse mathematics. See Hirschfeldt~\cite{hirschfeldt2015slicing} for an introduction to the reverse mathematics of combinatorial principles.

Beyond the clear combinatorial link between Ramsey's theorem for pairs and the pigeonhole principle, the formal computability-theoretic relation between these two principles was emphasized by the decomposition of $\RT^2_2$ into a cohesive\-ness principle ($\COH$) and the pigeonhole principle for $\Delta^0_2$ instances ($\Sub{\Delta^0_2}$).\footnote{This principle is also known as $\mathsf{D}^2_2$ in the reverse mathematical literature~\cite{cholak2001strength}.}

Given an infinite sequence of sets~$\vec{R} = R_0, R_1, \dots$, an infinite set~$C$ is \emph{$\vec{R}$-cohesive} if for every~$s \in \NN$, $C \subseteq^{*} R_s$ or $C \subseteq^{*} \overline{R}_s$, where~$\subseteq^{*}$ denotes inclusion up to finite changes. One can think of the sequence $\vec{R}$ as an infinite sequence of instances of $\RT^1_2$, and an $\vec{R}$-cohesive set as an infinite set which is almost a solution to every instance simultaneously.

\begin{statement}[Cohesiveness]
$\COH$ is the statement \qt{Every infinite sequence of sets has a cohesive set}.
\end{statement}

Informally, $\Sub{\Sigma^0_n}$ and $\Sub{\Delta^0_n}$ are the restriction of the pigeonhole principle for 2-colorings of $\Sigma^0_n$ and $\Delta^0_n$ sets, respectively. Technically, $\Sigma^0_n$ sets do not necessarily belong to models of weak arithmetic, and therefore are manipulated through formulas.

\begin{statement}Fix~$n \geq 1$.
\begin{itemize}
    \item $\Sub{\Sigma^0_n}$ is the statement \qt{For every~$\Sigma^0_n$ formula~$\varphi(x)$, there is an infinite set~$H \subseteq \NN$ such that either $\forall x \in H\ \varphi(x)$ or $\forall x \in H\ \neg \varphi(x)$.}
    \item $\Sub{\Delta^0_n}$ is the statement \qt{For every $\Sigma^0_n$ formula~$\varphi(x)$ and every~$\Pi^0_n$ formula~$\psi(x)$ such that $\forall x(\varphi(x) \leftrightarrow \psi(x))$ holds, there is an infinite set~$H \subseteq \NN$ such that either $\forall x \in H\ \varphi(x)$ or $\forall x \in H\ \neg \varphi(x)$.}
\end{itemize}
\end{statement}

Cholak, Jockusch and Slaman~\cite{cholak2001strength} and Mileti~\cite{mileti2004partition} proved the equivalence of~$\RT^2_2$ and $\COH + \Sub{\Delta^0_2}$ over~$\RCA_0 + \IDelta_2$, where $\mathsf{I}\Gamma$ denotes the induction scheme for $\Gamma$-predicates~\footnote{Over~$\RCA_0$, $\IDelta_n$ is equivalent to the better-known collection principle for $\Sigma^0_n$-formulas ($\BSig_n$) for~$n \geq 2$.}. Later, Chong, Lempp and Yang~\cite{chong2010role} got rid of the use of $\IDelta_2$, yielding the following equivalence:

\begin{theorem}[\cite{cholak2001strength,mileti2004partition,chong2010role}]
$\RCA_0 \vdash \RT^2_2 \leftrightarrow \COH + \Sub{\Delta^0_2}$.
\end{theorem}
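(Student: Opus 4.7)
The direction $\RT^2_2 \rightarrow \COH + \Sub{\Delta^0_2}$ is the easier one. For $\Sub{\Delta^0_2}$, given a $\Sigma^0_2$ formula $\varphi(x)\equiv\exists u\forall v\,\theta(x,u,v)$ and a $\Pi^0_2$ formula $\psi$ with $\forall x(\varphi(x)\leftrightarrow\psi(x))$, the plan is to build the natural computable approximation $g(x,y) = 1$ iff $\exists u\le y\,\forall v\le y\,\theta(x,u,v)$, define $f(x,y)=g(x,y)$ on $[\NN]^2$, and apply $\RT^2_2$. On any infinite $f$-homogeneous set $H$ with colour $c$, the stabilisation $g(x,\cdot)\to\varphi(x)$ (which is provable pointwise from the $\Sigma^0_2$/$\Pi^0_2$ equivalence, with no need for uniform bounds) forces $\varphi(x)=c$ for every $x\in H$. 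For $\COH$, given $\vec R=(R_n)_{n\in\NN}$, the plan is to encode the sequence as a single $2$-colouring of pairs whose homogeneous sets are almost-homogeneous for every $R_n$ simultaneously, using the standard derivation of $\COH$ from $\RT^2_2$ in $\RCA_0$.

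The main direction is $\COH + \Sub{\Delta^0_2} \rightarrow \RT^2_2$. Fix a $2$-colouring $f \colon [\NN]^2 \to 2$ and set $R_x = \{ y : f(x,y) = 1\}$ for each $x$. Applying $\COH$ to $\vec R$ yields a cohesive set $C$. For every $x$, cohesiveness gives $C \subseteq^{*} R_x$ or $C \subseteq^{*} \overline{R_x}$, so the eventual colour $\tilde f(x) = \lim_{y \in C} f(x,y)$ is well defined. Moreover $\tilde f(x) = 1$ is expressible both as the $\Sigma^0_2(C)$ predicate $\exists N\, \forall y \in C\,(y > N \to y \in R_x)$ and as the $\Pi^0_2(C)$ predicate $\forall N\, \exists y \in C\,(y > N \wedge y \in R_x)$, with the equivalence of the two supplied by cohesiveness; so $\tilde f$ is $\Delta^0_2$ in $C$. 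Let $h$ be the increasing enumeration of $C$ (computable from $C$), and apply $\Sub{\Delta^0_2}$ with parameter $C$ to the $\Delta^0_2(C)$ predicate $n \mapsto \tilde f(h(n))$. The resulting infinite set $H'$ is such that $H := h(H')\subseteq C$ is infinite and $\tilde f$ is constant on $H$ with some value $c$. Finally, thin $H$ into a genuine $f$-homogeneous set by primitive recursion: let $x_0 < x_1 < \dots$ be the sequence in $H$ where $x_n$ is the least element of $H$ above $x_{n-1}$ with $f(x_i, x_n) = c$ for every $i < n$. Since each $x_i \in C$ satisfies $\tilde f(x_i)=c$, cofinitely many $y\in C$ — and hence cofinitely many $y\in H$ — satisfy $f(x_i,y)=c$, and a finite intersection of sets cofinite in $H$ is still infinite, guaranteeing that the recursion does not stall.

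The hard part is not the construction itself but verifying that it lives inside $\RCA_0$, that is, at the level of $\Sigma^0_1$-induction, without secretly appealing to $\BSig_2$ (equivalently $\IDelta_2$). The original Cholak--Jockusch--Slaman and Mileti arguments used $\IDelta_2$ to justify the limit-lemma step in the forward direction and to control the recursive thinning above, where the existence of a next $x_n$ is \emph{a priori} a $\Sigma^0_2$ assertion. The Chong--Lempp--Yang refinement proceeds by making the uniformity explicit: cohesiveness of $C$ against a finite batch of rows $R_{x_0},\dots,R_{x_{n-1}}$ provides a single threshold past which every $y\in H$ is a valid choice of $x_n$, which reduces the search to a bounded $\Sigma^0_1$-query that is handled by $\Sigma^0_1$-induction alone. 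I expect the main technical step of the plan to be precisely threading this uniformity through each application of $\COH$ and of $\Sub{\Delta^0_2}$, and checking that the two recursions — the definition of the $x_n$ and the iterated consequences of $\RT^2_2$ on the forward side — can be formalised as $\Sigma^0_1$-primitive recursions in the available parameters.
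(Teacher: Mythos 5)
The paper states this theorem as a citation without proof, so I am evaluating your sketch on its own terms. There are two genuine gaps.

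For the forward direction, the approximation $g(x,y) = 1 \leftrightarrow \exists u \le y\,\forall v \le y\,\theta(x,u,v)$ does not, in general, stabilize to $\varphi(x)$. Take $\theta(x,u,v)$ to be $v \le u$: then $\varphi(x)\equiv\exists u\,\forall v\,(v\le u)$ is false, yet $g(x,y)=1$ for every $y$ (take $u=y$). A one-sided $\Sigma^0_2$ approximation of a $\Delta^0_2$ predicate need not converge, and the mere \emph{existence} of an equivalent $\Pi^0_2$ formula $\psi$ does not rescue it. You must actually use $\psi$ in the definition: writing $\neg\psi(x)\equiv\exists u'\,\forall v'\,\theta'(x,u',v')$, the limit-lemma approximation races the two bounded searches $(\exists u\le y)(\forall v\le y)\,\theta(x,u,v)$ and $(\exists u'\le y)(\forall v'\le y)\,\theta'(x,u',v')$ and sets $g(x,y)$ according to which side produces the smaller witness at stage $y$. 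That definition stabilizes, with limit $\varphi(x)$, as a consequence of $\forall x(\varphi(x)\leftrightarrow\psi(x))$; yours does not.

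Your account of the Chong--Lempp--Yang step is also off. Cohesiveness of $C$ gives a threshold for each row $R_{x_i}$ separately, and extracting a common bound for the finitely many $i<n$ is precisely a $\BSig_2$ application; there is no free uniformity hiding in cohesiveness to turn this into a bounded $\Sigma^0_1$ search. What Chong--Lempp--Yang actually prove, and what this paper records, is that $\RCA_0 \vdash \Sub{\Delta^0_2} \to \IDelta_2$ (equivalently $\BSig_2$). Once $\IDelta_2$ is available as a consequence of the hypothesis --- and $\RT^2_2$ likewise implies $\BSig_2$ over $\RCA_0$, covering the other direction --- the original Cholak--Jockusch--Slaman and Mileti arguments, including the $\BSig_2$-dependent limit lemma and the thinning recursion, apply unchanged. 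So your thinning step does rely on $\BSig_2$ as written; the resolution is not to eliminate that reliance, but to derive $\BSig_2$ from $\Sub{\Delta^0_2}$ first.
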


The cohesiveness principle is very weak from a reverse mathematical viewpoint. It has the same first-order consequences as $\RCA_0$~\cite{cholak2001strength} and preserves all the first-jump control properties studied in reverse mathematics, such as cone avoidance, PA avoidance, DNC avoidance, among others. From a more abstract perspective, $\COH$ is equivalent over~$\RCA_0 + \IDelta_2$ to the statement \qt{Every $\Delta^0_2$ infinite binary tree admits an infinite $\Delta^0_2$ path}\cite{belanger2022conservation} and Towsner~\cite{townser2015maximum} proved that the $\Delta^0_2$-sets are indistinguishable from arbitrary sets from the viewpoint of~$\RCA_0$. Thus, a statement of the existence of a $\Delta^0_2$-approximation of a set does not add any proof-theoretic strength to $\RCA_0$. It follows from these considerations that the whole reverse mathematical complexity of~$\RT^2_2$ is already contained in~$\Sub{\Delta^0_2}$.

More recently, Benham et al.~\cite{benham2024ginsburg} revealed a surprising connection between a theorem of topology and $\RT^1_2$ for $\Sigma^0_2$ sets. The Ginsburg-Sands theorem~\cite{ginsburg1979minimal} states that every infinite topological space has an infinite subspace homeomorphic to exactly one of the following five topologies on $\NN$: indiscrete, discrete, initial segment, final segment, and cofinite. When restricted to $T_1$-spaces, it states that every infinite topological space has an infinite subspace homeomorphic to either the discrete or the cofinite topology on~$\NN$. Benham et al.~\cite{benham2024ginsburg} proved that the Ginsburg-Sands theorem for $T_1$ spaces is equivalent over~$\RCA_0$ to $\COH + \Sub{\Sigma^0_2}$. The higher levels of the pigeonhole hierarchy are related to Ramsey-type hierarchies such as the rainbow Ramsey and free set theorems~\cite{csima2009strength,wang2014some} in terms of computability-theoretic techniques used to analyze them. In particular, all three hierarchies admit strong cone avoidance~\cite{dzhafarov2009ramsey,wang2014some} and for every $n \geq 2$, there is a $\Delta^0_n$ instance with no $\Sigma^0_{n-1}$ solution~\cite{jockuschRamseyTheoremRecursion1972,cholakFreeSetsReverse2001,csima2009strength}.
The strictness of the rainbow Ramsey and free set hierarchies remains an open question.

\subsection{Strictness of the hierarchy}

The main contributions of this article are the strictness of the hierarchy of pigeonhole principles over~$\RCA_0$,
and conservation theorems at various levels of the induction hierarchy. Separating a problem~$\Psf$ from another problem~$\Qsf$ over~$\RCA_0$ usually consists in finding an invariant computability-theoretic weakness property such that every weak instance of~$\Psf$ admits a weak solution, while there is a weak instance of~$\Qsf$ with no weak solution. Then, a simple iterated construction yields an $\omega$-model of~$\RCA_0 + \Psf$ which is not a model of~$\Qsf$.

The most natural weakness properties are the levels of the arithmetic hierarchy, but these are not invariant, as if a set~$X$ is $\Delta^0_2(Y)$ and $Y$ is $\Delta^0_2$, then $X$ is not in general $\Delta^0_2$. Lowness properties are a strengthening of the levels of the arithmetic hierarchy providing the desired invariant. A set~$X$ is of \emph{low} degree if $X' \leq_T \emptyset'$. More generally, a set~$X$ is of \emph{low${}_n$} degree if $X^{(n)} \leq_T \emptyset^{(n)}$, where $X^{(n)}$ denotes the $n$-fold Turing jump of~$X$. If a set~$X$ is low${}_n$ over~$Y$ and $Y$ is low${}_n$, then $X$ is low${}_n$, as $X^{(n)} \leq_T Y^{(n)} \leq_T \emptyset^{(n)}$. The low${}_n$ degrees form a subclass of the $\Delta^0_{n+1}$ degrees.

\begin{definition}
Fix~$n \geq 1$. A problem $\Psf$ \emph{admits a low${}_n$ basis} if every computable instance of~$\Psf$ admits a solution of low${}_n$ degree. 
\end{definition}

Building a solution~$G$ of low${}_n$ degree consists of deciding the $\Sigma^0_n(G)$ properties of the generic object through a $\emptyset^{(n)}$-computable process. This technique is called \emph{$n$th-jump control}. Second and higher jump controls are usually significantly more complicated than first-jump control, as the forcing relation involves density properties. Thankfully, in many cases, $(n+1)$th-jump control can be obtained by an $n$th-jump control with PA degrees. A set $P$ is of \emph{PA degree} over~$X$ if every infinite $X$-computable binary tree admits an infinite $P$-computable path.

\begin{definition}
Fix~$n \geq 1$. A problem $\Psf$ \emph{admits a weakly low${}_n$ basis} if to every computable instance of~$\Psf$, and for every set~$Q$ of PA degree over~$\emptyset^{(n)}$, there is a solution $Y$ such that $Y^{(n)} \leq_T Q$.
\end{definition}

Clearly, if a problem admits a low${}_n$ basis, then it admits a weakly low${}_n$ basis.
On the other hand, given $n \geq 1$, by the low basis theorem for $\Pi^0_1$-classes~\cite{jockusch1972classes}, there is a set~$Q$ of PA degree over~$\emptyset^{(n)}$ such that $Q' \leq_T \emptyset^{(n+1)}$. Thus, if $Y$ is a solution such that $Y^{(n)} \leq_T Q$, then $Y^{(n+1)} \leq_T Q' \leq_T \emptyset^{(n+1)}$, hence $Y$ is of low${}_{n+1}$ degree. It follows that if a problem admits a weakly low${}_n$ basis, then it admits a low${}_{n+1}$ basis. We therefore have the following implications
\begin{center}
low basis $\rightarrow$ weakly low basis $\rightarrow$ low${}_2$ basis $\rightarrow$ weakly low${}_2$ basis $\rightarrow \dots$ 
\end{center}

Due to its strong connections with Ramsey's theorem for pairs, the statement $\Sub{\Delta^0_2}$ was thoroughly studied in reverse mathematics. Low${}_n$ and weakly low${}_n$ basis theorems for $\Sub{\Delta^0_2}$ were studied in particular by Cholak, Jockusch and Slaman~\cite{cholak2001strength} and Downey, Hirschfeldt, Lempp and Solomon~\cite{downey2001delta2}. The problem $\Sub{\Delta^0_n}$ for $n \geq 2$ was studied by Monin and Patey~\cite{monin2021weakness}, and the problem $\Sub{\Sigma^0_2}$ was more recently studied by Benham et al.~\cite{benham2024ginsburg}. The following table summarizes the known literature on the subject.

\begin{figure}[htbp]
\begin{center}
\bgroup
\def\arraystretch{1.5}%
\begin{tabular}{|c|c|c|c|} \hline
Problem & Non-basis & Previous basis & New basis\\ \hline
$\Sub{\Sigma^0_{n+1}}$ & low${}_n$~\cite{downey2001delta2} & weakly low${}_{n+1}$~\cite{monin2021weakness} & weakly low${}_n$ \\
$\Sub{\Delta^0_{n+1}}$ & low${}_n$~\cite{downey2001delta2} & weakly low${}_n$~\cite{monin2021weakness} & \\
$\Sub{\Sigma^0_2}$ & low~\cite{downey2001delta2} & low${}_2$~\cite{benham2024ginsburg} & weakly low \\
$\Sub{\Delta^0_2}$ & low~\cite{downey2001delta2} & weakly low~\cite{cholak2001strength} & \\
$\Sub{\Sigma^0_1}$ & & computable solutions & \\ \hline
\end{tabular}
\egroup
\end{center}
\caption{Summary table of the previously known bounds and the new bounds in terms of low basis theorems.
The new basis theorem proven in this article completes the table with tight bounds.}
\end{figure}

Our first main theorem is a weakly low${}_n$ basis theorem for $\Sub{\Sigma^0_{n+1}}$, disproving a conjecture of Benham et al.~\cite{benham2024ginsburg}.

\begin{maintheorem}\label{thm:main-weakly-low-basis}
Fix~$n \geq 1$. For every $\Sigma^0_{n+1}$ set~$A$ and every set~$Q$ of PA degree over~$\emptyset^{(n)}$, there is an infinite set~$H \subseteq A$ or $H \subseteq \overline{A}$ such that $H^{(n)} \leq_T Q$.
\end{maintheorem}

It follows that for every~$n \geq 1$, there is an $\omega$-model of $\Sub{\Sigma^0_n}$ with only low${}_n$ sets.
By Downey et al.~\cite{downey2001delta2}, there is a computable instance of $\Sub{\Delta^0_{n+1}}$ with no low${}_n$ solution. Thus, $\RCA_0 + \Sub{\Sigma^0_n} \nvdash \Sub{\Delta^0_{n+1}}$ for every~\mbox{$n \geq 1$}.

Separating $\Sub{\Delta^0_n}$ from $\Sub{\Sigma^0_n}$ is more complicated, as these princi\-ples satisfy the same lowness basis. A function $f : \NN \to \NN$ \emph{dominates} $g : \NN \to \NN$ if $\forall x(f(x) \geq g(x))$. A function $f : \NN \to \NN$ is \emph{$X$-hyperimmune} if it is not dominated by any $X$-computable function. If $X$ is computable, then we simply say that $f$ is hyperimmune. Benham et al.~\cite{benham2024ginsburg} separated $\Sub{\Delta^0_2}$ from $\Sub{\Sigma^0_2}$ by designing a very elaborate invariant in terms of preservation of hyperimmunities and $\emptyset'$-hyperimmunities simultaneously. We simplify their argument and generalize it to higher levels of the pigeonhole hierarchy.

\begin{definition}
Fix a multiset~$I \subseteq \NN$. A problem $\Psf$ \emph{preserves hyperimmunity at levels~$I$} if for every computable instance~$X$ of~$\Psf$ and every family of functions $(f_n)_{n \in I}$ such that $f_n$ is $\emptyset^{(n)}$-hyperimmune, there is a solution~$Y$ to~$X$ such that for every~$n \in I$, $f_n$ is $Y^{(n)}$-hyperimmune.
\end{definition}

Benham et al.~\cite{benham2024ginsburg} essentially proved that $\Sub{\Sigma^0_2}$ does not preserve hyper\-immunity at levels~$\{0,1\}$ while $\Sub{\Delta^0_2}$ does. A direct relativization of their proof yields that $\Sub{\Sigma^0_n}$ does not preserve hyperimmunity at levels~$\{n-2,n-1\}$ for every~$n \geq 2$, but the positive preservation theorem requires a non-trivial generalization. Our second main theorem generalizes and simplifies the proof of Benham et al.~\cite{benham2024ginsburg} by stating that $\Sub{\Delta^0_n}$ preserves hyperimmunity at levels~$\{n-2,n-1\}$ for every~$n \geq 2$.

\begin{maintheorem}\label[maintheorem]{thm:main-preservation-hyps}
Fix~$n \geq 2$. For every $\Delta^0_n$ set~$A$, every $\emptyset^{(n-2)}$-hyperimmune function $f : \NN \to \NN$ and every $\emptyset^{(n-1)}$-hyperimmune function $g : \NN \to \NN$, there is an infinite set~$H \subseteq A$ or $H \subseteq \overline{A}$ such that $f$ is $H^{(n-2)}$-hyperimmune and $g$ is $H^{(n-1)}$-hyperimmune.
\end{maintheorem}

It follows that $\RCA_0 + \Sub{\Delta^0_n} \nvdash \Sub{\Sigma^0_n}$ for every~$n \geq 2$. Thus, the hierarchy of pigeonhole principles is strict, answering a question of Benham et al.~\cite{benham2024ginsburg}. The separation being witnessed by $\omega$-models, these separations also hold when adding the full induction scheme to~$\RCA_0$.

\begin{maintheorem}
Over~$\RCA_0$, the following hierarchy is strict
$$\Sub{\Sigma^0_1} < \Sub{\Delta^0_2} < \Sub{\Sigma^0_2} < \Sub{\Delta^0_3} < \dots $$
\end{maintheorem}

\subsection{Conservation theorems}

The \emph{first-order part} of a second-order theory~$T$ is the set of its first-order theorems.
Characterizing the first-order part of ordinary second-order theorems is an important subject of study in reverse mathematics and is closely related to Hilbert's finitistic reductionism program~\cite{simpson1988partial}.

The first-order parts of the main subsystems studied in reverse mathematics are well-understood.
In particular, the first-order part of $\RCA_0 + \ISig_n$ for~$n \geq 1$ corresponds to
$\Robinson + \mathsf{I}\Sigma_n$, where~$\mathsf{I}\Sigma_n$ denotes the induction scheme restricted to $\Sigma_n$-formulas, that is, with no second-order parameter allowed~\cite{friedmanSystemsSecondOrder1976}. 

A good way to calibrate the first-order part of second-order theories is to reduce it to existing benchmark theories through conservation. Given a family of sentences $\Gamma$, a theory $T_2$ is \emph{$\Gamma$-conservative} over~$T_1$ if every $\Gamma$-sentence provable over~$T_2$ is provable over~$T_1$. A formula is \emph{$\Pi^1_1$} if it starts with a universal second-order quantifier, followed by an arithmetic formula. If a theory $T_2$ is $\Pi^1_1$-conservative over~$T_1$, then the first-order part of~$T_2$ is included in the first-order part of~$T_1$.

Characterizing the first-order part of $\Sub{\Delta^0_2}$ is one of the most important questions in reverse mathematics. Chong, Lempp and Yang~\cite{chong2010role} proved that  $\RCA_0 \vdash \Sub{\Delta^0_2} \to \IDelta_2$. On the other hand, Cholak, Jockusch and Slaman~\cite{cholak2001strength} proved that $\RCA_0 + \ISig_2 + \Sub{\Delta^0_2}$ is $\Pi^1_1$-conservative over~$\RCA_0 + \ISig_2$ and Chong, Slaman and Yang~\cite{chong2017inductive} proved that $\RCA_0 + \Sub{\Delta^0_2} \nvdash \ISig_2$. Thus, the first-order part of~$\RCA_0 + \Sub{\Delta^0_2}$ is above $\Robinson + \mathsf{I}\Delta_2$ and strictly below $\Robinson + \mathsf{I}\Sigma_2$.

Benham et al.~\cite{benham2024ginsburg} asked whether $\RCA_0 + \ISig_2 + \Sub{\Sigma^0_2}$ is $\Pi^1_1$-conservative over~$\RCA_0 + \ISig_2$. We answer positively at every level of the hierarchy.

\begin{maintheorem}\label{thm:main-conservation}
Fix~$n \geq 2$. Then $\RCA_0 + \ISig_n + \Sub{\Sigma^0_n}$ is $\Pi^1_1$-conservative over~$\RCA_0 + \ISig_n$.
\end{maintheorem}

It follows that the Ginsburg-Sands theorem for $T_1$-spaces is $\Pi^1_1$-conservative over~$\RCA_0 + \ISig_2$.
On the other hand, it remains open whether $\RCA_0 + \Sub{\Sigma^0_n} \vdash \ISig_2$ for any~$n \geq 2$
or even whether~$\RCA_0 + \IDelta_n + \Sub{\Sigma^0_n}$ is $\Pi^1_1$-conservative over~$\RCA_0 + \IDelta_n$.


\subsection{Notation}

We assume the reader is familiar with notations from classical computability theory. See Cooper~\cite{cooper2004computability} or Soare~\cite{soare2016turing} for a reference. In particular, $\leq_T$ denotes Turing reducibility, and $\Phi_0, \Phi_1, \dots$ is a fixed enumeration of all the Turing functionals.
We write $\Phi_e^A(x)\converge$ to say that the $e$th program with oracle~$A$ halts on input~$x$, and $\Phi_e^A(x)\diverge$ otherwise.

\emph{Integers}. We let $\NN$ denote the set of non-negative integers. When working in models of weak arithmetic, we distinguish the formal set~$\NN$ representing the integers in the model, from the set~$\omega$ of standard integers, which corresponds to the integers in the meta-theory. In particular, $\omega$ is always a (proper or not) initial segment of~$\NN$.

\emph{Binary strings}. We let $2^{<\NN}$ denote the set of all finite binary strings, and $2^\NN$ denote the class of all infinite binary sequences. Note that $2^\NN$ is in bijection with $\P(\NN)$ and both are usually identified.
Finite binary strings are written with greek letters~$\sigma, \tau, \mu, \dots$. The length of a binary string $\sigma$ is written~$|\sigma|$, and the concatenation of two binary strings $\sigma$ and $\tau$ is written $\sigma \cdot \tau$. Given an infinite binary sequence~$X \in 2^\NN$ and $n \in \NN$, we write $X \uh_n$ for its initial segment of length~$n$. A string~$\sigma$ is a prefix of a string~$\tau$, written $\sigma \preceq \tau$, if there is some~$\mu$ such that $\sigma \cdot \mu = \tau$.

\emph{Finite sets}. Finite binary strings, seen as finite characteristic functions, are often identified with finite sets, that is, $\sigma$ is identified with $\{ n < |\sigma| : \sigma(n) = 1 \}$. Based on this correspondence, we extend the set-theoretic notations to binary strings, and let for example $\sigma \cup \rho$ denote the binary string of length~$\max(|\sigma|, |\rho|)$, and such that $(\sigma \cup \rho)(n) = 1$ if and only if $\sigma(n) = 1$ or $\rho(n) = 1$.
In particular, one shall distinguish the cardinality $\card \sigma = \card \{ n < |\sigma| : \sigma(n) = 1 \}$ of a string seen as a set, from its length~$|\sigma|$.

\emph{Trees}. A \emph{binary tree} is a set~$T \subseteq 2^{<\NN}$ closed under prefix. A \emph{path} through a binary tree~$T$ is an infinite binary sequence~$X \in 2^\NN$ such that $X \uh_n \in T$ for every~$n \in \NN$. We let $[T]$ denote the class of all paths through~$T$. Every closed class in the Cantor space can be written of the form $[T]$ for a binary tree~$T \subseteq 2^{<\NN}$. A class~$\C \subseteq 2^\NN$ is \emph{$\Pi^0_1(X)$} if it is of the form $[T]$ for an $X$-co-c.e.\ tree~$T$, or equivalently an $X$-computable tree~$T$, or even a primitive $X$-recursive tree~$T$. A set $P$ is of \emph{PA degree} over~$X$ if for every~$X$-computable infinite binary tree~$T \subseteq 2^{<\NN}$, $P$ computes an infinite path. Note that if $P$ is of PA degree over~$X$, then $P$ computes~$X$.

\emph{Formulas}.
A formula is $\Sigma^0_n(X_1, \dots, X_k)$ (resp.\ $\Pi^0_n(X_1, \dots, X_k)$) if it is $\Sigma^0_n$ (resp.\ $\Pi^0_n$) with parameters $X_1, \dots, X_k$. Here, $X_1, \dots, X_k$ are either free set variables, or actual set parameters, in which case we implicitly consider an augmented language.
Given a family of sets~$\M \subseteq \P(\NN)$, a formula is $\Sigma^0_n(\M)$ (resp.\ $\Pi^0_n(\M)$) if it is $\Sigma^0_n(X_1, \dots, X_k)$ (resp.\ $\Pi^0_n(X_1, \dots, X_k)$) for some~$X_1, \dots, X_k \in \M$.

\emph{Ideals}.
Recall that a non-empty family of sets $\M \subseteq \P(\NN)$ is a \emph{Turing ideal} if for every~$X \in \M$ and $Y \leq_T X$, then $Y \in \M$, and for every~$X, Y \in \M$, $X \oplus Y \in \M$. A Turing ideal~$\M$ is a \emph{Scott ideal} if furthermore, for every infinite binary tree~$T \in \M$, there is an infinite path~$P \in [T] \cap \M$.
A countable Turing ideal~$\M = \{Z_0, Z_1, \dots\}$ is \emph{coded} by a set~$M \subseteq \NN$ if $M = \bigoplus_n Z_n = \{ \langle x, n \rangle : x \in Z_n \}$. Here, $\langle \cdot, \cdot \rangle : \NN^2 \to \NN$ denotes the usual Cantor bijection. An \emph{$M$-index} of an element~$X \in \M$ is an integer~$n$ such that $X = Z_n$. A \emph{Scott code} of a Scott ideal~$\M$ is a code~$M$ such that the operations $(m, n) \mapsto p$ such that $Z_p = Z_m \oplus Z_n$, and $(e, n) \mapsto p$ such that $Z_p$ is a completion of~$\Phi_e^{Z_n}$, are computable. A collection of sets~$\M$ is \emph{topped} by a set~$X$ if it is of the form $\{ Z \in \cs : Z \leq_T X \}$. Note that every topped collection is a Turing ideal, and that no Scott ideal is topped. 

\emph{$\omega$-structure}. An $\omega$-structure $\M = (\omega, S)$ is fully specified by its second-order part~$S$. Thus, we identify both notions. In particular, we say that $\M$ is topped if so is $S$. As mentioned, $\M \models \RCA_0$ if and only if $S$ is a Turing ideal.

\emph{Mathias forcing}. A \emph{Mathias condition} is an ordered pair $(\sigma, X)$, where $\sigma$ is a finite binary string, and $X \subseteq \NN$ is an infinite set with $\min X > |\sigma|$. A condition $(\tau, Y)$ \emph{extends} another condition $(\sigma, X)$ if $\sigma \preceq \tau$, $Y \subseteq X$, and $\tau \setminus \sigma \subseteq X$. The set $X$ of a Mathias condition $(\sigma, X)$ is considered as a \emph{reservoir} of elements which are allowed to be later added to the initial segment~$\sigma$. By abuse of notation, when~$\min X \leq |\sigma|$, we shall write $(\sigma, X)$ to denote the Mathias condition $(\sigma, X \setminus \{0, \dots, |\sigma|\})$.

\subsection{Organization of the paper}

In \Cref{sect:iterated-jump-control}, we present the big picture of the iterated-jump control techniques used to achieve the main theorems. In particular, we emphasize the role of the so-called \qt{forcing question} in the preservation of computability-theoretic weakness properties.

PA degrees play an essential role in computability theory, and are often involved as intermediary objects to obtain a good iterated jump control construc\-tion. Let \emph{weak K\"onig's lemma} ($\WKL$) be the problem whose instances are infinite binary trees and whose solutions are paths. In \Cref{sect:forcing-trees}, we prove that $\WKL$ preserves hyperimmunity at every level of the arithmetic hierarchy simultaneously, that is, if the considered functions are $\emptyset^{(n)}$-hyperimmune, they will remain $G^{(n)}$-hyperimmune, where $G$ is the generic object. This both serves as a gentle example to iterated-jump control and a preliminary construction necessary to prove our main theorems.

Solutions to combinatorial theorems from Ramsey theory are often constructed using variants of Mathias forcing. However, Mathias forcing does not behave well with respect to iterated-jump control. In \Cref{sect:largeness}, we introduce the fundamental concepts of largeness, partition regularity, minimal and cohesive class, which enable to define a refinement of Mathias forcing with the appropriate iterated-jump control. Then, in \Cref{core-forcing}, we introduce the common combinatorial core of all the notions of forcing used in this article.

In \Cref{sect:iterated-lowness}, we define two notions of forcing, namely, main forcing and witness forcing, to build low${}_{n+1}$ and weakly low${}_n$ solutions to $\Sub{\Sigma^0_{n+1}}$, and use it to separate $\Sub{\Sigma^0_n}$ from $\Sub{\Delta^0_{n+1}}$ over~$\omega$-models. Then, in \Cref{sect:preservation-hyps}, we introduce a disjunctive notion of forcing to preserve multiple hyperimmunities simultaneously, and use this framework to separate $\Sub{\Delta^0_{n+1}}$ from $\Sub{\Sigma^0_{n+1}}$ over $\omega$-models. We formalize the constructions of \Cref{sect:iterated-lowness} over weak models of arithmetic in \Cref{sect:conservation} to prove that $\RCA_0 + \ISig_{n+1} + \Sub{\Sigma^0_{n+1}}$ is $\Pi^1_1$-conservative over $\RCA_0 + \ISig_{n+1}$.

Last, in \Cref{sect:open-questions}, we state some remaining open questions and research directions.

\section{Iterated jump control and forcing question}\label{sect:iterated-jump-control}

The main theorems of this article are proven by effective forcing, with an iterated jump control.
This technique consists of making the constructed set inherit computability-theoretic weaknesses of the ground model by translating arithmetical properties of the generic set into absolute arithmetical formulas of the same complexity.

In what follows, we shall consider an arbitrary notion of forcing $(\PP, \leq)$, together with an \emph{interpretation function} $[\cdot] : \PP \to \P(\cs)$ such that if $d \leq c$, then $[d] \subseteq [c]$.
Intuitively, a condition~$c$ is an approximation of the constructed set, and its interpretation is the class of all candidate sets which satisfy the approximation. If~$d \leq c$, then the approximation~$d$ is more precise than $c$, thus $[d] \subseteq [c]$. In all the notions of forcing we shall consider, the interpretation will be a closed class in the Cantor space, and for every sufficiently generic filter~$\F$, the class $\bigcap_{c \in \F} [c]$ will be a singleton~$\{G_\F\}$. A condition~$c$ \emph{forces} a formula $\varphi(G)$ if $\varphi(G_\F)$ holds for every sufficiently generic filter~$\F$ containing~$c$.

The computability-theoretic weaknesses of the generic set are closely related to the existence of a so-called forcing question with a good definitional complexity.

\begin{definition}\label[definition]{def:abstract-forcing-question}
Let $\Gamma$ be a family of formulas. A \emph{forcing question} for $\Gamma$-formulas is a relation $\qvdash \subseteq \PP \times \Gamma$ such that for every~$c \in \PP$ and $\varphi(G) \in \Gamma$,
\begin{enumerate}
    \item If $c \qvdash \varphi(G)$, then there is an extension~$d \leq c$ forcing~$\varphi(G)$ ;
    \item If $c \nqvdash \varphi(G)$, then there is an extension~$d \leq c$ forcing~$\neg \varphi(G)$.
\end{enumerate}
\end{definition}

Given a formula~$\varphi(G)$, the set $\PP$ can be divided into three categories: the conditions forcing $\varphi(G)$, the conditions forcing~$\neg \varphi(G)$, and the conditions forcing neither of those. 
A forcing question can be thought of as a dividing line within the third category. There are therefore two canonical relations satisfying \Cref{def:abstract-forcing-question}:
\begin{itemize}
    \item $c \qvdash_0 \varphi(G)$ if there is some~$d \leq c$ forcing $\varphi(G)$. This relation will give the same answers to the third category and the first one.
    \item $c \qvdash_1 \varphi(G)$ if there is no $d \leq c$ forcing $\neg \varphi(G)$. This relation will give the same answers to the third category and the second one.
\end{itemize}
In some cases, however, there exist intermediary forcing questions with a better definitional complexity.

Note that a forcing question for $\Sigma^0_n$ formulas induces a forcing question for $\Pi^0_n$ formulas by negating the relation, thus we shall only consider forcing questions for~$\Sigma^0_n$ formulas. The notion of forcing question was introduced by Monin and Patey~\cite[Section 2]{monin201pigeons} who proved two abstract theorems. We recall them for the sake of completeness.

\begin{definition}
Let $\Gamma$ be a family of formulas. A forcing question is \emph{$\Gamma$-preserving} if for every~$c \in \PP$
and every formula $\varphi(G, x) \in \Gamma$, the relation $c \qvdash \varphi(G, x)$ is in~$\Gamma$ uniformly in~$x$, that is, the predicate $P(x) := c \qvdash \varphi(G, x)$ is a $\Gamma$-formula.
\end{definition}

The first abstract theorem concerns the preservation of the arithmetic hierarchy. It is used to prove cone avoidance, or its iterated versions.

\begin{theorem}[\cite{monin201pigeons}]\label{thm:abstract-preserving-definitions}
Let $(\PP, \leq)$ be a notion of forcing with a $\Sigma^0_n$-preserving forcing question.
For every non-$\Sigma^0_n$ set~$C$ and every sufficiently generic filter~$\F$, $C$ is not $\Sigma^0_n(G_\F)$.
\end{theorem}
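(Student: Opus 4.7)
The plan is the standard density argument that extracts a diagonalization from the forcing question. Enumerate all $\Sigma^0_n$ formulas with a single set-parameter and one free number-variable as $(\varphi_e(G, x))_{e \in \omega}$. For each $e$, let $D_e \subseteq \PP$ be the set of conditions that force $\{x : \varphi_e(G, x)\} \neq C$; concretely, $c \in D_e$ iff $c$ forces $\varphi_e(G, x_0)$ for some $x_0 \notin C$, or $c$ forces $\neg \varphi_e(G, x_0)$ for some $x_0 \in C$. If each $D_e$ is dense and $\F$ meets every $D_e$, then for each $e$ some $x_0$ witnesses $\{x : \varphi_e(G_\F, x)\} \neq C$, and since every $\Sigma^0_n(G_\F)$ set arises as $\{x : \varphi_e(G_\F, x)\}$ for some $e$, the conclusion $C \notin \Sigma^0_n(G_\F)$ follows.

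The heart of the argument is the density of $D_e$. Fix an arbitrary $c \in \PP$ and consider
\[
A_e^c \;=\; \{\, x \in \NN : c \qvdash \varphi_e(G, x) \,\}.
\]
By the $\Sigma^0_n$-preservation hypothesis, the relation $c \qvdash \varphi_e(G, x)$ is $\Sigma^0_n$ uniformly in $x$, so $A_e^c$ is a $\Sigma^0_n$ set of integers (with no surviving dependence on the generic object). Since $C$ is not $\Sigma^0_n$, we have $A_e^c \neq C$, and may pick some $x_0$ in the symmetric difference $A_e^c \triangle C$. If $x_0 \in A_e^c \setminus C$, clause (1) of the forcing question yields $d \leq c$ forcing $\varphi_e(G, x_0)$; since $x_0 \notin C$, such $d$ lies in $D_e$. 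If $x_0 \in C \setminus A_e^c$, clause (2) yields $d \leq c$ forcing $\neg \varphi_e(G, x_0)$; since $x_0 \in C$, again $d \in D_e$. Either way, $D_e$ is dense below $c$, and a sufficiently generic filter meets each $D_e$.

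The difficulty is not really in the proof itself, which is short once the framework is in place. The substantive hypothesis doing all the work is the uniformity clause in the definition of $\Sigma^0_n$-preservation: it is precisely what turns $A_e^c$ into an honest $\Sigma^0_n$ subset of $\NN$ in the ground model, rather than a set merely $\Sigma^0_n$ in some parameter attached to the condition $c$. Without this uniformity, one could not conclude $A_e^c \neq C$ from the assumption that $C$ is not $\Sigma^0_n$, and the diagonalization would collapse. Thus the real conceptual content of the theorem is hidden in what is demanded of the forcing question, and the proof is essentially a transparent unpacking of that demand.
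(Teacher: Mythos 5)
Your proof is correct and follows essentially the same route as the paper's: both define $D_e$ as (a variant of) the set of conditions forcing the $e$-th $\Sigma^0_n(G)$-definable set to differ from $C$, consider the $\Sigma^0_n$ set $\{x : c \qvdash \varphi_e(G,x)\}$, diagonalize against $C$ at a point of the symmetric difference, and invoke the two clauses of the forcing question to produce a witnessing extension. The only difference is cosmetic notation: the paper writes the $e$-th $\Sigma^0_n(G)$ set as $W_e^{G^{(n-1)}}$ rather than via an enumeration $\varphi_e(G,x)$ of $\Sigma^0_n$ formulas.
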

\begin{proof}
For every~$e \in \NN$, let $\D_e \subseteq \PP$ be the set of all conditions forcing $W_e^{G^{(n-1)}} \neq C$.
We claim that~$\D_e$ is dense. Indeed, given $c \in \PP$, consider the following set:
$$
U = \{ x \in \NN : c \qvdash x \in W_e^{G^{(n-1)}} \}
$$
Since the forcing question is $\Sigma^0_n$-preserving, the set~$U$ is $\Sigma^0_n$, thus $U \neq C$.
Let $x \in U \Delta C = (U \setminus C) \cup (C \setminus U)$. Suppose first that $x \in U \setminus C$. By Property~1 of the forcing question, there is an extension~$d \leq c$ forcing $x \in W_e^{G^{(n-1)}}$.
Suppose now that $x \in C \setminus U$. By Property~2 of the forcing question, there is an extension~$d \leq c$ forcing $x \not\in W_e^{G^{(n-1)}}$.
In both cases, the extension $d \leq c$ forces $W_e^{G^{(n-1)}} \neq C$, so the set $\D_e$ is dense. This proves our claim. Thus, for every sufficiently generic filter~$\F$, $\F$ is $\{\D_e : e \in \NN\}$-generic, hence $C$ is not $\Sigma^0_n(G_\F)$.
\end{proof}

Many forcing questions, when answering positively a $\Sigma^0_n$ question, can actually find a finite set of witnesses for the outermost existential quantifier. This can be seen as a form of compactness.

\begin{definition}
A forcing question is \emph{$\Sigma^0_n$-compact} if for every $c \in \PP$ and every $\Sigma^0_n$ formula $\varphi(G, x)$, if $c \qvdash \exists x \varphi(G, x)$, then there is some~$k \in \NN$ such that $c \qvdash (\exists x < k)\varphi(G, x)$.
\end{definition}

The existence of a $\Sigma^0_n$-compact forcing question is closely related to the ability to compute fast-growing functions. Recall that a function~$f : \NN \to \NN$ is \emph{$A$-hyperimmune} if it is not dominated by any $A$-computable function.

\begin{theorem}[\cite{monin201pigeons}]\label{thm:abstract-compact-hyperimmunity}
Let $(\PP, \leq)$ be a notion of forcing with a $\Sigma^0_n$-compact, $\Sigma^0_n$-preserving forcing question.
For every $\emptyset^{(n-1)}$-hyperimmune function~$f : \NN \to \NN$ and every sufficiently generic filter~$\F$, $f$ is $G_\F^{(n-1)}$-hyperimmune.
\end{theorem}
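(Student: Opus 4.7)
The plan is to adapt the density argument of \Cref{thm:abstract-preserving-definitions} to the hyperimmunity setting. For each Turing index~$e \in \NN$, let~$\D_e \subseteq \PP$ be the collection of conditions forcing that~$\Phi_e^{G^{(n-1)}}$ does not dominate~$f$. Showing each~$\D_e$ is dense is enough: a sufficiently generic filter~$\F$ then meets every~$\D_e$, so no total $G_\F^{(n-1)}$-computable function dominates~$f$, which is exactly $G_\F^{(n-1)}$-hyperimmunity of~$f$.

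Fix $c \in \PP$ and consider, for each~$x \in \NN$, the $\Sigma^0_n$ formula $\varphi_x(G) \equiv (\exists y)\, \Phi_e^{G^{(n-1)}}(x) = y$. I will define a partial function~$h$ by letting $h(x)$ be the least $k$ such that $c \qvdash (\exists y < k)\, \Phi_e^{G^{(n-1)}}(x) = y$, with $h(x)$ undefined when no such $k$ exists. By $\Sigma^0_n$-preservation, the forcing question appearing in this definition is uniformly $\Sigma^0_n = \Sigma^0_1(\emptyset^{(n-1)})$ in~$x$ and~$k$, so $h$ is partial $\emptyset^{(n-1)}$-computable. Now split into cases. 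If $h(x_0)$ is undefined for some~$x_0$, then by $\Sigma^0_n$-compactness $c \nqvdash \varphi_{x_0}(G)$, so Property~2 of the forcing question provides an extension $d \leq c$ forcing $\Phi_e^{G^{(n-1)}}(x_0)\diverge$; such a~$d$ forces $\Phi_e^{G^{(n-1)}}$ to be non-total and thus lies in~$\D_e$. Otherwise~$h$ is a total $\emptyset^{(n-1)}$-computable function, and the $\emptyset^{(n-1)}$-hyperimmunity of~$f$ yields some~$x$ with $f(x) > h(x)$; Property~1 of the forcing question then supplies an extension $d \leq c$ forcing $\Phi_e^{G^{(n-1)}}(x) < h(x) < f(x)$, again placing~$d$ in~$\D_e$.

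The main subtlety I anticipate is the definitional bookkeeping needed to ensure that~$h$ is genuinely $\emptyset^{(n-1)}$-computable uniformly in~$e$ and~$c$. This is precisely why both $\Sigma^0_n$-preservation \emph{and} $\Sigma^0_n$-compactness appear in the hypothesis: without compactness the search for a witness~$k$ cannot be bounded within~$\emptyset^{(n-1)}$, and without preservation the forcing question cannot be decided at the correct level of the arithmetic hierarchy. Once~$h$ is in hand, the hyperimmunity hypothesis on~$f$ does all the remaining combinatorial work.
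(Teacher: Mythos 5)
Your argument follows the paper's proof closely: the same dense sets $\D_e$, the same two-case analysis on whether a bound for the forcing question exists at each $x$, and the same appeals to Properties~1 and~2 and to the contrapositive of compactness. There is, however, a genuine slip in the definition of $h$. You declare $h(x)$ to be the \emph{least} $k$ with $c \qvdash (\exists y < k)\,\Phi_e^{G^{(n-1)}}(x) = y$ and claim that $\Sigma^0_n$-preservation alone makes this partial function $\emptyset^{(n-1)}$-computable. That inference fails: a $\Sigma^0_1(\emptyset^{(n-1)})$ relation $R(x,k)$ is only semi-decidable from $\emptyset^{(n-1)}$, so once some $k$ enters $R(x,\cdot)$ the oracle $\emptyset^{(n-1)}$ cannot in general determine whether a smaller $k'$ will eventually appear, and the least-$k$ function need not be $\emptyset^{(n-1)}$-computable. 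Concretely, if $R(x,k)$ holds exactly when $k = 1$, or when $k = 0$ and $x \in \emptyset^{(n)}$, then $R$ is $\Sigma^0_1(\emptyset^{(n-1)})$ but the least-$k$ function decides membership in $\emptyset^{(n)}$.

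The fix is the one the paper uses: take $h(x)$ to be \emph{some} $k$ with $c \qvdash (\exists y < k)\,\Phi_e^{G^{(n-1)}}(x) = y$, namely the first one produced by dovetailing the $\Sigma^0_1(\emptyset^{(n-1)})$ searches over $k$. In your Case~2 this $h$ is total and genuinely $\emptyset^{(n-1)}$-computable, and since it only over-approximates the least witness, your subsequent appeal to hyperimmunity of~$f$ and to Property~1 goes through unchanged: one still gets an $x$ with $h(x) < f(x)$ and an extension forcing $\Phi_e^{G^{(n-1)}}(x)\converge < h(x) < f(x)$. Everything else in your proof is correct and matches the paper.
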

\begin{proof}
For every~$e \in \NN$, let $\D_e \subseteq \PP$ be the set of all conditions forcing $\Phi_e^{G^{(n-1)}}$ not to dominate~$f$. More precisely, $\D_e$ is the set of all conditions forcing either $\Phi_e^{G^{(n-1)}}$ to be partial, or $\Phi_e^{G^{(n-1)}}(x)\converge < f(x)$ for some~$x \in \NN$. We claim that~$\D_e$ is dense.
Suppose first that $c \nqvdash \Phi_e^{G^{(n-1)}}(x)\converge$ for some~$x \in \NN$. Then by Property~2 of the forcing question, there is an extension~$d \leq c$ forcing $\Phi_e^{G^{(n-1)}}(x)\diverge$, hence to be partial.
Suppose now that for every~$x \in \NN$, $c \qvdash \exists v \Phi_e^{G^{(n-1)}}(x)\converge = v$. By $\Sigma^0_n$-compactness of the forcing question, for every~$x \in \NN$, there is some bound $k_x \in \NN$ such that $c \qvdash (\exists v < k_x)\Phi_e^{G^{(n-1)}}(x)\converge = v$. Let~$h : \NN \to \NN$ be the function which on input~$x$, looks for some~$k_x \in \NN$ such that $c \qvdash (\exists v < k_x)\Phi_e^{G^{(n-1)}}(x)\converge = v$, and outputs~$k_x$. Such a function is total by hypothesis, and $\emptyset^{(n-1)}$-computable by $\Sigma^0_n$-preservation of the forcing question. Since~$f$ is $\emptyset^{(n-1)}$-hyperimmune, $h(x) < f(x)$ for some~$x \in \NN$. By Property~1 of the forcing question, there is an extension~$d \leq c$ forcing $(\exists v < k_x)\Phi_e^{G^{(n-1)}}(x)\converge = v$. Since~$f(x) \geq k_x$, $d$ forces $\Phi_e^{G^{(n-1)}}(x)\converge < f(x)$.
This proves our claim. Thus, for every sufficiently generic filter~$\F$, $\F$ is $\{\D_e : e \in \NN\}$-generic, hence $f$ is $G_\F^{(n-1)}$-hyperimmune.
\end{proof}

Additional structural properties on the forcing question, such as the ability to find simultaneous answers to independent questions, yield PA or DNC avoidance, as in Monin and Patey~\cite{monin2021srt}.

The forcing question plays an important role in conservation theorems as well. Indeed, given a $\Pi^1_2$-problem~$\Psf$, proving that $\RCA_0 + \ISig_n + \Psf$ is $\Pi^1_1$-conservative over~$\RCA_0 + \ISig_n$ consists in starting with a countable model $\M = (M, S)$ of $\RCA_0 + \ISig_n$, and given an instance~$X \in S$ of $\Psf$, constructing a solution~$G \subseteq M$ such that $\M \cup \{G\} \models \ISig_n$, where~$\M \cup \{G\} = (M, S \cup \{G\})$.

\begin{definition}
Given a notion of forcing $(\PP, \leq)$ and some~$n \in \NN$, a forcing question is \emph{$(\Sigma^0_n, \Pi^0_n)$-merging}
if for every~$c \in \PP$ and every pair of $\Sigma^0_n$ formulas $\varphi(G), \psi(G)$ such that $c \qvdash \varphi(G)$ 
but $c \nqvdash \psi(G)$, then there is an extension~$d \leq c$ forcing $\varphi(G) \wedge \neg \psi(G)$.
\end{definition}

The existence of a $(\Sigma^0_n, \Pi^0_n)$-merging forcing question enables to preserve $\Sigma^0_n$-induction.

\begin{theorem}\label{thm:abstract-preservation-isig1}
Let~$\M = (M, S) \models \ISig_n$ be a countable model and let~$(\PP, \leq)$ be a notion of forcing with a $\Sigma^0_n$-preserving $(\Sigma^0_n,\Pi^0_n)$-merging forcing question. For every sufficiently generic filter~$\F$, $\M \cup \{G_\F\} \models \ISig_n$. 
\end{theorem}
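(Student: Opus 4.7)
The plan is to prove $\ISig_n$ in $\M \cup \{G_\F\}$ via density. For each $\Sigma^0_n(G, \vec Z)$ formula $\varphi(x, G)$ with parameters $\vec Z \in S$ and each $y \in M$, I define a dense set $\D_{\varphi, y} \subseteq \PP$ consisting of all conditions $d$ forcing at least one of the following clauses: (i) $\neg\varphi(0, G)$; (ii) $\varphi(y, G)$; (iii) $(\exists x < y)(\varphi(x, G) \wedge \neg\varphi(x+1, G))$. A filter $\F$ meeting every such $\D_{\varphi, y}$ immediately yields $\M \cup \{G_\F\} \models \ISig_n$: if the induction premises for $\varphi$ hold in the extension, then clauses (i) and (iii) are ruled out for every $y$, so clause (ii) must hold at every $y$, giving $\forall y\, \varphi(y, G_\F)$.

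It then remains to establish density of $\D_{\varphi, y}$. Fix $c \in \PP$. Apply the forcing question to $\varphi(0, G)$: if $c \nqvdash \varphi(0, G)$, Property~2 yields $d \leq c$ forcing $\neg\varphi(0, G)$, landing in case (i). Similarly, if $c \qvdash \varphi(y, G)$, Property~1 yields $d \leq c$ forcing $\varphi(y, G)$, landing in case (ii). So assume $c \qvdash \varphi(0, G)$ while $c \nqvdash \varphi(y, G)$.

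Consider the set
$$
A = \{ x \leq y : c \qvdash \varphi(x, G) \}.
$$
By $\Sigma^0_n$-preservation, $c \qvdash \varphi(x, G)$ is a $\Sigma^0_n$ formula in $x$, uniformly, whose parameters—namely $c$ and those of $\varphi$—all lie in $S$, so $A$ is $\Sigma^0_n$-definable over $\M$. Since $0 \in A$ and $y \notin A$, invoking $\ISig_n$ in $\M$ (equivalently, the bounded $\Sigma^0_n$ least-element principle applied to the complement below $y$) produces some $x < y$ with $x \in A$ and $x+1 \notin A$, that is, $c \qvdash \varphi(x, G)$ while $c \nqvdash \varphi(x+1, G)$. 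The $(\Sigma^0_n, \Pi^0_n)$-merging property then supplies $d \leq c$ forcing $\varphi(x, G) \wedge \neg\varphi(x+1, G)$, putting $d$ in case (iii) and concluding the density argument.

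The delicate step—and essentially the whole content of the theorem—is recognising that $A$ is a bona fide $\Sigma^0_n$ set of $\M$ so that ground-model $\ISig_n$ can be invoked; this is exactly what $\Sigma^0_n$-preservation of the forcing question guarantees. The merging hypothesis is then what converts the resulting syntactic discrepancy between $\qvdash$ and $\nqvdash$ at two consecutive integers into a genuine extension refuting the inductive step, closing the argument.
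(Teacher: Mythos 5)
Your proof is correct and follows essentially the same strategy as the paper's: use $\Sigma^0_n$-preservation of the forcing question to make $A = \{x : c \qvdash \varphi(x,G)\}$ a $\Sigma^0_n$-definable set of $\M$, apply ground-model $\ISig_n$ (or, as you note, the equivalent least-element principle) to locate a point where membership in $A$ flips, and invoke the $(\Sigma^0_n,\Pi^0_n)$-merging property to turn this syntactic discrepancy into an extension forcing $\varphi(x,G) \wedge \neg\varphi(x+1,G)$. The only genuine difference is organizational: you index the dense sets by pairs $(\varphi, y)$ and include the bounded clause \emph{force $\varphi(y,G)$}, whereas the paper uses a single dense set $\D_\varphi$ whose first clause is \emph{force $\forall x\,\varphi(x,G)$}, a $\Pi^0_{n+1}$ statement taken in the semantic sense, and then passes to an extension $d$ forcing $\neg\varphi(b,G)$ before defining $A$ at $d$. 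Your reorganization keeps every formula being forced at the $\Sigma^0_n/\Pi^0_n$ level and lets you work at $c$ directly rather than at an auxiliary extension, which is a mild tidiness gain but does not change the mathematical content.
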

\begin{proof}
For every $\Sigma^0_n$-formula~$\varphi(x, G)$, let $\D_\varphi \subseteq \PP$ be the set of all conditions forcing either $\forall x \varphi(x, G)$,
or $\neg \varphi(0, G)$, or $\varphi(a-1,G) \wedge \neg \varphi(a, G)$, for some~$a \in M$ with $a > 0$. We claim that~$\D_\varphi$ is dense.

Let~$c \in \PP$ be a condition. If $c$ forces $\forall x \varphi(x, G)$, then we are done. Otherwise, there is an extension~$d \leq c$ and some~$b \in M$ such that $d$ forces $\neg \varphi(b, G)$.
Let $A = \{ x \in M : d \qvdash \varphi(x, G) \}$.
Since the forcing question is $\Sigma^0_n$-preserving, the set $A$ is $\Sigma^0_n(\M)$.
Moreover, $d$ forces $\neg\varphi(b, G)$, so by definition of the forcing question, $d \nqvdash \varphi(b, G)$, hence $b \not \in A$.
Since $\M \models \ISig_n$, and $A \neq M$, either $0 \not \in A$, or there is some~$a \in M$ with $a > 0$ such that $a \not \in A$, and $a-1 \in A$.
In the first case, by definition of the forcing question, there is an extension of~$d$ forcing $\neg \varphi(0, G)$.
Otherwise, since the forcing question is $(\Sigma^0_n, \Pi^0_n)$-merging, there is an extension of~$d$
forcing $\varphi(a-1, G) \wedge \neg \varphi(a, G)$. This proves our claim.
Thus, for every sufficiently generic filter~$\F$, $\F$ is $\{\D_\varphi \}$-generic, hence $\M \cup \{G_\F\} \models \ISig_n$.
\end{proof}

The most natural way to define a forcing question consists in defining $c \qvdash \varphi(G)$ to hold if there exists some~$d \leq c$ forcing~$\varphi(G)$. There exists an inductive syntactic definition of the forcing relation, and when the partial order is computable, this definition yields a $\Sigma^0_n$-preserving forcing question. However, in most cases, the partial order is not computable, and one uses a custom $\Gamma$-preserving forcing relation to obtain a $\Gamma$-preserving forcing question.

\begin{definition}\label{def:forcing-relation}
Let $\Gamma$ be a family of formulas. A \emph{forcing relation} for $\Gamma$ is a relation $\Vdash \subseteq \PP \times \Gamma$ such that for every~$c \in \PP$ and $\varphi(G) \in \Gamma$,
\begin{enumerate}
    \item If $c \Vdash \varphi(G)$, then $c$ forces $\varphi(G)$ ;
    \item The set of conditions~$c$ such that $c \Vdash \varphi(G)$ or $c \Vdash \neg \varphi(G)$ is dense ;
    \item If $c \Vdash \varphi(G)$ and $d \leq c$ then $d \Vdash \varphi(G)$.
\end{enumerate}
\end{definition}

The first property, known as \qt{forcing implies truth}, states the soundness of the relation, while the second property states is completeness. The definition is equivalent to the statement \qt{for every sufficiently generic filter~$\F$ and every~$\varphi(G) \in \Gamma$, $\varphi(G_\F)$ holds if and only if $c \Vdash \varphi(G)$ for some condition~$c \in \F$.}

\section{Forcing with trees}\label{sect:forcing-trees}

PA degrees play an essential role in the computability-theoretic analysis of the pigeonhole principle. Indeed, the notion of forcing used to build solutions to the pigeonhole principle is a variant of Mathias forcing whose reservoirs belong to a Scott ideal. Therefore, to prove an avoidance or preservation property for the pigeonhole principle, one must first prove a basis theorem for $\Pi^0_1$ classes with respect to the same property.

PA degrees admit several characterizations and therefore form a robust notion. A function $f : \NN \to \NN$ is \emph{diagonally non-$X$-computable} ($X$-DNC) if $f(e) \neq \Phi_e^X(e)$ for every~$e \in \NN$. A set is of PA degree over~$X$ if and only if it computes a diagonally non-$X$-computable $\{0,1\}$-valued function, or equivalently if, given an enumeration $\psi_0, \psi_1, \dots$ of all $\Pi^0_1(X)$ formulas, it computes a function $g : \NN^2 \to \{0,1\}$ such that $(\forall e_0, e_1)[(\psi_{e_0} \vee \psi_{e_1}) \rightarrow \psi_{e_{g(e_0, e_1)}}]$. Furthermore, there exists an $X$-computable infinite binary tree whose paths are exactly the $\{0,1\}$-valued $X$-DNC functions, so there exists a maximally difficult tree whose paths are all of PA degree. It follows that any computability-theoretic result about PA degrees can be stated equivalently over PA degrees or over members of $\Pi^0_1$ classes.

For our purpose, we will need to prove the existence of PA degrees which preserve multiple hyperimmunities relative to various levels of the arithmetic hierarchy. For example, if $f$ is hyperimmune and $g$ is $\emptyset'$-hyperimmune, one wants to prove the existence of a set~$P$ of PA degree such that $f$ is $P$-hyperimmune and $g$ is $P'$-hyperimmune. There exist, among others, two well-known basis theorems which serve a large majority of the purposes: the low and the computably dominated basis theorem~\cite{jockusch1972classes}. A set~$X$ is of \emph{computably dominated} degree if every total $X$-computable function is dominated by a total computable function, or equivalently if it does not compute any hyperimmune function.

\begin{theorem}[Jockusch and Soare~\cite{jockusch1972classes}]\label[theorem]{thm:low-compdom-pi01-basis-theorems}
Let $\C \subseteq 2^\NN$ be a non-empty $\Pi^0_1$ class.
\begin{enumerate}
    \item There exists a member~$X \in \C$ of low degree ($X' \leq_T \emptyset'$).
    \item There exists a member~$X \in \C$ of computably dominated degree.
\end{enumerate}
\end{theorem}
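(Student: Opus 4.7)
The plan is to prove both items uniformly by building a nested sequence $\C = \C_0 \supseteq \C_1 \supseteq \dots$ of non-empty $\Pi^0_1$ classes and then extracting any $X \in \bigcap_e \C_e$; non-emptiness of the intersection follows from compactness of the Cantor space applied to the finite intersection property.

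For item~(1), at stage~$e$ I would ask whether some $X \in \C_e$ satisfies $\Phi_e^X(e)\converge$. Since $\C_e$ has a fixed $\Pi^0_1$ presentation, this is a $\Sigma^0_1$ question, uniformly decidable by $\emptyset'$. If the answer is positive, I witness convergence by a finite prefix $\sigma$ with $\Phi_e^\sigma(e)\converge$ that still extends to a path in $\C_e$, and set $\C_{e+1} = \C_e \cap [\sigma]$, a non-empty $\Pi^0_1$ subclass on which $\Phi_e^X(e)$ converges. Otherwise I set $\C_{e+1} = \C_e$, on which $\Phi_e^X(e)$ diverges identically. Any $X \in \bigcap_e \C_e$ then has $e \in X'$ iff the $e$th question was answered positively, which yields $X' \leq_T \emptyset'$.

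For item~(2), a $\Sigma^0_2$ case split will suffice, since we no longer need to compute the sequence from any particular oracle. At stage~$e$, I would ask whether $\{X \in \C_e : \Phi_e^X(n)\diverge\}$ is non-empty for some $n \in \NN$; if so, this set is itself a non-empty $\Pi^0_1$ subclass, which I take as $\C_{e+1}$, killing totality of $\Phi_e^X$ uniformly on it. Otherwise, for every $n$ the $\Pi^0_1$ class $\{X \in \C_e : \Phi_e^X(n)\diverge\}$ is empty; the key step, and what I expect to be the main subtlety, is to deduce by compactness of $\C_e$ that the c.e.\ set of prefixes $\sigma$ in the defining tree of $\C_e$ with $\Phi_e^\sigma(n)\converge$ contains a finite antichain covering every path of $\C_e$, which can then be located computably by brute search. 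I would then define $h_e(n)$ as the maximum of $\Phi_e^\sigma(n)$ over the first such antichain found; the resulting $h_e$ is a total computable function dominating $\Phi_e^X$ for every $X \in \C_{e+1}$ on which $\Phi_e^X$ is total. Any $X \in \bigcap_e \C_e$ will therefore be computably dominated.
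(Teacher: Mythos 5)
The paper itself does not reprove this theorem (it simply cites Jockusch--Soare), so I will assess your argument against the standard proof.

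Item~(2) is correct and is essentially the classical argument. Item~(1) has a genuine gap in the complexity estimate for the question you pose to~$\emptyset'$. You ask whether some $X \in \C_e$ satisfies $\Phi_e^X(e)\converge$, i.e.\ whether $\C_e \cap \{X : \Phi_e^X(e)\converge\}$ is non-empty, and claim this is $\Sigma^0_1$. It is not: $\{X : \Phi_e^X(e)\converge\}$ is effectively \emph{open}, so its intersection with the closed set $\C_e$ is neither $\Sigma^0_1$ nor $\Pi^0_1$, and non-emptiness of that intersection is $\Sigma^0_2$. Concretely, a witness $\sigma$ must satisfy both $\Phi_e^\sigma(e)\converge$ (which is $\Sigma^0_1$) and ``$\sigma$ extends to a path in $\C_e$'' (which is $\Pi^0_1$ when $\C_e$ is given by a computable tree with dead ends), so the existence of $\sigma$ is $\exists\,(\Sigma^0_1 \wedge \Pi^0_1) = \Sigma^0_2$. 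This is not a presentation artifact: one can encode a $\Sigma^0_2$-complete predicate into questions of the form ``does the $\Pi^0_1$ class $[T]$ meet the c.e.\ open set $U$'' by building, for each $m$, a column of $T$ that has an infinite path iff $\forall n\,R(e,m,n)$, and letting $U$ be the union of those columns; thus $\emptyset'$ cannot decide your question in general.

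The fix is the asymmetric question that Jockusch and Soare actually use: at stage $e$, ask whether $\C_e \cap \{X : \Phi_e^X(e)\diverge\}$ is \emph{empty}. Since $\{X : \Phi_e^X(e)\diverge\}$ is $\Pi^0_1$, this intersection is again a $\Pi^0_1$ class with a tree uniformly computable from an index of $\C_e$, and emptiness of a $\Pi^0_1$ class is $\Sigma^0_1$ by compactness. If empty, every $X \in \C_e$ has $\Phi_e^X(e)\converge$, so set $\C_{e+1} = \C_e$ and record $e \in X'$; if non-empty, set $\C_{e+1} = \C_e \cap \{X : \Phi_e^X(e)\diverge\}$ and record $e \notin X'$. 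This keeps the sequence of tree indices uniformly $\emptyset'$-computable and yields $X' \leq_T \emptyset'$. Note the roles of the two outcomes are swapped relative to your version: it is the \emph{negative} outcome that forces divergence and shrinks the class, not the positive one.

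Your item~(2) is sound: asking ``$\exists n$ with $\C_e \cap \{\Phi_e^X(n)\diverge\} \neq \emptyset$'' is indeed $\Sigma^0_2$, which is fine since no uniformity in an oracle is required, and the compactness argument for the positive case correctly yields a total computable $h_e$ dominating $\Phi_e^X$ on $\C_e$. One small refinement worth stating explicitly: the brute search for a finite covering terminates because ``this finite set of cylinders covers $\C_e$'' is itself a $\Sigma^0_1$ condition (emptiness of $\C_e$ minus a clopen set), and the $\Pi^0_2$ hypothesis guarantees a cover exists for every $n$.
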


Given~$f$ and $g$ as in the discussion above (before \Cref{thm:low-compdom-pi01-basis-theorems}), by the computably dominated basis theorem, there exists a set~$P$ of PA and computably dominated degree. In particular, $f$ is $P$-hyperimmune. On the other hand, by the low basis theorem, there is a set~$Q$ of PA and low degree, so since~$g$ is $\emptyset'$-hyperimmune and $Q' \leq_T \emptyset'$, $g$ is $Q'$-hyperimmune. The difficulty is to preserve both hyperimmunities simultaneously, as every low degree is hyperimmune, so no set can be simultaneously of low and computably dominated degree.

In order to prove the preservation of multiple hyperimmunities simultaneously, we use a notion of forcing with primitive recursive trees introduced by Wang \cite{wang2016definability}, who showed the existence of a forcing question with good definability properties at every level. For the sake of completeness, we state the properties of his notion of forcing, and use it to prove the existence of PA degrees which preserve multiple hyperimmunities simultaneously.

\begin{definition}
    Let~$\TT$ be the notion of forcing whose \emph{conditions} are infinite primitive recursive trees, partially ordered by the inclusion relation. We write $S \leq T$ for the inclusion and say that $S$ \emph{extends} $T$. 
\end{definition}

A non-empty class~$\P \subseteq \cs$ is $\Pi^0_1$ if and only if there is a co-c.e. pruned (i.e. with no leaves) tree~$T \subseteq \bstr$ such that $\P = [T]$. Using a delaying trick, for every co-c.e. tree~$T$, there is a primitive recursive tree~$S$ such that $[T] = [S]$ (see \cite[Lemma 3.8]{wang2016definability}).
It follows that there is a primitive recursive tree whose paths are exactly the $\{0,1\}$-valued DNC functions. 

The partial order $(\TT, \leq)$ being non-computable, the usual inductive definition of the forcing relation
does not have the right definitional complexity already for deciding $\Sigma^0_1(G)$-properties of the generic object. We shall therefore define a custom forcing relation for $\Sigma^0_1$ and $\Pi^0_1$ formulas. At higher levels, the complexity of the partial order is absorbed by the complexity of the forced formula, and therefore one can use the standard inductive definition of the forcing relation.

\begin{definition}
    We define inductively a forcing relation for arithmetic formulas as follows:\\
    For $\phi(G,x)$ a $\Pi_0^0$ formula:
    \begin{itemize}
        \item $T \Vdash (\exists x)\phi(G,x)$ if $ (\exists \ell)(\forall \sigma \in 2^{\ell} \cap T)(\exists x \leq \ell) \phi(\sigma,x)$,
        \item $T \Vdash (\forall x) \neg \phi(G,x)$ if $(\forall \sigma \in T)(\forall x) \neg \phi(\sigma,x)$.
    \end{itemize}
    For $\phi(G,x)$ a $\Pi_n^0$ formula for $n \geq 1$:
    \begin{itemize}
         \item $T \Vdash (\exists x) \phi(G,x)$ if $ T \Vdash \phi(G,a)$ for some $a \in \NN$,
        \item $T \Vdash(\forall x) \neg \phi(G,x)$ if $(\forall S \leq T) S \not \Vdash (\exists x) \phi(G,x)$.
    \end{itemize}
\end{definition}

The following lemma corresponds to~\cite[Lemma 3.13]{wang2016definability}, and states that the forcing relation has the same definitional complexity as the formulas it forces.

\begin{lemma}\label{lem:tree-forcing-relation-preserving}
    Let $n \in \NN$, $T \in \TT$ and $\phi(G,x)$ be a $\Pi_n^0$ formula. The formula $T \Vdash (\exists x)\phi(G,x)$ is $\Sigma_{n+1}^0$, and the formula  $T \Vdash (\forall x)\neg \phi(G,x)$ is $\Pi_{n+1}^0$.
\end{lemma}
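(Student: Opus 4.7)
The plan is to proceed by induction on $n$, using the recursive structure of the forcing relation together with the low-complexity combinatorial properties of primitive recursive binary trees. The base case $n = 0$ is a direct unfolding of the definitions: $T \Vdash (\exists x)\phi(G, x)$ is literally $(\exists \ell)(\forall \sigma \in 2^\ell \cap T)(\exists x \leq \ell)\phi(\sigma, x)$, which is $\Sigma^0_1$ since $T$ is primitive recursive and both inner quantifiers are bounded, and symmetrically $T \Vdash (\forall x)\neg\phi(G, x)$ unfolds to $(\forall \sigma \in T)(\forall x)\neg\phi(\sigma, x)$, which is $\Pi^0_1$.

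For the inductive step, fix $n \geq 1$ and write $\phi(G, x) = (\forall y)\neg\chi(G, x, y)$ for some $\Pi^0_{n-1}$ formula $\chi$. The existential direction is immediate: $T \Vdash (\exists x)\phi(G, x) \iff (\exists a)\, T \Vdash (\forall y)\neg\chi(G, a, y)$, and by the inductive hypothesis the inner relation is $\Pi^0_n$ uniformly in $a$ and $T$, so the whole formula is $\Sigma^0_{n+1}$. For the universal direction, by definition $T \Vdash (\forall x)\neg\phi(G, x) \iff (\forall S \leq T)\, S \not\Vdash (\exists x)\phi(G, x)$, and we have just shown that $S \not\Vdash (\exists x)\phi(G, x)$ is $\Pi^0_{n+1}$ uniformly in $S$.

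The only delicate point is bounding the complexity of the external quantifier $(\forall S \leq T)$. The crucial observation is that, for primitive recursive binary trees, the predicate \qt{$S$ is an infinite primitive recursive subtree of $T$} is merely $\Pi^0_1$: fixing a natural indexing of primitive recursive functions, both \qt{$S$ is a primitive recursive tree} and \qt{$S \subseteq T$} are $\Pi^0_1$, while \qt{$S$ is infinite} rewrites as $(\forall \ell)(\exists \sigma \in 2^\ell)(\sigma \in S)$, whose inner existential is bounded by $2^\ell$ and hence primitive recursive, giving overall a $\Pi^0_1$ formula. Consequently, $(\forall S \leq T)$ applied to a $\Pi^0_{n+1}$ predicate yields $(\forall e)[\Sigma^0_1 \vee \Pi^0_{n+1}]$, which collapses to $\Pi^0_{n+1}$ as soon as $n \geq 1$, since $\Sigma^0_1 \subseteq \Pi^0_2 \subseteq \Pi^0_{n+1}$ and $\Pi^0_{n+1}$ is closed under disjunction and universal number quantification. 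This is the step where the choice of primitive recursive trees (rather than merely computable ones) becomes essential: it is precisely what keeps the infiniteness constraint at the $\Pi^0_1$ level and prevents a spurious jump to $\Pi^0_{n+2}$.
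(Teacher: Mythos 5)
Your proof is correct. The paper states this lemma without proof, referring to Wang~\cite[Lemma 3.13]{wang2016definability}; your argument is the natural (and, in substance, Wang's) one, and you correctly isolate the crux: because conditions are primitive recursive trees, the relation ``$S$ is an infinite primitive recursive subtree of $T$'' is $\Pi^0_1$ in a p.r.\ index for $S$, so the outer quantifier $(\forall S \leq T)$ contributes only a number quantifier plus a $\Sigma^0_1$ side-condition, which is absorbed into $\Pi^0_{n+1}$ for all $n \geq 1$ rather than forcing an extra jump.
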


The following lemma corresponds to \cite[Lemma 3.12]{wang2016definability}
and states that the forcing relation is sound and complete.

\begin{lemma}
Let $\F$ be a sufficiently generic filter. Then $\bigcap_{T \in \F} [T]$ contains a single element $G_{\F}$ and, for every arithmetic formula $\phi(G)$, $\phi(G_{\F})$ holds if and only if there is a condition $T \in \F$ forcing $\phi(G)$.
\end{lemma}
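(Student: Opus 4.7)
The plan is to establish the two assertions of the lemma in order, with the bulk of the work being an induction on formula complexity for the forcing relation.

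\textbf{Singleton of the intersection.} For each $n \in \NN$, let $\D_n$ be the set of conditions $T \in \TT$ such that $T \cap 2^n$ is a singleton. Given any $T \in \TT$, König's lemma supplies an extendible node $\sigma \in T \cap 2^n$; then $T_\sigma := \{\tau \in T : \tau \preceq \sigma \text{ or } \sigma \preceq \tau\}$ is an infinite primitive recursive subtree of $T$ lying in $\D_n$, so $\D_n$ is dense. A sufficiently generic filter meeting every $\D_n$ induces a coherent sequence of determined initial segments, which assemble into a unique element $G_\F \in \bigcap_{T \in \F}[T]$.

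\textbf{Soundness, density, and completeness.} By induction on the complexity of $\phi$, I would prove (i) soundness: if $T \in \F$ and $T \Vdash \phi(G)$ then $\phi(G_\F)$ holds, and (ii) density: the set of conditions forcing $\phi(G)$ or $\neg \phi(G)$ is dense. The forward direction of the biconditional in the statement is soundness; the converse follows from density combined with soundness applied to $\neg \phi$, since the filter meets every dense set. I would also need the auxiliary monotonicity lemma, namely if $S \leq T$ and $T \Vdash \phi(G)$ then $S \Vdash \phi(G)$, which is a straightforward induction: at the base level both clauses transfer under $S \subseteq T$, and at higher levels the $\Sigma$ case applies the induction hypothesis to the witness while the $\Pi$ case is immediate since every extension of $S$ is also an extension of $T$.

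\textbf{Base case.} For $\phi \in \Pi^0_0$, soundness of $T \Vdash (\exists x)\phi(G,x)$ is immediate from $G_\F \uh_\ell \in T \cap 2^\ell$ at the witnessing level $\ell$, and soundness of $T \Vdash (\forall x)\neg \phi(G,x)$ follows because every initial segment of $G_\F$ is a node of $T$. For density, given a condition $T$: if some extendible $\sigma \in T$ and some $x \in \NN$ satisfy $\phi(\sigma, x)$, extending $\sigma$ far enough so its length bounds the fragment of $G$ queried by $\phi(\cdot, x)$ and then forming $T_\sigma$ yields a condition forcing $(\exists x)\phi(G,x)$; otherwise, pruning $T$ to the subtree of nodes whose every initial segment avoids $\phi$-witnesses up to its length yields a condition forcing $(\forall x)\neg \phi(G,x)$. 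The pruning must be realized as a primitive recursive subtree via the time trick on $\Pi^0_1$ classes recalled in the excerpt; this is the main technical obstacle in the whole argument.

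\textbf{Inductive step.} For $\phi \in \Pi^0_n$ with $n \geq 1$, soundness of $T \Vdash (\exists x)\phi(G,x)$ applies the induction hypothesis to the witness $a$ for which $T \Vdash \phi(G, a)$. Density splits on whether some $S \leq T$ forces $\phi(G, a)$ for some $a$: if yes, $S \Vdash (\exists x)\phi(G, x)$; if no, $T$ itself forces $(\forall x)\neg \phi(G, x)$ directly by the definition. Soundness of $T \Vdash (\forall x)\neg \phi(G, x)$ is the subtle point, which I would handle by contradiction: supposing some $\phi(G_\F, a)$ holds, completeness (inductive hypothesis) produces $T' \in \F$ with $T' \Vdash \phi(G, a)$, and taking a common refinement $S \in \F$ below $T$ and $T'$, monotonicity yields $S \Vdash \phi(G, a)$, hence $S \Vdash (\exists x)\phi(G, x)$, contradicting the defining clause of $T \Vdash (\forall x)\neg \phi(G, x)$.
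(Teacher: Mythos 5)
The paper does not actually prove this lemma; it just cites Wang~\cite[Lemma 3.12]{wang2016definability}. Your reconstruction follows the standard structure for such results—dense sets for determined initial segments, then a simultaneous induction proving soundness, density, and hence completeness, with the delicate point being soundness of the universal clause handled via completeness at the lower level plus monotonicity and genericity—and it is essentially correct.

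One small correction: you flag the base-case pruning as requiring the \qt{time trick} on $\Pi^0_1$ classes and call it the main technical obstacle, but no such device is needed. The pruned tree
\[
S = \{\sigma \in T : (\forall x \leq |\sigma|)\,\neg\phi(\sigma,x)\}
\]
is already primitive recursive: membership is decided by a bounded search, the set is closed under prefixes by oracle-monotonicity of $\Pi^0_0$ formulas, and it remains infinite because in Case~2 it contains every extendible node of $T$. This is exactly the tree used in the paper's \Cref{lem:tree-question-find-extension} for the analogous step of the forcing question, and no detour through co-c.e.\ trees is required. That $S \Vdash (\forall x)\neg\phi(G,x)$ then rests on the convention (implicit in the paper) that for a $\Pi^0_0$ formula, $\phi(\sigma,x)$ is false whenever evaluating $\phi(\cdot,x)$ would query bits beyond $|\sigma|$; in particular $x>|\sigma|$ contributes nothing. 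The time trick, recalled just before \Cref{lem:tree-forcing-relation-preserving}, is used to upgrade arbitrary $\Pi^0_1$ classes to primitive recursive trees when \emph{defining} the forcing notion, not when carrying out the density argument inside it. Apart from this mis-attribution of difficulty, the argument is sound.
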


We can now define a $\Sigma^0_n$-preserving forcing question for $\Sigma^0_n$ formulas based on
the previously defined forcing relation.

\begin{definition}[Forcing question]\label{def:tree-forcing-question}
We define the $\Sigma_n^0$-forcing question for $n \geq 1$ as follows: Let $\phi(G,x)$ be a $\Pi_{n-1}^0$ formula
\begin{itemize}
    \item If $n = 1$, $T \qvdash (\exists x)\phi(G,x)$ holds if $T \Vdash (\exists x)\phi(G,x)$.
    \item If $n > 1$, $T \qvdash (\exists x) \phi(G,x)$ holds if $T \not \Vdash (\forall x) \neg \phi(G,x)$.
\end{itemize}   
\end{definition}

The following lemma states that \Cref{def:tree-forcing-question} meets the specifications of a forcing question. 
Moreover, by \Cref{lem:tree-forcing-relation-preserving}, this forcing question is $\Sigma^0_n$-preserving. 

\begin{lemma}\label[lemma]{lem:tree-question-find-extension}
    Let $n \geq 1$, let  $\phi(G,x)$ be a $\Pi_{n-1}^0$ formula, and let $T \in \TT$.
    \begin{itemize}
        \item If $T \qvdash (\exists x)\phi(G,x)$, then $\exists S \leq T$ such that $S \Vdash (\exists x)\phi(G,x)$.
        \item If $T \nqvdash (\exists x) \phi(G,x)$, then $\exists S \leq T$ such that $S \Vdash (\forall x)\neg \phi(G,x)$.
    \end{itemize}
\end{lemma}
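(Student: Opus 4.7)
The proof splits on whether $n = 1$ or $n \geq 2$, with the latter being essentially definitional and the former requiring an actual construction.

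For $n \geq 2$, by \Cref{def:tree-forcing-question}, $T \qvdash (\exists x)\phi(G,x)$ is exactly $T \not\Vdash (\forall x)\neg\phi(G,x)$. Since $\phi$ is $\Pi^0_{n-1}$ and $n - 1 \geq 1$, the inductive clause for $\Vdash$ unfolds as $T \Vdash (\forall x)\neg\phi(G,x) \iff (\forall S \leq T)\, S \not\Vdash (\exists x)\phi(G,x)$. Negating this equivalence yields, for the first bullet, the required extension $S \leq T$ with $S \Vdash (\exists x)\phi(G,x)$; for the second bullet, $T \nqvdash (\exists x)\phi(G,x)$ is definitionally $T \Vdash (\forall x)\neg\phi(G,x)$, so one takes $S := T$.

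For $n = 1$, the forcing question coincides with the forcing relation, so the first bullet is trivial with $S := T$. For the converse, I would unfold $T \not\Vdash (\exists x)\phi(G,x)$ to the combinatorial statement that for every $\ell$ there exists $\sigma \in 2^\ell \cap T$ with $(\forall x \leq \ell)\neg\phi(\sigma, x)$. I would then take as candidate the subtree $S := \{\sigma \in T : (\forall x \leq |\sigma|)\neg\phi(\sigma, x)\}$: it is primitive recursive (since $T$ and $\phi$ are), closed under prefix by the monotonicity of $\Pi^0_0$-witnessing under string extension (the implication $\phi(\tau, x) \Rightarrow \phi(\sigma, x)$ for $\tau \preceq \sigma$ transfers $\neg\phi$ downward to prefixes), and infinite by the hypothesis together with K\"onig's lemma, since a member of $S$ exists at every length. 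To verify $S \Vdash (\forall x)\neg\phi(G,x)$, namely $(\forall \sigma \in S)(\forall x)\neg\phi(\sigma, x)$, I would fix $\sigma \in S$ and $x$: when $x \leq |\sigma|$ this is immediate, and when $x > |\sigma|$ I would extend $\sigma$ within $S$ to some $\tau$ of length at least $x$, use $\tau \in S$ to get $\neg\phi(\tau, x)$, and transfer back to $\neg\phi(\sigma, x)$ via monotonicity.

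The main obstacle is precisely this last extension step: a short $\sigma \in S$ may fail to admit extensions in $S$ of arbitrary length, because its initial bits may already commit to witnessing $\phi(\sigma, x)$ for some $x \gg |\sigma|$ that is not checked by the bound $|\sigma|$. The resolution is either to pass through the pruned $\Pi^0_1$ tree $U := \{\sigma \in T : (\forall x)\neg\phi(\sigma, x)\}$ and invoke the time-trick of \cite[Lemma 3.8]{wang2016definability} to recover a primitive recursive tree with the same paths, or equivalently to strengthen the defining condition by replacing the bound $|\sigma|$ by a sufficiently fast-growing computable function so that any would-be $\phi$-witness is detected at an early enough level. Either refinement yields the required $S \in \TT$ forcing $(\forall x)\neg\phi(G,x)$ syntactically on all of $S$.
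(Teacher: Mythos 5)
Your proof follows the same decomposition as the paper: for $n \geq 2$ both bullets are definitional unwindings of $\qvdash$ and $\Vdash$, for $n = 1$ the first bullet is immediate with $S := T$, and for the $n = 1$, second bullet you construct exactly the paper's subtree $S = \{\sigma \in T : (\forall x \leq |\sigma|)\neg\phi(\sigma, x)\}$.

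The worry you raise — that for $\sigma \in S$ and $x > |\sigma|$ the needed $\neg\phi(\sigma, x)$ may fail because $\sigma$ could already commit to a witness for some much larger $x$ — is a real subtlety, but it is dissolved by the convention on evaluating $\Pi_0^0$ formulas at finite strings that the paper inherits (silently) from Wang: one normalizes $\phi(G, x)$ so that its use on $G$ is at least $x$ (padding with the tautology $G(x)=G(x)$ always achieves this), so $\phi(\sigma,x)$ is vacuously false whenever $x \geq |\sigma|$. This is the same convention that makes the bound $\exists x \leq \ell$ in the clause for $T \Vdash (\exists x)\phi(G,x)$ harmless. Under it, $(\forall x \leq |\sigma|)\neg\phi(\sigma, x)$ is outright equivalent to $(\forall x)\neg\phi(\sigma, x)$, so $S \Vdash (\forall x)\neg\phi(G, x)$ holds immediately and no extension step (hence no extendibility concern) is needed.

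Unfortunately, if that convention were not in force, neither of your patches would close the gap. The time trick applied to the $\Pi_1^0$ tree $U = \{\sigma \in T : (\forall x)\neg\phi(\sigma, x)\}$ produces a primitive recursive tree with the same paths, but that tree is generally not a subtree of $U$: it can contain nodes that are pruned from $U$ only at a late stage, and such a node may still witness $\phi(\sigma, x)$ for some $x$, so the required syntactic condition $(\forall\sigma)(\forall x)\neg\phi(\sigma,x)$ can fail. As for replacing the bound $|\sigma|$ by a fast-growing computable $g(|\sigma|)$, the same problem just reappears for $x > g(|\sigma|)$; worse, the hypothesis extracted from $T\not\Vdash(\exists x)\phi(G,x)$ only provides some $\sigma$ of each length $\ell$ with $(\forall x \leq \ell)\neg\phi(\sigma,x)$, not $(\forall x \leq g(\ell))\neg\phi(\sigma,x)$, so the modified $S$ may fail to be infinite. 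The right fix is simply to make the normalization explicit, not to change the tree.
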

\begin{proof}
If $T \qvdash (\exists x)\phi(G,x)$, there are two cases:
\begin{itemize}
    \item If $n = 1$, then $T \Vdash (\exists x)\phi(G,x)$.
    \item If $n > 1$, then $T \not \Vdash (\forall x) \neg \phi(G,x)$, hence there exists some $S \leq T$ such that $S \Vdash (\exists x) \phi(G,x)$.
\end{itemize}
If $T \nqvdash (\exists x)\phi(G,x)$, there are two cases:
\begin{itemize}
    \item If $n = 1$, then $T \not \Vdash (\exists x)\phi(G,x)$, hence $(\forall \ell)(\exists \sigma \in 2^{\ell} \cap T)(\forall x \leq \ell) \neg \phi(\sigma,x)$. Then the tree $S = \{\sigma \in T : (\forall x \leq |\sigma|) \neg \phi(\sigma,x) \}$ is an infinite primitive recursive subtree of $T$ such that $S \Vdash (\forall x) \neg \phi(G,x)$.
    \item If $n > 1$, then $T \Vdash (\forall x) \neg \phi(G,x)$.
\end{itemize}
\end{proof}

Since we are interested in preservation of iterated hyperimmunity, the following lemma states that the forcing question is $\Sigma^0_n$-compact.

\begin{lemma}\label[lemma]{lem:tree-question-compact}
    For all $n \leq 1$, the $\qvdash$ relation for $\Sigma_n^0$ formulas is compact, i.e., if $T \qvdash (\exists x)\phi(G,x)$, then there exists some bound~$k$ such that $T \qvdash (\exists x < k)\phi(G,x)$.
\end{lemma}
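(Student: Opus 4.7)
The plan is to extract the bound $k$ directly from the witness data that the hypothesis $T \qvdash (\exists x)\phi(G,x)$ already commits $T$ to, by unpacking the inductive definition of $\Vdash$ and treating the two cases of \Cref{def:tree-forcing-question} separately. In neither case will a density or genericity argument be needed; the bound is read off the witness itself.

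For $n = 1$, the forcing question is literally the forcing relation, so $T \qvdash (\exists x)\phi(G,x)$ unfolds to the base clause $(\exists \ell)(\forall \sigma \in 2^{\ell} \cap T)(\exists x \leq \ell)\phi(\sigma, x)$. Taking $k := \ell + 1$ makes every witness supplied by this clause satisfy $x \leq \ell < k$, so the same $\ell$ and the same witnesses establish $T \Vdash (\exists x < k)\phi(G,x)$, hence $T \qvdash (\exists x < k)\phi(G,x)$.

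For $n > 1$, I would first unpack the hypothesis as $T \not\Vdash (\forall x)\neg \phi(G,x)$. The $\Pi^0_n$ clause yields an extension $S \leq T$ with $S \Vdash (\exists x)\phi(G,x)$, and the corresponding $\Sigma^0_n$ clause then yields a specific integer $a$ with $S \Vdash \phi(G,a)$. I would then set $k := a + 1$: since the $\Pi^0_{n-1}$-matrix $x < k \wedge \phi(G,x)$ evaluated at $x = a$ is syntactically $\phi(G,a)$ conjoined with the true arithmetic statement $a < a+1$, the same $S$ with the same witness $a$ gives $S \Vdash (\exists x < k)\phi(G,x)$ via the $\Sigma^0_n$ clause. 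This is incompatible with $T \Vdash (\forall x)\neg[x < k \wedge \phi(G,x)]$ by the very definition of the $\Pi^0_n$ clause, so $T \qvdash (\exists x < k)\phi(G,x)$, as required. The only real point to verify is that the inductive clauses for $\Vdash$ on $\Pi^0_{n-1}$ formulas are unaffected by adding a true arithmetic conjunct to the matrix; this holds because those clauses are defined syntactically on $(\sigma, \cdot)$-evaluations, so adding a true ground-level conjunct does not change whether a condition forces the formula at a fixed integer parameter.
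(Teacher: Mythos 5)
Your proof is correct and takes essentially the same route as the paper: in both the $n=1$ and $n>1$ cases the bound is read off the explicit witness in the inductive clauses of the forcing relation, first the $\ell$ from the $\Pi^0_0$ clause, then the specific integer witness supplied by a subtree $S \leq T$ forcing the existential. Your extra observation about absorbing a true arithmetic conjunct into the matrix is a minor elaboration of a step the paper leaves implicit (it treats bounded existentials natively, so that $S \Vdash \phi(G,k)$ immediately yields $S \Vdash (\exists x \leq k)\phi(G,x)$), and does not change the substance of the argument.
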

\begin{proof}
Assume $T \qvdash (\exists x)\phi(G,x)$ for some $\Sigma_n^0$ formula $(\exists x)\phi(G,x)$. There are two cases:
\begin{itemize}
    \item If $n = 1$, then $T \Vdash (\exists x)\phi(G,x)$ and there exists some $\ell$ such that $(\forall \sigma \in 2^{\ell} \cap T)(\exists x \leq \ell) \phi(\sigma,x)$, hence $T \Vdash (\exists x \leq \ell)\phi(G,x)$ and $T \qvdash (\exists x \leq \ell)\phi(G,x)$.
    \item If $n > 1$, then $T \not \Vdash (\forall x) \neg \phi(G,x)$, hence there exists some $S \leq T$ such that $S \Vdash (\exists x) \phi(G,x)$. Fix such a tree $S$, there exists some $k$ such that $S \Vdash \phi(G,k)$, hence $T \qvdash (\exists x \leq k) \phi(G,x)$.
\end{itemize}

\end{proof}

We are now ready to prove the main theorem of this section, based on the abstract framework of forcing questions.

\begin{theorem}\label[theorem]{thm:wkl-hyperimmunity-preservation}
Let $\{(f_s,n_s)\}_{s \in \NN}$ be a family such that for every $s\in \NN$, $f_s$ is $\emptyset^{(n_s)}$-hyperimmune. Let $T \subseteq 2^{<\NN}$ be an infinite computable tree. There exists some path $P$ in $[T]$ such that $f_s$ is $P^{(n_s)}$-hyperimmune for every $s \in \NN$.
\end{theorem}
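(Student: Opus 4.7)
The plan is to apply the abstract framework of \Cref{thm:abstract-compact-hyperimmunity} within the forcing notion~$\TT$, working below a suitable initial condition. First, since~$T$ is computable, hence co-c.e., we use the time trick of \cite[Lemma 3.8]{wang2016definability} to replace~$T$ by an infinite primitive recursive tree~$T_0$ with $[T_0] = [T]$. Thus~$T_0 \in \TT$ is a valid starting condition, and it suffices to construct a sufficiently generic filter~$\F \subseteq \TT$ containing~$T_0$, so that the resulting path~$P = G_\F \in [T_0] = [T]$ meets all the requirements.

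For each pair~$(s, e) \in \NN \times \NN$, define
$$\D_{s,e} = \{\, S \leq T_0 : S \Vdash \text{``}\Phi_e^{G^{(n_s)}}\text{ is partial, or } \Phi_e^{G^{(n_s)}}(x)\converge < f_s(x) \text{ for some } x\,\text{''}\,\}.$$
The core of the proof consists in showing that~$\D_{s,e}$ is dense below~$T_0$, for which we mimic the argument of \Cref{thm:abstract-compact-hyperimmunity} applied at level~$n = n_s + 1$. Fix~$S \leq T_0$. If there is some~$x$ such that $S \nqvdash \Phi_e^{G^{(n_s)}}(x)\converge$, then by \Cref{lem:tree-question-find-extension} there is an extension of~$S$ forcing $\Phi_e^{G^{(n_s)}}(x)\diverge$, and we are done. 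Otherwise, $S \qvdash \exists v\, \Phi_e^{G^{(n_s)}}(x)\converge = v$ for every~$x$; by the $\Sigma^0_{n_s+1}$-compactness of the forcing question (\Cref{lem:tree-question-compact}), for each~$x$ there is some bound~$k_x$ such that $S \qvdash (\exists v < k_x)\, \Phi_e^{G^{(n_s)}}(x)\converge = v$. By the $\Sigma^0_{n_s+1}$-preserving property of the forcing relation (\Cref{lem:tree-forcing-relation-preserving}), the function $h : x \mapsto k_x$ can be computed by a $\emptyset^{(n_s)}$-computable search. Since~$f_s$ is $\emptyset^{(n_s)}$-hyperimmune, there is some~$x$ with $k_x \leq f_s(x)$, and by \Cref{lem:tree-question-find-extension} we find an extension of~$S$ forcing $(\exists v < k_x)\, \Phi_e^{G^{(n_s)}}(x)\converge = v$, which then forces $\Phi_e^{G^{(n_s)}}(x)\converge < f_s(x)$.

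Finally, since the family $\{\D_{s,e} : s, e \in \NN\}$ is countable, a standard diagonalisation produces a sufficiently generic filter~$\F$ containing~$T_0$ that meets each~$\D_{s,e}$. The unique point~$P \in \bigcap_{S \in \F}[S]$ then lies in~$[T]$ and satisfies, for every~$s \in \NN$, that no $\Phi_e^{P^{(n_s)}}$ dominates~$f_s$, so~$f_s$ is $P^{(n_s)}$-hyperimmune. The main potential obstacle is ensuring that a single notion of forcing handles all levels of the arithmetic hierarchy simultaneously; this is precisely why Wang's notion~$\TT$ is used, since its forcing question is uniformly $\Sigma^0_n$-preserving and $\Sigma^0_n$-compact for every~$n \geq 1$, so the dense sets at different levels never interfere.
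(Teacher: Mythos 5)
Your proof is correct and follows essentially the same route as the paper, which simply cites \Cref{thm:abstract-compact-hyperimmunity} together with \Cref{lem:tree-question-compact}, \Cref{lem:tree-question-find-extension}, and \Cref{lem:tree-forcing-relation-preserving}; you have merely unfolded that abstract argument into the concrete tree-forcing setting. You also supply a detail the paper's one-line proof leaves implicit: replacing the computable tree $T$ by an infinite primitive recursive tree $T_0$ with $[T_0]=[T]$ via the time trick and restricting the forcing to conditions $S \leq T_0$, which is what actually guarantees the generic path lands in $[T]$. (Minor nit: hyperimmunity gives $k_x < f_s(x)$ rather than $k_x \leq f_s(x)$, though the weaker inequality already suffices for your conclusion.)
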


\begin{proof}
By \Cref{lem:tree-question-compact,lem:tree-question-find-extension,lem:tree-forcing-relation-preserving}, the notion of forcing $(\PP, \leq)$ admits a $\Sigma^0_n$-compact, $\Sigma^0_n$-preserving forcing question, so apply \Cref{thm:abstract-compact-hyperimmunity}.

\end{proof}

\begin{remark}
Wang~\cite{wang2016definability} proved, given a family $\{(C_s, n_s)\}_{s \in \NN}$ such that for every~$s \in \NN$, $C_s$ is not $\Sigma^0_{n_s}$, the existence of a set~$P$ of PA degree such that for every~$s \in \NN$, $C_s$ is not $\Sigma^0_{n_s}(P)$. Downey et al.~\cite{downey2022relationships} studied the relationships between notions of preservations and avoidance, and proved in particular that preservation of hyperimmunity is equivalent to preservation of non-$\Sigma^0_1$ definitions. Their proof relativizes to iterated jumps, but it is not known to be equivalent when working with levels of the hierarchy simultaneously. Therefore, the main theorem of this section (\Cref{thm:wkl-hyperimmunity-preservation}) is not a consequence of Wang's result.
\end{remark}

As mentioned, the degree-theoretic study of PA degrees and members of $\Pi^0_1$ classes coincide, as there exists a maximal $\Pi^0_1$ class containing only sets of PA degree. The following well-known proposition shows that the degrees of Scott codes coincide with PA degrees, hence it is not more complicated to compute hierarchies of PA degrees than a single one. Recall that a \emph{Scott code} of a countable Scott ideal~$\M = \{ Z_i : i \in \NN \}$ is a set $M = \bigoplus_{i} Z_i$ such that the basic operations (Turing reducibility, effective join, taking a path through a binary tree) on the $M$-indices are computable.

\begin{proposition}[Scott~\cite{scott1962algebra}]\label{prop:pa-to-scott}
For every set~$X$, there exists a non-empty $\Pi^0_1(X)$ class~$\C(X)$ containing only Scott codes of Scott ideals containing~$X$.
\end{proposition}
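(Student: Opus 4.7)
The plan is to realize $\C(X)$ as the class of infinite paths through an $X$-computable tree $T \subseteq 2^{<\NN}$ whose elements, decoded via a fixed effective partition of $\NN$ into role-dedicated columns of the standard join $M = \bigoplus_n Z_n$, are Scott codes of Scott ideals containing~$X$. I fix a computable bijection allocating the columns of $M$ to three disjoint roles: one distinguished column for~$X$, a column $j(m,n)$ for each pair $(m,n)$ intended to hold $Z_m \oplus Z_n$, and a column $c(e,n)$ for each pair intended to hold a completion of $\Phi_e^{Z_n}$. The functions $j$ and $c$ are by design the computable join and completion operations demanded by the Scott-code definition.

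To guarantee that the completion constraint is $\Pi^0_1(X)$ and always realizable, I replace the naive tree extracted from $\Phi_e^{Z_n}$ by a canonical version $T^\star_{e,n}$: apply Wang's time trick (\Cref{sect:forcing-trees}) to present the $\Pi^0_1(Z_n)$ description of the tree as a primitive-$Z_n$-recursive tree, then pad with $0^\omega$ at any level where it threatens to die out, so that $T^\star_{e,n}$ is uniformly an infinite primitive-$Z_n$-recursive tree and coincides with $\Phi_e^{Z_n}$ whenever the latter genuinely describes an infinite binary tree. Define $T$ to be the set of $\sigma \in 2^{<\NN}$ satisfying, for every column index below $|\sigma|$: the bits in the $X$-column agree with $X$, the bits in column $j(m,n)$ agree with the join of the bits visible in columns $m$ and $n$, and the bits in column $c(e,n)$ form a string of $T^\star_{e,n}$ as computed from column~$n$ of $\sigma$. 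All constraints are uniformly $\Pi^0_1(X)$ and local, so $T$ is an $X$-computable tree and $[T]$ is a $\Pi^0_1(X)$ class.

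To verify non-emptiness, I apply the relativized low basis theorem to the $\Pi^0_1(X)$ class of $\{0,1\}$-valued $X$-DNC functions to produce a set $Y$ of PA degree over~$X$, and build a path $P \in [T]$ $Y$-computably by populating columns in order: place $X$ in the $X$-column, $Z_m \oplus Z_n$ in each $j(m,n)$-column, and a path through $T^\star_{e,n}$ in each $c(e,n)$-column, the latter existing and being $Y$-computable because $T^\star_{e,n}$ is an infinite tree computable from $Z_n \leq_T Y$ and $Y$ has PA degree over~$Z_n$. Conversely, any $P \in [T]$ decodes to a sequence $(Z_n)_{n \in \NN}$ with $Z_0 = X$ on which $j$ and $c$ act as promised, so $\{Z_n : n \in \NN\}$ is closed under join and under taking paths through member trees; downward Turing-closure comes for free by applying the completion clause to the principal tree $\{\sigma : \sigma \preceq W\}$ for any $W \leq_T Z_n$, whose unique infinite path is $W$ itself, forcing the corresponding $Z_{c(e,n)}$ to equal~$W$. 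The main technical wrinkle, which I expect to absorb most of the effort, is maintaining the uniformly $\Pi^0_1(X)$ character of the completion constraint despite the fact that the raw tree derived from $\Phi_e^{Z_n}$ may be finite; the time-trick-plus-padding recipe resolves this cleanly while preserving agreement with $\Phi_e^{Z_n}$ in all the cases that matter.
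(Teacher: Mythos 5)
Your construction of $\C(X)$ is in the same spirit as the paper's: a $\Pi^0_1(X)$ class of joins $\bigoplus_n Z_n$ cut out by columnwise join and completion constraints, and you are right to reserve a column for $X$ (a point the paper's terse proof elides). However, your non-emptiness argument has a genuine gap. You fix a single set $Y$ of PA degree over $X$ and try to $Y$-compute a path $P$ column by column, filling the $c(e,n)$-column with a path through $T^\star_{e,n}$ on the grounds that ``$Y$ has PA degree over $Z_n$.'' This does not follow: $Y$ is PA over $X$ and hence over any $Z \leq_T X$, but the columns $Z_n$ you are filling are \emph{chosen by} $Y$ (as paths through earlier trees), so in general $Z_n \leq_T Y$ without $Z_n \leq_T X$, and $Y$ need not be PA over such $Z_n$ --- no set is PA over itself, so the argument collapses already once some column has the same degree as $Y$. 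The standard repair is not to exhibit a path directly, but to show the $X$-computable tree of finite approximations is infinite (greedily fill columns at each finite level, noting the completion constraint is always locally satisfiable once one truncates $\Phi_e^{Z_n}$ to its $\{0,1\}$-valued outputs), and then invoke weak K\"onig's lemma.

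A secondary issue is that the $0^\omega$-padding of $T^\star_{e,n}$ is not harmless. If padding can inject the $0^\omega$ branch into a tree whose genuine class of completions is non-empty but does not contain $0^\omega$, then $[T]$ will contain codes whose $c(e,n)$-column is not in fact a completion of $\Phi_e^{Z_n}$, contradicting the claim that $\C(X)$ contains \emph{only} Scott codes; conversely, a padding rule that kicks in only when the tree is ``genuinely'' finite is not uniformly computable from $Z_n$. Since the paper's Scott-code definition asks for completions of the partial \emph{function} $\Phi_e^{Z_n}$, not paths through a tree it codes, the cleaner move is simply to truncate $\Phi_e^{Z_n}$ to $\{0,1\}$-valued outputs: its class of completions is then a non-empty $\Pi^0_1(Z_n)$ class with an infinite canonical tree, and no padding is required.
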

\begin{proof}
Let $\C(X)$ be the class of all $\bigoplus_n Z_n$ such that for every~$a, b \in \NN$, $Z_{\langle 0, a, b\rangle} = Z_a \oplus Z_b$ and for every~$e, a \in \NN$, $Z_{\langle 1, e, a\rangle}$ is a completion of the partial function $\Phi_e^{Z_a}$. Let $M = \bigoplus_i Z_i \in \C(X)$ and $\M = \{ Z_i : i \in \NN \}$. By construction, $\M$ is closed under effective join. We claim that $\M$ downward-closed under Turing reducibility. Let $Z_a \in \M$ and $Y \leq_T Z_a$. Then there is a Turing functional $\Phi_e$ such that $\Phi_e^{Z_a} = Y$, so $Z_{\langle 1, e, a \rangle} = Y \in \M$.

We now claim that $\M$ is a Scott ideal. Let $Z_a \in \M$, and let $\Phi_e$ be a Turing functional such that for every set $X$, every $x \in \NN$ and $i < 2$, $\Phi_e^{X}(x)\converge = 1-i$ if and only if $\Phi^X_x(x)\converge = i$. Then any completion of $\Phi_e^X$ is a $\{0,1\}$-valued $X$-DNC function, hence of PA degree over~$X$. It follows that $Z_{\langle 1, e, a\rangle} \in \M$ is of PA degree over~$Z_a$.
\end{proof}

\section{Largeness and partition regularity}\label{sect:largeness}

Solutions to problems from Ramsey theory are often constructed using variants of Mathias forcing, that is, with conditions consisting of a finite stem and an infinite reservoir. Even in the case of computable Mathias forcing, where the reservoirs are computable, the $\Sigma^0_2(G)$ and $\Pi^0_2(G)$ properties of the generic object~$G$ are generally more complex than the $\Sigma^0_2$ and $\Pi^0_2$ properties of the ground model. In particular, every sufficiently generic set for computable Mathias forcing is of high degree. Recall that a function $f : \NN \to \NN$ is \emph{dominant} if it eventually dominates every computable function. By Martin's domination theorem~\cite{martin1966classes}, a degree is high if and only if it computes a dominant function. The \emph{principal function} of an infinite set~$X = \{x_0 < x_1 < \dots \}$ is the function $p_X : \NN \to \NN$ defined by~$n \mapsto x_n$.

\begin{proposition}[Folklore]
Let~$\F$ be a sufficiently generic filter for computable Mathias forcing.
Then the principal function of $G_\F$ is dominant.
\end{proposition}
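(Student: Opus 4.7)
The plan is to show, for every total computable function $f : \NN \to \NN$, that the set
$$
D_f = \{(\sigma, X) : (\sigma, X) \text{ forces } (\forall n \geq \card \sigma)\, p_G(n) \geq f(n)\}
$$
is dense in computable Mathias forcing. Since the total computable functions form a countable family, a sufficiently generic filter meets every $D_f$, whence $p_{G_\F}$ eventually dominates every total computable function, i.e., is dominant in the sense of Martin.

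To prove density, fix a condition $(\sigma, X)$ and a total computable $f$, and write $k = \card \sigma$. I thin $X$ to an infinite computable subset $X' = \{x_0 < x_1 < \dots\}$ by recursively setting $x_0 = \min\{x \in X : x \geq f(k)\}$ and $x_{i+1} = \min\{x \in X : x > x_i \text{ and } x \geq f(k+i+1)\}$. Since $X$ is infinite computable and $f$ is total computable, $X'$ is infinite and computable, and by construction $x_i \geq f(k + i)$ for every $i \geq 0$. Because $X' \subseteq X$ and $\sigma$ is unchanged, $(\sigma, X') \leq (\sigma, X)$ is a legitimate extension in computable Mathias forcing.

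I then claim that $(\sigma, X')$ forces $p_G(n) \geq f(n)$ for every $n \geq k$, placing it in $D_f$. Indeed, for any sufficiently generic filter $\F$ containing $(\sigma, X')$, the set $G_\F$ is infinite (the set of conditions with stem of cardinality at least $N$ is dense for each $N$), contains $\sigma$ as a subset, and satisfies $G_\F \setminus \sigma \subseteq X'$. The Mathias requirement $\min X' > |\sigma| > \max \sigma$ ensures that $\sigma$ and $X'$ are disjoint as sets, so enumerating $G_\F = \{g_0 < g_1 < \dots\}$ the first $k$ values coincide with the elements of $\sigma$, while for $n \geq k$ the value $g_n$ is the $(n-k)$-th element in increasing order of $G_\F \setminus \sigma \subseteq X'$. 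Consequently $g_n \geq x_{n-k} \geq f(k + (n-k)) = f(n)$, as required.

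I do not anticipate a genuine obstacle: the construction is a direct computable thinning, and ``forcing'' reduces here to the observation that the inequality is preserved under every further Mathias extension with reservoir inside $X'$. The only delicate point is the index book-keeping behind $g_n \geq x_{n-k}$; and indexing the dense sets by the total computable functions themselves (rather than by all partial computable indices) lets us sidestep the non-decidability of totality without any cost.
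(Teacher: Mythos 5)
Your proof is correct and takes essentially the same approach as the paper: thin the reservoir computably so that the $i$-th element of the new reservoir is at least $f(\card\sigma + i)$, observe that every set in the resulting cylinder has principal function dominating $f$ from $\card\sigma$ onward, and index the dense sets by total computable functions directly. You simply supply more of the bookkeeping that the paper compresses into ``one can computably thin out the reservoir.''
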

\begin{proof}
Given a total computable function~$g : \NN \to \NN$ and a computable Mathias condition $(\sigma, X)$, one can computably thin out the reservoir~$X$ to obtain a computable reservoir~$Y$ such that $p_{\sigma \cup Y}$ eventually dominates~$g$.
The condition $(\sigma, Y)$ is an extension of~$(\sigma, X)$ forcing the principal function of $G_\F$ to dominate~$g$.
\end{proof}

A condition can be seen as an invariant property that is preserved along the construction of an infinite object: given a mathematical approximation satisfying some structural properties, one can apply one step of the construction, and obtain another mathematical approximation satisfying the same structural properties.

In the construction of solutions to the pigeonhole principle, Mathias forcing over Scott ideals is an appropriate invariant for a good first-jump control, but it is an over-generalization preventing from having a good second-jump control: the forcing relation for $\Pi^0_2(G)$ properties is a density statement about an infinite collection of $\Sigma^0_1(G)$ properties. It requires guaranteeing some \emph{positive} information about the future, while a reservoir forces some \emph{negative} information, as it restricts the candidate integers that can be added to the generic set. One must therefore use a \qt{reservoir of reservoirs}, which will restrict the possible choices of reservoirs, hence will restrict the future negative information, which is a way of forcing positive information.

This \qt{reservoir of reservoirs} must still allow the necessary operations on the reservoirs to ensure a good first-jump control. Looking at the combinatorics of a first-jump control of the pigeonhole principle, the only operations on the reservoirs are finite truncation, and splitting based on a 2-partition. This naturally yields the notion of partition regular class.
Partition regularity is a generalization of the notion of infinity.

\begin{definition}
A class $\A \subseteq 2^{\NN}$ is \textit{partition regular} if :
\begin{itemize}
    \item $\A$ is non-empty,
    \item for all $X \in \A$, if $ X \subseteq Y$, then $Y \in \A$,
    \item for every $X \in \A$, for every $2$-cover $Y_0 \cup Y_1 \supseteq X$, there exists $i < 2$ such that $Y_i \in \A$.
\end{itemize}
\end{definition}

By iterating the splitting, if $\A$ is partition regular, then for every integer $k$, for every $X \in \A$, and every $k$-cover $Y_1, Y_2, \dots Y_k$ of $X$, there exists some $i \leq k$ such that $Y_i \in \A$.
By the infinite pigeonhole principle, the class of all infinite sets is partition regular. We will be interested in partition regular classes having only infinite sets. These classes are called \emph{non-trivial}. Equivalently, a partition regular class is non-trivial if every set has at least 2 elements.
For a set $X$, let $\L_X$ be the $\Pi_2^0(X)$ partition regular class containing all the sets having an infinite intersection with $X$. 

Given a partition regular class~$\A \subseteq \cs$, one can construct solutions to the pigeonhole principle with a good first-jump control, using a variant of Mathias forcing whose reservoirs belong to~$\A$. Cholak, Jockusch and Slaman~\cite{cholak2001strength} and Dorais~\cite{dorais2012variant} first used variants of Mathias forcing with reservoirs in partition regular classes to build generic sets of non-high degree. The technique was then developed by Monin and Patey~\cite{monin201pigeons,monin2021weakness,monin2021srt,monin2022partition} to prove several basis theorems about the pigeonhole principle.

\subsection{Large classes}

One should expect from a notion of largeness that it is closed upwards under inclusion. The collection of all partition regular classes is not closed upwards: for example, letting~$X$ be any infinite and co-infinite set, $\L_X$ is partition regular, but $\L_X \cup \{\overline{X}\}$ is not. The following notion of largeness is more convenient to work with, and closely related to partition regularity.

\begin{definition}
A class $\A \subseteq 2^{\NN}$ is \textit{large} if :
\begin{itemize}
    \item for all $X \in \A$, if $ X \subseteq Y$, then $Y \in \A$,
    \item for every integer $k$, for every $k$-cover $Y_1, Y_2, \dots Y_k$ of $\NN$, there exists $i \leq k$ such that $Y_i \in \A$.
\end{itemize}
\end{definition}

The notion of largeness was introduced and studied by Monin and Patey~\cite{monin201pigeons}. They proved that a class is large if and only if it contains a partition regular subclass. Furthermore, every large class contains a maximal partition regular subclass for inclusion, which admits an explicit syntactic definition.

\begin{definition}
Given a large class $\A \subseteq \cs$, let
$$\L(\A) = \{X \in \A: \forall k \forall X_0 \cup \dots \cup X_{k-1} \supseteq X~\exists i < k~X_i \in \A\}$$
\end{definition}

Monin and Patey~\cite{monin201pigeons} proved that if $\A$ is large, then $\L(\A)$ is the maximal partition regular subclass of~$\A$. Large classes satisfy a very useful combinatorial property that we shall use all over the article:

\begin{lemma}[\cite{monin201pigeons}]\label[lemma]{lem:decreasing-sequence-large}
Let $\A_0 \supseteq \A_1 \supseteq \dots$ a decreasing sequence of large classes, then $\bigcap_{i \in \NN} \A_i$ is large.  
\end{lemma}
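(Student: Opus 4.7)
The plan is to verify the two defining clauses of largeness for $\B := \bigcap_{i \in \NN} \A_i$, using the fact that finitely many witnesses of non-membership can be absorbed into a single class of the chain by monotonicity.

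First I would check upward closure. If $X \in \B$ and $X \subseteq Y$, then for every $i \in \NN$ we have $X \in \A_i$, so $Y \in \A_i$ since each $\A_i$ is upward-closed, hence $Y \in \B$.

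For the main clause, fix an integer $k$ and a $k$-cover $Y_1, \dots, Y_k$ of $\NN$. Suppose for contradiction that no $Y_j$ lies in $\B$. Then for each $j \leq k$ there exists an index $n_j \in \NN$ such that $Y_j \notin \A_{n_j}$. Let $N = \max_{j \leq k} n_j$. Since the sequence is decreasing, $\A_N \subseteq \A_{n_j}$ for every $j$, so if we had $Y_j \in \A_N$ this would force $Y_j \in \A_{n_j}$, a contradiction. Therefore $Y_j \notin \A_N$ for every $j \leq k$, contradicting the fact that $\A_N$ is large on the cover $Y_1, \dots, Y_k$. This yields some $j$ with $Y_j \in \B$.

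No step here looks like a genuine obstacle: the only real idea is that a finite set of indices $\{n_1, \dots, n_k\}$ can be replaced by their maximum thanks to the nesting $\A_0 \supseteq \A_1 \supseteq \dots$, converting \emph{pointwise} non-membership witnesses into a \emph{uniform} one. It is worth noting that the argument uses only that the intersection is over a set of indices whose finite subsets have an upper bound, i.e.\ the fact that the sequence is linearly (in fact, well-)ordered by the reverse inclusion; this observation explains why the same proof does not extend to arbitrary intersections of large classes.
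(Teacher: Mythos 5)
Your proof is correct and is exactly the standard argument (the paper cites Monin and Patey for this lemma without reproducing the proof, but this is the intended one). The only real content is the compactness step you identify: converting pointwise failure witnesses $n_j$ into a uniform index $N = \max_j n_j$, which is legitimate because the sequence is nested.
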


The contrapositive of \Cref{lem:decreasing-sequence-large} has some compactness flavor. Indeed, if an intersection $\bigcap_{i} \A_i$ of a collection of classes $\A_0, \A_1, \dots$ is not large, then there is some~$n \in \NN$
such that $\bigcap_{i < n} \A_i$ is not large. We shall be interested only in G${}_\delta$ large classes, that is, intersections of open large classes. For this, we consider $W_0, W_1, \dots \subseteq \bstr$ as an effective enumeration of all c.e.\ sets of strings, and let $\U_0, \U_1, \dots$ be defined by
$$
\U_e = \{ X \in \cs : \exists \rho \subseteq X\ \rho \in W_e \}
$$
Thus, $\U_0, \U_1, \dots$ is a uniform enumeration of all upward-closed $\Sigma^0_1$ classes.
By an immediate relativization, we let $\U_0^Z, \U_1^Z, \dots$ be a uniform enumeration of all upward-closed $\Sigma^0_1(Z)$ classes.
From now on, fix a Scott ideal $\M = \{Z_0,Z_1,\dots\}$ with Scott code~$M$ (in other words, $M = \bigoplus_{i} Z_i$ and the basic operations on the $M$-indices are computable).
Given a set~$C \subseteq \NN^2$, we let 
$$\U_C^{\M} = \bigcap_{(e,i) \in C} \U_e^{Z_i}$$
Thanks to \Cref{lem:decreasing-sequence-large}, largeness of an arbitrary intersection of $\Sigma^0_1$ classes can be reduced to checking largeness of a finite intersection of $\Sigma^0_1$ classes, which is a $\Pi^0_2$ statement. The following lemma gives the relativized complexity of the general statement:

\begin{lemma}[{\cite{monin2021weakness}}]\label[lemma]{lem:complexity-largeness}
Let $C \subseteq \NN^2$ be a set, the statement “$\U_C^{\M}$ is large” is $\Pi_1^0(C \oplus M')$.
\end{lemma}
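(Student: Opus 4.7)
The plan is to reduce largeness of $\U_C^{\M}$ to a statement about all its finite sub-intersections, show that largeness of such a sub-intersection is $\Pi^0_2(M) \equiv \Pi^0_1(M')$ uniformly in the code of the sub-intersection, and then combine with the $\Delta^0_1(C)$ check that the sub-intersection is indexed by a subset of $C$.

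First, I would observe, using monotonicity of largeness and \Cref{lem:decreasing-sequence-large}, that $\U_C^{\M}$ is large if and only if for every finite $F \subseteq C$, the class $\B_F := \bigcap_{(e,i) \in F} \U_e^{Z_i}$ is large. The forward direction is immediate since $\U_C^{\M} \subseteq \B_F$ and any superclass of a large class is large. For the converse, enumerate $C = \{c_0, c_1, \dots\}$ and set $F_n = \{c_0, \dots, c_{n-1}\}$; then $\B_{F_0} \supseteq \B_{F_1} \supseteq \dots$ is a decreasing chain of large classes whose intersection equals $\U_C^{\M}$, which is large by \Cref{lem:decreasing-sequence-large}.

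Next, I would show that \qt{$\B_F$ is large} is $\Pi^0_1(M')$, uniformly in $F$. Unfolding the definition, this asserts that for every $k$ and every $f \in k^\NN$ encoding a $k$-partition of $\NN$, there exists $i < k$ such that for every $(e,j) \in F$ there is some $\rho \in W_e^{Z_j}$ with $\rho \subseteq f^{-1}(i)$. The set of $f$ meeting this condition is $\Sigma^0_1(M)$-open in the compact space $k^\NN$; by compactness, it equals $k^\NN$ iff there exists $\ell$ such that for every $\sigma \in k^\ell$ some $i < k$ witnesses, for each $(e,j) \in F$, the presence of a $\rho \subseteq \sigma^{-1}(i)$ in the $\ell$-stage enumeration $W_{e,\ell}^{Z_j}$. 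This matrix is $\Delta^0_1(M)$ with all remaining quantifiers bounded, yielding a $\forall k \exists \ell$ statement over a decidable-in-$M$ predicate, hence $\Pi^0_2(M) \equiv \Pi^0_1(M')$.

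Finally, combining the two steps, \qt{$\U_C^{\M}$ is large} is equivalent to $\forall F \,[F \subseteq C \to \B_F \text{ is large}]$. For a canonical code of a finite set $F$, the predicate $F \subseteq C$ is $\Delta^0_1(C)$, so the bracketed implication rewrites as $\forall y\, \psi(F,y)$ with $\psi$ in $\Delta^0_1(C \oplus M')$, and universally quantifying over the code of $F$ preserves $\Pi^0_1(C \oplus M')$. The main obstacle is the compactness step, which collapses an \emph{a priori} $\Pi^1_1$ quantifier over arbitrary partitions of $\NN$ into an effective $\exists \ell$ witness search; the rest is careful bookkeeping of quantifier alternations.
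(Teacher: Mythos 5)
Your proof is correct and takes essentially the same route as the paper's sketch: reduce to finite sub-intersections via \Cref{lem:decreasing-sequence-large}, then use compactness of the Cantor/Baire space to replace the quantifier over all $k$-covers of $\NN$ by an effectively bounded search, giving the $\Pi^0_2(M) \subseteq \Pi^0_1(M')$ bound, and finally fold in the $\Delta^0_1(C)$ check that $F \subseteq C$. The paper collapses the two universal quantifiers ($\forall k$ and $\forall F \subseteq C$) into a single prefix rather than treating \qt{$\B_F$ is large} as a separate uniformly-$\Pi^0_1(M')$ sub-predicate, but the substance is identical.
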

\begin{proof}[Proof sketch]
By \Cref{lem:decreasing-sequence-large} and by compactness, this statement can be rephrased as \qt{for every $k$ and every finite subset $E$ of $C$, there exists some $n$ such that for every $k$-partition of $\{0,\dots, n\}$, one of its parts belongs to $\bigcap_{(i,e) \in E} \U_e^{Z_i}$.}
\end{proof}

Given a large $\Sigma^0_1$ class~$\U$, its largest partition regular subclass $\L(\U)$ is $\Pi^0_2$. Still by \Cref{lem:decreasing-sequence-large}, the largest partition regular subclass of a large $\Pi^0_2$ class is again $\Pi^0_2$. One can therefore switch from largeness to partition regularity with no additional cost:

\begin{lemma}[\cite{monin2021weakness}]
Let $C \subseteq \NN^2$ be a set, then $\L(\U_C^{\M}) = \bigcap_{F \finsub C} \L(\U_F^\M)$ is $\Pi_1^0(C \oplus M')$ and there exists a set $D$ computable uniformly in $C$ such that $\L(\U_C^{\M}) = \U_D^{\M}$.
\end{lemma}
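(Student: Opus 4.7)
The plan is to establish the lemma in three moves: first the set-theoretic equality, then the representation $\L(\U_C^\M) = \U_D^\M$ for some $D$ uniformly computable in $C$, from which the $\Pi^0_1(C \oplus M')$ complexity bound follows at once.

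For the equality $\L(\U_C^\M) = \bigcap_{F \finsub C} \L(\U_F^\M)$, the inclusion $\subseteq$ is immediate from the monotonicity $\U_C^\M \subseteq \U_F^\M$. For the reverse inclusion, suppose $X \in \L(\U_F^\M)$ for every finite $F \subseteq C$. Then $X$ lies in each $\U_F^\M$, hence in their intersection $\U_C^\M$. Given a $k$-cover $X_0 \cup \dots \cup X_{k-1} \supseteq X$, consider $S_F = \{i < k : X_i \in \U_F^\M\}$. Each $S_F$ is nonempty by hypothesis, and $S_F \supseteq S_{F'}$ whenever $F \subseteq F' \finsub C$. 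Being a decreasing family of nonempty subsets of the finite set $\{0,\dots,k-1\}$, it has a common element $i$, whence $X_i \in \U_C^\M$.

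For the representation, I plan to show that for each finite $F$ and each $k$, the class of $X$ for which every $k$-cover of $X$ has a part in $\U_F^\M$ coincides with a single $\U_{e(F,k)}^{Z_{j(F)}}$, with indices computable from $F$. The key step is the equivalence: this condition holds if and only if $X$ has a prefix $\rho$ such that every $k$-partition of $\rho$ (viewed as a finite set) admits a part with a subprefix in the finite c.e.\ intersection $\bigcap_{(e,j) \in F} W_e^{Z_j}$. The forward direction uses upward-closure of $\U_F^\M$; the converse is a compactness / K\"onig's lemma argument applied to the finitely-branching tree of $k$-colorings of $X$ whose monochromatic classes contain no witnessing prefix. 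Since the finite effective join of the $Z_j$'s for $(e,j) \in F$ has a Scott-code-computable index $j(F)$, the collection of such $\rho$ is $Z_{j(F)}$-c.e.\ uniformly in $F$ and $k$, supplying the index $e(F,k)$. Adding the analogous $\U$-representation of $\{X : X \in \U_F^\M\}$ itself yields $\L(\U_F^\M) = \U_{D_F}^\M$ with $D_F$ uniformly computable in $F$, and by the first step $D = \bigcup_{F \finsub C} D_F$ is uniformly computable in $C$ and witnesses $\L(\U_C^\M) = \U_D^\M$.

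The complexity bound is then immediate: $X \in \U_D^\M$ unfolds as $\forall (e,j) \in D,\ \exists \rho \subseteq X,\ \rho \in W_e^{Z_j}$. The outer universal, bounded by the $C$-decidable predicate $(e,j) \in D$, is $\Pi^0_1(C)$; the inner existential is $\Sigma^0_1(M)$, hence $\Delta^0_0(M')$. Jointly, the membership predicate in $X$ is $\Pi^0_1(C \oplus M')$. The main obstacle is the compactness equivalence in the second move: trading the genuinely infinite universal over $k$-covers of $X$ for the existence of a single finite prefix witness. The remaining arguments -- the finite pigeonhole in the first move, the bookkeeping of effective joins inside the Scott ideal, and the routing through the arithmetic hierarchy in the third -- are standard.
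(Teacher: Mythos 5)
The paper attributes this lemma to Monin and Patey and gives no in-text proof, so there is nothing internal to compare against directly; your plan is the natural one and mirrors the compactness-style sketch the paper gives for the adjacent lemma on the complexity of the largeness predicate. The finite pigeonhole for $\L(\U_C^\M) = \bigcap_{F\finsub C}\L(\U_F^\M)$ (pick $F$ minimizing $|S_F|$, so that $S_F = \bigcap_{F'}S_{F'} \neq \emptyset$) and the K\"onig's-lemma reduction of the universal quantifier over $k$-covers to an existential prefix condition are both sound. Three details deserve tightening. First, for a finite part $\rho_i$ to witness $\rho_i\in\U_F^\M$ you need, for \emph{each} $(e,j)\in F$, some $\tau_{e,j}\subseteq\rho_i$ in $W_e^{Z_j}$, and the $\tau_{e,j}$ may differ across pairs; so the right set of strings is not $\bigcap_{(e,j)\in F}W_e^{Z_j}$ but the intersection of the upward closures $\{\sigma : (\exists\tau\subseteq\sigma)\ \tau\in W_e^{Z_j}\}$. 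That set is still $\Sigma^0_1$ in the finite join $\bigoplus_{(e,j)\in F}Z_j$, which has an $M$-index computable from $F$ via the Scott-code operations, so nothing downstream breaks. Second, the inner $(\exists\rho\subseteq X)\ \rho\in W_e^{Z_j}$ is $\Sigma^0_1(M)$ and hence $\Delta^0_1(M')$, not $\Delta^0_0(M')$; moreover the collapse to a $(C\oplus M')$-decidable predicate applies when $X$ is supplied by an $M$-index, which is the reading the paper uses elsewhere for such complexity statements about these classes. Third, for $D = \bigcup_{F\finsub C}D_F$ to be \emph{decidable} in $C$ rather than merely c.e., one should pad the indices $e(F,k)$ so that $F$ and $k$ are recoverable from them; then $(e',a')\in D$ is decided by checking $(e',a')\in C$, or decoding $(F,k)$ from $e'$, verifying $F\finsub C$, and re-deriving the indices. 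None of these affects the architecture of your argument.
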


\subsection{$\M$-minimal classes}

As mentioned above, the notion of forcing for constructing solutions to the pigeonhole principle with a good first-jump control is a variant of Mathias forcing whose conditions belong to a Scott ideal. To obtain a good second-jump control, one must restrict the reservoirs to some well-chosen partition regular class. 

Given the computability-theoretic nature of the $\Sigma^0_2(G)$ and $\Pi^0_2(G)$ statements that need to be forced, the appropriate partition regular class does not admit a nice explicit combinatorial definition. One can either decide to start with the simplest partition regular class of all the infinite sets, and refine this class over the construction by considering partition regular subclasses which will ensure stronger positive information about the reservoirs, or build once and for all the most restrictive partition regular class, in other words, the partition regular class which will maintain as much positive information about the reservoirs as possible. We adopt the latter approach.

Seeing a partition regular class as a \qt{reservoir of reservoirs}, if~$\A \subseteq \B$ are two partition regular classes, $\A$ will impose more restrictions on the possible choice of reservoirs than~$\B$. Considering that a reservoir forces negative information about the set, $\A$ will force more positive information than~$\B$. Therefore, minimal partition regular classes will ensure as much positive information as possible.

\begin{definition}
A large class $\A$ is \emph{$\M$-minimal} if for every $X \in \M$ and $e \in \NN$, either $\A \subseteq \U_e^X$ or $\A \cap \U_e^X$ is not large.
\end{definition}

Every large class containing a partition regular subclass, every $\M$-minimal large class of the form $\U^\M_C$ is also partition regular.
There is a natural greedy algorithm to build a set~$C \subseteq \NN^2$ such that $\U_C^\M$ is non-trivial and $\M$-minimal.

\begin{lemma}\label{lem:computation-m-minimal}
Let~$\M$ be a Scott ideal with Scott code~$M$ and let $D \subseteq \NN^2$ be a set of indices such that $\U^\M_D$ is large and contains only infinite sets. Then $(D \oplus M')'$ computes a set~$C \supseteq D$ such that $\U^\M_C$ is $\M$-minimal.
\end{lemma}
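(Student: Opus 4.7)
The plan is to build $C$ by a greedy enumeration. Fix an effective enumeration $p_0, p_1, \dots$ of $\NN^2$ and set $C_0 = D$. At stage $s$, ask whether $\U^\M_{C_s \cup \{p_s\}}$ is still large: if so, let $C_{s+1} = C_s \cup \{p_s\}$; otherwise let $C_{s+1} = C_s$. Finally set $C = \bigcup_s C_s$. The invariant \qt{$\U^\M_{C_s}$ is large} is maintained by construction, and by \Cref{lem:decreasing-sequence-large} the intersection $\U^\M_C = \bigcap_s \U^\M_{C_s}$ is large; since $\U^\M_C \subseteq \U^\M_D$ it still contains only infinite sets, and clearly $C \supseteq D$.

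The key verification is $\M$-minimality. Fix $(e, i)$ with $Z_i \in \M$ and let $s$ be the stage where $p_s = (e, i)$. If $(e, i)$ was added at that stage then $\U^\M_C \subseteq \U_e^{Z_i}$ by the very definition of $\U^\M_C$. Otherwise $\U^\M_{C_s \cup \{(e,i)\}} = \U^\M_{C_s} \cap \U_e^{Z_i}$ was not large; since $\U^\M_C \cap \U_e^{Z_i}$ is upward-closed and contained in this non-large upward-closed class, it too must fail to be large. The underlying fact --- any upward-closed subclass of a non-large upward-closed class is itself non-large --- follows immediately from the definition: a $k$-cover of $\NN$ witnessing non-largeness of the outer class still witnesses non-largeness of the inner one.

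For the complexity bound, by \Cref{lem:complexity-largeness} the predicate \qt{$\U^\M_{C_s \cup \{p_s\}}$ is large} is $\Pi^0_1((C_s \cup \{p_s\}) \oplus M')$; writing $C_s = D \cup F_s$ with $F_s$ a finite set of added pairs, this is uniformly a $\Pi^0_1(D \oplus M')$ predicate in the parameters $F_s$ and $p_s$, so $(D \oplus M')'$ decides it. Iterating, $(D \oplus M')'$ computes the sequence $(C_s)_{s \in \NN}$ and hence $C$. The only delicate step of the argument is the parenthetical observation used in the verification of $\M$-minimality above; it is elementary but essential, since it is precisely what converts the locally greedy decisions into a globally $\M$-minimal class rather than merely a locally maximal one.
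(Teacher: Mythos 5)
Your proof is correct and follows essentially the same greedy construction as the paper's; the only noticeable difference is that you add the pair $p_s = (e,i)$ directly, whereas the paper adds a \emph{padded} index $(g(e,s), h(i,s))$ with $g(e,s), h(i,s) > s$ pointing to the same class $\U_e^{Z_i}$. The padding guarantees $C_{s+1} \uh s = C_s \uh s$ outright, so that $C \uh s = C_s \uh s$ for all $s$ and the $(D \oplus M')'$-computability of $C$ is immediate. Without the padding your $C$ is still $(D \oplus M')'$-computable, but for a slightly different reason which you leave implicit: the membership of a pair $x$ in $C$ is settled at the unique stage $t$ with $p_t = x$, and $(D \oplus M')'$ can simulate the construction through stage $t+1$; it would be worth stating this convergence argument explicitly, since "$(D \oplus M')'$ computes the sequence $(C_s)$" does not by itself yield a decision procedure for the limit. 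Everything else — the invariant that $\U^\M_{C_s}$ stays large, the appeal to \Cref{lem:decreasing-sequence-large} for largeness of $\U^\M_C$, the two-case verification of $\M$-minimality, and the observation that an upward-closed subclass of a non-large upward-closed class is non-large (which the paper uses silently) — matches the paper's reasoning.
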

\begin{proof}
By the padding lemma, there is a total computable function $g : \NN^2 \to \NN$ such that for every $e, s \in \NN$ and every set~$X$, $\U^X_{g(e, s)} = \U^X_e$ and $g(e,s) > s$. By uniformity of the properties of a Scott code, there is another total computable function $h : \NN^2 \to \NN$ such that for every $e, s \in \NN$ and every Scott code~$M$, $h(e,s)$ and $e$ are both $M$-indices of the same set, and $h(e,s) > s$.

We build a $(D \oplus M')'$-computable sequence of $D$-computable sets $C_0 \subseteq C_1 \subseteq \dots$ such that, letting $C = \bigcup_s C_s$, $\U^M_C$ is $\M$-minimal and for every~$s$, $C \uh s = C_s \uh s$.
Start with $C_0 = D$.
Then, given a set~$C_s \subseteq \NN^2$ such that $\U_{C_s}^\M$ is large, and a pair $(e, i)$, define $C_{s+1} = C_s \cup \{(g(e,s),h(i,s))\}$ if $\U_{C_s}^\M \cap \U_e^{Z_i}$ is large, and $C_{s+1} = C_s$ otherwise. The set~$C = \bigcup_s C_s$ is the desired set. Note that by choice of~$g$ and $h$, in the former case, $\U_{C_{s+1}}^\M = \U_{C_s}^\M \cap \U_e^{Z_i}$.
By \Cref{lem:complexity-largeness}, the statement \qt{$\U_{C_s}^\M \cap \U_e^{Z_i}$ is large} is $\Pi^0_1(C_s \oplus M')$, so it can be decided $(D \oplus M')'$-computably since $C_s \leq_T D$. The use of $g$ and $h$ ensures that $C_{s+1} \uh s = C_s \uh s$.
\end{proof}

One can apply \Cref{lem:computation-m-minimal} with $D = \{ (e_s, i) : s \in \NN \}$ where $\U_{e_s}^{Z_i} = \{ Y \in \cs : \operatorname{card} Y \geq s \}$ to obtain a set~$C \leq_T M''$ such that $\U^M_C$ is $\M$-minimal.
However, being $M''$-computable is too complex for our purpose. Thankfully, one does not need to explicitly have access to the set of indices of the $\M$-minimal class, but only to be able to check that a class is compatible with it. This yields the notion of $\M$-cohesive class.



\subsection{$\M$-cohesive classes}

In the previous algorithm for constructing an $\M$-minimal class, the order in which one considers the pairs $(e, i)$ matters. Indeed, if $\A$ is large and $\B, \C \subseteq \A$ are two large subclasses, then $\B \cap \C$ is not necessarily large. Therefore, there exist many $\M$-minimal classes, depending on the ordering of the pairs. The notion of $\M$-cohesiveness is a way of choosing an $\M$-minimal class without explicitly giving its set of indices.

\begin{definition}
    A large class $\A$ is \emph{$\M$-cohesive} if for every $X \in \M$, either $\A \subseteq \L_X$ or $\A \subseteq \L_{\overline{X}}$.
\end{definition}

It follows from the definition that for every infinite set $X \in \M$, if $X \in \A$, then $\A \subseteq \L_X$.
Indeed, otherwise, by $\M$-cohesiveness, $X \in \A \subseteq \L_{\overline{X}}$, yielding a contradiction.
The cohesiveness terminology comes from the cohesiveness principle ($\COH$), which states for every infinite sequence of sets~$R_0, R_1, \dots$, the existence of an infinite set $H \subseteq \NN$ such that for every~$n \in \NN$, either $H \subseteq^* R_n$ or $H \subseteq^* \overline{R}_n$. Such a set~$H$ is said to be \emph{cohesive} for the sequence. There exists an immediate correspondence between the cohesiveness principle and the existence of $\M$-cohesive classes. Indeed, given an infinite set~$H$ which is cohesive for the sequence $\M = \{Z_0, Z_1, \dots \}$,
the class~$\L_H$ is partition regular and $\M$-cohesive. 

The following lemma shows that an $\M$-cohesive class already contains the information of an $\M$-minimal class, in the sense that in the greedy algorithm to build an $\M$-minimal class from an $\M$-cohesive one, the ordering on the pairs does not matter.

\begin{lemma}[\cite{monin2021weakness}]\label[lemma]{lem:cohesive-compatibility}
    Let $\U_C^{\M}$ be an $\M$-cohesive class. Let $\U_D^{\M}$ and  $\U_E^\M$ be such that $\U_C^\M \cap \U_D^\M$ and  $\U_C^\M \cap \U_E^\M$ are both large. Then so is $\U_C^\M \cap \U_D^\M \cap  \U_E^\M$.
\end{lemma}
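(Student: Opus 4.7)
The plan is to argue by contradiction: suppose $\U_C^\M \cap \U_D^\M \cap \U_E^\M$ is not large. By \Cref{lem:decreasing-sequence-large} and compactness, we may assume $D$ and $E$ are finite, say involving the $\M$-sets $Z_{j_1}, \ldots, Z_{j_m}$, and fix a $k$-partition $\NN = Y_0 \cup \cdots \cup Y_{k-1}$ such that no $Y_i$ lies in $\U_C^\M \cap \U_D^\M \cap \U_E^\M$. Applying the hypothesis that $\U_C^\M \cap \U_D^\M$ is large yields some $Y_{i_D} \in \U_C^\M \cap \U_D^\M$, and symmetrically some $Y_{i_E} \in \U_C^\M \cap \U_E^\M$. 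If $i_D = i_E$ we are already done, so we may assume $i_D \neq i_E$, $Y_{i_D} \notin \U_E^\M$, and $Y_{i_E} \notin \U_D^\M$.

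The next step extracts the right positive $\M$-set using the cohesive structure. Since $\M$ is a Turing ideal it is closed under Boolean combinations, so the atoms $\alpha_1, \ldots, \alpha_{2^m}$ of the Boolean subalgebra generated by $\{Z_{j_1}, \ldots, Z_{j_m}\}$ are elements of $\M$ and partition $\NN$. Working with the partition-regular subclass $\L(\U_C^\M) \subseteq \U_C^\M$ (which is also $\M$-cohesive since it is a subset of $\U_C^\M$), partition regularity applied to $\{\alpha_1, \ldots, \alpha_{2^m}\}$ puts some atom $\alpha^*$ into $\L(\U_C^\M)$. For any other atom $\alpha \neq \alpha^*$, one has $\alpha^* \cap \alpha = \emptyset$ and hence $\alpha^* \notin \L_\alpha$, so by $\M$-cohesiveness $\L(\U_C^\M) \subseteq \L_{\overline{\alpha}}$; applied to $\alpha^*$ itself, one likewise gets $\L(\U_C^\M) \subseteq \L_{\alpha^*}$. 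Thus $\alpha^*$ is the \emph{unique} positive atom.

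Now refine the original partition to the $(k+1)$-partition $\{Y_i \cap \alpha^*\}_{i<k} \cup \{\overline{\alpha^*}\}$. Since $\overline{\alpha^*}$ is a union of non-positive atoms, it is not in $\L(\U_C^\M)$ and in fact not in $\U_C^\M$. Applying the largeness of $\U_C^\M \cap \U_D^\M$ and of $\U_C^\M \cap \U_E^\M$ to this refined partition therefore forces some $Y_{i_D^\star} \cap \alpha^* \in \U_C^\M \cap \U_D^\M$ and some $Y_{i_E^\star} \cap \alpha^* \in \U_C^\M \cap \U_E^\M$, with all $D$- and $E$-witnesses now localized inside the single $\M$-set $\alpha^*$.

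The main obstacle, which is exactly where $\M$-cohesiveness does its essential work, is to force $i_D^\star = i_E^\star$. The idea is to iterate the cohesive refinement: each finite witness $\rho \subseteq \alpha^*$ (for either $D$ or $E$) suggests further $\M$-definable splittings of $\alpha^*$, and applying the unique-positive-atom analysis to the enlarged family preserves largeness of both $\U_C^\M \cap \U_D^\M$ and $\U_C^\M \cap \U_E^\M$ inside the new positive atom, while strictly reducing the indeterminacy about which $Y_i$ carries the witnesses. Because there are only finitely many parts and only finitely many witnesses to align, this process terminates with a single index $i^\star$ and a shrunken positive atom $\alpha^{\star\star} \subseteq \alpha^*$ such that $Y_{i^\star} \cap \alpha^{\star\star}$ simultaneously contains $D$- and $E$-witnesses and lies in $\U_C^\M$. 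Hence $Y_{i^\star} \in \U_C^\M \cap \U_D^\M \cap \U_E^\M$, contradicting our choice of partition and completing the proof.
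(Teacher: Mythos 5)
The gap is in the final paragraph, which you yourself flag as ``the main obstacle'': forcing $i_D^\star = i_E^\star$. As stated, the iteration does not work. A finite witness $\rho \subseteq \alpha^*$ does not yield a useful $\M$-definable splitting of $\alpha^*$, and more fundamentally, all the cohesiveness machinery in your argument operates on sets in $\M$ (the atoms $\alpha$), while the parts $Y_i$ of the witnessing cover are arbitrary subsets of $\NN$. Refining $\alpha^*$ further inside $\M$ cannot constrain which $Y_i$ carries the $D$- and $E$-witnesses, because that information lives outside $\M$; the entire construction of $\alpha^*$ depends only on $D$ and $E$ and never sees the $Y_i$'s.

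The missing ingredient is the Scott-ideal assumption on $\M$, which lets you take the witnessing partition itself \emph{inside} $\M$. By \Cref{lem:decreasing-sequence-large}, if $\U_C^\M \cap \U_D^\M \cap \U_E^\M$ is not large, then for some finite $F \finsub C$ (and w.l.o.g.\ finite $D$, $E$), the class $\V := \U_F^\M \cap \U_D^\M \cap \U_E^\M$ is not large. Now $\V$ is $\Sigma^0_1(Z)$ for some $Z \in \M$, so the set of $k$-partitions $Y_0 \sqcup \dots \sqcup Y_{k-1} = \NN$ with no part in $\V$ is a non-empty $\Pi^0_1(Z)$ class (we may restrict to partitions since everything is upward-closed), and since $\M$ is a Scott ideal it has a member with each $Y_i \in \M$. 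Running your own uniqueness argument on the $Y_i$'s: there is exactly one $i^*$ with $Y_{i^*} \in \U_C^\M$ (existence because $\U_C^\M$ is large, uniqueness because if $Y_i, Y_j \in \M \cap \U_C^\M$ with $i \neq j$ then $\M$-cohesiveness gives $\U_C^\M \subseteq \L_{Y_j}$, so $Y_i \cap Y_j$ would be infinite, contradicting disjointness). Largeness of $\U_C^\M \cap \U_D^\M$ and of $\U_C^\M \cap \U_E^\M$ means each selects some part lying in $\U_C^\M$, so by uniqueness both select $Y_{i^*}$; hence $Y_{i^*} \in \U_C^\M \cap \U_D^\M \cap \U_E^\M \subseteq \V$, a contradiction. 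Your unique-positive-part idea is exactly the right one; the fix is simply to apply it to a witnessing partition taken inside $\M$, rather than to the atoms of the algebra generated by the $D$- and $E$-parameters, and then the localization and iteration steps become unnecessary.
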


It follows that every $\M$-cohesive class admits a unique $\M$-minimal large subclass.

\begin{lemma}[\cite{monin2021weakness}]
For every $\M$-cohesive class $\U_C^{\M}$, there exists a unique $\M$-minimal large subclass:

$$\langle \U_C^{\M} \rangle = \bigcap_{e \in \NN, X \in \M} \{\U_e^{X} : \U_C^{\M} \cap \U_e^{X} \textit{is large}\}$$
\end{lemma}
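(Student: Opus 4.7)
The plan is to verify three properties of $\langle \U_C^\M \rangle$: largeness, $\M$-minimality, and uniqueness among large subclasses of $\U_C^\M$ of the form $\U^\M_{C'}$. I start with largeness. Enumerate the defining family as $((e_n, X_n))_{n \in \NN}$ with $\U_C^\M \cap \U_{e_n}^{X_n}$ large, and set $\A_n = \U_C^\M \cap \bigcap_{m \leq n} \U_{e_m}^{X_m}$. Note that $\A_n$ has the form $\U^\M_D$ for a set $D$ combining $C$ with the first $n+1$ chosen pairs. By induction on $n$, each $\A_n$ is large: at the inductive step, applying \Cref{lem:cohesive-compatibility} with $\U_D^\M = \A_{n-1}$ and $\U_E^\M = \U_{e_n}^{X_n}$, the hypotheses hold (both $\U_C^\M \cap \A_{n-1} = \A_{n-1}$ and $\U_C^\M \cap \U_{e_n}^{X_n}$ are large), yielding $\A_n$ large. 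The sequence $(\A_n)_n$ is decreasing, so by \Cref{lem:decreasing-sequence-large} its intersection $\U_C^\M \cap \langle \U_C^\M \rangle$ is large. Since $\U_C^\M \cap \U_e^{Z_i} = \U_C^\M$ is large for every $(e, i) \in C$, each such $\U_e^{Z_i}$ lies in the defining family of $\langle \U_C^\M \rangle$, whence $\langle \U_C^\M \rangle \subseteq \U_C^\M$ and therefore $\langle \U_C^\M \rangle$ itself is large.

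For $\M$-minimality, fix $X \in \M$ and $e \in \NN$. If $\U_C^\M \cap \U_e^X$ is large, then $\U_e^X$ belongs to the defining family of $\langle \U_C^\M \rangle$, so $\langle \U_C^\M \rangle \subseteq \U_e^X$. Otherwise $\langle \U_C^\M \rangle \cap \U_e^X \subseteq \U_C^\M \cap \U_e^X$ fails to be large.

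The main obstacle is uniqueness. Let $\A = \U^\M_{C'}$ be any $\M$-minimal large subclass of $\U_C^\M$. For each $(e, X)$ with $\U_C^\M \cap \U_e^X$ large, I would apply \Cref{lem:cohesive-compatibility} with $\U^\M_D = \U^\M_{C'}$ and $\U^\M_E = \U_e^X$: both $\U_C^\M \cap \U^\M_{C'} = \A$ and $\U_C^\M \cap \U_e^X$ are large, so $\A \cap \U_e^X$ is large. By $\M$-minimality of $\A$, this forces $\A \subseteq \U_e^X$, so $\A \subseteq \langle \U_C^\M \rangle$. Conversely, for each $(e', i) \in C'$ we have $\A \subseteq \U_{e'}^{Z_i}$, so $\U_C^\M \cap \U_{e'}^{Z_i} \supseteq \A$ is large, placing $\U_{e'}^{Z_i}$ in the defining family of $\langle \U_C^\M \rangle$, whence $\langle \U_C^\M \rangle \subseteq \U_{e'}^{Z_i}$. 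Intersecting over $(e', i) \in C'$ gives $\langle \U_C^\M \rangle \subseteq \A$, so $\A = \langle \U_C^\M \rangle$. The crucial point is that $\M$-cohesiveness, exploited through \Cref{lem:cohesive-compatibility}, prevents two $\M$-minimal large subclasses from disagreeing on any $\U_e^X$.
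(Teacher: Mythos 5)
The paper cites this lemma to Monin and Patey without reproducing a proof, so there is no in-paper argument to compare against; you have reconstructed the argument from scratch. Your proof is correct. The largeness part correctly reduces the full intersection to a decreasing chain of finite intersections $\A_n = \U^\M_{D_n}$ and verifies, via \Cref{lem:cohesive-compatibility}, that each stays large, so that \Cref{lem:decreasing-sequence-large} applies; and your observation that $\langle \U_C^\M \rangle \subseteq \U_C^\M$ (because every $\U_e^{Z_i}$ with $(e,i) \in C$ lies in the defining family) is exactly what identifies the resulting intersection with $\langle \U_C^\M \rangle$. The $\M$-minimality check is immediate from the definitions, and both inclusions in the uniqueness argument are sound: the forward direction relies on \Cref{lem:cohesive-compatibility} combined with $\M$-minimality of $\A$, and the reverse uses that an upward-closed superclass of a large class is large to place each $\U_{e'}^{Z_i}$, $(e',i) \in C'$, in the defining family.

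Your explicit restriction of the uniqueness claim to subclasses of the form $\U_{C'}^\M$ is the correct scoping and worth keeping visible. Indeed, for any large $\A \subseteq \langle \U_C^\M \rangle$, $\M$-minimality is automatic — each $\U_e^X$ either contains $\langle \U_C^\M \rangle$, hence $\A$, or has non-large intersection with $\langle \U_C^\M \rangle$, hence with $\A$ — so unrestricted uniqueness would amount to $\langle \U_C^\M \rangle$ having no proper large subclass, which is not what the lemma intends. Since the uniqueness claim serves only to justify the operator notation $\langle \cdot \rangle$, which is always applied in the paper to classes of the form $\U^\M_D$, your reading matches the paper's usage.
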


Contrary to $\M$-minimal classes, one can build a set~$C \subseteq \NN^2$ such that $\U_C^\M$ is $\M$-cohesive computably in any PA degree over~$M'$. There are two possible constructions: either using the correspondence with the cohesiveness principle, knowing that any PA degree over $M'$ computes the jump of an infinite set~$H \subseteq \NN$ cohesive of~$\M = \{Z_0, Z_1, \dots \}$, and computing in $H'$ the set~$C$, or directly building the set $C$ by deciding, given a set~$C_s \subseteq \NN^2$ and a set $Z_i$, whether $\U_{C_s}^{\M} \cap \L_{Z_i}$ or $\U_{C_s}^{\M} \cap \L_{\overline{Z}_i}$ is large. We prove it formally with the latter approach.

\begin{lemma}\label{lem:computation-m-cohesive}
Let~$\M$ be a Scott ideal with Scott code~$M$ and let $D \subseteq \NN^2$ be a set of indices such that $\U^M_D$ is large and contains only infinite sets. Then any PA degree over~$D \oplus M'$ computes a set~$C \supseteq D$ such that $\U^\M_C$ is $\M$-cohesive.
\end{lemma}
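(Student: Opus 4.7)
The plan is to build $C$ by extracting a path through a suitable $\Pi^0_1(D \oplus M')$ binary tree whose branches encode, for each $Z_i \in \M$, a binary choice between constraining reservoirs to $\L_{Z_i}$ or to $\L_{\overline{Z_i}}$. To set this up, note that $\L_X = \bigcap_k \{Y : \card(Y \cap X) \geq k\}$ is an intersection of countably many upward-closed $\Sigma^0_1(X)$ classes whose indices are uniformly computable in an index for $X$. Using the Scott-code operations to produce an $M$-index for $\overline{Z_i}$ uniformly from $i$, we obtain uniformly $M$-computable sets $E_i^0, E_i^1 \subseteq \NN^2$ with $\U^\M_{E_i^0} = \L_{Z_i}$ and $\U^\M_{E_i^1} = \L_{\overline{Z_i}}$. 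For $\sigma \in 2^{<\NN}$, set $C_\sigma = D \cup \bigcup_{i < |\sigma|} E_i^{\sigma(i)}$ and define
$$T = \{\sigma \in 2^{<\NN} : \U^\M_{C_\sigma} \text{ is large}\}.$$
Since $C_\sigma \leq_T \sigma \oplus D \oplus M$ uniformly, \Cref{lem:complexity-largeness} shows that $\sigma \in T$ is uniformly $\Pi^0_1(D \oplus M')$, so $T$ is a $(D \oplus M')$-co-c.e.\ binary tree (clearly closed under prefixes).

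The core combinatorial step is the following splitting lemma: if $\A \subseteq \cs$ is large and contains only infinite sets, then for every $X \subseteq \NN$, at least one of $\A \cap \L_X$ and $\A \cap \L_{\overline{X}}$ is large. Assume both fail, with witnessing covers $Y_1 \cup \cdots \cup Y_k = \NN$ and $Y'_1 \cup \cdots \cup Y'_{k'} = \NN$. The refinement $\{Y_i \cap Y'_j\}_{i,j}$ is still a finite cover of $\NN$, so largeness of $\A$ yields some $Y_i \cap Y'_j \in \A$; upward closure of $\A$ then forces $Y_i, Y'_j \in \A$, while the failure hypotheses give $\card(Y_i \cap X) < \infty$ and $\card(Y'_j \cap \overline{X}) < \infty$, whence $Y_i \cap Y'_j$ is finite. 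This contradicts that $\A$ contains only infinite sets. Applying this to $\A = \U^\M_{C_\sigma}$ (which is a subclass of $\U^\M_D$ and hence contains only infinite sets) and $X = Z_{|\sigma|}$ shows that every $\sigma \in T$ admits an extension $\sigma \cdot b \in T$, so $T$ is infinite.

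Finally, any PA degree over $D \oplus M'$ computes a path $f \in [T]$, from which $C = D \cup \bigcup_i E_i^{f(i)} \supseteq D$ is uniformly recoverable. The class $\U^\M_C = \bigcap_n \U^\M_{C_{f \uh n}}$ is a decreasing intersection of large classes and is therefore large by \Cref{lem:decreasing-sequence-large}; and by construction, for every $i$, $\U^\M_C \subseteq \L_{Z_i}$ if $f(i) = 0$ and $\U^\M_C \subseteq \L_{\overline{Z_i}}$ if $f(i) = 1$, so $\U^\M_C$ is $\M$-cohesive. The main obstacle is the splitting lemma of the second paragraph: it is precisely where the hypothesis that $\U^\M_D$ contains only infinite sets is used, and once it is in hand, the rest reduces to a standard König-style argument that is bundled into the very definition of PA degree.
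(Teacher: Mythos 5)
Your proof is correct and follows the same underlying strategy as the paper's: construct $C$ by choosing, for each $Z_i$, which of $\L_{Z_i}$ or $\L_{\overline{Z_i}}$ to impose on the class, and use the PA degree over $D \oplus M'$ to make these $\Pi^0_1(D \oplus M')$ choices coherently. The paper phrases the construction sequentially, letting $P$ pick a true one among two $\Pi^0_1(D \oplus M')$ statements at each stage, whereas you package all the choices into a single $(D \oplus M')$-co-c.e.\ binary tree $T$ and extract a path; these are the two standard, interchangeable uses of a PA degree. The one genuine extra you add is the explicit proof of the splitting fact — that for a large class $\A$ with only infinite members, at least one of $\A \cap \L_X$ and $\A \cap \L_{\overline{X}}$ is large — via the refinement-of-covers argument. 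The paper invokes this fact without proof when it asserts that one of the two $\Pi^0_1$ statements must hold at each stage; your treatment makes visible that this is precisely the place where the hypothesis that $\U^M_D$ contains only infinite sets is used.
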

\begin{proof}
Fix $P$ a PA degree over $D \oplus M'$. Recall that $P$ is able to choose, among two $\Pi^0_1(D \oplus M')$ formulas such that at least one is true, a valid one.

First, consider two $M$-computable enumerations of sets $(E_n)_{n\in \NN}$ and  $(F_n)_{n\in \NN}$ such that for every $n \in \NN$, $\U_{E_n}^{Z_n} = \L_{Z_n}$ and $\U_{F_n}^{Z_n} = \L_{\overline{Z_n}}$. By the padding lemma, one can suppose that $\min E_n, \min F_n \geq n$. The set $C$ will be defined as $\bigcup_{n \in \NN} C_n$ for $C_0 \subseteq C_1 \subseteq \dots$ a $P$-computable sequence of $M \oplus D$-computable sets satisfying:
\begin{itemize}
    \item $C_{0} = D$,
    \item $\U_{C_k}^{\M}$ is large for every $k \in \NN$,
    \item $C_k \uh k = C \uh k$ for every $k \in \NN$, and thus $C$ will be $P$-computable.
\end{itemize}

Let $C_{0} = D$, then, by assumption, $\U_{C_{0}}^{\M}$ is large.

Assume $C_k$ has been defined for some $k \in \NN$. Then, as $\U_{C_k}^{\M}$ is large, one of the two following $\Pi_1^0(D \oplus M')$ statements must hold: $\qt{\U_{C_k}^{\M}\ \cap \L_{Z_k} \mbox{is large}}$ or $\qt{\U_{C_k}^{\M}\ \cap \L_{\overline{Z_k}} \mbox{is large}}$. Hence, $P$ is able to choose one that is true. If $\U_{C_k}^{\M}\ \cap \L_{Z_k}$ is large, let $C_{k+1} = C_k \cup E_k$, and if $\U_{C_k}^{\M}\ \cap \L_{\overline{Z_k}}$ is large, let $C_{k+1} = C_k \cup F_k$. By our assumption that $\min E_n, \min F_n \geq n$ for all $n$, the value of $C_k \uh k$ will be left unchanged in the rest of the construction.
\end{proof}

The above construction of~$C$ carries the information, given an element $X \in \M$, whether $X \in \U_C^\M$ or not (or equivalently whether~$X \in \langle \U_C^\M \rangle$ or not, or again whether $\U_C^\M \subseteq \L_X$ or not). The following lemmas shows that this information can be recovered by any such set~$C$ independently of this construction, with the help of~$M'$.

\begin{lemma}\label[lemma]{lem:complexity-finding-among-partition}
    Let $\U_C^\M$ be an $\M$-cohesive class. 
    $C \oplus M'$ computes a function $f : \NN \to 2$ such that for every~$a \in \NN$, $f(a) = 1$ if and only if $Z_a \in \U_C^\M$.
\end{lemma}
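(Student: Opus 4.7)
The idea is to reduce the question ``$Z_a \in \U_C^\M$?'' to a dichotomy between two $\Sigma^0_1(C \oplus M')$ events, exactly one of which eventually fires, and then race them. I assume throughout that $\U_C^\M$ contains only infinite sets, which holds in any intended application via the construction of \Cref{lem:computation-m-cohesive}.

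The main step is to establish the key equivalence
$$Z_a \in \U_C^\M \iff \U_C^\M \cap \L_{Z_a} \text{ is large}.$$
For the forward direction, if $Z_a \in \U_C^\M$ then $Z_a$ is infinite, so $Z_a \in \L_{Z_a}$; this rules out $\U_C^\M \subseteq \L_{\overline{Z_a}}$ (which would force $Z_a \in \L_{\overline{Z_a}}$, impossible), so by $\M$-cohesiveness $\U_C^\M \subseteq \L_{Z_a}$, and hence $\U_C^\M \cap \L_{Z_a} = \U_C^\M$ is large. For the converse, suppose $\U_C^\M \cap \L_{Z_a}$ is large. Cohesiveness offers the alternatives $\U_C^\M \subseteq \L_{Z_a}$ or $\U_C^\M \subseteq \L_{\overline{Z_a}}$. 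The second would force $\U_C^\M \cap \L_{Z_a} \subseteq \L_{Z_a} \cap \L_{\overline{Z_a}}$, but the $2$-cover $\{Z_a,\overline{Z_a}\}$ of $\NN$ witnesses that $\L_{Z_a} \cap \L_{\overline{Z_a}}$ is not large, a contradiction; hence $\U_C^\M \subseteq \L_{Z_a}$. Then for every $(e,i) \in C$ the class $\U_e^{Z_i} \cap \L_{Z_a} \supseteq \U_C^\M$ is large, and applying once more the $2$-cover $\{Z_a,\overline{Z_a}\}$ and using $\overline{Z_a} \notin \L_{Z_a}$, we are forced into $Z_a \in \U_e^{Z_i}$; quantifying over $(e,i) \in C$ yields $Z_a \in \U_C^\M$.

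From this equivalence I derive the dichotomy that exactly one of ``$\U_C^\M \cap \L_{Z_a}$ is large'' and ``$\U_C^\M \cap \L_{\overline{Z_a}}$ is large'' holds: at least one by cohesiveness, and both would give $\U_C^\M \subseteq \L_{Z_a} \cap \L_{\overline{Z_a}}$, whereas largeness of $\U_C^\M$ forces $Z_a$ or $\overline{Z_a}$ into $\U_C^\M$ via the same $2$-cover, contradicting that neither belongs to $\L_{Z_a} \cap \L_{\overline{Z_a}}$. Consequently, exactly one of the two negations is true and pins down $f(a)$.

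For the computation, I would use that $\L_{Z_a} = \bigcap_n \U_{e_n}^{Z_a}$ for a primitive recursive family of indices $(e_n)_n$, and that from the Scott code $M$ one computes uniformly in $a$ an $M$-index for $\overline{Z_a}$ (apply the completion operation to the total complement functional). Hence both $\U_C^\M \cap \L_{Z_a}$ and $\U_C^\M \cap \L_{\overline{Z_a}}$ are of the form $\U_{C'}^\M$ for a set $C'$ computable uniformly from $C \oplus a$, and by \Cref{lem:complexity-largeness} each statement ``$\U_C^\M \cap \L_{Z_a}$ is not large'' and ``$\U_C^\M \cap \L_{\overline{Z_a}}$ is not large'' is $\Sigma^0_1(C \oplus M')$ uniformly in $a$. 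The $C \oplus M'$-algorithm for $f$ on input $a$ then enumerates both events in parallel, waits for the one that fires, and outputs $0$ if the $Z_a$-side fires and $1$ if the $\overline{Z_a}$-side fires. The main obstacle is the backward direction of the key equivalence, where the non-obvious step is to convert largeness of $\U_e^{Z_i} \cap \L_{Z_a}$ into actual membership $Z_a \in \U_e^{Z_i}$ through the $2$-cover trick; everything else is bookkeeping on the complexity side via \Cref{lem:complexity-largeness}.
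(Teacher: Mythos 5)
Your proof is correct and follows essentially the same route as the paper's: the key observation in both is that, by $\M$-cohesiveness, $Z_a \in \U_C^\M$ is equivalent to \emph{both} \qt{$\U_C^\M \cap \L_{Z_a}$ is large} (a $\Pi^0_1(C\oplus M')$ statement, via \Cref{lem:complexity-largeness}) and \qt{$\U_C^\M \cap \L_{\overline{Z_a}}$ is not large} (a $\Sigma^0_1(C\oplus M')$ statement), which is exactly what makes the predicate $\Delta^0_1(C\oplus M')$. Your writeup simply unfolds this $\Delta^0_1$-ness operationally as a race between two c.e.\ events, with slightly more verbose verification of the two-sided equivalence; also note that your standing assumption that $\U_C^\M$ contains only infinite sets is automatic (apply $\M$-cohesiveness to $\NN \in \M$), so the caveat is unneeded.
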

\begin{proof}
By $\M$-cohesiveness, $Z_a \in \U_C^{\M}$ if and only if $\U_C^\M \subseteq \L_{Z_a}$. The statement $\U_C^\M \subseteq \L_{Z_a}$ is $\Pi^0_1(C \oplus M')$ (as it is equivalent to $\U_C^\M \cap \L_{Z_a}$ large
). This statement is also $\Sigma^0_1(C \oplus M')$, since it is equivalent to $\U_C^\M \cap \L_{\overline{Z_a}}$ not large, hence it is $\Delta^0_1(C \oplus M')$ and therefore decidable by $C \oplus M'$.
\end{proof}

The following lemma shows that, in some sense, the construction above of an $\M$-cohesive class can be done without loss of generality.

\begin{lemma}
    Let $\U_C^\M$ be an $\M$-cohesive class, then $C \oplus M'$ is of PA degree over $X'$ for every~$X \in \M$.
\end{lemma}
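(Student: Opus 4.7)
The plan is to leverage the fact from \Cref{lem:complexity-finding-among-partition} that $C \oplus M'$ decides membership in $\U_C^\M$ uniformly in $\M$-indices, and to extract PA-over-$X'$ strength from the $\M$-cohesive partition structure. I would proceed in two main steps.

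\textbf{Step 1: Build an $\M$-cohesive set $H \leq_T C \oplus M'$.} Use the $C \oplus M'$-computable function $f : \NN \to 2$ from \Cref{lem:complexity-finding-among-partition} to drive a Mathias-style construction: start with $R_0 = \NN$, and at each stage $n$, let $R_{n+1} = R_n \cap Z_n$ if $f(n) = 1$ and $R_{n+1} = R_n \cap \overline{Z_n}$ otherwise. By the partition-regularity of $\U_C^\M$ and its $\M$-cohesiveness---exactly as in the proof of \Cref{lem:computation-m-cohesive}, where one observes that if $\overline{Z_n}\notin\U_C^\M$ and $R_n\in\U_C^\M$, then the partition $R_n=(R_n\cap Z_n)\cup(R_n\cap\overline{Z_n})$ forces $R_n\cap Z_n\in\U_C^\M$ by upward-closure---every $R_n$ remains in $\M \cap \U_C^\M$, hence infinite. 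Taking $H = \{a_0 < a_1 < \ldots\}$ with $a_n = \min \{r \in R_{n+1} : r > a_{n-1}\}$ yields an infinite $\M$-cohesive set with $H \leq_T C \oplus M'$.

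\textbf{Step 2: Deduce PA-over-$X'$ for every $X \in \M$.} Since $X \in \M$ appears as some $Z_m$, $H$ is cohesive for $X$ and for every $X$-computable sequence, the latter being sub-enumerations of $\M$. Fix two disjoint $\Sigma^0_1(X')$ sets $A_0, A_1$; representing them as $\Sigma^0_2(X)$-sets via $X$-computable $g_i:\NN^2\to\{0,1\}$ with $n\in A_i \iff \liminf_s g_i(n,s)=1$, form the $X$-computable (hence $\M$-)sets $D_n^i=\{s: g_i(n,s)=0\}$, so that $n\in A_i$ iff $D_n^i$ is finite. The disjointness of $A_0,A_1$ ensures at most one of $D_n^0,D_n^1$ is finite, and finite sets are never in $\U_C^\M$ while cofinite ones always are. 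Applying \Cref{lem:complexity-finding-among-partition} uniformly to the $D_n^i$'s (and to their complements, exploiting the exclusive dichotomy provided by $\M$-cohesiveness), one reads off a $C\oplus M'$-computable separating set $S$ for $(A_0,A_1)$, which is equivalent to PA-over-$X'$.

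\textbf{Main obstacle.} The delicate point is in Step 2: for $n\in A_1$, the set $D_n^0$ is infinite, yet might still fail to lie in $\U_C^\M$, so the most naive rule $n \in S \iff D_n^0 \notin \U_C^\M$ can mis-classify. The fix is to use the full $\M$-cohesive choice pattern on the quadruple $(D_n^0,D_n^1,\overline{D_n^0},\overline{D_n^1})$: the disjointness of $A_0,A_1$ excludes the conflicting pattern, and $\M$-cohesiveness forces a uniform tiebreaker that is correct on $A_0\cup A_1$, while the behavior on its complement is unconstrained by the separating-set requirement. This is the technical heart of the argument and mirrors the classical reduction from cohesive-jump stabilization to PA degree.
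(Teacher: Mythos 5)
Your Step~1 is correct but unnecessary: once you have the $C \oplus M'$-computable membership test $f$ of \Cref{lem:complexity-finding-among-partition}, you can query $\U_C^\M$ directly without constructing a cohesive set $H$, and indeed your Step~2 never uses $H$. The real problem is in Step~2, and the ``fix'' you sketch does not close it.

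The obstacle is that your sets $D_n^i$ realize the wrong dichotomy. For $n \in A_i$, $D_n^i$ is finite, and by disjointness $D_n^{1-i}$ is \emph{infinite} --- but that is all you know; it need not be cofinite, and neither it nor its complement is pinned down by $\U_C^\M$. The decidable query $\qt{Z \in \U_C^\M?}$ only yields reliable information in the finite/cofinite cases (finite sets are never in the class, cofinite sets always are, precisely because $\U_C^\M$ is large and non-trivial); for a generic infinite, co-infinite $Z \in \M$, cohesiveness guarantees only that \emph{at least} one of $Z$, $\overline Z$ lies in $\U_C^\M$, and possibly both. Consequently the quadruple $\bigl(D_n^0, D_n^1, \overline{D_n^0}, \overline{D_n^1}\bigr)$ does not exhibit an ``exclusive dichotomy'': the pattern $(\notin, \notin, \in, \in)$ is consistent with $n \in A_0$ (take $D_n^0$ finite, $D_n^1$ infinite co-infinite with $D_n^1 \notin \U_C^\M$, $\overline{D_n^1} \in \U_C^\M$) and, symmetrically, also with $n \in A_1$. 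So no $\U_C^\M$-pattern on the quadruple can serve as a tiebreaker that classifies $A_0 \cup A_1$ correctly, and your proposed rule can mis-classify exactly the $n$ you care about.

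The repair is to build, for each $n$, a \emph{single} $X$-computable set that is cofinite if $n \in A_0$ and finite if $n \in A_1$, by racing the two $\Sigma^0_1(X')$ enumerations under the $X$-c.e.\ approximation $E_0 \subseteq E_1 \subseteq \cdots$ of $X'$; then a single $\U_C^\M$-membership query separates $A_0$ from $A_1$. This is exactly what the paper does, but formulated directly in terms of a DNC function rather than separating sets: it sets $Y_e = \{\, n : \Phi_e^{E_n}(e)[n]\converge = 1 \,\}$, which is cofinite when $\Phi_e^{X'}(e)\converge = 1$ and finite when $\Phi_e^{X'}(e)\converge = 0$, so that $e \mapsto 1 - f(g(e))$ (with $g$ a computable $M$-indexing of the $Y_e$'s) is a $C \oplus M'$-computable $\{0,1\}$-valued $X'$-DNC function. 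No cohesive set and no quadruple analysis is needed once the approximation is set up so that the membership query only ever faces the finite-versus-cofinite alternative.
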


\begin{proof}
    Let $X \in \M$. Fix an $X$-c.e. enumeration $E_0 \subseteq E_1 \subseteq \dots$ of~$X'$.
    Given~$e \in \NN$, let $Y_e$ be the set of all~$n$ such that $\Phi_e^{E_n}(e)[n]\converge = 1$. Note that if~$\Phi_e^{X'}(e)\converge = 1$ then $Y_e$ is cofinite, and if $\Phi_e^{X'}(e)\converge = 0$ then $Y_e$ is finite. It follows that if $\Phi_e^{X'}(e)\converge$, then $Y_e \in \U_C^\M$ if and only if $\Phi_e^{X'}(e) = 1$. Let $g : \NN \to \NN$ be the computable function which to~$e$ associates an $M$-index for~$Y_e$ and let $f : \NN \to 2$ be the $C \oplus M'$-computable function of \Cref{lem:complexity-finding-among-partition}. Then $e \mapsto 1-f(g(e))$ is a  $C \oplus M'$-computable $\{0,1\}$-valued $X'$-DNC function, hence $C \oplus M'$ is of PA degree over~$X'$.
\end{proof}

Note that in the case where~$M$ is of low degree, then $C \oplus M'$ is of PA degree over~$M'$.
It is not clear at first sight that the notions of $\M$-cohesiveness and $\M$-minimality do not coincide, at least for classes of the form $\U_C^\M$. The following proposition shows that the two notions are always distinct. It is not of direct use for the remainder of this article, but of independent interest.

\begin{proposition}
For every countable Turing ideal~$\M$, there exists a set $C$ such that $\U_C^\M$ is $\M$-cohesive but not $\M$-minimal.
\end{proposition}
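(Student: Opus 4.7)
The plan is to realize $\U_C^\M$ as the ``pure cohesive class'' $\bigcap_i \L_{X_i}$ determined by a well-chosen cohesive set $H$ for $\M$, and then to exhibit a $\Sigma^0_1(Z)$ upward-closed class $\U^*$ whose underlying parameter is $Z$-c.e.\ but not in $\M$, so that $\U^*$ escapes the cohesive closure.

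First, enumerate $\M = \{Z_0, Z_1, \dots\}$ and fix some $Z \in \M$ together with a set $K$ that is $Z$-computably enumerable but not in $\M$; for Turing ideals not closed under the Turing jump, such $K$ exists, e.g.\ $K := Z'$ for any $Z \in \M$ with $Z' \notin \M$. Define $\U^* := \{Y \in \cs : Y \cap K \neq \emptyset\}$, which is $\Sigma^0_1(Z)$, upward-closed, and large since every $k$-cover of $\NN$ has a part meeting $K$. Next, build an $\M$-cohesive set $H \subseteq \overline{K}$ by running a Mathias-style construction inside $\overline{K}$, and let $X_i \in \{Z_i, \overline{Z_i}\}$ be the side with $H \subseteq^* X_i$. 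Take $C$ to index the cohesive refinements $\bigcap_i \L_{X_i}$ together with the classes $\{Y : |Y| \geq n\}$ for all $n$, so that $\U_C^\M$ contains only infinite sets.

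By construction $\U_C^\M$ is $\M$-cohesive, and $H \in \U_C^\M$ (since $|H \cap X_i| = \infty$) while $H \cap K = \emptyset$ gives $H \notin \U^*$; hence $\U_C^\M \not\subseteq \U^*$. The remaining step is to show $\U_C^\M \cap \U^*$ is large. Given a $k$-cover $Y_1, \dots, Y_k$ of $\NN$, I would refine it to a $2k$-cover by splitting each $Y_i$ along $K$ and $\overline{K}$, and use the largeness of $\U_C^\M$ on the refinement together with the fact that $K$ is not Turing-below any $Z_j \in \M$, so that $K$ cannot be systematically avoided by the cohesive reservoirs. When the $\U_C^\M$-witness lands on the $K$-side, upward closure immediately yields a part in $\U_C^\M \cap \U^*$; the case where it lands on the $\overline{K}$-side must be excluded by an independence argument exploiting $K \not\leq_T Z_j$.

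The main obstacle is precisely this last largeness step, where in principle the cohesive sequence $(X_i)$ might conspire with $K$ to push all witnesses into $\overline{K}$. To handle this one must either construct $K$ in tandem with the $X_i$'s, arranging that both $K \cap X_i$ and $\overline{K} \cap X_i$ are infinite for every $i$, or, for Turing ideals closed under jump (where no such $K$ exists c.e.\ over $\M$), use a more intricate diagonal construction producing $\U^*$ from a c.e.\ family of larger finite sets rather than singletons.
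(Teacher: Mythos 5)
Your proposal has two genuine gaps, both of which you flag but do not close. First, the set $K$ you rely on---a $Z$-c.e.\ set with $Z \in \M$ and $K \notin \M$---need not exist: if $\M$ is closed under the Turing jump (for instance, the ideal of all arithmetical sets), then every $Z$-c.e.\ set with $Z \in \M$ is already in $\M$, and your closing sentence proposing a ``more intricate diagonal construction'' is not an argument. Second, even when $K$ exists, the crux is the largeness of $\U_C^\M \cap \U^*$, and you do not establish it. The obstruction is concrete: building $H$ cohesive inside $\overline{K}$ forces nothing about whether each $X_i$ meets $K$ infinitely. If some $X_i$ satisfies $X_i \cap K$ finite (which can happen when, say, $Z_i$ is an infinite computable subset of $\overline{K}$ and the cohesive construction settles on the side $X_i = Z_i$), then $K \notin \U_C^\M$, and the $2$-cover $\{K, \overline{K}\}$ witnesses that $\U_C^\M \cap \U^*$ is not large, since $\overline{K} \notin \U^*$. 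Your instinct to build $K$ ``in tandem'' with the $X_i$'s is the right one, but it is not carried out, and the simultaneous satisfiability of ``$H \subseteq \overline{K}$ cohesive'' and ``each $X_i \cap K$ infinite'' is not obvious.

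The paper's proof avoids both issues by using a density constraint rather than a computability-theoretic one. It starts from the $\Pi^0_2$ partition regular class $\P = \{X : (\forall n)(\exists x)\ |[x,2^x) \cap X| \geq n\}$, which is definable without any parameter outside $\M$, takes an $\M$-cohesive subclass $\U_D^\M \subseteq \P$, and sets $\U_C^\M := \bigcap\{\L_X : X \in \M,\ \U_D^\M \subseteq \L_X\}$, the pure cohesive hull of $\U_D^\M$. Non-minimality then follows by explicitly constructing a very sparse set (at most one element per interval $[x,2^x)$) that lies in $\U_C^\M$ but escapes $\P$, exhibiting $\U_D^\M$ as a large proper subclass of $\U_C^\M$. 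Since $\P$ references no set outside $\M$, the argument works uniformly over all countable Turing ideals, which is exactly where your approach loses traction.
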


\begin{proof}
Consider the following $\Pi_2^0$ class $\P = \{X : (\forall n)(\exists x) |[x,2^x) \cap X| \geq n\}$.
\smallskip

\emph{Claim 1. The class $\P$ is partition regular.} First, $\NN \in \P$. Let~$X_0 \cup X_1 \in \P$ for some sets $X_0,X_1$, if $X_0,X_1 \notin \P$, then for all $i < 2$, there exists some $n_i \in \NN$ such that for all $x$, and  $|[x,2^x) \cap X_i| < n_i$. Thus, for all $x \in \NN$, $|[x,2^x) \cap (X_0 \cup X_1)| < n_0 + n_1$, contradicting our assumption that $X_0 \cup X_1 \in \P$. This proves our claim.
\smallskip

Let $\U_D^\M$ be an $\M$-cohesive partition regular subclass of~$\P$, which exists as $\P$ is $\Pi^0_2$.
Let $Z_0, Z_1, \dots$ be the list of all sets~$X$ in $\M$ such that $\U_D^\M \subseteq \L_X$ and
let $\U_C^\M = \bigcap_n \L_{Z_n}$.
By $\M$-cohesiveness of $\U_D^\M$, for every $X \in \M$, either $\U_D^\M \subseteq \L_X$ or $\U_D^\M \subseteq \L_{\overline{X}}$, so $\U_C^\M$ is $\M$-cohesive. Moreover, $\U_D^\M \subseteq \U_C^\M$.
\smallskip

\emph{Claim 2. $\U_C^\M \not \subseteq \U_D^\M$}
Let $X = \{x_0, x_1, \dots \}$ defined as follows: let $x_n$ be the smallest element of $\bigcap_{k \leq n} Z_k$ bigger than $2^{x_{n-1}}$ (or bigger than $0$ if $n = 0$). Note that $\bigcap_{k \leq n} Z_k$ is infinite, since $\{ \overline{Z}_k : k \leq n \} \cup \{\bigcap_{k \leq n} Z_k\}$ is a cover of~$\NN$ and none of $\overline{Z}_k$ belongs to $\U_C^\M$, so $\bigcap_{k \leq n} Z_k \in \U_C^\M \subseteq \L_\NN$. By construction $X$ cannot be an element of $\P$ as $|X \cap [x,2^x)| \leq 1$ for every $x$, and $X \in \U_C^\M$ as $X \cap Z_k$ is infinite for every $k \in \NN$.
It follows that $\U_C^\M$ is $\M$-cohesive, but not $\M$-minimal.

\end{proof}

\subsection{Scott towers, largeness towers}

In order to obtain notions of forcing with a good iterated jump control, we shall often work with hierarchies of Scott ideals with good computational properties.

\begin{definition}
Fix~$n \geq 0$. A \emph{Scott tower} of height~$n$ is a sequence of Scott ideals $\M_0, \dots, \M_n$
with Scott codes $M_0, \dots, M_n$, respectively, such that for every~$i < n$, $M_i' \in \M_{i+1}$.
\end{definition}

Intuitively, a Mathias-like notion of forcing with a good iterated-jump control will decide $\Sigma^0_{k+1}$-properties with a question with parameters in~$\M_k$.

\begin{remark}\label{rem:index-translation}
Note that for every Scott tower $\M_0, \dots, \M_n$ and $i < n$, there exists a computable function $g : \NN \to \NN$
translating $M_i$-indices into $M_{i+1}$-indices, that is, such that for every~$a \in \NN$, $g(a)$ is an $M_{i+1}$-index of the set of $M_i$-index $a$. Indeed, there is a computable function $h : \NN \to \NN$ such that $\Phi^{M_i}_{h(a)}$ is a set of $M_i$-index~$a$ for every~$a \in \NN$. For every~$a \in \NN$, let $Z_a$ be the set of $M_{i+1}$-index~$a$. By our definition of a Scott code, there exists a total computable function $q : \NN^2 \to \NN$ such that $Z_{q(e, a)}$ is a completion of $\Phi_e^{Z_a}$. Let $b$ be an $M_{i+1}$-index of~$M_i$ and let $g(a) = q(h(a), b)$. By definition, $Z_{g(a)}$ is a completion of $\Phi^{M_i}_{h(a)}$, hence is the set of~$M_i$-index~$a$.
\end{remark}

The following proposition will be useful throughout this article.

\begin{proposition}\label[lemma]{lem:scott-tower}
Fix~$n \geq 0$. There exists a Scott tower $\M_0, \dots, \M_n$ with Scott codes $M_0, \dots, M_n$, such that for every~$i \leq n$, $M_i$ is low over $\emptyset^{(i)}$, that is, $(M_i \oplus \emptyset^{(i)})' \leq_T \emptyset^{(i+1)}$.
\end{proposition}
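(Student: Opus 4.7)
The plan is to prove the proposition by induction on $n$, combining the relativized low basis theorem with \Cref{prop:pa-to-scott} (Scott's theorem), which produces, for any set $X$, a non-empty $\Pi^0_1(X)$ class $\C(X)$ whose members are Scott codes of Scott ideals containing $X$.

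For the base case $n = 0$, I would apply \Cref{prop:pa-to-scott} to $X = \emptyset$ to obtain the non-empty $\Pi^0_1$ class $\C(\emptyset)$, and then apply the low basis theorem of Jockusch and Soare to extract an element $M_0 \in \C(\emptyset)$ with $M_0' \leq_T \emptyset'$. This $M_0$ is a Scott code of a Scott ideal $\M_0$, and is low (hence low over $\emptyset^{(0)} = \emptyset$), as desired.

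For the inductive step, suppose Scott ideals $\M_0, \dots, \M_i$ with Scott codes $M_0, \dots, M_i$ have been constructed, with $M_j$ low over $\emptyset^{(j)}$ for every $j \leq i$ and $M_{j-1}' \in \M_j$ for every $0 < j \leq i$. From $(M_i \oplus \emptyset^{(i)})' \leq_T \emptyset^{(i+1)}$ I can extract that $M_i' \leq_T \emptyset^{(i+1)}$. I then apply \Cref{prop:pa-to-scott} with $X = M_i'$ to obtain a non-empty $\Pi^0_1(M_i')$ class $\C(M_i')$ of Scott codes of Scott ideals containing $M_i'$. Since $M_i' \leq_T \emptyset^{(i+1)}$, the class $\C(M_i')$ is also $\Pi^0_1(\emptyset^{(i+1)})$. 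Applying the low basis theorem relativized to $\emptyset^{(i+1)}$ to this class yields a member $M_{i+1} \in \C(M_i')$ such that
\[
(M_{i+1} \oplus \emptyset^{(i+1)})' \leq_T (\emptyset^{(i+1)})' = \emptyset^{(i+2)}.
\]
Thus $M_{i+1}$ is a Scott code of a Scott ideal $\M_{i+1}$ containing $M_i'$, and $\M_{i+1}$ is low over $\emptyset^{(i+1)}$, completing the induction.

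There is no genuine obstacle here: the proof is a direct combination of two off-the-shelf tools (\Cref{prop:pa-to-scott} and the relativized low basis theorem). The only mild subtlety is to observe, at each step, that lowness of $M_i$ over $\emptyset^{(i)}$ allows the $\Pi^0_1(M_i')$ class $\C(M_i')$ to be re-read as a $\Pi^0_1(\emptyset^{(i+1)})$ class, which is what makes the relativized low basis theorem applicable at the right oracle.
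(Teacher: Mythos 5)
Your proposal is correct and follows essentially the same route as the paper: Scott's theorem (\Cref{prop:pa-to-scott}) combined with the relativized low basis theorem. The only cosmetic difference is that you apply Scott's theorem to $M_i'$ directly rather than to $\emptyset^{(i+1)}$ (the paper picks $M_i \in \C(\emptyset^{(i)})$ at each level independently and then observes $M_i' \equiv_T \emptyset^{(i+1)}$); since these two sets are Turing equivalent, the two phrasings give the same construction.
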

\begin{proof}
For every set $X$, consider the $\Pi^0_1(X)$ class $\C(X)$ defined in \Cref{prop:pa-to-scott}.
By a relativized version of the low basis theorem~\cite{jockusch1972classes}, for every $i \leq n$, there is a Scott ideal~$\M_i$ containing $\emptyset^{(i)}$ with Scott code~$M_i \in \C(\emptyset^{(i)})$ of degree low over $\emptyset^{(i)}$. Then, for every $i < n$, $M_i' \equiv_T \emptyset^{(i+1)}$, and thus $M_i' \in \M_{i+1}$.
\end{proof}

Given a Scott tower $\M_0, \dots, \M_n$, we shall define some sets $C_0, \dots, C_{n-1}$ such that $\U^{\M_i}_{C_i}$ is $\M_i$-cohesive, and work with notions of forcing with multiple reservoirs, such that the reservoir~$X_i$ at level~$i$ belongs to $\M_i \cap \U^{\M_i}_{C_i}$. The combinatorics of the notions of forcing will require in particular that the various cohesive classes are compatible, that is, $\bigcap_i \U^{\M_i}_{C_i}$ is large. This is not the case in general, as given a bi-infinite set~$X$, $\L_X$ and $\L_{\overline{X}}$ are partition regular, but $\L_X \cap \L_{\overline{X}}$ is not even large. We will therefore need to ensure structurally this compatibility between the cohesive classes. This yields the notion of largeness tower.

\begin{definition}
Fix~$n \geq 0$. A \emph{largeness tower} of height~$n$ is a Scott tower $\M_0, \dots, \M_n$
together with a sequence of sets  $C_0, \dots, C_{n-1}$ such that for every~$i < n$:
\begin{enumerate}
    \item $\U_{C_i}^{\M_i}$ is an $\M_i$-cohesive large class containing only infinite sets ;
    \item $C_i \in \M_{i+1}$ ;
    \item $\U_{C_{i+1}}^{\M_{i+1}} \subseteq \langle \U_{C_i}^{\M_i} \rangle$ if $i < n-1$.
\end{enumerate}
\end{definition}

The following proposition shows that the compatibility requirements between the various notions of largeness
do not impose any constraints on the notion of Scott tower, and thus that the two constructions can done separately.

\begin{proposition}[Monin and Patey~\cite{monin2021weakness}]\label[lemma]{lem:largeness-tower}
Every Scott tower can be completed into a largeness tower.
\end{proposition}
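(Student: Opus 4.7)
The plan is to construct the sets $C_0, \dots, C_{n-1}$ by induction on $i$, invoking \Cref{lem:computation-m-cohesive} at each step with a carefully chosen seed~$D$ so that the containment required by condition~(3) is automatic.

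For the base case, I would take $D_0$ to be a computable set of indices such that $\U_{D_0}^{\M_0}$ equals the class of all infinite sets; this class is large and contains only infinite sets. Applying \Cref{lem:computation-m-cohesive} with $\M = \M_0$ and $D = D_0$, any PA degree over $M_0'$ computes a set $C_0 \supseteq D_0$ such that $\U_{C_0}^{\M_0}$ is $\M_0$-cohesive. Since $M_0' \in \M_1$ and $\M_1$ is a Scott ideal, a PA degree over $M_0'$ lies inside $\M_1$, giving $C_0 \in \M_1$.

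For the inductive step, suppose $C_0, \dots, C_{i-1}$ have been built. I would first produce an index set for $\langle \U_{C_{i-1}}^{\M_{i-1}} \rangle$, then re-express this class over~$\M_i$. A relativization of \Cref{lem:computation-m-minimal} with $\M = \M_{i-1}$ and $D = C_{i-1}$ gives a set $E \supseteq C_{i-1}$, computable from $(C_{i-1} \oplus M_{i-1}')'$, with $\U_E^{\M_{i-1}} = \langle \U_{C_{i-1}}^{\M_{i-1}} \rangle$. Since $C_{i-1} \in \M_i$ by induction and $M_{i-1}' \in \M_i$ by the Scott-tower hypothesis, we have $(C_{i-1} \oplus M_{i-1}')' \leq_T M_i' \in \M_{i+1}$, so $E \in \M_{i+1}$. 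Translating $M_{i-1}$-indices to $M_i$-indices via \Cref{rem:index-translation} yields a set $E' \in \M_{i+1}$ with $\U_{E'}^{\M_i} = \langle \U_{C_{i-1}}^{\M_{i-1}} \rangle$, still large and containing only infinite sets. Applying \Cref{lem:computation-m-cohesive} with $\M = \M_i$ and $D = E'$ gives, for any PA degree over $E' \oplus M_i'$, a set $C_i \supseteq E'$ such that $\U_{C_i}^{\M_i}$ is $\M_i$-cohesive. Since $E' \oplus M_i' \in \M_{i+1}$ and $\M_{i+1}$ is a Scott ideal, such a PA degree lies in $\M_{i+1}$, whence $C_i \in \M_{i+1}$. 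Finally, $C_i \supseteq E'$ yields $\U_{C_i}^{\M_i} \subseteq \U_{E'}^{\M_i} = \langle \U_{C_{i-1}}^{\M_{i-1}} \rangle$, verifying condition~(3).

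The only real delicacy is the bookkeeping of complexities across the various Scott ideals: one must confirm at each inductive step that the indices produced by the auxiliary lemmas land inside $\M_{i+1}$. The tower inequality $M_i' \in \M_{i+1}$, together with the Scott-ideal property of containing a PA degree over any of its members, is precisely what absorbs the jumps introduced by \Cref{lem:computation-m-minimal} and \Cref{lem:computation-m-cohesive} at each stage. Once this is tracked carefully, the construction is routine.
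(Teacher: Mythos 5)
Your proposal is correct and follows essentially the same route as the paper's proof: a base case via \Cref{lem:computation-m-cohesive} seeded by the indices for the class of all infinite sets, then an inductive step that first computes an index set for $\langle \U_{C_{i-1}}^{\M_{i-1}} \rangle$ using \Cref{lem:computation-m-minimal}, translates indices via \Cref{rem:index-translation}, and re-cohesifies over $\M_i$ via \Cref{lem:computation-m-cohesive}, with the Scott-tower condition $M_i' \in \M_{i+1}$ absorbing the jumps at each step. The only cosmetic difference is that you make the seed $D_0$ explicit and track the complexities in a slightly more spelled-out manner; the paper leaves the seed implicit and is terser about the arithmetic.
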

\begin{proof}
Let $\M_0, \dots, \M_n$ be a Scott tower with Scott codes $M_0, \dots, M_n$. 
The Scott ideal $\M_1$ contains $M_0'$ hence also contains a set~$X_0$ that is PA over $M_0'$. By \Cref{lem:computation-m-cohesive}, $X_0$ computes a set $C_0$ such that $\U_{C_0}^{\M_0}$ is an $\M_0$-cohesive large class. Note that $C_0 \in \M_1$ since a Scott ideal is stable by Turing reduction.

Assume $C_k$ has already been defined for some $k < n - 1$. In particular, $\U_{C_{k}}^{\M_{k}}$ is $\M_k$-cohesive, so by \Cref{lem:computation-m-minimal}, $(C_k \oplus M_k')'$ computes a set $D_{k+1}$ such that $\U_{D_{k+1}}^{\M_k} = \langle \U_{C_{k}}^{\M_{k}} \rangle$. By \Cref{rem:index-translation}, there is a set~$E_{k+1} \leq_T D_{k+1}$ such that $\U_{E_{k+1}}^{\M_{k+1}} = \U_{D_{k+1}}^{\M_k}$. Note that $C_k \oplus M_k' \in M_{k+1}$, so $E_{k+1} \leq_T D_{k+1} \leq_T M_{k+1}'$. By definition of a largeness tower, $M_{k+1}' \in \M_{k+2}$ so there exists a set~$X_{k+1} \in \M_{k+2}$ of PA degree over~$M_{k+1}'$. By \Cref{lem:computation-m-cohesive}, $X_{k+1}$ computes a set $C_{k+1} \supseteq E_{k+1}$ such that the class $\U_{C_{k+1}}^{\M_{k+1}}$ is an $\M_{k+1}$-cohesive class. In particular, $C_{k+1} \in \M_{k+2}$.
\end{proof}

\section{Core forcing}\label{core-forcing}

Monin and Patey~\cite{monin2021weakness} designed a notion of forcing which will serve as the common combinatorial core to all the notions of forcing introduced in this article. We define it and state its main lemmas and re-prove them for the sake of completeness. 
For the remainder of this section, fix~$n \geq 0$, and let $\M_0, \dots, \M_{n+1}$ be Scott ideals with Scott codes $M_0, \dots, M_{n+1}$, respectively, and let $C_0, \dots, C_n$ be forming a largeness tower, that is, for every $i \leq n$:
\begin{itemize}
    \item $\U_{C_i}^{\M_i}$ is an $\M_i$-cohesive large class containing only infinite sets ;
    \item $C_i,M_i' \in \M_{i+1}$ ;
    \item $\U_{C_{i+1}}^{\M_{i+1}} \subseteq \langle \U_{C_i}^{\M_i} \rangle$ if $i < n$.
\end{itemize}
This hierarchy is defined abstractly because of its multiple uses in \Cref{sect:iterated-lowness} and \Cref{sect:preservation-hyps}. 

Following the intuition on $\M$-minimal partition regular classes, $\langle \U_{C_0}^{\M_0} \rangle$ is a collection of reservoirs with a maximum amount of positive information, that is, satisfying a maximum amount of $\Sigma^0_1$ properties. More generally, $\langle \U_{C_n}^{\M_n} \rangle$ satisfies a maximum amount of $\Sigma^0_{n+1}$ properties. Moreover, this hierarchy of minimal classes is ordered under inclusion, so that any reservoir in $\langle \U_{C_n}^{\M_n} \rangle$ will satisfy simultaneously all these $\Sigma^0_{k+1}$ properties for $k \leq n$. The core forcing is a refinement of Mathias forcing in which the reservoirs are required to maintain as much positive information as possible.

\begin{definition}[Condition]
For any set~$A \in \langle \U_{C_n}^{\M_n} \rangle$, let~$\PP^A_n$ be the notion of forcing whose conditions are Mathias conditions $(\sigma, X_n)$ such that
 \begin{itemize}
    \item $\sigma \subseteq A$ ;
    \item $X_n \in \M_n \cap \langle \U_{C_n}^{\M_n} \rangle$.
\end{itemize}
\end{definition}

Note that for every set~$A$, either $A$ or $\overline{A} \in \langle \U_{C_n}^{\M_n} \rangle$, hence either $\PP^A_n$ or $\PP^{\overline{A}}_n$ is a valid notion of forcing, and it might be the case for both. Actually, by Monin and Patey~\cite[Proposition 2.7]{monin2022partition}, the measure of sets~$A$ such that both $A$ and $\overline{A}$ belong to $\langle \U_{C_n}^{\M_n} \rangle$ is~1.

The definition of a condition slightly differs from the original notion~\cite{monin2021weakness} by two aspects: First, the reservoir is required to belong to~$\M_n$, while the original definition did not impose any computability-theoretic constraint on it. Second, the stem~$\sigma$ is required to be a subset of~$A$. None of these variations will affect the combinatorial properties of the notion of forcing, but they will be very useful for the study of $\Sub{\Sigma^0_n}$ and $\Sub{\Delta^0_n}$.

\begin{definition}
The set $\PP^A_n$ is partially ordered using the Mathias extension relation, that is, a $\PP^A_n$-condition $(\tau,Y_n)$ \emph{extends} a $\PP^A_n$-condition $(\sigma,X_n)$ (and we write $(\tau,Y_n) \leq (\sigma,X_n)$) if $Y_n \subseteq X_n$ and $\sigma \preceq \tau \subseteq \sigma \cup X_n$.
\end{definition}

Given a $\PP^A_n$-condition~$(\sigma, X_n)$, requiring that the stem~$\sigma$ is included in~$A$ might be an issue, since the reservoir~$X_n$ might have empty intersection with~$A$. The following lemma shows that not only $A \cap X_n$ is infinite, but $X_n$ and $X_n \cap A$ also satisfy the same large $\Sigma^0_{k+1}$ properties forced by $\langle \U_{C_n}^{\M_n} \rangle$ for every~$k \leq n$.

\begin{lemma}\label[lemma]{lem:core-reservoir-intersection}
    Let $(\sigma, X_n) \in \PP^A_n$ be a condition. Then $A \cap X_n \in \langle \U_{C_n}^{\M_n} \rangle$.
\end{lemma}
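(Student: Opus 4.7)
The plan is to exploit two facts about $\A := \langle \U_{C_n}^{\M_n} \rangle$: it is partition regular (being $\M_n$-minimal large), and it is $\M_n$-cohesive (since $\U_{C_n}^{\M_n}$ is $\M_n$-cohesive and $\A \subseteq \U_{C_n}^{\M_n}$, so inclusion into $\L_X$ or $\L_{\overline{X}}$ is inherited). The condition provides us with $A \in \A$, and with $X_n$ lying in both $\M_n$ and $\A$. The strategy is to split $A$ along $X_n$, apply partition regularity, and rule out the ``wrong half''.

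\smallskip

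\textbf{Step 1: $\A \subseteq \L_{X_n}$.} Since $X_n \in \M_n$, $\M_n$-cohesiveness gives $\A \subseteq \L_{X_n}$ or $\A \subseteq \L_{\overline{X_n}}$. The second alternative is impossible: it would force every element of~$\A$ to have infinite intersection with~$\overline{X_n}$, yet $X_n \in \A$ and $X_n \cap \overline{X_n} = \emptyset$ is finite. Hence $\A \subseteq \L_{X_n}$; in particular, no subset of~$\overline{X_n}$ belongs to~$\A$.

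\smallskip

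\textbf{Step 2: partition argument on $A$.} Write $A = (A \cap X_n) \cup (A \cap \overline{X_n})$. Since $A \in \A$ and $\A$ is partition regular, at least one of the two pieces belongs to~$\A$. But $A \cap \overline{X_n} \subseteq \overline{X_n}$, so if $A \cap \overline{X_n} \in \A$, then by upward closure $\overline{X_n} \in \A$, contradicting Step~1. Therefore $A \cap X_n \in \A$, as required.

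\smallskip

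I do not expect any real obstacle here: the whole content is unpacking the definitions of partition regularity, cohesiveness, and the fact that $\langle \U_{C_n}^{\M_n} \rangle$ inherits $\M_n$-cohesiveness from $\U_{C_n}^{\M_n}$. The only subtlety worth flagging is that $A$ itself need not lie in $\M_n$, so one cannot invoke cohesiveness directly on $A$; this is precisely why the argument goes through $X_n \in \M_n$ first (Step~1) and only then splits $A$ (Step~2).
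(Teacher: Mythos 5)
Your proof is correct and follows essentially the same route as the paper: split $A$ along $X_n$, apply partition regularity, and use $\M_n$-cohesiveness together with $X_n \in \M_n \cap \langle \U_{C_n}^{\M_n} \rangle$ to conclude $\langle \U_{C_n}^{\M_n} \rangle \subseteq \L_{X_n}$, which rules out the piece $A \cap \overline{X_n}$. The only cosmetic difference is that the paper discards $A \cap \overline{X_n}$ directly (it has empty intersection with $X_n$, so cannot lie in $\L_{X_n}$), whereas you detour through upward closure to $\overline{X_n}$; both are valid.
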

\begin{proof}
    By partition regularity of $\langle \U_{C_n}^{\M_n} \rangle$, since $A \in \langle \U_{C_n}^{\M_n} \rangle$ and $A = (A \cap X_n) \cup (A \cap \overline{X_n})$, either $A \cap X_n$ or $A \cap \overline{X_n}$ belongs to $\langle \U_{C_n}^{\M_n} \rangle$. Since $X_n \in \M_n \cap \langle \U_{C_n}^{\M_n} \rangle$ and $\U_{C_n}^{\M_n}$ is $\M_n$-cohesive, $\langle \U_{C_n}^{\M_n} \rangle \subseteq \L_{X_n}$ and therefore, $A \cap \overline{X_n} \notin \langle \U_{C_n}^{\M_n} \rangle$, hence $A \cap X_n \in \langle \U_{C_n}^{\M_n} \rangle$.
\end{proof}

Every filter~$\F \subseteq \PP^A_n$ induces a set~$G_\F$ whose characteristic function is the limit of $\{\sigma : (\sigma, X_n) \in \F\}$, that is, $x \in G_\F$ is $\sigma(x) = 1$ for some~$(\sigma, X_n) \in \F$.
It is often convenient to see a forcing condition~$c \in \F$ as an approximation of the constructed object~$G_\F$. The following notion of cylinder gives the class of \qt{candidate} objects associated to a condition.

\begin{definition}[Cylinder]
The \textit{cylinder} under a $\PP^A_n$-condition $(\sigma,X_n)$ is the class 
$$[\sigma, X_n] = \{G : \sigma \subseteq G \subseteq \sigma \cup (X_n \cap A)\}$$
\end{definition}

Given a filter~$\F$, using the \qt{candidate} interpretation of a cylinder, one should expect the resulting object~$G_\F$ to belong to the cylinder of each condition of~$\F$. We shall see in \Cref{prop:core-forcing-generic-set} that this is the case.
Moreover, still following this intuition, if a condition $d$ extends another condition~$c$, then $d$ is a more precise approximation than~$c$, so there should be less candidate objects associated to~$d$ than to~$c$. This is indeed the case: if $(\tau, Y_n) \leq (\sigma, X_n)$, then $[\tau, Y_n] \subseteq [\sigma, X_n]$. 

Monin and Patey~\cite{monin2021weakness} designed the following forcing question for the core forcing. In order to obtain a forcing question with sufficiently good definitional properties, the question does not directly involve the reservoir of the condition, but over-approximates it by asking whether the collection of reservoirs with the desired property is large. This results in a forcing question depending only on the stem of the condition. Despite this over-approximation, the resulting question for~$\Sigma^0_{n+1}$-formulas is not $\Sigma^0_{n+1}$, but rather $\Pi^0_1(\M_{n+1})$. Because of this, we shall refine this notion of forcing in the later sections to obtain better definability properties.

\begin{definition}[{Forcing question, \cite[Definition 3.3]{monin2021weakness}}]
    Let $\sigma \in 2^{<\MM}$ be a finite string.

    \begin{itemize}
        \item For $\phi(G,x)$ a $\Pi_0^0$ formula, let $\sigma \qvdash (\exists x)\phi(G,x)$ hold if: 
        $$\U_{C_0}^{\M_0} \cap \{X : (\exists \rho \subseteq X)(\exists x)\phi(\sigma \cup \rho, x) \} \textit{ is large}.$$
        \item 
    For $1 \leq k \leq n$ and $\phi(G,x)$ a $\Pi_k^0$ formula, we define inductively the relation $\sigma \qvdash (\exists x)\phi(G,x)$ to hold if: $$\U_{C_{k}}^{\M_{k}} \cap \{X : (\exists \rho \subseteq X)(\exists x) \sigma \cup \rho \nqvdash \neg \phi(G,x) \} \textit{ is large}.$$
    \end{itemize}
\end{definition}

As mentioned above, the forcing question does not have the appropriate definability property.

\begin{lemma}\label[lemma]{lem:core-complexity-question}
The statement $\sigma \qvdash \phi(G)$ is $\Pi_1^0(\M_{k+1})$ if $\phi(G)$ is a $\Sigma_{k+1}^0$ formula for $k \leq n$.
\end{lemma}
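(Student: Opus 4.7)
The plan is to prove this by induction on $k$, using \Cref{lem:complexity-largeness} as the central tool. Recall it states that largeness of $\U_C^{\M}$ is a $\Pi^0_1(C \oplus M')$ statement, and recall also that the largeness tower satisfies $C_i, M_i' \in \M_{i+1}$ for each~$i$. These two facts are exactly what is needed to absorb the complexity of the forcing question into~$\M_{k+1}$.

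For the base case $k = 0$, consider a $\Sigma^0_1$ formula $(\exists x)\phi(G,x)$ with $\phi$ a $\Pi^0_0$ formula. The upward closure $\{X : (\exists \rho \subseteq X)(\exists x)\phi(\sigma \cup \rho, x)\}$ is an upward-closed $\Sigma^0_1$ class, hence of the form $\U_e$ for some index $e$ computable from $\sigma$. The intersection with $\U_{C_0}^{\M_0}$ is then of the form $\U_{C_0 \cup \{(e, i_0)\}}^{\M_0}$ where $i_0$ is an $M_0$-index of any fixed set, so by \Cref{lem:complexity-largeness} its largeness is $\Pi^0_1(C_0 \oplus M_0')$. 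Since $C_0, M_0' \in \M_1$, the forcing question is $\Pi^0_1(\M_1)$.

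For the inductive step, assume the result holds at all levels $\leq k$ and consider $\phi(G,x)$ a $\Pi^0_{k+1}$ formula, so that $(\exists x)\phi(G,x)$ is $\Sigma^0_{k+2}$. By induction hypothesis, the relation $\tau \qvdash \neg\phi(G,x)$ is $\Pi^0_1(\M_{k+1})$ uniformly in $\tau$ and $x$ (since $\neg\phi$ is $\Sigma^0_{k+1}$), hence its negation $\tau \nqvdash \neg\phi(G,x)$ is $\Sigma^0_1(\M_{k+1})$. Consequently, the class
$$\{X : (\exists \rho \subseteq X)(\exists x)\ \sigma \cup \rho \nqvdash \neg\phi(G,x)\}$$
is an upward-closed $\Sigma^0_1(Y)$ class for some $Y \in \M_{k+1}$, and therefore equals $\U_e^Y$ for some~$e$. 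Its intersection with $\U_{C_{k+1}}^{\M_{k+1}}$ can be written as $\U_{C_{k+1} \cup \{(e,i)\}}^{\M_{k+1}}$, where $i$ is an $M_{k+1}$-index of~$Y$. Applying \Cref{lem:complexity-largeness} once more, largeness of this class is $\Pi^0_1(C_{k+1} \oplus M_{k+1}')$, and since both $C_{k+1}$ and $M_{k+1}'$ lie in~$\M_{k+2}$, this is $\Pi^0_1(\M_{k+2})$, as required.

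There is no real obstacle here; the argument is a bookkeeping induction whose success depends entirely on the structural hypotheses of a largeness tower. The one point needing a small amount of care is ensuring that the upward-closed class produced in the inductive step genuinely uses only parameters available in~$\M_{k+1}$ so that it can be coded as a single $\U_e^Y$ with $Y \in \M_{k+1}$; this is exactly what the inductive hypothesis delivers, and the constraints $C_k, M_k' \in \M_{k+1}$ guarantee that the level in the Scott tower accommodates the parameters coming from lower levels.
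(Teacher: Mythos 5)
Your proof is correct and follows exactly the route the paper intends: the paper's own proof reads simply ``By an immediate induction using \Cref{lem:complexity-largeness},'' and your write-up is the natural fleshing-out of that one-line induction, using $\U_C^\M$-largeness being $\Pi^0_1(C\oplus M')$ together with the largeness-tower constraints $C_i, M_i' \in \M_{i+1}$ to push the complexity up one level per step. The bookkeeping at the base case and the uniformity observation in the inductive step are exactly the details the paper elides.
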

\begin{proof}
By an immediate induction using \Cref{lem:complexity-largeness}.
\end{proof}

The forcing question is defined by induction over the syntactical definition of a formula. It is therefore sensitive to the presentation of a property. In particular, logically equivalent formulas might yield a different answer through the forcing question.

Every notion of forcing induces a forcing relation by letting~$c$ force~$\varphi(G)$ if and only if $\varphi(G_\F)$ for every sufficiently generic filter~$\F$ containing~$c$. However, in many situations, it is more convenient to work with a custom syntactical forcing relation. 

\begin{definition}[{Forcing relation, \cite[Definition 3.5]{monin2021weakness}}]\label[definition]{def:core-forcing-relation}
Let $c = (\sigma,X_n)$ be a $\PP_n^A$-condition, we define the forcing relation $\Vdash$ for $\Sigma_k^0$ and $\Pi_k^0$ formulas for every $0 < k \leq n+1$ as follows: For $\phi(G,x)$ a $\Pi_0^0$ formula:
\begin{itemize}
    \item $c \Vdash (\exists x)\phi(G,x)$ if $\phi(\sigma,a)$ for some $a \in \NN$;
    \item $c \Vdash (\forall x)\neg\phi(G,x)$ if $(\forall \rho \subseteq X_n)(\forall x \in \NN) \neg\phi(\sigma \cup \rho,x)$.
\end{itemize}
For $0 < k \leq n$ and $\phi(G,x)$ a $\Pi_{k}^0$ formula:
\begin{itemize}
    \item $c \Vdash (\exists x)\phi_e(G,x)$ if $c \Vdash \phi(G,a)$ for some~$a \in \NN$;
    \item $c \Vdash (\forall x)\neg\phi(G,x)$ if $(\forall \rho  \subseteq X_n)(\forall x \in \NN) \sigma \cup \rho \qvdash \neg \phi(G,x)$.
\end{itemize}
\end{definition}

Intuitively, a $\Pi^0_{k+1}$-formula $(\forall x)\neg \phi(G, x)$ can be seen as a countable collection $\{\neg \phi(G, x) : x \in \NN \}$ of~$\Sigma^0_k$-formulas. Assuming that the forcing question meets its requirements, the forcing relation for $\Pi^0_{k+1}$ formulas is a density statement, saying that for every $\Sigma^0_k$-formula $\neg \phi(G, x)$, whatever the extension of the stem~$\sigma$ into~$\sigma \cup \rho$, since $\sigma \cup \rho \qvdash \neg \phi(G,x)$, there is an extension forcing $\neg \phi(G, x)$. Thus, if~$\F$ is a sufficiently generic filter containing a condition~$c = (\sigma, X_n)$ such that $\sigma \qvdash (\forall x)\neg \phi(G,x)$, then for every $x \in \NN$, there will be a condition in~$\F$ forcing $\neg \phi(G, x)$.

Also note that the forcing relation for $\Pi^0_k$ formulas might seem too strong, as one would expect to require $\rho$ to range over~$X_n \cap A$ instead of~$X_n$. We shall see, thanks to \Cref{lem:core-question-find-extension}, that it is still dense to force either a formula or its negation.
Together with \Cref{lem:core-forcing-closure,prop:core-forcing-imply-truth}, this shows that the forcing relation satisfies the axioms of~\Cref{def:forcing-relation}.

\begin{lemma}\label{lem:core-forcing-closure}
    The forcing relations are closed downwards.
\end{lemma}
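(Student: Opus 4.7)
The plan is to proceed by simultaneous induction on the complexity level~$k$, establishing downward closure for both $\Sigma^0_k$ and $\Pi^0_k$ formulas at each stage. The crucial combinatorial observation reused throughout is the following: if $d = (\tau, Y_n) \leq c = (\sigma, X_n)$ is a Mathias extension, then $Y_n \subseteq X_n$ and $\tau \setminus \sigma \subseteq X_n$, so for every $\rho' \subseteq Y_n$ we can write $\tau \cup \rho' = \sigma \cup \rho''$ with $\rho'' := (\tau \setminus \sigma) \cup \rho' \subseteq X_n$. This is precisely the reason the $\Pi^0_k$-clauses of \Cref{def:core-forcing-relation} quantify over $\rho \subseteq X_n$ (and not, say, $\rho \subseteq X_n \cap A$).

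For the base case, suppose $\phi(G,x)$ is $\Pi^0_0$. If $c \Vdash (\exists x)\phi(G,x)$ because $\phi(\sigma, a)$ holds, then since $\phi$ is bounded and $\sigma \preceq \tau$, the same witness yields $\phi(\tau, a)$, so $d \Vdash (\exists x)\phi(G,x)$. If $c \Vdash (\forall x)\neg\phi(G,x)$, then $\neg\phi(\sigma \cup \rho, x)$ holds for every $\rho \subseteq X_n$ and every $x$; applying the observation above, any $\rho' \subseteq Y_n$ gives $\tau \cup \rho' = \sigma \cup \rho''$ with $\rho'' \subseteq X_n$, hence $\neg\phi(\tau \cup \rho', x)$, so $d \Vdash (\forall x)\neg\phi(G,x)$.

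For the inductive step with $0 < k \leq n$ and $\phi(G,x)$ a $\Pi^0_k$ formula, the $\Sigma^0_{k+1}$-case is immediate: if $c \Vdash \phi(G,a)$ for some witness $a$, then by the induction hypothesis applied to $\phi(G,a)$ we get $d \Vdash \phi(G,a)$, and hence $d \Vdash (\exists x)\phi(G,x)$. The $\Pi^0_{k+1}$-case again reduces to the combinatorial observation: the assumption $c \Vdash (\forall x)\neg\phi(G,x)$ means $\sigma \cup \rho \qvdash \neg\phi(G,x)$ for all $\rho \subseteq X_n$ and all $x$; rewriting $\tau \cup \rho' = \sigma \cup \rho''$ for any $\rho' \subseteq Y_n$ with $\rho'' \subseteq X_n$, the hypothesis applied to $\rho''$ yields $\tau \cup \rho' \qvdash \neg\phi(G,x)$, which is exactly what $d \Vdash (\forall x)\neg\phi(G,x)$ requires.

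I do not anticipate any real obstacle: the result is essentially engineered into the definition of the forcing relation, and the induction is bookkeeping around the single combinatorial identity $\tau \cup \rho' = \sigma \cup ((\tau \setminus \sigma) \cup \rho')$. The only subtlety worth flagging is that downward closure at the $\Pi^0_{k+1}$-level only relies on the forcing \emph{question} being monotone under lengthening of the stem, and not on any extra structural property of $\qvdash$; since the definition of $\sigma \qvdash \cdot$ depends solely on $\sigma$, this monotonicity is trivially satisfied for the purpose of this argument.
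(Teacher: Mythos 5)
Your proof is correct and follows the same route as the paper's: induction on quantifier depth, with the $\Sigma^0_{k+1}$-clause reducing to the induction hypothesis and the $\Pi^0_{k+1}$-clause reducing to the rewriting $\tau \cup \rho' = \sigma \cup ((\tau\setminus\sigma)\cup\rho')$, which the paper carries out with the same device $\mu = \tau\setminus\sigma$. One small remark on your closing paragraph: the $\Pi^0_{k+1}$-case does not actually invoke any monotonicity of the forcing question under stem-lengthening — it is a pure substitution (the string $\tau\cup\rho'$ literally \emph{is} one of the strings $\sigma\cup\rho$ ranged over in the hypothesis), so no structural property of $\qvdash$ is appealed to at all; your phrasing makes it sound as though a separate monotonicity fact is being checked and found trivial, when in fact nothing of the sort is used.
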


\begin{proof}
    Let $c = ( \sigma,X_n)$ and $d = (\tau,Y_n)$ be two $\PP_n^A$-conditions, such that $d \leq c$.
Let $\phi(G, x)$ be a $\Pi^0_0$-formula.
    \begin{itemize}
        \item If $c \vdash (\exists x)\phi(G, x)$, then $\phi(\sigma,a)$ holds for some $a \in \NN$ hence $\phi(\tau,a)$ holds and thus $d \Vdash (\exists x)\phi(G, x)$.
        \item If $c \vdash (\forall x)\neg \phi(G, x)\varphi(G)$, then $(\forall \rho \subseteq X_n)(\forall x \in \NN)\neg \phi(\sigma \cup \rho,x)$, hence in particular, letting $\mu$ be such that $\sigma \cdot \mu = \tau$,  $(\forall \rho \subseteq Y_n)(\forall x \in \NN)\neg \phi(\sigma \cup \mu \cup \rho,x)$, hence $d \Vdash (\forall x)\neg \phi(G, x)$.
    \end{itemize}
Now, let $\phi(G, x)$ be a $\Pi^0_k$-formula for $1 \leq k \leq n$:
    \begin{itemize}
        \item If $c \vdash (\exists x)\phi(G, x)$, then $c \Vdash \phi(G,a)$ for some $a \in \NN$. By induction hypothesis, $d \Vdash \phi(G,a)$, and thus $d \Vdash (\exists x)\phi(G, x)$.
         \item If $c \vdash (\forall x)\neg \phi(G, x)$, then $(\forall \rho \subseteq X_n)(\forall x \in \NN)\sigma \cup \rho \qvdash \neg \phi(G,x)$, hence in particular, letting $\mu$ be such that $\sigma \cdot \mu = \tau$, $(\forall \rho \subseteq Y_n)(\forall x \in \NN) \sigma \cup \mu \cup \rho \qvdash \neg \phi(G,x)$, hence $d \Vdash (\forall x)\neg \phi(G, x)$.
    \end{itemize}
\end{proof}

In order to prove iterated lowness basis theorems, one will need to construct generic filters effectively. For this, it is necessary to fix a finite representation of $\PP^A_n$-condition to talk about effectivity.
Recall that an \emph{$M_n$-index} of a set $X \in \M_n$ is an integer~$a \in \NN$ such that $X = Z_a$. Note that $M_n$-indices are not unique.

\begin{definition}
An \emph{index} of a $\PP^A_n$-condition $(\sigma, X_n)$ is a pair $\langle \sigma, a\rangle$ where~$a$ is an $M_n$-index of~$X_n$.
\end{definition}

The following lemma states that the forcing question meets its specifications, that is, each case is witnessed by an extension forcing the answer.

\begin{lemma}[{\cite[Lemma 3.8]{monin2021weakness}}]\label[lemma]{lem:core-question-find-extension}
Let $c = (\sigma, X_n)$ be a $\PP^A_n$-condition and $\phi(G,x)$ be a $\Pi_k^0$ formula for $k \leq n$.
\begin{itemize}
    \item If $\sigma \qvdash (\exists x) \phi(G,x)$, then there exists $d \leq c$ such that $d \Vdash (\exists x) \phi(G,x)$.
    \item If $\sigma \nqvdash (\exists x) \phi(G,x)$, then there exists $d \leq c$ such that $d \Vdash (\forall x) \neg \phi(G,x)$.
\end{itemize}
Deciding which case holds requires $M_{k+1}'$, then given the answers to the forcing question, finding an index of the extension can be done uniformly in an index of~$c$ using $A \oplus M_{n+1}$.
\end{lemma}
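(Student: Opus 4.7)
The plan is to prove both clauses by induction on $k \in \{0, 1, \ldots, n\}$. For $k \geq 1$, set $\B_k = \{X : (\exists \rho \subseteq X)(\exists x)\ \sigma \cup \rho \nqvdash \neg \phi(G, x)\}$, and for $k = 0$ set $\B_0 = \{X : (\exists \rho \subseteq X)(\exists x)\phi(\sigma \cup \rho, x)\}$; the forcing question $\sigma \qvdash (\exists x)\phi(G, x)$ asks whether $\U_{C_k}^{\M_k} \cap \B_k$ is large. By \Cref{lem:core-complexity-question} applied at level $k-1$ to the $\Sigma^0_k$-formula $\neg \phi$, the relation $\nqvdash \neg \phi(G, x)$ is $\Sigma^0_1(\M_k)$; joining parameters, $\B_k$ is an upward-closed $\Sigma^0_1(Z_\star)$-class for some $Z_\star \in \M_k$ (and $Z_\star = \emptyset$ works at $k = 0$).

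For the positive clause, suppose $\U_{C_k}^{\M_k} \cap \B_k$ is large. Writing $\B_k = \U_e^{Z_\star}$ with $Z_\star \in \M_k$, the class $\B_k$ lies in the defining intersection of $\langle \U_{C_k}^{\M_k} \rangle$, so $\langle \U_{C_k}^{\M_k} \rangle \subseteq \B_k$. Combining \Cref{lem:core-reservoir-intersection} with the tower-induced nesting $\langle \U_{C_n}^{\M_n} \rangle \subseteq \cdots \subseteq \langle \U_{C_k}^{\M_k} \rangle$ gives $A \cap X_n \in \B_k$, so there exist $\rho \subseteq A \cap X_n$ and $x$ witnessing membership. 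Setting $\tau = \sigma \cup \rho$ and $Y_n' = X_n \setminus [0, \max \tau]$, the pair $(\tau, Y_n')$ is a valid $\PP_n^A$-extension of $c$ since $\M_n$ and $\langle \U_{C_n}^{\M_n} \rangle$ are preserved under cofinite modification. At $k = 0$ this already forces $\phi(G, x)$. For $k \geq 1$, write $\phi(G, x) = (\forall y)\neg\xi(G, x, y)$ with $\xi$ a $\Pi^0_{k-1}$-formula; since $\tau \nqvdash \neg \phi(G, x) = (\exists y)\xi(G, x, y)$, the induction hypothesis at level $k-1$ applied to $\xi$ and the condition $(\tau, Y_n')$ produces an extension $d$ forcing $(\forall y)\neg \xi(G, x, y) = \phi(G, x)$, hence also $(\exists x)\phi(G, x)$.

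For the negative clause, suppose $\U_{C_k}^{\M_k} \cap \B_k$ is not large. By the compactness reformulation underlying \Cref{lem:complexity-largeness}, non-largeness is witnessed by a finite $F \finsub C_k$, some $m \geq 1$, and an infinite path through the $m$-ary tree $T$ of finite $m$-partitions of initial segments of $\NN$ no part of which is enumerated into $\bigcap_{(e,i) \in F} \U_e^{Z_i} \cap \B_k$ by the corresponding stage. The tree $T$ is computable from the join of $Z_\star$ and the $Z_i$ for $(e, i) \in F$, all of which lie in $\M_k \subseteq \M_n$, so $T$ is $\M_n$-computable. Since $\M_n$ is a Scott ideal, $T$ has an infinite path inside $\M_n$, coding an $m$-partition $V_1, \ldots, V_m \in \M_n$ of $\NN$ no part of which lies in $\B_k$. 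Partition regularity of $\langle \U_{C_n}^{\M_n} \rangle$ applied to the cover $(V_i \cap X_n)_{i \leq m}$ of $X_n$ selects some $V_{i_0} \cap X_n \in \langle \U_{C_n}^{\M_n} \rangle$; upward closure of $\B_k$ then forces $Y_n := V_{i_0} \cap X_n \notin \B_k$. Since $V_{i_0}, X_n \in \M_n$, also $Y_n \in \M_n$, so $d = (\sigma, Y_n)$ is a $\PP_n^A$-extension of $c$; by definition of $\B_k$, every $\rho \subseteq Y_n$ and every $x$ satisfies $\sigma \cup \rho \qvdash \neg \phi(G, x)$ (respectively $\neg \phi(\sigma \cup \rho, x)$ at $k = 0$), so $d$ forces $(\forall x)\neg \phi(G, x)$.

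Deciding which clause of the forcing question holds is $\Pi^0_1(\M_{k+1})$ by \Cref{lem:core-complexity-question}, hence computable in $M_{k+1}'$. Given the answer, the positive-case search for a witness is uniform in $A \oplus M_k$ (to enumerate $\nqvdash$-true pairs and check $\rho \subseteq A$), and the negative-case extraction of a path through an $\M_n$-computable tree is uniform in $M_n' \leq_T M_{n+1}$; combining these, an index of $d$ is produced uniformly in an index of $c$ using $A \oplus M_{n+1}$. The main technical obstacle is the negative clause: while partition regularity of $\langle \U_{C_n}^{\M_n} \rangle$ combinatorially yields a reservoir $Y_n \subseteq X_n$ of the required shape, ensuring $Y_n \in \M_n$ requires re-expressing the non-largeness witness as a path through a tree computable inside $\M_n$ and invoking the Scott closure of $\M_n$.
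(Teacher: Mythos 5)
Your proof follows the same overall route as the paper's: for the positive clause, largeness pushes $\langle\U_{C_k}^{\M_k}\rangle$ inside $\B_k$, \Cref{lem:core-reservoir-intersection} supplies a witness $\rho\subseteq A\cap X_n$, and the induction hypothesis (the second clause at level~$k-1$) finishes. That part is correct. The negative clause, however, has a genuine gap. You build the tree~$T$ of partitions no part of which enters $\bigcap_{(e,i)\in F}\U_e^{Z_i}\cap\B_k = \U_F^{\M_k}\cap\B_k$, and then, in the very next sentence, assert that a path through~$T$ codes a partition ``no part of which lies in $\B_k$''. That is strictly stronger than what~$T$ provides: the tree only rules out membership in $\U_F^{\M_k}\cap\B_k$, and a part may well lie in $\B_k$ while failing to lie in $\U_F^{\M_k}$. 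Your subsequent argument leans on the false premise $V_{i_0}\notin\B_k$ to conclude (via upward closure of~$\B_k$) that $V_{i_0}\cap X_n\notin\B_k$. The missing step is exactly the one the paper is careful about: after partition regularity yields $V_{i_0}\cap X_n\in\langle\U_{C_n}^{\M_n}\rangle$, one must observe that upward closure of~$\langle\cdot\rangle$ gives $V_{i_0}\in\langle\U_{C_n}^{\M_n}\rangle$, and then chase the nesting $\langle\U_{C_n}^{\M_n}\rangle\subseteq\cdots\subseteq\langle\U_{C_k}^{\M_k}\rangle\subseteq\U_{C_k}^{\M_k}\subseteq\U_F^{\M_k}$ to conclude $V_{i_0}\in\U_F^{\M_k}$, whence $V_{i_0}\notin\B_k$ and only then $V_{i_0}\cap X_n\notin\B_k$. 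You mention the nesting in the positive clause but never invoke it here, and your current chain of implications does not go through without it.

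A secondary issue is the effectiveness claim in the negative case. You account for deciding the forcing question ($M_{k+1}'$) and for extracting a path through the $\M_n$-computable tree, but the paper identifies the real obstacle as something else: given the partition $V_1,\dots,V_m\in\M_n$, one must \emph{effectively select} the index $i_0$ with $V_{i_0}\in\langle\U_{C_n}^{\M_n}\rangle$. This is what \Cref{lem:complexity-finding-among-partition} is for, and it costs $C_n\oplus M_n'$ (which is $\leq_T M_{n+1}$). Your final answer $A\oplus M_{n+1}$ is correct, but the reasoning you give does not actually produce it, since path extraction alone does not tell you which part to keep as the new reservoir.
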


\begin{proof}
Suppose $k=0$.
Let $\U(\sigma,\phi)$ denote $\{X : (\exists \rho \subseteq X )(\exists x)\phi(\sigma \cup \rho, x) \}$. 
\begin{itemize}
    \item If $c \qvdash (\exists x) \phi(G,x)$, then the class $\U_{C_{0}}^{\M_{0}} \cap \U(\sigma,\phi)$ is large. By definition of the $\langle \cdot \rangle$ operator, this yields that  $\U_{C_{0}}^{\M_{0}} \cap \U(\sigma,\phi) \supseteq \langle \U_{C_{0}}^{\M_{0}} \rangle \supseteq \langle \U_{C_n}^{\M_n} \rangle$.

    By \Cref{lem:core-reservoir-intersection}, $A \cap X_n \in \langle \U_{C_n}^{\M_n}\rangle$, hence there exists some $\rho \subseteq A \cap X_n$ such that $(\exists x)\phi(\sigma \cup \rho, x)$ holds. The $\PP^A_n$-condition $d = (\sigma \cup \rho, X_n \setminus \{0, \dots, |\rho|-1\})$ is such that $d \leq c$ and $d \Vdash (\exists x)\phi(G,x)$.

    \item 
    If $c \nqvdash (\exists x) \phi(G,x)$, then the class $\U_{C_{0}}^{\M_{0}} \cap \U(\sigma,\phi)$ is not large. 
 This yields the existence of a finite set $F \subseteq C_0$ such that $\U_F^{\M_0} \cap \U(\sigma,\phi)$ is not large. As such, there is some $\ell \in \NN$ and some $(\ell+1)$-partition $Z_0, \dots Z_\ell$ of $\NN$ such that for all $i \leq \ell$, $Z_i \not \in \U_F^{\M_0} \cap \U(\sigma,\phi)$. Consider the $\Pi_1^0(\M_0)$ class of such partitions. Since $\M_0$ is a Scott ideal, there is such a partition in $\M_0$. Consider such a partition $Z_0, \dots Z_\ell$ for $\ell \in \NN$. Since $\langle \U_{C_n}^{\M_n} \rangle$ is partition regular, and as $X_n \in \langle \U_{C_n}^{\M_n} \rangle$, there exists $i \leq \ell$ such that $X_n \cap Z_i \in  \langle \U_{C_n}^{\M_n} \rangle $, and thus $Z_i \in \langle \U_{C_n}^{\M_n} \rangle $.  Moreover, since $\langle \U_{C_n}^{\M_n} \rangle \subseteq \U_{C_0}^{\M_0} \subseteq \U_F^{\M_0}$, we have that $Z_i \not \in \U(\sigma,\phi)$, and thus $X_n \cap Z_i \not \in \U(\sigma,\phi)$.  The $\PP^A_n$-condition $d = (\sigma, X_n \cap Z_i)$ is such that $d \leq c$ and $d \Vdash (\forall x) \neg \phi(G,x)$.
\end{itemize}
Suppose $0 < k \leq n$.
Let $\U(\sigma,\phi)$ denote $\{X : (\exists \rho \subseteq X )(\exists x)\sigma \cup \rho \nqvdash \neg \phi(G,x)  \}$. 
\begin{itemize}
     \item If $c \qvdash (\exists x) \phi(G,x)$, then the class $\U_{C_{k}}^{\M_{k}} \cap \U(\sigma,\phi)$ is large. By definition of the $\langle \cdot \rangle$ operator, this yields that  $\U_{C_{k}}^{\M_{k}} \cap \U(\sigma,\phi) \supseteq \langle \U_{C_{k}}^{\M_{k}} \rangle \supseteq \langle \U_{C_n}^{\M_n} \rangle $.

     By \Cref{lem:core-reservoir-intersection}, $A \cap X_n \in \langle \U_{C_n}^{\M_n}\rangle$, hence there exists some $\rho \subseteq X_n \cap A$ and some $a \in \NN$ such that $\sigma \cup \rho \nqvdash \neg \phi(G,a)$. By induction hypothesis, there exists $ d \leq (\sigma \cup \rho, X_n \setminus \{0, \dots, |\rho|-1\}) \leq c$  such that $d \Vdash \phi(G,a)$, hence, $d \Vdash (\exists x) \phi(G,x)$. 

    \item 
    This is proved the exact same way as in the $k =0$ case, after redefining $\U(\sigma,\phi)$.
\end{itemize}
Regarding the complexity of finding such $d$, the main difficulty is finding, given an $(\ell+1)$-partition $Z_0, \dots, Z_\ell$ in $\M_n$, some $i\leq \ell$ such that $Z_i \in \langle \U_{C_n}^{\M_n} \rangle$. By \Cref{lem:complexity-finding-among-partition}, this can be done uniformly in $C_n \oplus M_n'$.
\end{proof}

Genericity is usually defined with respect to a collection of dense sets of conditions. We define a hierarchy of notions of genericity depending on the formulas which are decided by the filter.

\begin{definition}[Generic filter]
We say that a $\PP^A_n$-condition $c$ \emph{decides} a formula~$\varphi(G)$ if either~$c \Vdash \varphi(G)$, or $c \Vdash \neg \varphi(G)$. A filter~$\F$ \emph{decides} a formula~$\varphi(G)$ if it contains a $\PP^A_n$-condition deciding $\varphi(G)$.

A filter~$\F$ is \emph{$k$-generic} for $1 \leq k \leq n+1$ if it decides every $\Sigma^0_1, \dots, \Sigma^0_k$ formulas.
\end{definition}

Thanks to \Cref{lem:core-question-find-extension}, for every $k \leq n$ and every $\Sigma^0_{k+1}$-formula~$\varphi(G)$, it is dense to either force~$\varphi(G)$ or force $\neg \varphi(G)$. It follows that every sufficiently generic filter is $(n+1)$-generic. Given a filter $\F$, we write $[\F]$ for~$\bigcap_{(\sigma, X_n) \in \F} [\sigma,X_n]$.

\begin{proposition}\label[proposition]{prop:core-forcing-generic-set}
Let $\F$ be a $1$-generic filter. There exists a unique set $G_{\F}$ belonging to $[\F]$. Furthermore, $G_{\F}$ is infinite.
\end{proposition}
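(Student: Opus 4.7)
The plan is to use $1$-genericity to show that the stems of conditions in~$\F$ become arbitrarily long and contain arbitrarily many $1$-bits, so that they assemble into a unique infinite binary sequence, which will be~$G_\F$.

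First, I would establish the following density claim: for every $\PP^A_n$-condition $(\sigma, X_n)$ and every $k \in \NN$, there is an extension $(\tau, Y_n)$ with $\card \tau \geq k$. Indeed, by \Cref{lem:core-reservoir-intersection}, $A \cap X_n$ belongs to $\langle \U_{C_n}^{\M_n} \rangle$, which is a subclass of $\U_{C_n}^{\M_n}$ and hence contains only infinite sets. One can therefore pick $k$ distinct elements $a_1 < \dots < a_k$ of $A \cap X_n$ above~$|\sigma|$ and define $\tau$ by extending $\sigma$ with $0$'s up to position $a_k$ while placing $1$'s at positions $a_1, \dots, a_k$. The condition $(\tau, X_n \setminus \{0, \dots, |\tau|-1\})$ is a valid extension: the new $1$-bits lie in $X_n \cap A$, so $\tau \subseteq A$ and $\tau \subseteq \sigma \cup X_n$; and the residual reservoir, being a cofinite subset of~$X_n$, remains in $\M_n \cap \langle \U_{C_n}^{\M_n} \rangle$ since no finite set lies in this class and partition regularity therefore rules out its finite complement.

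Second, I would translate this density claim into a property of~$\F$. Consider, for each~$k$, the $\Sigma^0_1$ formula $\varphi_k(G) \equiv (\exists x)\,\card(G \cap \{0, \dots, x\}) \geq k$ expressing that $G$ has at least $k$ elements. By \Cref{def:core-forcing-relation}, a condition $(\sigma, X_n)$ forces $\varphi_k(G)$ iff $\card \sigma \geq k$. The density claim thus means that no condition can force $\neg \varphi_k(G)$. Since~$\F$ is $1$-generic, it must contain a condition $c_k = (\sigma_k, X_{n,k})$ forcing~$\varphi_k(G)$, with $|\sigma_k| \geq \card \sigma_k \geq k$.

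Third, as~$\F$ is a filter, any two stems in~$\F$ are $\preceq$-comparable, and the family $(\sigma_k)_{k \in \NN}$ has lengths tending to infinity. These stems coherently merge into a unique element $G_\F \in \cs$. To check $G_\F \in [\F]$: given any $(\sigma, X_n) \in \F$, $\sigma$ is an initial segment of~$G_\F$ by construction, and any $i \in G_\F$ with $i \geq |\sigma|$ appears as a $1$-bit of some longer stem~$\tau$ of a condition in~$\F$ extending $(\sigma, X_n)$, so by the extension relation $\tau \subseteq \sigma \cup X_n$ and $\tau \subseteq A$, giving $i \in X_n \cap A$. Uniqueness of~$G_\F$ in~$[\F]$ follows because any $G \in [\F]$ must extend each stem~$\sigma_k$, which already pins down an infinite sequence. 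Infinity of~$G_\F$ is immediate from $\card \sigma_k \geq k$ for all~$k$. No step is a real obstacle; the only non-trivial input is \Cref{lem:core-reservoir-intersection}, which allows the additional $1$-bits to be chosen inside~$A$ rather than merely inside the reservoir.
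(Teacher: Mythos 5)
Your proof is correct, but it takes a genuinely different route from the paper's. The paper first establishes that $[\F]$ is non-empty via compactness of the Cantor space (each cylinder is a non-empty closed set, and the finite-intersection property follows from filter compatibility), and only then uses $1$-genericity to show that the element of $[\F]$ is unique (by forcing the prefixes of $G$) and infinite (by observing that $|G|\leq s$ can never be forced). You instead construct $G_\F$ directly as the union of the stems appearing in~$\F$: you show that conditions $(\sigma,X_n)$ with $\card\sigma\geq k$ are dense (using \Cref{lem:core-reservoir-intersection} to place the new $1$-bits inside~$A\cap X_n$, and partition regularity to argue that the truncated reservoir stays in $\langle\U_{C_n}^{\M_n}\rangle$), that the formulas $\varphi_k$ can only be forced and never refuted, and hence by $1$-genericity $\F$ contains stems of unboundedly many $1$'s; the resulting limit sequence is then verified to lie in $[\F]$ by an elementary argument on the extension relation (comparable stems of compatible conditions differ only inside the smaller condition's reservoir, intersected with~$A$). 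Your approach is more explicit and avoids the topological compactness step entirely, showing at one stroke that $G_\F$ exists, is unique, and is infinite; the paper's approach factors the argument into a general non-emptiness fact and two separate genericity-based density arguments. Both are sound, and both rest on the same two inputs: $1$-genericity of~$\F$ and \Cref{lem:core-reservoir-intersection} ensuring $A\cap X_n\in\langle\U_{C_n}^{\M_n}\rangle$ is infinite.
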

\begin{proof}
For every condition $(\sigma,X_n)$ in $\F$, the cylinder $[\sigma, X_n]$ is a non-empty closed subset of $2^{\NN}$. Assume by contradiction that $[\F] = \emptyset$, then as $2^{\NN}$ is compact, there exists a finite subfamily $E$ of $\F$ such that $\bigcap_{(\sigma, X_n) \in E} [\sigma, X_n] = \emptyset$. By compatibility of the conditions in a filter, there is a condition $c$ extending every member of this family, which yields that the cylinder under $c$ is empty, contradiction. Therefore, $[\F]$ is non-empty. 

Let $G$ be an element of $[\F]$. For every $\sigma \prec G$, there exists a condition $c \in \F$ forcing the $\Sigma_1^0$ property $\sigma \prec G$, otherwise, by $1$-genericity of $\F$, there would exist a condition $d$ in $\F$ forcing $\sigma \not \prec G$ and every element in the cylinder under $d$ (including $G$) would therefore not have $\sigma$ as a prefix, contradicting $\sigma \prec G$. All the elements of $[\F]$ share the same prefixes as $G$, hence $G$ is the only element.

By \Cref{lem:core-reservoir-intersection}, $X_n \cap A$ is in $\langle \U_{C_n}^{\M_n} \rangle$ and therefore, $X_n \cap A$ is infinite by hypothesis on $\langle \U_{C_n}^{\M_n} \rangle$. Hence, for every $s \in \NN$, it is not possible to force the property $|G| \leq s$, as for every condition $c = (\sigma,X_n)$, the cylinder under $c$ will contain infinite sets such as $\sigma \cup (X_n \cap A)$. Hence, for every $s \in \NN$, there exists a condition in $\F$ forcing $|G| > s$ and $G_{\F}$ is infinite.
\end{proof}

As mentioned, the semantic definition of the forcing relation states that a condition~$c$ forces a formula~$\varphi(G)$ if $\varphi(G_\F)$ holds for every sufficiently generic filter containing~$c$. The following lemma gives a quantitative refinement of this definition by stating for every $k$-generic filter~$\F$, if a condition $c \in \F$ forces a $\Sigma^0_{k+1}$ or a $\Pi^0_{k+1}$-formula~$\varphi(G)$, then $\varphi(G_\F)$ holds. It shows in particular that the syntactic forcing relation is sound, in that it implies the semantic forcing relation.

\begin{proposition}\label[proposition]{prop:core-forcing-imply-truth}
Let $\F$ be a $k$-generic filter for some $1 \leq k \leq n$. If there exists some $c \in \F$ such that $c \Vdash (\forall x) \phi(G,x)$ for some $\Sigma_k^0$ formula $\phi(G,x)$, then for every $a \in \NN$, there exists some $d \in \F$ such that $d \Vdash \phi(G,a)$. Furthermore, $(\forall x)\phi(G_{\F},x)$ will hold.   
\end{proposition}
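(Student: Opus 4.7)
The plan is to combine a density argument below $c$ with the $k$-genericity of $\F$. For each $a \in \NN$, I would first verify that the set $\D_a = \{d \in \PP_n^A : d \Vdash \phi(G, a)\}$ is dense below $c$. Indeed, for any extension $e = (\sigma', X_n') \leq c$, \Cref{lem:core-forcing-closure} gives $e \Vdash (\forall x)\phi(G, x)$, which (writing $\phi$ as the negation of a $\Pi_k^0$ formula) unfolds via \Cref{def:core-forcing-relation} to $(\forall \rho \subseteq X_n')(\forall x)\,\sigma' \cup \rho \qvdash \phi(G, x)$, and specializing to $\rho = \emptyset$ and $x = a$ yields $\sigma' \qvdash \phi(G, a)$. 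Then \Cref{lem:core-question-find-extension} delivers an extension $d \leq e$ forcing $\phi(G, a)$.

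Next, I would leverage $k$-genericity to locate such a $d$ inside $\F$. Since $\phi(G, a)$ is $\Sigma_k^0$, some condition of $\F$ decides it. If it forces $\phi(G, a)$, we are done; otherwise some $d' \in \F$ forces $\neg\phi(G, a)$, and any common extension $d^* \in \F$ of $c$ and $d'$ forces both $(\forall x)\phi(G, x)$ and $\neg\phi(G, a)$ by closure. But the density claim applied below $d^*$ produces $e \leq d^*$ with $e \Vdash \phi(G, a)$, and then $e$ simultaneously forces $\phi(G, a)$ and $\neg\phi(G, a)$, which cannot happen.

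This last impossibility is the main obstacle and relies on an auxiliary incompatibility lemma, which I would prove by induction on the complexity of the formula. The base case for $\Pi_0^0$ follows directly from the definition of $\Vdash$ on the stem. For the inductive step, from a condition $e = (\mu, Z_n)$ forcing both $(\exists y)\phi'(G, y)$, witnessed by some $b$, and $(\forall y)\neg\phi'(G, y)$, the definition of $\Vdash$ yields $\mu \qvdash \neg\phi'(G, b)$; then \Cref{lem:core-question-find-extension} produces a further extension forcing $\neg\phi'(G, b)$, while closure makes it also force $\phi'(G, b)$, contradicting the induction hypothesis applied to the lower-complexity formula $\phi'$.

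For the final sentence of the proposition, I would invoke soundness of $\Vdash$ for $\Sigma_k^0$ formulas---proved by a parallel induction alongside the incompatibility lemma---to conclude $\phi(G_\F, a)$ from each witness $d_a \Vdash \phi(G, a)$, and hence $(\forall x)\phi(G_\F, x)$. The overarching subtlety is that the present proposition, the incompatibility lemma, and the soundness of $\Vdash$ at each level of the arithmetic hierarchy are interdependent and must be proved together by induction on complexity, each level consuming only the outputs already established at strictly lower levels.
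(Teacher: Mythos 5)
Your proposal is correct, but it routes through two auxiliary lemmas (a density claim and an incompatibility lemma) that the paper does not state explicitly, so the decomposition is genuinely different even though the underlying combinatorics coincide. The pivot in both arguments is the same: by $k$-genericity some $d = (\tau, Y_n) \in \F$ decides $\phi(G,a)$, and after passing to a common refinement of $c$ and $d$ inside the filter, the negative decision must be ruled out. Where the paper does this directly — it unfolds $c \Vdash (\forall x)\phi(G,x)$ to get $\tau \qvdash \phi(G,a)$, observes that the corresponding $\Sigma^0_1(\M_{k-1})$ class is therefore large and hence contains $\langle \U_{C_n}^{\M_n}\rangle \ni Y_n$, and reads off a finite $\rho \subseteq Y_n$ witnessing $\phi$ (or the failure of $\qvdash \neg\phi$) that contradicts the literal definition of $d \Vdash \neg\phi(G,a)$ — you instead invoke \Cref{lem:core-question-find-extension} to produce an extension forcing $\phi(G,a)$, then appeal to a "no condition forces $\psi$ and $\neg\psi$" lemma proved by its own induction on formula complexity. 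Your incompatibility lemma is a valid theorem (its inductive step, using \Cref{lem:core-forcing-closure} and another application of \Cref{lem:core-question-find-extension} to descend a level, goes through), but note that it is really the same largeness argument wrapped inside the proof of \Cref{lem:core-question-find-extension}: you are re-deriving the combinatorics through one extra indirection rather than reading the contradiction off the reservoir $Y_n$ directly. For the "Furthermore" clause, the paper folds soundness of $\Vdash$ into the strong induction on $k$ (invoking the proposition at level $k-2$ to handle the $\Pi^0_{k-1}$ subformula, with $k=1,2$ as base cases handled by the definition of $\Vdash$), whereas you factor it out as a parallel soundness lemma; both are workable, and you are right that these pieces are interdependent and must be layered by complexity. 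The net trade-off is modularity versus economy: your version makes the two key facts (extension-finding, incompatibility) reusable and self-contained, while the paper's inlined derivation is shorter and keeps everything within a single induction.
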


\begin{proof}
Proceed by strong induction on $k$:

Let $1 \leq k \leq n$ and assume the property to be true for every $\ell < k$. Let $\F$ be $k$-generic, let $c = (\sigma, X_n) \in \F$ be such that $c \Vdash (\forall x) \phi(G,x)$ for some $\Sigma_{k}^0$ formula $\phi(G,x)$ and let $a \in \NN$. By $k$-genericity of $\F$, there exists some $d = (\tau,Y_n) \in \F$ such that $d \Vdash \phi(G,a)$ or $d \Vdash \neg \phi(G,a)$. By definition of a filter and by downward closure of the forcing relation, we can assume that $d \leq c$, hence $\tau \succeq \sigma$. Assume for contradiction that $d \Vdash \neg \phi(G,a)$. Since $c \Vdash (\forall x)\phi(G,x)$, there are two cases depending on the value of $k$:
\begin{itemize}
    \item If $k = 1$, then, for all $\rho \subseteq X_n$ and all $x \in \NN$, $\sigma \cup \rho \qvdash \phi(G,x)$ holds, hence $\tau \qvdash \phi(G,a)$ and the following class is large (and therefore includes $\langle \U_{C_n}^{\M_n} \rangle \subseteq \langle \U_{C_{0}}^{\M_{0}} \rangle$):
$$\U_{C_{0}}^{\M_{0}} \cap \{X : (\exists \rho \subseteq X )\phi(\tau \cup \rho, a) \}$$
    
    As $Y_n \in \langle \U_{C_n}^{\M_n} \rangle$, there exists some $\rho \subseteq Y_n$ such that $\phi(\tau \cup \rho, a)$ holds, contradicting $d \Vdash \neg \phi(G,a)$. 
    \item If $1 < k \leq n$, then, write $\phi(G,x) = (\exists y)\psi(G,x,y)$ for some $\Pi_{k-1}^0$ formula $\psi(G,x,y)$. Then, for all $\rho \subseteq X_n$ and all $x \in \NN$, $\sigma \cup \rho \qvdash (\exists y)\psi(G,x,y)$ holds, hence $\tau \qvdash (\exists y) \psi(G,a,y)$ holds, and the following class is large (and therefore includes $\langle \U_{C_n}^{\M_n} \rangle \subseteq \langle \U_{C_{k-1}}^{\M_{k-1}} \rangle$):
$$\U_{C_{k-1}}^{\M_{k-1}} \cap \{X : (\exists \rho \subseteq X )(\exists y) \tau \cup \rho \nqvdash \neg \psi(\tau \cup \rho, a,y) \}$$

    As $Y_n \in \langle \U_{C_{n}}^{\M_{n}} \rangle$, there exists some $\rho\subseteq Y_n$ and some $b \in \NN$ such that $\tau \cup \rho \nqvdash \neg \psi(\tau \cup \rho, a,b)$ holds, contradicting $d \Vdash (\forall y) \neg \psi(G,a, y)$.
\end{itemize}
Therefore, $d \Vdash \phi(G,a)$ holds. If $k = 1$, then $\phi(G_{\F},a)$ holds by definition of the forcing relation for $\Sigma_1^0$ formulas, hence $(\forall x)\phi(G_{\F},x)$ will hold. If $k > 1$, then, writing $\phi(G,x)$ as $(\exists y)\psi(G,x,y)$ for some $\Pi_{k-1}^0$ formula $\psi(G,x,y)$ yields that $d \Vdash \psi(G,a,b)$, hence $\psi(G_{\F},a,b)$ holds for some $b \in \NN$ (either by the inductive hypothesis applied with $\ell = k-2$ if $k > 2$, or if $k = 2$, by definition of the forcing relation), thus $(\forall x)\phi(G_{\F},x)$ will hold. 
\end{proof}

\section{Iterated lowness basis}\label{sect:iterated-lowness}

The goal of this section is to prove the following iterated basis theorem, which, besides its intrinsic interest, will serve to separate $\Sub{\Sigma^0_n}$ from $\Sub{\Delta^0_{n+1}}$ over $\omega$-models:

\begin{repmaintheorem}{thm:main-weakly-low-basis}
Fix~$n \geq 1$. For every $\Sigma^0_{n+1}$ set~$A$ and every set~$Q$ of PA degree over~$\emptyset^{(n)}$, there is an infinite set~$H \subseteq A$ or $H \subseteq \overline{A}$ such that $H^{(n)} \leq_T Q$.
\end{repmaintheorem}

Before proving \Cref{thm:main-weakly-low-basis}, we deduce a few immediate consequences.

\begin{corollary}\label{cor:main-low-basis}
Fix~$n \geq 0$. For every $\Sigma^0_{n+1}$ set~$A$, there is an infinite set~$H \subseteq A$ or $H \subseteq \overline{A}$ of low${}_{n+1}$ degree.
\end{corollary}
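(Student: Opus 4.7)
My plan is to obtain this as an immediate corollary of \Cref{thm:main-weakly-low-basis} combined with the Jockusch--Soare low basis theorem, exactly along the lines sketched in the introduction when noting that ``weakly low${}_n$ basis'' implies ``low${}_{n+1}$ basis'', with a separate trivial handling of the base case $n = 0$.

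For the main case $n \geq 1$, I would fix a $\Sigma^0_{n+1}$ set $A$ and apply the low basis theorem for $\Pi^0_1$-classes relativized to $\emptyset^{(n)}$ to produce a set $Q$ of PA degree over $\emptyset^{(n)}$ with $Q' \leq_T \emptyset^{(n+1)}$. Then \Cref{thm:main-weakly-low-basis} applied to $A$ and this $Q$ returns an infinite set $H \subseteq A$ or $H \subseteq \overline{A}$ with $H^{(n)} \leq_T Q$, so that $H^{(n+1)} \leq_T Q' \leq_T \emptyset^{(n+1)}$, witnessing that $H$ is of low${}_{n+1}$ degree.

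For the base case $n = 0$, the $\Sigma^0_1$ set $A$ is c.e.; if $A$ is infinite, enumerating it in strictly increasing order of magnitude extracts an infinite computable subset $H \subseteq A$, and otherwise $\overline{A}$ is cofinite and computable, furnishing an infinite computable subset of $\overline{A}$. In either sub-case $H$ is of computable, hence low${}_1$, degree. There is essentially no obstacle here: all the real content sits inside the Main Theorem and the relativized low basis theorem, and the corollary is just the standard \emph{PA-degrees-boost-jump-control-by-one-level} observation.
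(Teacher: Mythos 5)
Your proposal is correct and follows exactly the same route as the paper: a trivial base case for $n=0$, and for $n \geq 1$ apply the relativized low basis theorem to get a $Q$ of PA degree over $\emptyset^{(n)}$ with $Q' \leq_T \emptyset^{(n+1)}$, feed it into \Cref{thm:main-weakly-low-basis}, and observe $H^{(n+1)} \leq_T Q' \leq_T \emptyset^{(n+1)}$.
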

\begin{proof}
For $n = 0$, every $\Sigma^0_1$ set~$A$ is either infinite, in which case it admits an infinite computable subset, or $A$ is finite, and therefore $\overline{A}$ is an infinite computable solution. In particular, every computable set is low. Suppose $n \geq 1$.
By the low basis theorem relativized to $\emptyset^{(n)}$ (\cite{jockusch1972classes}), there is a set~$Q$ of PA degree over~$\emptyset^{(n)}$ such that $Q' \leq_T \emptyset^{(n+1)}$. By \Cref{thm:main-weakly-low-basis}, there is an infinite set~$H \subseteq A$ or $H \subseteq \overline{A}$ such that $H^{(n)} \leq_T Q$. In particular, $H^{(n+1)} \leq_T Q' \leq_T \emptyset^{(n+1)}$, so $H$ is of low${}_{n+1}$ degree.
\end{proof}

The restriction of \Cref{thm:main-weakly-low-basis} to $\Delta^0_{n+1}$ sets was proven by Cholak, Jockusch and Slaman~\cite{cholak2001strength} for the case~$n = 1$ and in the general case by Monin and Patey~\cite{monin2021weakness}.
The case $n = 1$ of \Cref{cor:main-low-basis} was proven by Benham et al.~\cite{benham2024ginsburg}. \Cref{cor:main-low-basis} can be used to separate $\Sub{\Sigma^0_n}$ from $\Sub{\Delta^0_{n+1}}$ over $\omega$-models.

\begin{corollary}
Fix~$n \geq 1$. Then there exists an $\omega$-model of~$\RCA_0 + \Sub{\Sigma^0_n}$
which is not a model of~$\Sub{\Delta^0_{n+1}}$.
\end{corollary}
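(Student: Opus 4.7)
The plan is to iterate the low${}_n$ basis for $\Sub{\Sigma^0_n}$ (obtained from \Cref{cor:main-low-basis} applied with $n-1$ in place of $n$, which yields that every $\Sigma^0_n$ set admits an infinite subset of itself or of its complement of low${}_n$ degree) to construct a Turing ideal $\M$ consisting entirely of low${}_n$ sets and satisfying $\Sub{\Sigma^0_n}$. The separation then follows from the existence, proven by Downey, Hirschfeldt, Lempp and Solomon~\cite{downey2001delta2}, of a computable instance of $\Sub{\Delta^0_{n+1}}$ with no low${}_n$ solution: since $\emptyset \in \M$, this instance lies in $\M$ but has no solution in $\M$, so $\M \not\models \Sub{\Delta^0_{n+1}}$.

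The construction proceeds by stages, producing an increasing sequence $\emptyset =_T X_0 \leq_T X_1 \leq_T \dots$ of low${}_n$ sets. Enumerate all pairs $(e, s)$ where $e$ is an index of a $\Sigma^0_n$-formula in one set parameter. At stage $t$, corresponding to the pair $(e, s)$ with $s \leq t$, let $A \subseteq \NN$ be the $\Sigma^0_n(X_s)$ set defined by the formula with parameter $X_s$. The relativization of \Cref{cor:main-low-basis} to the oracle $X_t$ yields an infinite set $H \subseteq A$ or $H \subseteq \overline{A}$ such that $(X_t \oplus H)^{(n)} \leq_T X_t^{(n)}$. Since $X_t$ is low${}_n$ by induction, $X_t^{(n)} \leq_T \emptyset^{(n)}$, so $(X_t \oplus H)^{(n)} \leq_T \emptyset^{(n)}$ and $X_{t+1} := X_t \oplus H$ is low${}_n$. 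Define $\M = \{Y \subseteq \NN : \exists t\ Y \leq_T X_t\}$.

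The set $\M$ is a Turing ideal consisting of low${}_n$ sets, hence an $\omega$-model of $\RCA_0$. To see that $\M \models \Sub{\Sigma^0_n}$, let $\varphi(x)$ be a $\Sigma^0_n$-formula with parameters in $\M$. Its parameters are all computable from some $X_s \in \M$, so $\varphi$ defines a $\Sigma^0_n(X_s)$-set $A$. There is a stage $t$ at which the pair $(e, s)$, for $e$ an index of $\varphi$, is treated, producing an infinite set $H \subseteq A$ or $H \subseteq \overline{A}$ with $H \leq_T X_{t+1}$, hence $H \in \M$. Thus $\M \models \Sub{\Sigma^0_n}$, completing the separation.

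The construction is entirely standard; the only substantive ingredient is the relativized form of \Cref{cor:main-low-basis}, which in turn comes from \Cref{thm:main-weakly-low-basis} composed with the low basis theorem. No further obstacle arises.
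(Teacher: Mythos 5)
Your proof is correct and follows essentially the same approach as the paper's: iterate the relativized form of \Cref{cor:main-low-basis} to build an increasing chain of low${}_n$ tops (the paper phrases this as a chain of topped $\omega$-models, you as a chain of sets closed under downward reduction — these are the same construction), then appeal to the Downey–Hirschfeldt–Lempp–Solomon result (suitably relativized to $\emptyset^{(n-1)}$, which you leave implicit but the paper states explicitly) to get a computable $\Delta^0_{n+1}$ instance with no low${}_n$ solution.
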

\begin{proof}
By Downey, Hirschfeldt, Lempp and Solomon~\cite{downey2001delta2} relativized to $\emptyset^{(n-1)}$,
there exists a $\Delta^0_{n+1}$ set $B$ with no infinite subset~$H \subseteq B$ or~$H \subseteq \overline{B}$ such that $H' \leq_T \emptyset^{(n)}$. In particular, $B$ is a computable instance of $\Sub{\Delta^0_{n+1}}$ with no solution of low${}_n$ degree. 

Consider a chain $\N_0 \subseteq \N_1 \subseteq \dots$ of countable $\omega$-models of $\RCA_0$ such that 
\begin{enumerate} 
    \item $\N_i$ is topped by some set $D_i$ of low${}_n$ degree ; 
    \item For every $\Sigma^0_n(\N_i)$ set $A$, there exists some $j \in \NN$ and some infinite~$H \in \N_j$ such that $H \subseteq A$ or $H \subseteq \overline{A}$.
\end{enumerate}

This chain can be constructed recursively as follows: let $\N_0$ be the $\omega$-model whose second order part are the computable sets and assuming $\N_0, \dots \N_a$ have been defined, let $\langle k,\ell \rangle = a$ and consider $A$ the $k$-th $\Sigma_{n}^0(D_{\ell})$ set. The set $A$ is also $\Sigma_{n}^0(D_{a})$, thus using a relativized version of \Cref{cor:main-low-basis}, there exists an infinite set $H \subseteq A$ or $H \subseteq \overline{A}$ of low${}_{n}$ degree relative to $D_a$. Let $D_{a+1} = D_a \oplus H$ that is therefore of low${}_{n}$ degree and let $\N_{a+1}$ be comprised of all the sets computable by $D_{a+1}$. \\

Since a union of an increasing sequence of Turing ideals is again a Turing ideal, $\N = \bigcup_{n \in \NN} \N_i$ is again a Turing ideal, $\N \models \RCA_0$. By item 2, $\N \models \Sub{\Sigma^0_n}$, as every instance of $\Sub{\Sigma^0_n}$ in $\N$ belongs to some~$\N_i$, and therefore has a solution in some $\N_j \subseteq \N$.
Last, by item 1, $\N$ contains only sets of low${}_n$ degree, hence, $\N$ contains no solution to~$B$ seen as an instance of $\Sub{\Delta^0_{n+1}}$. It follows that $\N \not \models \Sub{\Delta^0_{n+1}}$.
\end{proof}
\bigskip

The remainder of the section is dedicated to the proof of \Cref{thm:main-weakly-low-basis}. Fix~$n \geq 1$.
Let $\M_0, \dots, \M_n$ be Scott ideals with Scott codes $M_0, \dots, M_n$, respectively, and let $C_0, \dots, C_{n-1}$ be forming a largeness tower, that is, for every $i < n$:
\begin{itemize}
    \item $\U_{C_i}^{\M_i}$ is an $\M_i$-cohesive large class containing only infinite sets ;
    \item $C_i,M_i' \in \M_{i+1}$ ;
    \item $\U_{C_{i+1}}^{\M_{i+1}} \subseteq \langle \U_{C_i}^{\M_i} \rangle$.
\end{itemize}
Fix a $\Sigma^0_{n+1}$ set~$A$. 
By partition regularity of $\langle \U_{C_{n-1}}^{\M_{n-1}} \rangle$, either~$A$ or $\overline{A} \in \langle \U_{C_{n-1}}^{\M_{n-1}} \rangle$, and maybe both. 
Depending on which case holds, one will construct an infinite subset of~$A$ or~$\overline{A}$. However, there is some asymmetry in the constructions, as $A$ is $\Sigma^0_{n+1}$, while $\overline{A}$ is $\Pi^0_{n+1}$. Intuitively, it is easier to build subsets of~$A$, as belonging to~$A$ is witnessed by a $\emptyset^{(n)}$-c.e. process. Thus, if both $A$ and $\overline{A}$ belong to $\langle \U_{C_{n-1}}^{\M_{n-1}} \rangle$, we will rather construct an infinite subset of~$A$. It follows that we will construct a subset of~$\overline{A}$ only if $A \not \in \langle \U_{C_{n-1}}^{\M_{n-1}} \rangle$. If so, it is witnessed by a large $\Sigma^0_1(\M_{n-1})$ class $\U \supseteq \langle \U_{C_{n-1}}^{\M_{n-1}} \rangle$ such that $A \not \in \U$. We will then exploit this witness to construct an infinite subset of~$\overline{A}$. 

We now present our two notions of forcing, called \emph{main forcing} and \emph{witness forcing}, to build an infinite subset of~$A$ in the former case, and of $\overline{A}$ in the latter case.


\subsection{Main forcing}\label{sect:main-forcing}

Throughout this section, fix a $\Sigma^0_{n+1}$ set~$A \in \langle \U_{C_{n-1}}^{\M_{n-1}} \rangle$. The notion of forcing is parameterized by the set~$A$.

\begin{definition}[Condition]
Let~$\MM^A_n$ be the $\PP^A_{n-1}$ notion of forcing.
\end{definition}

The $\MM^A_n$-forcing coincides with the $\PP^A_{n-1}$-forcing as a partial order, but one will exploit the $\Sigma^0_{n+1}$ extra-hypothesis on~$A$ to define a $\Sigma^0_{n+1}$-preserving forcing question for~$\Sigma^0_{n+1}$-formulas. Recall that $\PP^A_n$-forcing admits a forcing question for~$\Sigma^0_{n+1}$-formulas with a bad definitional complexity.

The forcing relation for $\MM^A_n$-forcing inherits the forcing relation from $\PP^A_{n-1}$-forcing. However, $\PP^A_{n-1}$-forcing does not define any forcing relation for $\Sigma^0_{n+1}$ and $\Pi^0_{n+1}$-formulas.
We shall actually define a slightly different forcing relation by making $\rho$ range over $X_{n-1} \cap A$ rather than $X_{n-1}$. This difference will be justified by the design of the forcing question for $\Sigma^0_{n+1}$-formulas.

\begin{definition}[Forcing relation]\label[definition]{def:main-forcing-relation}
Let $c = (\sigma,X_{n-1})$ be an $\MM_{n}^A$-condition, we define the forcing relation $\Vdash$ for $\Sigma_{n+1}^0$ and $\Pi_{n+1}^0$ formulas as follows: 
For $\phi(G,x)$ a $\Pi_n^0$ formula:
\begin{itemize}
    \item $c \Vdash (\exists x)\phi(G,x)$ if $c \Vdash \phi(G,a)$ for some~$a \in \NN$;
    \item $c \Vdash (\forall x)\neg\phi(G,x)$ if for every $\rho \subseteq X_{n-1} \cap A$ and every $a \in \NN$, $\sigma \cup \rho \qvdash \neg \phi(G, a)$.
\end{itemize}
\end{definition}

As expected, the forcing relations for $\Sigma^0_{n+1}$ and $\Pi^0_{n+1}$ formulas satisfy the axioms of \Cref{def:forcing-relation}. The following lemma extends \Cref{lem:core-forcing-closure} to $\Sigma^0_{n+1}$ and $\Pi^0_{n+1}$ formulas.

\begin{lemma}\label[lemma]{lem:main-forcing-closed-downwards}
    The forcing relations are closed downwards.
\end{lemma}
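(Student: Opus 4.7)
The plan is to argue separately for $\Sigma^0_{n+1}$ and $\Pi^0_{n+1}$ formulas, reducing the former directly to \Cref{lem:core-forcing-closure} and handling the latter by a simple substitution that exploits how stems are extended within $A$.

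Let $c = (\sigma, X_{n-1})$ and $d = (\tau, Y_{n-1})$ be $\MM_n^A$-conditions with $d \leq c$. By definition of the extension relation, $\sigma \preceq \tau$, $Y_{n-1} \subseteq X_{n-1}$, and $\tau \subseteq \sigma \cup X_{n-1}$, so $\tau \setminus \sigma \subseteq X_{n-1}$. Crucially, since $\tau \subseteq A$ by the definition of a $\PP^A_{n-1}$-condition, we actually have $\tau \setminus \sigma \subseteq X_{n-1} \cap A$. This is the key observation that makes the proof go through despite the asymmetry in \Cref{def:main-forcing-relation}, where $\rho$ is required to range over $X_{n-1} \cap A$ rather than $X_{n-1}$.

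For the $\Sigma^0_{n+1}$ case, suppose $c \Vdash (\exists x)\phi(G,x)$ with $\phi$ a $\Pi^0_n$ formula. Then by \Cref{def:main-forcing-relation}, $c \Vdash \phi(G,a)$ for some $a \in \NN$. Since $\phi(G,a)$ is $\Pi^0_n$, \Cref{lem:core-forcing-closure} (applied at level $n$) yields $d \Vdash \phi(G,a)$, and hence $d \Vdash (\exists x)\phi(G,x)$ by \Cref{def:main-forcing-relation}.

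For the $\Pi^0_{n+1}$ case, suppose $c \Vdash (\forall x)\neg\phi(G,x)$, i.e., for every $\rho \subseteq X_{n-1} \cap A$ and every $a \in \NN$, $\sigma \cup \rho \qvdash \neg\phi(G,a)$. Set $\mu = \tau \setminus \sigma$, so $\mu \subseteq X_{n-1} \cap A$ by the observation above. Given any $\rho' \subseteq Y_{n-1} \cap A$ and any $a \in \NN$, the set $\mu \cup \rho'$ is contained in $X_{n-1} \cap A$, so the assumption applied to $\rho := \mu \cup \rho'$ gives $\sigma \cup \mu \cup \rho' \qvdash \neg\phi(G,a)$, that is, $\tau \cup \rho' \qvdash \neg\phi(G,a)$. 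This is exactly the clause required for $d \Vdash (\forall x)\neg\phi(G,x)$. There is no real obstacle here; the only subtlety worth flagging is ensuring the witnesses $\rho'$ remain in the restricted range $Y_{n-1} \cap A$, which is immediate from $Y_{n-1} \subseteq X_{n-1}$.
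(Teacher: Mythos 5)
Your proof is correct and follows essentially the same route as the paper's: the $\Sigma^0_{n+1}$ case reduces to \Cref{lem:core-forcing-closure}, and the $\Pi^0_{n+1}$ case substitutes $\rho := \mu \cup \rho'$ with $\mu = \tau \setminus \sigma$. You make explicit the observation (left implicit in the paper) that $\tau \setminus \sigma \subseteq X_{n-1} \cap A$ because $\tau \subseteq A$, which is indeed the point on which the substitution hinges.
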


\begin{proof}
    Let $c = ( \sigma,X_{n-1})$ and $d = (\tau,Y_{n-1})$ be two $\MM_n^A$-conditions, such that $d \leq c$. Let $\phi(G, x)$ be a $\Pi^0_n$-formula.
      \begin{itemize}
        \item If $c \Vdash (\exists x)\phi(G, x)$, then $c \Vdash \phi(G,a)$ for some $a \in \NN$. By \Cref{lem:core-forcing-closure}, $d \Vdash \phi(G,a)$, and thus $d \Vdash (\exists x)\phi(G, x)$.
         \item If $c \Vdash (\forall x)\neg \phi(G, x)$, then for all $\rho \subseteq X_{n-1} \cap A$ and for all $x \in \NN$, $\sigma \cup \rho \qvdash \neg \phi(G,x)$. This yields in particular, letting $\mu$ be such that $\sigma \cdot \mu = \tau$, for all $\rho \subseteq Y_{n-1} \cap A$ and for all $x \in \NN$, $\sigma \cup \mu \cup \rho \qvdash \neg \phi(G,x)$, hence $d \Vdash (\forall x)\neg \phi(G, x)$.
    \end{itemize}
\end{proof}

A forcing question is \emph{extremal} if it is one of the two canonical forcing questions (see discussion after \Cref{def:abstract-forcing-question}). In other words, a forcing question for $\Sigma^0_{n+1}$-formulas is extremal if either it coincides with the forcing relation for $\Sigma^0_{n+1}$-formulas, or it coincides with the negation of the forcing relation of $\Pi^0_{n+1}$-formulas. Contrary to the lower levels, the forcing question for $\Sigma^0_{n+1}$-formulas is extremal.

\begin{definition}[Forcing question for $\Sigma^0_{n+1}$ formulas]
Let $c = (\sigma,X_{n-1})$ be an $\MM^A_n$-condition.
For $\phi(G,x)$ a $\Pi_n^0$ formula, write $c \qvdash (\exists x)\phi(G,x)$ if $c \not \Vdash (\forall x)\neg \phi(G, x)$, that is, there exist some $\rho \subseteq X_{n-1} \cap A$ and some $a \in \NN$ such that $\sigma \cup \rho \nqvdash \neg \phi(G, a)$.
\end{definition}

The following lemma shows that the forcing question for $\Sigma^0_{n+1}$-formulas has the right definitional property. In particular, if $M_n$ is low over~$\emptyset^{(n)}$, that is, $M_n' \leq_T \emptyset^{(n+1)}$, then one can decide a $\Sigma^0_{n+1}$-formula using~$\emptyset^{(n+1)}$.

\begin{lemma}\label[lemma]{lem:main-complexity-sigma0n+1-question}
The statement $c \qvdash (\exists x)\phi(G,x)$ for $\phi(G)$ a $\Pi_{n}^0$ formula is $\Sigma^0_1(\M_n)$.
\end{lemma}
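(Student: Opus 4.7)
The plan is to unfold the forcing question and bound the complexity of each piece. By the definition of $\qvdash$ together with \Cref{def:main-forcing-relation}, the statement $c \qvdash (\exists x)\phi(G,x)$ is equivalent to
\[
\exists \rho \, \exists a \in \NN\,\bigl[\,\rho \subseteq X_{n-1} \;\wedge\; \rho \subseteq A \;\wedge\; \sigma \cup \rho \nqvdash \neg\phi(G,a)\,\bigr],
\]
where $\rho$ ranges over finite binary strings. Since the outer quantifiers are existential, it suffices to show that each conjunct of the matrix is $\Sigma^0_1(\M_n)$, and then absorb the outer $\exists$'s into that complexity class.

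I would then check the three conjuncts in turn. First, $\rho \subseteq X_{n-1}$ is decidable uniformly in $X_{n-1}$; since $X_{n-1} \in \M_{n-1}$ and $M_{n-1} \leq_T M_{n-1}' \in \M_n$, we have $X_{n-1} \leq_T M_n$, so this is $\Delta^0_1(\M_n)$. Second, $\rho \subseteq A$ is the finite conjunction of the $\Sigma^0_{n+1}$ statements $i \in A$ for $i \in \rho$; a short induction along the Scott tower, using that $\emptyset \in \M_0$ and $M_i' \in \M_{i+1}$, yields $\emptyset^{(n)} \leq_T M_{n-1}' \in \M_n$, so $A$ is itself a $\Sigma^0_1(\M_n)$ set and hence so is this conjunct. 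Third, since $\neg\phi(G,a)$ is $\Sigma^0_n$, \Cref{lem:core-complexity-question} applied at level $k = n-1$ gives that $\sigma \cup \rho \qvdash \neg\phi(G,a)$ is $\Pi^0_1(\M_n)$, whence its negation is $\Sigma^0_1(\M_n)$.

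There is no real obstacle; the interest of the calculation lies in where the $\Sigma^0_{n+1}$ hypothesis on $A$ is used. The conjunct $\rho \subseteq A$ only absorbs into $\Sigma^0_1(\M_n)$ because $\emptyset^{(n)}$ is available in $\M_n$ through the Scott tower; without the hypothesis that $A$ is $\Sigma^0_{n+1}$, one could not obtain an extremal forcing question at the top level, which is exactly why the asymmetric treatment (main forcing versus witness forcing) is necessary for the proof of \Cref{thm:main-weakly-low-basis}.
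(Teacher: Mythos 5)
Your proof is correct and follows the paper's own approach: unfold the forcing question into $(\exists \rho)(\exists a)[\rho \subseteq X_{n-1} \cap A \wedge \sigma\cup\rho \nqvdash \neg\phi(G,a)]$, then bound the complexity of each piece, using that $A$ is $\Sigma^0_{n+1}$ together with $\emptyset^{(n)} \leq_T M_{n-1}' \in \M_n$, and \Cref{lem:core-complexity-question} for the inner forcing question. Your splitting of the membership clause into the two conjuncts $\rho \subseteq X_{n-1}$ and $\rho \subseteq A$ is a cosmetic rearrangement of the same calculation.
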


\begin{proof}
The statement $c \qvdash (\exists x) \phi(G,x)$ is equivalent to $(\exists \rho \in 2^{<\NN})(\exists x \in \NN)(\rho \subseteq X_{n-1} \cap A \wedge \sigma \cup \rho \nqvdash \neg \phi(G,x))$.

The statement $\rho \subseteq X_{n-1} \cap A$ is $\Sigma_1^0(M_{n-1}')$ since $A$ is $\Sigma_{n+1}^0$ (and $M_{n-1}$ contains $\emptyset^{(n - 1)}$) and $X_{n-1} \in \M_{n-1}$ and the statement $\sigma \cup \rho \nqvdash \neg \phi(G,x)$ is $\Sigma^0_1(\M_n)$ by \Cref{lem:core-complexity-question}.
\end{proof}




The following lemma states that the forcing question at the last level meets its specifications, that is, each answer is witnessed by an extension forcing it.

\begin{lemma}\label[lemma]{lem:main-question-find-extension}
Let $c = (\sigma, X_{n-1})$ be an $\MM^A_n$-condition and $\phi(G,x)$ be a $\Pi_n^0$ formula.
\begin{itemize}
    \item If $c \qvdash (\exists x) \phi(G,x)$, then there exists $d \leq c$ such that $d \Vdash (\exists x) \phi(G,x)$.
    \item If $c \nqvdash (\exists x) \phi(G,x)$, then there exists $d \leq c$ such that $d \Vdash (\forall x) \neg \phi(G,x)$.
\end{itemize}
Deciding which case holds and finding an index of the extension can be done uniformly in an index of $c$ using $M_n'$.
\end{lemma}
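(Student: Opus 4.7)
The plan is to dispatch the two cases separately, observing that the negative direction is essentially trivial by design and the positive direction reduces cleanly to the corresponding core lemma.

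For the negative case, I would simply unfold definitions: $c \nqvdash (\exists x)\phi(G,x)$ asserts that for every $\rho \subseteq X_{n-1} \cap A$ and every $a \in \NN$ one has $\sigma \cup \rho \qvdash \neg \phi(G,a)$ at the core level, which is verbatim the condition in \Cref{def:main-forcing-relation} for $c \Vdash (\forall x)\neg \phi(G,x)$. So $d := c$ witnesses the claim with no further work.

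For the positive case, the plan is to first extract a witness pair $(\rho, a)$ with $\rho \subseteq X_{n-1} \cap A$ and $\sigma \cup \rho \nqvdash \neg \phi(G,a)$ from $c \qvdash (\exists x)\phi(G,x)$, then pass to the condition $c_0 := (\sigma \cup \rho, X_{n-1})$---valid as an $\MM_n^A$-extension of $c$ because $\sigma \cup \rho \subseteq A$ and the reservoir is unchanged (modulo the usual truncation convention)---and finally invoke \Cref{lem:core-question-find-extension} at the core level $k = n-1$ on the $\Sigma^0_n$ formula $\neg \phi(G,a)$. Since the core question is answered negatively at the stem $\sigma \cup \rho$, that lemma produces some $d \leq c_0$ with $d \Vdash \phi(G,a)$, whence $d \Vdash (\exists x)\phi(G,x)$ by \Cref{def:main-forcing-relation}.

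The complexity count is then routine: by \Cref{lem:main-complexity-sigma0n+1-question} the forcing question is $\Sigma^0_1(\M_n)$, so $M_n'$ both decides which case occurs and, in the positive case, enumerates a witness pair; applying \Cref{lem:core-question-find-extension} at level $n-1$ to produce the final extension costs $A \oplus M_n$, which is bounded by $M_n'$ because $A$ is $\Sigma^0_{n+1}$ and $\emptyset^{(n)} \in \M_n$. I do not expect a genuine obstacle here: the real design work was in tailoring the $\Sigma^0_{n+1}$ forcing question so that its negation coincides exactly with the forcing relation for $\Pi^0_{n+1}$ formulas, and this lemma is essentially the verification that the design pays off.
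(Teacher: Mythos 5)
Your proposal is correct and matches the paper's proof step for step: the negative case is immediate from the extremal design of the $\Sigma^0_{n+1}$ question, and the positive case extracts a witness pair $(\rho, a)$, truncates the reservoir, and invokes \Cref{lem:core-question-find-extension} on $\neg\phi(G,a)$, with the same $M_n'$ complexity accounting. The only cosmetic difference is that you spell out why passing to $(\sigma \cup \rho, X_{n-1})$ yields a valid extension, which the paper leaves implicit.
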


\begin{proof}\ 
\begin{itemize}
\item If $c \qvdash (\exists x) \phi(G,x)$, there exists some $\rho \subseteq X_{n-1} \cap A$ and some $a \in \NN$ such that $\sigma \cup \rho \nqvdash \neg \phi(G,a)$. Therefore, by \Cref{lem:core-question-find-extension}, there exists some $d \leq (\sigma \cup \rho, X_{n-1} \setminus \{0, \dots, |\rho|-1\}) \leq c$ such that $d \Vdash \phi(G,a)$, hence $d \Vdash (\exists x)\phi(G,x)$.

\item If $c \nqvdash (\exists x) \phi(G,x)$, then already $c \Vdash (\forall x) \neg \phi(G,x)$. \\

\end{itemize}
Regarding the complexity of finding such $d$, deciding whether $c \qvdash (\exists x) \phi(G,x)$ holds can be done using $M_n'$ by \Cref{lem:main-complexity-sigma0n+1-question} and in the case where $c \qvdash (\exists x) \phi(G,x)$, finding the extension $d$ can be done using $A \oplus M_n$ by \Cref{lem:core-question-find-extension}, hence using $M_n'$ since $A$ is $\emptyset^{(n+1)}$ computable and $\M_n$ contains $\emptyset^{(n)}$.

\end{proof}





The following lemma is the counterpart of \Cref{prop:core-forcing-imply-truth} and essentially states that the syntactic forcing relation implies the semantic forcing relation, with an explicit bound to the amount of genericity for it to hold.

\begin{proposition}\label[proposition]{prop:main-forcing-imply-truth}
Let $\F$ be an $n$-generic $\MM^A_n$-filter. If there exists some $c \in \F$ such that $c \Vdash (\forall x) \phi(G,x)$ for some $\Sigma_n^0$ formula $\phi(G,x)$, then for every $a \in \NN$, there exists some $d \in \F$ such that $d \Vdash \phi(G,a)$. Furthermore, $(\forall x)\phi(G_{\F},x)$ will hold.   
\end{proposition}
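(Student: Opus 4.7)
The plan is to mirror the proof of \Cref{prop:core-forcing-imply-truth} one step higher, exploiting the new asymmetric forcing relation introduced for $\Pi^0_{n+1}$-formulas in \Cref{def:main-forcing-relation}. Fix $a \in \NN$. By $n$-genericity of $\F$, there is some $d = (\tau, Y_{n-1}) \in \F$ deciding the $\Sigma^0_n$-formula $\phi(G,a)$, and by taking a common refinement inside $\F$ one may assume $d \leq c$, so that $\sigma \preceq \tau$ and $\tau \setminus \sigma \subseteq X_{n-1}$. Because every stem of an $\MM^A_n$-condition is required to lie in $A$, one actually has $\tau \setminus \sigma \subseteq X_{n-1} \cap A$, and this is the key feature that will make the two hypotheses interact.

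I would then rule out $d \Vdash \neg\phi(G,a)$ by contradiction. Writing $\phi(G,x) = (\exists y)\chi(G,x,y)$ with $\chi$ a $\Pi^0_{n-1}$-formula, the hypothesis $d \Vdash (\forall y)\neg\chi(G,a,y)$ unfolds through the core forcing relation at level $n$ as: for every $\rho \subseteq Y_{n-1}$ and every $y$, either $\neg\chi(\tau \cup \rho, a, y)$ (case $n=1$) or $\tau \cup \rho \qvdash \neg\chi(G,a,y)$ (case $n \geq 2$). On the other hand, the hypothesis $c \Vdash (\forall x)\phi(G,x)$ unfolded through \Cref{def:main-forcing-relation} gives $\sigma \cup \rho' \qvdash \phi(G,x)$ for every $\rho' \subseteq X_{n-1} \cap A$ and every $x$; substituting the legal choice $\rho' = \tau \setminus \sigma$ yields $\tau \qvdash (\exists y)\chi(G,a,y)$. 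Unpacking this last statement through the core forcing question, the class $\U_{C_{n-1}}^{\M_{n-1}} \cap \{X : (\exists \rho \subseteq X)(\exists y)\,\mathrm{Good}(\tau,\rho,a,y)\}$ is large, where $\mathrm{Good}(\tau,\rho,a,y)$ abbreviates $\chi(\tau \cup \rho, a, y)$ when $n=1$ and $\tau \cup \rho \nqvdash \neg\chi(G,a,y)$ when $n \geq 2$. Any such large class of the form $\U_{C_{n-1}}^{\M_{n-1}} \cap \U$, with $\U$ an upward-closed $\Sigma^0_1(\M_{n-1})$ class, contains the minimal partition regular subclass $\langle \U_{C_{n-1}}^{\M_{n-1}} \rangle$; since $Y_{n-1}$ lives in the latter, some $\rho \subseteq Y_{n-1}$ and some $y$ witness $\mathrm{Good}(\tau,\rho,a,y)$, in direct contradiction with what $d \Vdash \neg\phi(G,a)$ produced above. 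Hence $d \Vdash \phi(G,a)$.

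It then remains to deduce $\phi(G_\F, a)$. The core forcing relation turns $d \Vdash (\exists y)\chi(G,a,y)$ into $d \Vdash \chi(G,a,b)$ for some $b$, and since $\chi$ is $\Pi^0_{n-1}$, the fact that forcing implies truth in $\PP^A_{n-1}$ -- a consequence of \Cref{prop:core-forcing-imply-truth} for $n \geq 3$ and of the base cases of the core forcing relation for $n \in \{1,2\}$ -- gives $\chi(G_\F, a, b)$, hence $\phi(G_\F, a)$. Running this for every $a$ yields $(\forall x)\phi(G_\F, x)$. I expect the most delicate ingredient to be the legality of the substitution $\rho' = \tau \setminus \sigma$: this is precisely what the two non-standard design choices of $\MM^A_n$ -- requiring $\sigma \subseteq A$ in conditions and restricting $\rho$ to $X_{n-1} \cap A$ in \Cref{def:main-forcing-relation} -- were introduced to make possible, and without them the asymmetric forcing question of \Cref{lem:main-complexity-sigma0n+1-question} could not be reconciled with the core forcing machinery.
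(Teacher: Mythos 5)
Your proof is correct and follows the same approach as the paper's: pick a deciding extension $d \leq c$, assume $d \Vdash \neg\phi(G,a)$ for contradiction, substitute $\rho' = \tau \setminus \sigma \subseteq X_{n-1}\cap A$ into the $\MM^A_n$-forcing relation to get $\tau \qvdash \phi(G,a)$, unpack the forcing question to a large class containing $\langle\U_{C_{n-1}}^{\M_{n-1}}\rangle$, and conclude by applying the reservoir of $d$. The only differences are cosmetic: you unify the $n=1$ and $n\ge 2$ cases with a single $\mathrm{Good}$ predicate where the paper spells them out separately, and you use the fact $Y_{n-1}\in\langle\U_{C_{n-1}}^{\M_{n-1}}\rangle$ directly while the paper invokes $Y_{n-1}\cap A\in\langle\U_{C_{n-1}}^{\M_{n-1}}\rangle$ via \Cref{lem:core-reservoir-intersection} (both work, since the $\Pi^0_n$ forcing relation quantifies over $\rho\subseteq Y_{n-1}$).
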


\begin{proof}
Let $\F$ be $n$-generic, let $c = (\sigma, X_{n-1}) \in \F$ be such that $c \Vdash (\forall x) \phi(G,x)$ for some $\Sigma_n^0$ formula $\phi(G,x)$ and let $a \in \NN$. By $n$-genericity of $\F$, there exists some $d = (\tau,Y_{n-1}) \in \F$ such that $d \Vdash \phi(G,a)$ or $d \Vdash \neg \phi(G,a)$. By definition of a filter and by downward closure of the forcing relation, we can assume that $d \leq c$, hence $\tau \succeq \sigma$. Assume for contradiction that $d \Vdash \neg \phi(G,a)$. Since $c \Vdash (\forall x)\phi(G,x)$, there are two cases depending on the value of $n$:
\begin{itemize}
    \item If $n = 1$, then, for all $\rho \subseteq X_0 \cap A$ and all $x \in \NN$, $\sigma \cup \rho \qvdash \phi(G,x)$ holds, hence $\tau \qvdash \phi(G,a)$ and the following class is large (and therefore includes $\langle \U_{C_0}^{\M_0} \rangle$):
$$\U_{C_{0}}^{\M_{0}} \cap \{X : (\exists \rho \subseteq X )\phi(\tau \cup \rho, a) \}$$
    
    As $Y_0 \in \langle \U_{C_0}^{\M_0} \rangle$, there exists some $\rho \subseteq Y_0$ such that $\phi(\tau \cup \rho, a)$ holds, contradicting $d \Vdash \neg \phi(G,a)$. 
    
    \item If $n > 1$, then, write $\phi(G,x) = (\exists y)\psi(G,x,y)$ for some $\Pi_{n-1}^0$ formula $\psi(G,x,y)$. Then, for all $\rho \subseteq X_{n-1} \cap A$ and all $x \in \NN$, $\sigma \cup \rho \qvdash (\exists y)\psi(G,x,y)$ holds hence $\tau \qvdash (\exists y) \psi(G,a,y)$ holds and the following class is large (and therefore includes $\langle \U_{C_{n-1}}^{\M_{n-1}} \rangle$):
$$\U_{C_{n-1}}^{\M_{n-1}} \cap \{X : (\exists \rho \subseteq X)(\exists y) \tau \cup \rho \nqvdash \neg \psi(\tau \cup \rho, a,y) \}$$

    As $Y_{n-1} \cap A \in \langle \U_{C_{n-1}}^{\M_{n-1}} \rangle$, there exists some $\rho \subseteq Y_{n-1} \cap A$ and some $b \in \NN$ such that $\tau \cup \rho \nqvdash \psi(\tau \cup \rho, a,b)$ holds, contradicting $d \Vdash (\forall y) \neg \psi(G,a)$. 
\end{itemize}

Therefore, $d \Vdash \phi(G,a)$ holds. If $n = 1$, then $\phi(G_{\F},a)$ holds by definition of the forcing relation for $\Sigma_1^0$ formulas, hence $(\forall x)\phi(G_{\F},a)$ will hold. If $n > 1$, then, writing $\phi(G,x)$ as $(\exists y)\psi(G,x,y)$ for some $\Pi_{n-1}^0$ formula $\psi(G,x,y)$ yields that $d \Vdash \psi(G,a,b)$, hence $\psi(G_{\F},a,b)$ holds for some $b \in \NN$ (either by \Cref{prop:core-forcing-imply-truth} if $k > 2$, or if $k = 2$, by definition of the forcing relation), thus $(\forall x)\phi(G_{\F},a)$ will hold. 
\end{proof}

We are now ready to prove our first abstract construction. When considering a set $M_n$ such that $M_n' \leq_T \emptyset^{(n+1)}$, it states the existence of an infinite subset of low${}_{n+1}$ degree.

\begin{proposition}\label[proposition]{prop:main-forcing-lown+1}
There exists an infinite subset $H \subseteq A$ such that $H^{(n+1)} \leq_T M_n'$.
\end{proposition}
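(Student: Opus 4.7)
The plan is to construct an $(n+1)$-generic $\MM^A_n$-filter $\F$ using $M_n'$ as oracle, and extract $H = G_\F$. Starting from the trivial condition $c_0 = (\emptyset, \NN)$ — which is valid since $\NN \in \M_{n-1}$ (as $\M_{n-1}$ is a Turing ideal) and $\NN \in \langle \U_{C_{n-1}}^{\M_{n-1}} \rangle$ by partition regularity and upward closure — I fix an effective enumeration $\{\varphi_s(G)\}_{s \in \NN}$ of all $\Sigma^0_{k+1}$ formulas for $0 \leq k \leq n$. At stage $s$, given $c_s$, I find $c_{s+1} \leq c_s$ deciding $\varphi_s(G)$: for formulas of level $k+1 \leq n$ I apply the core forcing question together with \Cref{lem:core-question-find-extension}, and for $\Sigma^0_{n+1}$ formulas I use the extremal question together with \Cref{lem:main-question-find-extension}.

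The key point is that the entire construction can be carried out using~$M_n'$. The core forcing question for $\Sigma^0_{k+1}$ formulas is $\Pi^0_1(\M_{k+1})$ by \Cref{lem:core-complexity-question}, and the top-level question is $\Sigma^0_1(\M_n)$ by \Cref{lem:main-complexity-sigma0n+1-question}. The Scott tower relations $M_i' \in \M_{i+1}$ chain together to give $M_{k+1} \leq_T M_n$ for every $k < n$, so every forcing question is $M_n'$-decidable. Once the answer is known, producing an $M_{n-1}$-index of the extension requires at most $A \oplus M_n$ (by \Cref{lem:core-question-find-extension}) or $M_n'$ (by \Cref{lem:main-question-find-extension}), and since $A$ is $\Sigma^0_{n+1}$ while $\emptyset^{(n+1)} \leq_T M_n'$, both are bounded by $M_n'$. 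Consequently the sequence $(c_s)_{s \in \NN}$ of indices is $M_n'$-computable.

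The resulting filter $\F$ is $(n+1)$-generic. By \Cref{prop:core-forcing-generic-set}, $H = G_\F$ is infinite and belongs to every cylinder, so $H \subseteq \sigma \cup (X_{n-1} \cap A) \subseteq A$ for each $(\sigma, X_{n-1}) \in \F$. For any $\Sigma^0_{n+1}(H)$-sentence $\varphi(H)$, $(n+1)$-genericity together with \Cref{prop:core-forcing-imply-truth} and \Cref{prop:main-forcing-imply-truth} yield that $\varphi(H)$ holds if and only if some condition in~$\F$ forces $\varphi(G)$; which of the two cases holds is exactly the information recorded during the $M_n'$-computable construction. Hence $\Sigma^0_{n+1}(H)$ is uniformly $M_n'$-computable, so $H^{(n+1)} \leq_T M_n'$.

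The main obstacle is bookkeeping rather than ideas: one must verify that every layer of the forcing question has complexity bounded by $M_n'$, and that the soundness lemmas at every level of the arithmetic hierarchy chain together to let $(n+1)$-genericity decide truth of $\Sigma^0_{n+1}(H)$ sentences. All the combinatorial work — the largeness tower, the cohesive classes, the design of an extremal $\Sigma^0_{n+1}$ forcing question exploiting that $A$ is $\Sigma^0_{n+1}$ — is already absorbed into the preparatory lemmas, so once the complexity accounting is in place the construction proceeds mechanically.
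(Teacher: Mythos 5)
Your proof is correct and takes essentially the same route as the paper: build an $M_n'$-computable decreasing sequence of conditions starting from the trivial condition, deciding all $\Sigma^0_{k+1}$ formulas for $k \leq n$ via \Cref{lem:core-question-find-extension} at the lower levels and \Cref{lem:main-question-find-extension} at the top, then invoke \Cref{prop:core-forcing-generic-set}, \Cref{prop:core-forcing-imply-truth} and \Cref{prop:main-forcing-imply-truth} to pass from forcing to truth. Your complexity bookkeeping (chaining $M_i' \in \M_{i+1}$ to bound everything by $M_n'$, and noting $A \leq_T \emptyset^{(n+1)} \leq_T M_n'$) matches the role of the cited lemmas in the paper's argument.
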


\begin{proof}
Note that $\MM^A_n \neq \emptyset$, since $(\epsilon,\NN) \in \MM^A_n$, with $\epsilon$ denoting the empty sequence. We can build effectively in $M_n'$ an $(n+1)$-generic filter $\F$ such that the corresponding set $G_{\F}$ will be a subset of $A$ such that $G_{\F}^{(n+1)} \leq M_n'$. More precisely, we will build a uniformly $M_n'$-computable decreasing sequence of conditions 
$$
c_0 \geq c_1 \geq c_2 \geq \dots
$$
where $c_0 = (\epsilon, \NN)$, and for every~$s \in \NN$, if $s = \langle e,k \rangle$ with $k \leq n$, then $c_{s+1}$ decides $(\exists x) \phi_e^k(G,x)$ for $(\phi_e^k(G,x))_{e \in \NN}$ a computable list of all the $\Pi_k^0$ formulas with parameters $G$ and $x$. Then, the set~$\F = \{ d \in \MM^A_n : (\exists s) d \geq c_s \}$ will be a $(n+1)$-generic filter. \\

Let $s \in \NN$ and assume $c_{s} = (\sigma, X_{n-1})$ has already been defined. 

If $s = \langle e,k \rangle$ for some $k \leq n$, then, thanks to \Cref{lem:core-question-find-extension} in the case where $k < n$ and thanks to \Cref{lem:main-question-find-extension} in the case where $k = n$, uniformly in $M_n'$ we can find an extension $c_{s+1}\leq c_{s}$ such that $c_{s+1} \Vdash (\exists x)\phi_e^k(G,x)$ or $c_{s+1} \Vdash (\forall x)\neg \phi_e^k(G,x)$. \\

Let $s \in \NN$, as $\F$ is $(n+1)$-generic, every property forced by $c_s$ will hold for $G_{\F}$ by \Cref{prop:core-forcing-imply-truth} and \Cref{prop:main-forcing-imply-truth}, hence $M_n'$ decides every $\Sigma_k^0$ property of $G_{\F}$ for $k < n+1$, thus $G_{\F}^{(n+1)} \leq M_n'$. Finally, by \Cref{prop:core-forcing-generic-set}, $G_{\F}$ is infinite, and by definition of a condition, $G_{\F} \subseteq A$.
\end{proof}

\Cref{prop:main-forcing-lown+1} enables to reprove the theorem from Monin and Patey~\cite{monin2021weakness} about $\Sub{\Delta^0_n}$.

\begin{theorem}[Monin and Patey~\cite{monin2021weakness}]
Let $B$ be a $\Delta_{n+1}^0$ set, there exists an infinite set $H$ of low${}_{n+1}$ degree such that $H \subseteq B$ or $H \subseteq \overline{B}$.    
\end{theorem}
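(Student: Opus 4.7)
The plan is to reduce this corollary directly to \Cref{prop:main-forcing-lown+1} by choosing the largeness tower parameters appropriately and exploiting the fact that being $\Delta^0_{n+1}$ makes both $B$ and $\overline{B}$ eligible as $\Sigma^0_{n+1}$ instances to feed into the main forcing.

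First, I would invoke \Cref{lem:scott-tower} to fix a Scott tower $\M_0, \dots, \M_n$ with Scott codes $M_0, \dots, M_n$ such that each $M_i$ is low over $\emptyset^{(i)}$; in particular $M_n' \leq_T \emptyset^{(n+1)}$. Then, by \Cref{lem:largeness-tower}, I would extend this into a largeness tower by choosing sets $C_0, \dots, C_{n-1}$ satisfying the three conditions required at the beginning of \Cref{sect:iterated-lowness}.

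Next, I would use partition regularity of $\langle \U_{C_{n-1}}^{\M_{n-1}} \rangle$: since $B \cup \overline{B} = \NN$ and $\NN \in \langle \U_{C_{n-1}}^{\M_{n-1}} \rangle$, at least one of $B$ or $\overline{B}$ belongs to $\langle \U_{C_{n-1}}^{\M_{n-1}} \rangle$. Call this set $A$. Crucially, because $B$ is $\Delta^0_{n+1}$, so is $\overline{B}$, and in particular both are $\Sigma^0_{n+1}$ sets, so $A$ satisfies the hypotheses required to run the main forcing of \Cref{sect:main-forcing}.

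Applying \Cref{prop:main-forcing-lown+1} to $A$ yields an infinite set $H \subseteq A$ such that $H^{(n+1)} \leq_T M_n'$. Combining with $M_n' \leq_T \emptyset^{(n+1)}$ gives $H^{(n+1)} \leq_T \emptyset^{(n+1)}$, so $H$ is of low${}_{n+1}$ degree, and $H \subseteq A$ means $H \subseteq B$ or $H \subseteq \overline{B}$. The entire argument is a straightforward bookkeeping wrapper around the already-proved proposition; the only step requiring any thought is the observation that $\Delta^0_{n+1}$ (rather than $\Sigma^0_{n+1}$) is exactly what makes the partition-regularity dichotomy symmetric, so that regardless of which side of the partition lands in the minimal class we are still facing a legitimate $\Sigma^0_{n+1}$ instance of the main forcing.
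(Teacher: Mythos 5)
Your proposal is correct and follows the paper's proof essentially verbatim: fix a Scott tower with low Scott codes via \Cref{lem:scott-tower}, enrich it into a largeness tower via \Cref{lem:largeness-tower}, use partition regularity of $\langle \U_{C_{n-1}}^{\M_{n-1}} \rangle$ to decide which of $B$, $\overline{B}$ to target, note both are $\Sigma^0_{n+1}$ since $B$ is $\Delta^0_{n+1}$, and apply \Cref{prop:main-forcing-lown+1}. The only presentational difference is that you name the chosen side $A$ instead of writing out the two cases separately.
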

\begin{proof}
By \Cref{lem:scott-tower}, there is a Scott tower $\M_0, \dots, \M_n$ of height~$n$ with Scott codes~$M_0, \dots, M_n$, such that for every~$i \leq n$, $M_i$ is of low degree over~$\emptyset^{(i)}$.
By \Cref{lem:largeness-tower}, it can be enriched with some sets~$C_0, \dots, C_{n-1}$ to form a largeness tower of height $n$. We can therefore add the assumption that $M_n' \leq \emptyset^{(n+1)}$ in \Cref{sect:main-forcing}. There are two cases:

Case $1$: $B \in \langle \U_{C_{n-1}}^{\M_{n-1}} \rangle$. In that case, since $B$ is $\Sigma_{n+1}^0$, \Cref{prop:main-forcing-lown+1} will hold for $B$ and there exists some infinite subset $H \subseteq B$ such that $H^{(n+1)} \leq_T M_n' \leq_T \emptyset^{(n+1)}$.

Case $2$: $B \not\in \langle \U_{C_{n-1}}^{\M_{n-1}} \rangle$. In that case, $\overline{B} \in \langle \U_{C_{n-1}}^{\M_{n-1}} \rangle$ by partition regularity of the class. Thus, since $\overline{B}$ is also $\Sigma_{n+1}^0$, \Cref{prop:main-forcing-lown+1} will also hold, this time for $\overline{B}$, yielding an infinite subset $H \subseteq \overline{B}$ such that $H^{(n+1)} \leq_T \emptyset^{(n+1)}$. 
\end{proof}

\subsection{Witness forcing}

Throughout this section, fix a $\Pi^0_{n+1}$ set~$A$ such that $\overline{A} \not\in \langle \U_{C_{n-1}}^{\M_{n-1}} \rangle$. The set~$A$ in this section must be thought of as the complement of the set~$A$ in \Cref{sect:main-forcing}. Contrary to the previous notions of forcing, witness forcing conditions need a second reservoir of higher complexity. 

\begin{definition}[Condition]
Let~$\WW^A_{n}$ be the notion of forcing whose conditions are tuples $(\sigma,X_{n-1}, X_{n})$ where:
\begin{itemize}
    \item $\sigma \subseteq A$ ;
    \item $X_{n-1} \supseteq X_{n}$ ;
    \item $\overline{X_{n}} \not\in \langle \U_{C_{n-1}}^{\M_{n-1}} \rangle$ ;
    \item $X_{n-1} \in \M_{n-1}$ and $X_{n} \in \M_{n}$.
\end{itemize}
\end{definition}

Given a $\WW^A_n$-condition $c = (\sigma, X_{n-1}, X_n)$, $X_n \in \langle \U_{C_{n-1}}^{\M_{n-1}} \rangle$ by partition regularity of the class, and 
$X_{n - 1} \in \langle \U_{C_{n-1}}^{\M_{n-1}} \rangle$ by its upward-closure. Therefore, $c \uh {\PP^A_{n-1}} = (\sigma, X_{n-1})$ is a valid $\PP^A_{n-1}$-condition.
One can think of a $\WW^A_{n}$-condition $(\sigma, X_{n-1}, X_n)$ either as a $\PP^A_{n-1}$-condition $(\sigma, X_{n-1})$, or as a Mathias condition $(\sigma, X_n)$. The notion of extension follows from both approaches:

\begin{definition}
A $\WW^A_{n}$-condition $(\tau, Y_{n-1}, Y_n)$ \emph{extends} a $\WW^A_{n}$-condition $(\sigma, X_{n-1}, \allowbreak X_n)$
if $Y_{n-1} \subseteq X_{n-1}$, $Y_n \subseteq X_n$ and $\sigma \preceq \tau \subseteq \sigma \cup X_n$. 
\end{definition}

The notion of cylinder is naturally defined as follows:

\begin{definition}
The \emph{cylinder} under a $\WW^A_n$-condition $(\sigma, X_{n-1}, X_n)$ is the class
$$
[\sigma, X_{n-1}, X_n] = \{ G : \sigma \subseteq G \subseteq \sigma \cup (X_n \cap A) \}.
$$
\end{definition}

The notion of index of a $\WW^A_{n}$-condition is defined accordingly:

\begin{definition}
An \emph{index} of a $\WW^A_{n}$-condition $(\sigma, X_{n-1}, X_n)$ is a tuple $\langle \sigma, a, b\rangle$ where $a$ is an $M_{n-1}$-index of~$X_{n-1}$ and $b$ is an $M_n$-index of~$X_n$.
\end{definition}

Despite having a second reservoir, $\WW^A_{n}$-forcing inherits abstractly many proper\-ties from $\PP^A_{n-1}$-forcing. The following commutation lemma shows that any density property over $\PP^A_{n-1}$-forcing yields a density property over $\WW^A_{n}$-forcing. It follows in particular that $\WW^A_{n}$-forcing inherits the forcing relation for $\Sigma_k^0$-formulas for $k \leq n$ and that \Cref{lem:core-question-find-extension} also holds with $\WW^A_{n}$-conditions.

\begin{lemma}\label[lemma]{lem:witness-compatible-core}
Let $c = (\sigma, X_{n-1}, X_n)$ be a $\WW^A_n$-condition and $(\tau, Y_{n-1}) \leq c \uh \PP^A_{n-1}$ be a $\PP^A_{n-1}$-extension. Then there is a $\WW^A_n$-extension $d \leq c$ such that $d \uh \PP^A_{n-1} = (\tau, Y_{n-1})$. Furthermore, an index for~$d$ can be found computably uniformly in an index for~$c$ and $(\tau, Y_{n-1})$.
\end{lemma}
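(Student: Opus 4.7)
The natural candidate is $d \coloneqq (\tau, Y_{n-1}, Y_n)$ where $Y_n \coloneqq Y_{n-1} \cap X_n$: the second reservoir of $d$ must be contained in both $X_n$ (by the $\WW^A_n$-extension clause) and $Y_{n-1}$ (by the nesting axiom $Y_{n-1} \supseteq Y_n$), so the intersection is the largest admissible choice.

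The verification then reduces to four clauses of the definition of a $\WW^A_n$-condition plus the extension relation. The stem clause $\tau \subseteq A$ and the membership $Y_{n-1} \in \M_{n-1}$ are inherited from the fact that $(\tau, Y_{n-1})$ is a $\PP^A_{n-1}$-condition. The membership $Y_n \in \M_n$ uses $\M_{n-1} \subseteq \M_n$, which holds because the Scott tower gives $M_{n-1}' \in \M_n$, hence $M_{n-1} \in \M_n$, and every set in $\M_{n-1}$ is computable from $M_{n-1}$. The inclusions $Y_n \subseteq Y_{n-1}$ and $Y_n \subseteq X_n$ are immediate from the definition of $Y_n$, and the extension relation $d \leq c$ follows from $Y_{n-1} \subseteq X_{n-1}$, $Y_n \subseteq X_n$, $\sigma \preceq \tau$, and the fact that the stem extension is taken from within $X_n$.

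The main obstacle is the remaining clause $\overline{Y_n} \not\in \langle \U_{C_{n-1}}^{\M_{n-1}} \rangle$. The plan is to argue by contradiction using partition regularity. Suppose $\overline{Y_n} \in \langle \U_{C_{n-1}}^{\M_{n-1}} \rangle$. Since $\overline{Y_n} = \overline{Y_{n-1}} \cup \overline{X_n}$, partition regularity of $\langle \U_{C_{n-1}}^{\M_{n-1}} \rangle$ applied to this 2-cover forces one of $\overline{Y_{n-1}}$ or $\overline{X_n}$ to lie in the class. The second possibility is ruled out directly by the definition of $c$ as a $\WW^A_n$-condition. The first is ruled out by invoking $\M_{n-1}$-cohesiveness: since $Y_{n-1} \in \M_{n-1} \cap \langle \U_{C_{n-1}}^{\M_{n-1}} \rangle$, cohesiveness of $\U_{C_{n-1}}^{\M_{n-1}}$ combined with the standard consequence that an infinite set in an $\M$-cohesive class forces the class inside $\L_{Y_{n-1}}$ yields $\langle \U_{C_{n-1}}^{\M_{n-1}} \rangle \subseteq \L_{Y_{n-1}}$, whence $\overline{Y_{n-1}} \notin \langle \U_{C_{n-1}}^{\M_{n-1}} \rangle$. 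Both cases contradict the assumption.

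For the uniformity statement, given an index $\langle \sigma, a, b\rangle$ of $c$ (where $a$ is an $M_{n-1}$-index of $X_{n-1}$ and $b$ an $M_n$-index of $X_n$) and an index $\langle \tau, a'\rangle$ of $(\tau, Y_{n-1})$, the desired index of $d$ is $\langle \tau, a', b'\rangle$ where $b'$ is an $M_n$-index of $Y_n = Y_{n-1} \cap X_n$. Using the computable $M_{n-1}$-to-$M_n$ index translation of \Cref{rem:index-translation} on $a'$ produces an $M_n$-index of $Y_{n-1}$, and the Scott code operations of $\M_n$ then yield $b'$ computably from this translated index and $b$.
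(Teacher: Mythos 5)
Your proof is correct and takes essentially the same approach as the paper: same candidate $d = (\tau, Y_{n-1}, Y_{n-1}\cap X_n)$, same use of $\M_{n-1}$-cohesiveness to get $\overline{Y_{n-1}}\notin\langle\U_{C_{n-1}}^{\M_{n-1}}\rangle$, the same partition-regularity argument for the remaining clause (yours phrased as a contradiction, the paper's as a direct contrapositive), and the same index-translation argument for uniformity.
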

\begin{proof}Since $c$ is a $\WW^A_n$-condition, $\overline{X_n} \notin \langle \U_{C_{n-1}}^{\M_{n-1}} \rangle$. Then, $(\tau, Y_{n-1})$ being a $\PP^A_{n-1}$-condition, we have $Y_{n-1} \in \M_{n-1} \cap \langle \U_{C_{n-1}}^{\M_{n-1}} \rangle$, thus $\langle \U_{C_{n-1}}^{\M_{n-1}} \rangle \subseteq \L_{Y_{n-1}}$ and $\overline{Y_{n-1}} \notin \langle \U_{C_{n-1}}^{\M_{n-1}} \rangle$.
Combining those two results yields that $\overline{X_n} \cup \overline{Y_{n-1}} \notin \langle \U_{C_{n-1}}^{\M_{n-1}} \rangle$ by partition regularity of $\langle \U_{C_{n-1}}^{\M_{n-1}} \rangle$.
Hence, $d = (\tau, Y_{n-1}, X_n \cap Y_{n-1})$ is a valid $\WW^A_n$-condition. It is clear that $d \uh \PP^A_{n-1} = (\tau, Y_{n-1})$ and that $d \leq c$. \\

By \Cref{rem:index-translation}, an $M_n$-index for the set $Y_{n-1}$ can be found uniformly computably using an $M_{n-1}$ index for $Y_{n-1}$. Thus, by our assumptions on the encodings of Scott ideals, an index for $X_n \cap Y_{n-1}$ can be found uniformly computably using an $M_{n}$-index for the set $X_n$ and an $M_{n-1}$-index for the set $Y_{n-1}$. Therefore, an index for~$d$ can be found uniformly computably  in an index for~$c$ and $(\tau, Y_{n-1})$.
\end{proof}

The following forcing relation for $\Sigma^0_{n+1}$ and $\Pi^0_{n+1}$ formulas is closer to the one from $\PP^A_n$-forcing than $\MM^A_n$-forcing, in that $\tau$ ranges over~$X_n$ instead of $X_n \cap A$. Contrary to $\MM^A_n$-forcing, the complexity of the forcing relation for $\Pi^0_{n+1}$-formulas does not play any role in the constructions.

\begin{definition}[Forcing relation]
Let $c = (\sigma,X_{n-1}, X_{n})$ be a $\WW^A_n$-condition, we define the forcing relation $\Vdash$ for $\Sigma_{n+1}^0$ and $\Pi_{n+1}^0$ formulas as follows: for $\phi(G,x)$ a $\Pi_{n}^0$ formula,
\begin{itemize}
    \item $c \Vdash (\exists x)\phi(G,x)$ if there exists some $a \in \NN$ such that $c \Vdash \phi(G,a)$.
    \item $c \Vdash (\forall x)\neg \phi(G,x)$ if $(\forall \tau \subseteq X_{n})(\forall x \in \NN)\sigma \cup \tau \qvdash \neg \phi(G,x)$ .
\end{itemize}
\end{definition}

As mentioned earlier in \Cref{sect:iterated-lowness}, if $\overline{A} \not \in \langle \U_{C_{n-1}}^{\M_{n-1}} \rangle$, then this is witnessed by a large $\Sigma^0_1(\M_{n-1})$ class $\U \supseteq \langle \U_{C_{n-1}}^{\M_{n-1}} \rangle$ such that $\overline{A} \not \in \U$. This witness plays an important role in the design of a forcing question with good definitional properties. We therefore define the notion formally and state the complexity of finding such a witness.

\begin{definition}
Let $c = (\sigma, X_{n-1}, X_{n})$ be a $\WW^A_n$-condition. A \emph{witness} for~$c$ is a $\Sigma^0_1(\M_{n-1})$ class~$\U \supseteq \langle \U_{C_{n-1}}^{\M_{n-1}} \rangle$  such that $\overline{X_{n}} \cup \overline{A} \not \in \U$.
\end{definition}


\begin{lemma}\label[lemma]{lem:witness-overapproximate}
    For every $\WW^A_n$-condition $(\sigma,X_{n-1}, X_{n})$, there exists some witness~$\U$.
    An index for such a class $\U$ can uniformly be found using $M_{n}'$.
\end{lemma}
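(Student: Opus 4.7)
The plan is to use the explicit characterization of $\langle \U_{C_{n-1}}^{\M_{n-1}} \rangle$ as an intersection of $\Sigma^0_1(\M_{n-1})$ classes to extract a witness, then check that locating an index of one of them is within the power of $M_n'$.

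First, I would observe that $\overline{X_n} \cup \overline{A} \notin \langle \U_{C_{n-1}}^{\M_{n-1}} \rangle$. Indeed, by the definition of a $\WW^A_n$-condition, $\overline{X_n} \notin \langle \U_{C_{n-1}}^{\M_{n-1}} \rangle$, and by the standing assumption of this subsection, $\overline{A} \notin \langle \U_{C_{n-1}}^{\M_{n-1}} \rangle$. Since $\langle \U_{C_{n-1}}^{\M_{n-1}} \rangle$ is partition regular, it cannot contain the union $\overline{X_n} \cup \overline{A}$ either.

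Next, I would invoke the characterization
$$
\langle \U_{C_{n-1}}^{\M_{n-1}} \rangle = \bigcap \{ \U_e^{Z_i} : e, i \in \NN \text{ and } \U_{C_{n-1}}^{\M_{n-1}} \cap \U_e^{Z_i} \text{ is large}\}.
$$
Since $\overline{X_n} \cup \overline{A}$ lies outside this intersection, some term $\U_e^{Z_i}$ already excludes it. The class $\U = \U_e^{Z_i}$ is then $\Sigma^0_1(\M_{n-1})$, contains $\langle \U_{C_{n-1}}^{\M_{n-1}} \rangle$, and omits $\overline{X_n} \cup \overline{A}$, so it is a valid witness.

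For the complexity of finding such a pair $(e, i)$, I would show that both defining conditions are $\Pi^0_1(\M_n)$ uniformly in $(e, i)$, so that $M_n'$ can enumerate pairs and check them in turn. Largeness of $\U_{C_{n-1}}^{\M_{n-1}} \cap \U_e^{Z_i}$ is $\Pi^0_1(\M_{n-1})$ by \Cref{lem:complexity-largeness}, hence a fortiori $\Pi^0_1(\M_n)$. The statement $\overline{X_n} \cup \overline{A} \notin \U_e^{Z_i}$ unfolds to: for every $\rho \in W_e^{Z_i}$, some $x \in \rho$ lies in $X_n \cap A$. Since $X_n \in \M_n$ and $A$ is $\Pi^0_{n+1}$ (which is $\Pi^0_1(\emptyset^{(n)})$ with $\emptyset^{(n)} \in \M_n$), this is again $\Pi^0_1(\M_n)$. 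Both checks are therefore decidable from $M_n'$, and since at least one valid pair exists, $M_n'$ locates one by exhaustive search, uniformly in an index for $c$.

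I expect the main subtlety to be the bookkeeping about complexity: one has to verify that every parameter involved (namely $X_n$, $Z_i$, $C_{n-1}$, $M_{n-1}'$, and the defining parameters of $A$) stays within $\M_n$ so that both checks can be performed by $M_n'$. This is precisely where the Scott tower structure pays off, since $\emptyset^{(n)} \in \M_n$ is what allows the $\Pi^0_{n+1}$ definition of $A$ to be absorbed into a $\Pi^0_1(\M_n)$ statement.
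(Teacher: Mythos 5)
Your proof is correct and follows essentially the same route as the paper: both establish $\overline{X_n} \cup \overline{A} \notin \langle \U_{C_{n-1}}^{\M_{n-1}} \rangle$ by partition regularity, extract a witnessing $\Sigma^0_1(\M_{n-1})$ class from the intersection characterization, and verify that $M_n'$ can locate an index by exhaustive search. One minor slip: \Cref{lem:complexity-largeness} places the largeness statement in $\Pi^0_1(C_{n-1} \oplus M_{n-1}')$, not $\Pi^0_1(\M_{n-1})$ (and indeed $M_{n-1}' \notin \M_{n-1}$ since Scott ideals are not topped), but your conclusion that it is $\Pi^0_1(\M_n)$ still stands because $C_{n-1}$ and $M_{n-1}'$ both belong to $\M_n$ by the largeness-tower hypotheses.
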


\begin{proof}
By partition regularity of $\langle \U_{C_{n-1}}^{\M_{n-1}} \rangle$, $\overline{X_n} \cup \overline{A} \notin \langle \U_{C_{n-1}}^{\M_{n-1}} \rangle$, otherwise either $\overline{X_n}$ or $\overline{A}$ would be in $\langle \U_{C_{n-1}}^{\M_{n-1}} \rangle$ which is impossible by definition of a condition.

The class $\langle \U_{C_{n-1}}^{\M_{n-1}} \rangle$ being an intersection of $\Sigma_1^0(\M_{n-1})$ large classes, there exists one of those $\Sigma_1^0(\M_{n-1})$ large class $\U$ such that $\U \supseteq \langle \U_{C_{n-1}}^{\M_{n-1}} \rangle$ and such that $\overline{X_n} \cup \overline{A} \not \in \U$.

To find such a witness, one first $M_n'$-computes a set $D_{n-1} \supseteq C_{n-1}$ such that $\U_{D_{n-1}}^{\M_{n-1}} = \langle \U_{C_{n-1}}^{\M_{n-1}} \rangle$. Then, for every $(e, i) \in D_{n-1}$, ask whether for every $\rho \subseteq \overline{X_n} \cup \overline{A}$, $\rho \not \in W_e^{Z_i^{n-1}}$ (where $Z_i^{n-1}$ is the $i$-th element of $\M_{n-1}$). This can be done $M_n'$-computably as $\overline{A}$ is $\Sigma_{n+1}^0$ and $\M_n$ contains $\emptyset^{(n)}$ and $\overline{X_n}$. One must eventually find such a pair $(e, i)$ for which the answer is positive.    
\end{proof}

We now define forcing questions for $\Sigma^0_n$ and $\Sigma^0_{n+1}$-formulas in order to preserve hyperimmunities. Contrary to $\MM^A_n$-forcing, the set~$A$ is $\Pi^0_{n+1}$, hence one cannot ask for some~$\rho \subseteq X_{n-1} \cap A$ to satisfy some property, as the corresponding question would be too complex. We will therefore use an over-approximation by quantifying universally over all sets. Thankfully, given a $\WW^A_n$-condition $c$ and a witness~$\U$, one can restrict the over-approximation to all sets~$B$ such that $\overline{B} \not \in \U$, as this is the case for~$X_{n-1} \cap A$. This refined over-approximation has two benefits: (1) it is still compact, hence yields a forcing question with the appropriate definitional complexity, and (2) for every such set~$B$, since~$\overline{B}$ does not belong to a large class, then $B$ must contain many elements, hence one can always ask for a subset of~$B$.

\begin{definition}[Forcing question]
    Let $c = (\sigma,X_{n-1},X_n)$ be a $\WW^A_n$-condition and $\phi(G,x)$ a $\Pi_k^0$ formula for $k = n - 1$ or $k = n$. Let $\U$ be a $\Sigma_1^0(\M_{n-1})$-class and define $c \qvdash^\U (\exists x)\phi(G,x)$ to hold if for every $B \in 2^\NN$ such that $\overline{B} \not \in \U$, there is a finite $\tau \subseteq B \cap X_{k}$ and some~$x \in \NN$ such that $\phi(\sigma \cup \tau,x)$ holds if $k = 0$ or  $\sigma \cup \tau \nqvdash \neg \phi(G,x)$ holds if $k > 0$.
\end{definition}

The following lemma shows that the forcing question for $\Sigma^0_n$ and $\Sigma^0_{n+1}$ formulas has the appropriate definitional complexity.

\begin{lemma}\label[lemma]{lem:witness-forcing-question-complexity}
    Let $\phi(G,x)$ be a $\Pi_k^0$ formula for $k = n-1$ or $k = n$ and let $\U$ be a $\Sigma_1^0(\M_{n-1})$-class. For every $\WW^A_n$-condition $c$, the relation $c \qvdash^\U (\exists x) \phi(G,x)$ is $\Sigma_1^0(\M_k)$.
\end{lemma}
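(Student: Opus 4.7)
My plan is to rewrite $c \qvdash^\U \exists x \phi(G,x)$ as the emptiness of a certain $\Pi^0_1$ closed subclass of $2^\NN$ and then invoke the compactness of Cantor space to extract a bounded, $\Sigma^0_1(\M_k)$ witness.

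First I would unpack both sides. Since $\U$ is an upward-closed $\Sigma^0_1(\M_{n-1})$ class, it admits a generating c.e.\ (in $\M_{n-1}$) set $W_U$ of finite positive parts, so that $\U = \{X : \exists \rho \in W_U,\ \rho \subseteq X\}$ and $\overline{B} \notin \U$ is equivalent to $\forall \rho \in W_U,\ \rho \cap B \neq \emptyset$. For the conclusion of the definition, I would set
$$W_V = \{\tau \in 2^{<\NN} : \tau \subseteq X_k \wedge \exists x\, \psi(\sigma,\tau,x)\},$$
where $\psi(\sigma,\tau,x)$ denotes $\phi(\sigma\cup\tau,x)$ when $k=0$ and $\sigma\cup\tau \nqvdash \neg\phi(G,x)$ when $k>0$. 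The key bookkeeping is that $W_V$ is c.e.\ in $\M_k$: when $k=0$, $\psi$ is computable and $X_0 \in \M_0$; when $k>0$, the relation $\sigma\cup\tau \nqvdash \neg\phi(G,x)$ is $\Sigma^0_1(\M_k)$ by \Cref{lem:core-complexity-question} (applied to the $\Sigma^0_k$-formula $\neg\phi$), and $X_k \in \M_k$. Moreover $W_U$ is c.e.\ in $\M_k$ as well: trivially when $k=n-1$, and when $k=n$ because the Scott tower condition $M_{n-1}' \in \M_n$ together with Turing closure gives $\M_{n-1} \subseteq \M_n$.

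With these reformulations, $c \qvdash^\U \exists x\phi(G,x)$ becomes the universal disjunction
$$\forall B \in 2^\NN,\; [\exists \rho \in W_U,\ \rho \cap B = \emptyset] \vee [\exists \tau \in W_V,\ \tau \subseteq B],$$
i.e., the emptiness of the closed class $\{B : \forall \rho \in W_U,\ \rho \cap B \neq \emptyset\} \cap \{B : \forall \tau \in W_V,\ \tau \not\subseteq B\}$. By compactness of $2^\NN$, this is equivalent to the existence of a stage $s$ and a length $N$ such that every $\beta \in 2^N$ is killed at stage $s$, meaning some $\rho \in W_U[s]$ satisfies $\rho \cap \beta = \emptyset$ or some $\tau \in W_V[s]$ satisfies $\tau \subseteq \beta$. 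The reverse direction is immediate (plug $\beta = B\uh N$), and the resulting finite-stage statement is visibly $\Sigma^0_1$ in the two enumerations, hence $\Sigma^0_1(\M_k)$.

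The only real friction I anticipate is the complexity bookkeeping at the boundary --- confirming that the $\M_{n-1}$-enumeration of $W_U$ still fits into $\M_k$ at the level $k=n$ (handled by the Scott tower inclusion above) and that the inner predicate has the right complexity in the edge case $k=0$; the compactness step itself is routine.
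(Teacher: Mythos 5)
Your overall strategy is the same as the paper's: both rewrite $c \qvdash^\U (\exists x)\phi(G,x)$ as the emptiness of a $\Pi^0_1(\M_k)$ class and then invoke compactness of Cantor space to get a $\Sigma^0_1(\M_k)$ statement. Your bookkeeping of where the two enumerations live ($W_U$ c.e.\ in $\M_{n-1}\subseteq\M_k$, $W_V$ c.e.\ in $\M_k$ via \Cref{lem:core-complexity-question}) is correct and is exactly the content the paper is implicitly relying on.

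There is, however, a genuine gap in your compactness reformulation, specifically in the "reverse direction is immediate" claim. Your finite-stage condition says "some $\rho \in W_U[s]$ satisfies $\rho \cap \beta = \emptyset$" for $\beta \in 2^N$. If such a $\rho$ has elements $\geq N$, then $\rho\cap\beta=\emptyset$ only tells you $\rho$ avoids the bits of $\beta$ below $N$; it does \emph{not} give $\rho\cap B=\emptyset$ for every $B$ extending $\beta$, because $B$ may include a large element of $\rho$. Concretely, if $W_U=\{\{N\}\}$ and $W_V=\emptyset$, every $\beta\in 2^N$ is "killed," but any $B$ with $N\in B$ lies in your $\Pi^0_1$ class, so emptiness fails. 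The fix is easy but must be stated: require in the finite-stage condition that $\rho\subseteq\{0,\dots,N-1\}$ (the analogous bound on $\tau$ is automatic from $\tau\subseteq\beta$), and in the forward direction choose $N$ larger than $\max\rho_i$ for the finitely many $\rho_i$ produced by compactness. The paper sidesteps this subtlety entirely by using the condition $[\,\overline{\beta}\,]\subseteq\U$, which is itself a $\Sigma^0_1(\M_{n-1})$ predicate (no explicit $W_U$ enumeration or stage appears) and does correctly force $\overline{B}\in\U$ for every $B\succeq\beta$. So the paper's formulation is the cleaner choice, and your version needs the extra boundedness requirement to be sound.
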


\begin{proof}
By compactness, the statement $c \qvdash^\U (\exists x) \phi(G,x)$ is equivalent to the following statement when $k > 0$ (the case $k = 0$ is similar), with $\overline{\beta}$ representing the bitwise complement of $\beta$ :
$$(\exists \ell)(\forall \beta \in 2^{\ell})([\overline{\beta}] \subseteq \U \vee (\exists \tau \finsub \beta \cap X_k)(\exists x)\sigma \cup \tau \nqvdash \neg \phi(G,x))$$

The statement $[\overline{\beta}] \subseteq \U$ is $\Sigma_1^0(\M_{n-1})$ as $\U$ is a $\Sigma_1^0(\M_{n-1})$-class. The statement $\sigma \cup \tau \nqvdash \neg \phi(G,x)$ is $\Sigma_1^0(\M_k)$ by \Cref{lem:core-complexity-question}.
\end{proof}

The following lemma states, as usual, that the forcing question meets its specifications.

\begin{lemma}\label[lemma]{lem:witness-question-find-extension}
    Let $c$ be a $\WW^A_n$-condition, $\U$ be a witness for~$c$ and $\phi(G,x)$ a $\Pi_k^0$ formula for $k = n - 1$ or $k = n$.
    \begin{itemize}
        \item If $c \qvdash^\U (\exists x)\phi(G,x)$, then there exists $d \leq c$ such that $d \Vdash (\exists x)\phi(G,x)$.
        
        \item If $c \nqvdash^\U (\exists x)\phi(G,x)$, then there exists $d \leq c$ such that $d \Vdash (\forall x)\neg \phi(G,x)$.
    \end{itemize}
    Deciding whether $c \qvdash^\U (\exists x)\phi(G,x)$ holds or not, and finding the appropriate extension can be done uniformly in an index of $\U$ and~$c$ using $M_n'$.
\end{lemma}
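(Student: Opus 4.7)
The plan is to handle the two directions of the lemma separately, and within each to treat the subcases $k = n-1$ and $k = n$ uniformly.

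\textbf{Positive case.} I would feed the distinguished set $B := X_n \cap A$ into the forcing question; the witness hypothesis gives $\overline{B} = \overline{X_n} \cup \overline{A} \notin \U$ automatically, so the question then produces a finite $\tau \subseteq X_n \cap A$ and some $x \in \NN$ with $\phi(\sigma \cup \tau, x)$ (if $k = 0$) or $\sigma \cup \tau \nqvdash \neg \phi(G, x)$ (if $k > 0$). Since $\tau \subseteq A \cap X_n$, the stem can be legally extended to $\sigma \cup \tau$, yielding the intermediate $\PP^A_{n-1}$-condition $(\sigma \cup \tau, X_{n-1})$. For $k > 0$ I would then invoke \Cref{lem:core-question-find-extension} on this condition to obtain a $\PP^A_{n-1}$-extension $d_0$ forcing $\phi(G, x)$, and lift $d_0$ to a $\WW^A_n$-extension via \Cref{lem:witness-compatible-core}; for $k = 0$ the candidate $d := (\sigma \cup \tau, X_{n-1}, X_n)$ already forces $\phi(G, x)$ by the base clause of \Cref{def:core-forcing-relation}.

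\textbf{Negative case.} Using the compactness reformulation from the proof of \Cref{lem:witness-forcing-question-complexity}, the failure of $c \qvdash^\U (\exists x) \phi(G, x)$ is equivalent to the infinitude of the prefix-closed $\Pi^0_1(\M_k)$ tree $\mathcal{T} \subseteq 2^{<\NN}$ of strings $\beta$ satisfying $[\overline{\beta}] \not\subseteq \U$ and $(\forall \tau \finsub \beta \cap X_k)(\forall x)\ \sigma \cup \tau \qvdash \neg \phi(G, x)$ (with the obvious base-case variant when $k = 0$). Since $\M_k$ is Scott, I would extract a path $B^* \in \M_k$ through $\mathcal{T}$; taking limits, $B^*$ satisfies $\overline{B^*} \notin \U$ (because $\U$ is open) and the corresponding universal property on all finite $\tau \subseteq B^* \cap X_k$. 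The candidate extension is $d := (\sigma, B^* \cap X_{n-1}, B^* \cap X_n)$ when $k = n-1$, and $d := (\sigma, X_{n-1}, B^* \cap X_n)$ when $k = n$.

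The main obstacle is verifying that $d$ is a legal $\WW^A_n$-condition, that is, that $\overline{X_n''} \notin \langle \U_{C_{n-1}}^{\M_{n-1}} \rangle$ for the new reservoir $X_n'' := B^* \cap X_n$. Here the key observation is that $\{X_n'', \overline{B^*}, \overline{X_n}\}$ is a $3$-cover of $\NN$. Now $\overline{B^*} \notin \U$ by construction, and $\overline{X_n} \subseteq \overline{X_n} \cup \overline{A} \notin \U$ by the witness property combined with the upward-closure of~$\U$; hence neither of these belongs to the subclass $\langle \U_{C_{n-1}}^{\M_{n-1}} \rangle \subseteq \U$. Partition regularity of $\langle \U_{C_{n-1}}^{\M_{n-1}} \rangle$ then forces the remaining member $X_n''$ into it. An entirely analogous $3$-cover handles $B^* \cap X_{n-1}$ in the subcase $k = n-1$. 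The forcing relation $d \Vdash (\forall x) \neg \phi(G, x)$ is then immediate from the universal property of~$B^*$ applied to the chosen reservoir, using the relation inherited from $\PP^A_{n-1}$ when $k = n-1$ and the witness-forcing clause at the top level when $k = n$.

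For the complexity claim: deciding $c \qvdash^\U$ is $M_n'$-decidable by \Cref{lem:witness-forcing-question-complexity}; in the positive case the search for $\tau$ and~$x$ is effective in $M_n'$ because $A$ is $\Pi^0_{n+1}$ (so $\emptyset^{(n+1)} \leq_T M_n'$ suffices to test $\tau \subseteq A$), while \Cref{lem:core-question-find-extension} together with \Cref{lem:witness-compatible-core} is uniform in $A \oplus M_n \leq_T M_n'$; in the negative case, an $M_n$-index of~$B^*$ (and hence of the new reservoirs) is obtained uniformly from indices of~$\U$ and~$c$ through the Scott-code operations available in~$\M_n$.
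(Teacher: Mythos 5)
Your proposal follows the same strategy as the paper in both directions — instantiating the universal quantifier of $\qvdash^\U$ at $B = X_n \cap A$ in the positive case, and extracting a member $B^*$ of a non-empty $\Pi^0_1(\M_k)$ class via the Scott ideal property in the negative case — but the legality check in the negative case has a gap. You correctly identify the goal: showing $\overline{X_n''} \notin \langle \U_{C_{n-1}}^{\M_{n-1}} \rangle$ for the new reservoir $X_n'' = B^* \cap X_n$. However, the $3$-cover argument you supply (that $\{X_n'', \overline{B^*}, \overline{X_n}\}$ covers $\NN$, that $\overline{B^*}$ and $\overline{X_n}$ lie outside the class, and hence $X_n''$ lies inside it) establishes the positive containment $X_n'' \in \langle \U_{C_{n-1}}^{\M_{n-1}} \rangle$, which is not the required conclusion. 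A partition regular class can contain both a set and its complement — the class of infinite sets already does — so $X_n'' \in \A$ does not yield $\overline{X_n''} \notin \A$. The needed step is the contrapositive of the cover axiom applied directly to $\overline{X_n''}$: since $\overline{X_n''} = \overline{B^*} \cup \overline{X_n}$ is covered by two sets outside $\langle \U_{C_{n-1}}^{\M_{n-1}} \rangle$, it is itself outside the class. This is exactly the paper's one-line deduction, and you already have the two facts it requires, so the fix is immediate; the same correction applies verbatim to $B^* \cap X_{n-1}$ in the subcase $k = n-1$. Everything else in the proposal, including the lift via \Cref{lem:witness-compatible-core} in the positive case and the complexity accounting, matches the paper's proof.
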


\begin{proof}
    Say $c = (\sigma,X_{n-1},X_{n})$ and let $\phi(G,x)$ be a $\Pi_k^0$ formula for $k = n -1$ or $k = n$. There are two cases:
    \begin{itemize}
        \item If $c \qvdash^\U (\exists x)\phi(G,x)$. By definition of a witness, $\overline{A \cap X_{n}} \notin \U$, hence there exists a finite $\tau \finsub A \cap X_{n}$ and some $a \in \NN$ such that $\sigma \cup \tau \nqvdash \neg \phi(G,a)$. By \Cref{lem:core-question-find-extension}, there exists an extension $(\rho,Y_{n-1}) \leq (\sigma \cup \tau, X_{n-1} \setminus \{0, \dots, |\tau|\})$ forcing $\phi(G,a)$. Therefore, $d = (\rho, Y_{n-1}, X_{n} \cap Y_{n-1})$ extends $c$ and forces $(\exists x)\phi(G,x)$.
        
        \item  If $c \nqvdash^\U (\exists x)\phi(G,x)$. The following class is $\Pi_1^0(\M_k)$ and non-empty:
        $$\C = \{B \in 2^{\NN} : \overline{B} \notin \U \wedge (\forall \tau \finsub B \cap X_k)(\forall x \in \NN) \sigma \cup \tau \qvdash \neg \phi(G,x) \}$$
        Therefore, as $\M_k$ is a Scott ideal, there exists some $B \in \C \cap \M_k$. Since $\overline{B} \notin \U$, $\overline{B} \notin \langle \U_{C_{n-1}}^{\M_{n-1}} \rangle$. As $\overline{B}$ and $\overline{X_k}$ are not in $\langle \U_{C_{n-1}}^{\M_{n-1}} \rangle$, $\overline{B \cap X_k} \notin \langle \U_{C_{n-1}}^{\M_{n-1}} \rangle$. Hence, for $k = n-1$, $d = (\sigma, B \cap X_{n-1}, B \cap X_{n})$ is a valid condition extending $c$ such that $d \Vdash (\forall x)\neg \phi(G,x)$ and for $k = n$, $d = (\sigma, X_{n-1}, B \cap X_{n})$ is a valid condition extending $c$ such that $d \Vdash (\forall x)\neg \phi(G,x)$.
    \end{itemize}
By \Cref{lem:witness-forcing-question-complexity}, the relation $c \qvdash^\U (\exists x)\phi(G,x)$ is $\Sigma^0_1(\M_k)$, hence can be decided using $M_n'$. In the first case,  $\tau$, $a$ and the extension $(\rho, Y_{n-1})$ can be found $(A \oplus M_n)$-computably, hence $M_n'$-computably since $A$ is $\Pi^0_{n+1}$ and $\emptyset^{(n)} \in \M_n$.
In the second case, an $M_k$-index of a tree~$T \subseteq \bstr$ such that $[T] = \C$ can be found computably, and and since $M_k$ is a Scott code, an $M_k$-index of $B$ can be found computably in the $M_k$-index of~$T$. Thus an index of the extension witnessing the second case is found computably in an index of~$c$.
\end{proof}

Given a $\WW^A_n$-filter $\F$, we write $[\F]$ for $\bigcap_{(\sigma,X_{n-1},X_{n}) \in \F} [\sigma,X_{n-1},X_{n}]$
and $\F \uh \PP^A_{n-1} = \{ c \uh \PP^A_{n-1} : c \in \F \}$. Note that $\F \uh \PP^A_{n-1}$ is a $\PP^A_{n-1}$-filter and that $[\F] \subseteq [\F \uh \PP^A_{n-1}]$.
The following proposition is the counterpart of \Cref{prop:core-forcing-generic-set} for the witness forcing. 

\begin{proposition}\label[proposition]{prop:witness-forcing-generic-set}
Let $\F$ be a $1$-generic $\WW^A_n$-filter. There exists a unique set $G_{\F}$ belonging to $[\F]$. Furthermore, $G_{\F}$ is infinite.
\end{proposition}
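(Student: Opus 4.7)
My plan is to mirror the proof of \Cref{prop:core-forcing-generic-set} (the core forcing version), with appropriate adjustments for the fact that a witness condition now carries two reservoirs $X_{n-1} \supseteq X_n$, and that the ``effective'' reservoir for generating elements of $G_\F$ is $X_n \cap A$ rather than $X_{n-1} \cap A$.

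\textbf{Step 1 (Non-emptiness of $[\F]$).} Each cylinder $[\sigma,X_{n-1},X_n] = \{G : \sigma \subseteq G \subseteq \sigma \cup (X_n \cap A)\}$ is a closed subset of $2^\NN$; it is non-empty because $\sigma$, viewed as a finite set, lies in it (recall that by definition of a $\WW^A_n$-condition, $\sigma \subseteq A$). Assume for contradiction that $[\F] = \emptyset$. By compactness of $2^\NN$, some finite subfamily of $\F$ already has empty intersection of cylinders. By the filter property there is a single $\WW^A_n$-condition in $\F$ extending every member of this subfamily; its cylinder is contained in (hence equal to) this empty intersection, contradicting non-emptiness of cylinders. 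Hence $[\F] \neq \emptyset$.

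\textbf{Step 2 (Uniqueness).} Pick any $G \in [\F]$. For each $\sigma \prec G$, the formula ``$\sigma \prec G$'' is $\Sigma^0_1$, so by $1$-genericity of $\F$ some condition $c \in \F$ decides it. If $c$ forced its negation, every element of its cylinder would avoid $\sigma$ as a prefix, contradicting $G \in [c]$; therefore $c \Vdash \sigma \prec G$. It follows that every element of $[\F]$ shares all prefixes with $G$, so $[\F] = \{G_\F\}$.

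\textbf{Step 3 (Infiniteness).} The key point is that the reservoir $X_n \cap A$ is infinite. Indeed, $\NN = (\overline{X_n} \cup \overline{A}) \cup (X_n \cap A)$; by partition regularity of $\langle \U_{C_{n-1}}^{\M_{n-1}} \rangle$, if $\overline{X_n} \cup \overline{A}$ belonged to this class then so would $\overline{X_n}$ or $\overline{A}$. But $\overline{X_n} \notin \langle \U_{C_{n-1}}^{\M_{n-1}} \rangle$ by definition of a $\WW^A_n$-condition, and $\overline{A} \notin \langle \U_{C_{n-1}}^{\M_{n-1}} \rangle$ by the standing assumption of \Cref{sect:iterated-lowness} on~$A$ in the witness forcing section. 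Hence $X_n \cap A \in \langle \U_{C_{n-1}}^{\M_{n-1}} \rangle$, and in particular it is infinite (as $\langle \U_{C_{n-1}}^{\M_{n-1}} \rangle$ contains only infinite sets). Therefore, for every condition $c=(\sigma,X_{n-1},X_n)$ and every $s \in \NN$, the cylinder $[c]$ contains the infinite set $\sigma \cup (X_n \cap A)$, so no extension of~$c$ can force $|G| \leq s$. By $1$-genericity of $\F$, for each $s$ some condition of $\F$ forces $|G| > s$, which yields $|G_\F| > s$. Hence $G_\F$ is infinite.

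The only subtle point is Step~3, which requires combining the condition constraint $\overline{X_n} \notin \langle \U_{C_{n-1}}^{\M_{n-1}} \rangle$ with the global assumption $\overline{A} \notin \langle \U_{C_{n-1}}^{\M_{n-1}} \rangle$ via partition regularity. Steps 1 and 2 are essentially identical to their counterparts in the core forcing.
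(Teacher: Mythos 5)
Your proof is correct, and your Steps 1 and 2 match the paper's handling of nonemptiness (compactness plus the filter property) essentially verbatim. Where you diverge is in the last two claims: the paper short-circuits both uniqueness and infiniteness by observing that $[\sigma, X_{n-1}, X_n] \subseteq [\sigma, X_{n-1}]$, so $[\F] \subseteq [\F \uh \PP^A_{n-1}]$, and then invoking \Cref{prop:core-forcing-generic-set} on the projected $1$-generic $\PP^A_{n-1}$-filter to get a singleton which is already known to be infinite. You instead re-derive both facts directly inside the witness forcing, in particular giving a fresh partition-regularity argument (splitting $\NN$ into $(\overline{X_n} \cup \overline{A}) \cup (X_n \cap A)$ and using that neither $\overline{X_n}$ nor $\overline{A}$ lies in $\langle \U_{C_{n-1}}^{\M_{n-1}} \rangle$) to show $X_n \cap A$ is infinite. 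Your argument is correct — it is in fact the same combinatorial observation the paper uses in \Cref{lem:witness-overapproximate} — but it is redundant given the projection. The paper's route is shorter and makes the dependence on the core forcing explicit; yours is more self-contained and does not require one to verify that the projected filter $\F \uh \PP^A_{n-1}$ is itself $1$-generic.
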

\begin{proof}
For every condition $(\sigma,X_{n-1}, X_n)$ in $\F$, the cylinder $[\sigma, X_{n-1}, X_n]$ is a non-empty closed subset of $2^{\NN}$. Assume by contradiction that $[\F] = \emptyset$, then as $2^{\NN}$ is compact, there exists a finite subfamily $E$ of $\F$ such that 
$$\bigcap_{(\sigma, X_{n-1}, X_n) \in E} [\sigma, X_{n-1}, X_n] = \emptyset$$ 
By compatibility of a filter, there is a condition $c$ extending every member of this family, which yields that the cylinder under $c$ is empty, contradiction. Therefore, $[\F]$ is non-empty. 

For every condition $(\sigma,X_{n-1}, X_n)$ in $\F$, $[\sigma, X_{n-1}, X_n] \subseteq [\sigma, X_{n-1}]$,
hence by \Cref{prop:core-forcing-generic-set}, $[\F]$ is a singleton~$G_\F$, which is infinite as a set.
\end{proof}

\begin{proposition}\label[proposition]{prop:witness-forcing-imply-truth}
Let $\F$ be a $n$-generic $\WW^A_n$-filter. If there exists some $c \in \F$ such that $c \Vdash (\forall x) \phi(G,x)$ for some $\Sigma_n^0$ formula $\phi(G,x)$, then for every $a \in \NN$, there exists some $d \in \F$ such that $d \Vdash \phi(G,a)$. Furthermore, $(\forall x)\phi(G_{\F},x)$ will hold.   
\end{proposition}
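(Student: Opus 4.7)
The plan is to mirror the proof of \Cref{prop:main-forcing-imply-truth} almost verbatim, with the only substantive change being that the reservoir $Y_{n-1} \cap A$ used there gets replaced throughout by $Y_n \cap A$. Fix a condition $c = (\sigma, X_{n-1}, X_n) \in \F$ forcing the $\Pi_{n+1}^0$ formula $(\forall x)\phi(G, x)$ with $\phi \in \Sigma_n^0$, and fix $a \in \NN$. By $n$-genericity, some $d = (\tau, Y_{n-1}, Y_n) \in \F$ decides $\phi(G, a)$; using filter compatibility together with the downward closure of the inherited forcing relation, I may arrange $d \leq c$. The aim is to rule out $d \Vdash \neg\phi(G, a)$, which will then force $d \Vdash \phi(G, a)$.

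The combinatorial key is the observation that $Y_n \cap A \in \langle \U_{C_{n-1}}^{\M_{n-1}} \rangle$. Indeed, $\overline{Y_n} \notin \langle \U_{C_{n-1}}^{\M_{n-1}} \rangle$ by the defining clause of a $\WW^A_n$-condition, and $\overline{A} \notin \langle \U_{C_{n-1}}^{\M_{n-1}} \rangle$ by the standing hypothesis on $A$ in this subsection; by partition regularity, $\overline{Y_n \cap A} = \overline{Y_n} \cup \overline{A}$ is also outside the class, so $Y_n \cap A$ itself lies in it. This plays the same role as \Cref{lem:core-reservoir-intersection} in the main-forcing proof.

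Write $\phi(G,x) = (\exists y)\psi(G,x,y)$ with $\psi \in \Pi_{n-1}^0$. Unfolding the $\WW^A_n$-definition of $c \Vdash (\forall x)\phi(G, x)$, applied to the finite string $\tau \setminus \sigma \subseteq X_n$, gives $\tau \qvdash (\exists y)\psi(G, a, y)$. By the definition of the core forcing question at level $n$, this means that the appropriate $\Sigma_1^0(\M_{n-1})$ class is large, hence contains $\langle \U_{C_{n-1}}^{\M_{n-1}} \rangle$ and in particular $Y_n \cap A$; extracting a witness yields some $\rho \subseteq Y_n \cap A$ and $y$ with $\tau \cup \rho \nqvdash \neg\psi(G, a, y)$ (in the base case $n = 1$, read this as $\psi(\tau \cup \rho, a, y)$). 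On the other hand, $d \Vdash (\forall y)\neg\psi(G, a, y)$, unfolded through the inherited $\PP^A_{n-1}$-forcing relation for $\Pi_n^0$-formulas, says that $\tau \cup \rho' \qvdash \neg\psi(G, a, y')$ for every $\rho' \subseteq Y_{n-1}$ and every $y'$ (respectively $\neg\psi(\tau \cup \rho', a, y')$ when $n = 1$, with $\rho' \subseteq Y_0$). Since $\rho \subseteq Y_n \cap A \subseteq Y_{n-1}$, this is a contradiction. Having secured $d \Vdash \phi(G, a)$, the truth of $\phi(G_\F, a)$ follows directly from $\psi(\tau, a, b)$ when $n = 1$, and from one application of \Cref{prop:core-forcing-imply-truth} to the witnessed $\Pi_{n-1}^0$ instance when $n \geq 2$.

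The only subtlety I would watch carefully is the asymmetry between the $\MM^A_n$- and $\WW^A_n$-forcing relations: in the witness forcing the $\tau$-quantifier in the definition of $c \Vdash (\forall x)\phi(G,x)$ ranges over $X_n$ rather than $X_n \cap A$, so one needs $\tau \setminus \sigma \subseteq X_n$ rather than $\subseteq X_n \cap A$, which is automatic from $d \leq c$ regardless of whether $\tau$ happens to lie in $A$. No other adjustments are needed.
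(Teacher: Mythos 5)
Your proof is correct and follows essentially the route the paper indicates, which is simply to mirror the argument of \cref{prop:core-forcing-imply-truth}, reading a witness condition $(\sigma, X_{n-1}, X_n)$ as though its reservoir were $X_n$. Two small observations: you chose to model the argument on \cref{prop:main-forcing-imply-truth} rather than \cref{prop:core-forcing-imply-truth}, but these two propositions have essentially identical proofs, so this is a distinction without a difference; and the intersection with $A$ in your combinatorial key step is unnecessary -- the membership $Y_n \in \langle \U_{C_{n-1}}^{\M_{n-1}} \rangle$, which follows directly from partition regularity and the condition clause $\overline{Y_n} \notin \langle \U_{C_{n-1}}^{\M_{n-1}} \rangle$, already suffices to extract the witness $\rho$, and since $Y_n \subseteq Y_{n-1}$ the contradiction with the $\Pi^0_n$-level forcing relation for $d$ goes through without ever mentioning $A$.
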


\begin{proof}
A similar proof as the one of \Cref{prop:core-forcing-imply-truth} can be used to prove this result, by discarding the reservoir $X_{n-1}$ of a condition $(\sigma, X_{n-1}, X_n)$.   
\end{proof}

We are now ready to prove our second abstract construction. When considering a set~$M_n$ such that $M_n' \leq_T \emptyset^{(n+1)}$, it states the existence of an infinite subset of~$A$ of low${}_{n+1}$ degree. Note that \Cref{prop:witness-forcing-lown+1} is the same statement as \Cref{prop:main-forcing-lown+1}.

\begin{proposition}\label[proposition]{prop:witness-forcing-lown+1}
There exists an infinite subset $H \subseteq A$ such that $H^{(n+1)} \leq_T M_n' $.
\end{proposition}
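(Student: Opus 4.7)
The proof parallels that of \Cref{prop:main-forcing-lown+1}, but using the witness forcing. The plan is to build effectively in $M_n'$ a decreasing sequence of $\WW^A_n$-conditions
$$
c_0 \geq c_1 \geq c_2 \geq \dots
$$
starting with $c_0 = (\epsilon, \NN, \NN)$, where the empty stem is trivially contained in $A$ and both reservoirs equal $\NN \in \M_n \cap \M_{n-1}$, with $\overline{\NN} = \emptyset \notin \langle \U_{C_{n-1}}^{\M_{n-1}} \rangle$ by hypothesis on this class. Fix a computable enumeration $(\phi_e^k(G, x))_{e \in \NN}$ of all $\Pi_k^0$ formulas for each $k \leq n$. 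We arrange that at stage $s = \langle e, k \rangle$ with $k \leq n$, the condition $c_{s+1}$ decides the $\Sigma_{k+1}^0$ formula $(\exists x)\phi_e^k(G,x)$. Then $\F = \{ d \in \WW^A_n : (\exists s)\, d \geq c_s \}$ is an $(n+1)$-generic filter, uniformly $M_n'$-computable.

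At stage $s = \langle e, k \rangle$ with $c_s = (\sigma, X_{n-1}, X_n)$, there are two cases. If $k < n$, then the $\PP^A_{n-1}$-condition $c_s \uh \PP^A_{n-1} = (\sigma, X_{n-1})$ admits, by \Cref{lem:core-question-find-extension}, a $\PP^A_{n-1}$-extension $(\tau, Y_{n-1})$ deciding $(\exists x)\phi_e^k(G,x)$, with index computable in $A \oplus M_{n-1} \oplus M_k'$, hence in $M_n'$ since $A$ is arithmetic and $M_{n-1}, M_k' \in \M_n$. Lifting back via \Cref{lem:witness-compatible-core}, we obtain a $\WW^A_n$-extension $c_{s+1} \leq c_s$ with $c_{s+1} \uh \PP^A_{n-1} = (\tau, Y_{n-1})$, whose index is found uniformly computably. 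If instead $k = n$, we first use \Cref{lem:witness-overapproximate} to $M_n'$-compute an index for a witness $\U$ of $c_s$, and then apply \Cref{lem:witness-question-find-extension}: deciding the relation $c_s \qvdash^{\U} (\exists x)\phi_e^n(G, x)$ and producing the corresponding extension $c_{s+1}$ can both be done uniformly in $M_n'$.

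By \Cref{prop:witness-forcing-generic-set}, $G_\F$ is well-defined and infinite, and the cylinder condition $\sigma \subseteq A$ together with $G_\F \subseteq \sigma \cup (X_n \cap A)$ for the first condition's cylinder guarantees $G_\F \subseteq A$. For every $k \leq n$ and every $\Pi_k^0$ formula $\phi_e^k(G, x)$, the condition $c_{\langle e, k\rangle + 1}$ forces either $(\exists x)\phi_e^k(G, x)$ or $(\forall x)\neg \phi_e^k(G, x)$; by \Cref{prop:core-forcing-imply-truth} (for $k < n$) and \Cref{prop:witness-forcing-imply-truth} (for $k = n$), the forced statement holds of $G_\F$. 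Thus $M_n'$ uniformly computes, from an index of a $\Sigma_{k+1}^0$ formula with $k \leq n$, the stage at which it is decided and its truth value on $G_\F$, yielding $G_\F^{(n+1)} \leq_T M_n'$.
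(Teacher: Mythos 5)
Your proof is correct and takes essentially the same approach as the paper, which simply says the argument is "similar to" \Cref{prop:main-forcing-lown+1}: you build an $(n+1)$-generic $\WW^A_n$-filter effectively in $M_n'$, using \Cref{lem:core-question-find-extension} (lifted through \Cref{lem:witness-compatible-core}) for $\Sigma^0_{k+1}$-formulas with $k<n$, and \Cref{lem:witness-overapproximate} together with \Cref{lem:witness-question-find-extension} for $\Sigma^0_{n+1}$-formulas, then conclude via \Cref{prop:witness-forcing-generic-set}, \Cref{prop:core-forcing-imply-truth} and \Cref{prop:witness-forcing-imply-truth}. The only quibble is a minor imprecision in the stated oracle for the $k<n$ case — \Cref{lem:core-question-find-extension} (instantiated at level $n-1$) needs $M_{k+1}'$ to decide and $A \oplus M_n$ to find the extension, rather than "$A \oplus M_{n-1} \oplus M_k'$" — but all of these reduce to $M_n'$, so the complexity bound you draw is correct.
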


\begin{proof}
The proof is similar to that of \Cref{prop:main-forcing-lown+1}:

An $(n+1)$-generic $\WW^A_n$-filter $\F$ can be build effectively in $M_n'$ using \Cref{lem:witness-overapproximate} to find suitable over-approximations $\U$ of the class $\langle \U_{C_{n-1}}^{\M_{n-1}} \rangle$, and using \Cref{lem:witness-question-find-extension} and \Cref{lem:core-question-find-extension} to build extensions deciding every $\Sigma_k^0$ formulas for $k \leq n+1$.

By \Cref{prop:witness-forcing-generic-set}, there exists an infinite $M_n'$-computable set $G_{\F} \subseteq A$. Furthermore, by \Cref{prop:witness-forcing-imply-truth} and \Cref{prop:core-forcing-imply-truth}, every property forced by $\F$ will hold for $G_{\F}$. Hence, $G_{\F}^{(n+1)} \leq M_{n}'$.
\end{proof}


\subsection{Applications}

First-jump control is often considered as simpler than second-jump control. This is why the former technique preferable, when available. In their seminar paper, Cholak, Jockusch and Slaman~\cite{cholak2001strength} constructed solutions to $\Sub{\Delta^0_2}$ using both techniques, to show that it admits weakly low and a low${}_2$ basis.
Very recently, Benham et al.~\cite{benham2024ginsburg} showed that the second-jump control of Cholak, Jockusch and Slaman~\cite{cholak2001strength} could be extended to $\Sub{\Sigma^0_2}$ to produce low${}_2$ solutions and asked whether it was also the case for first-jump control.

We now prove that $\Sub{\Sigma^0_n}$ admits solutions both with $(n+1)$th jump control and $n$th jump control,
to prove a low${}_{n+1}$ basis and a weakly low${}_n$ basis, respectively. We first start with a direct $(n+1)$th jump control construction.

\begin{theorem}
Let $n \geq 1$ and let~$B$ be a $\Sigma_{n+1}^0$ set, then there exists an infinite set $H$ of low${}_{n+1}$ degree such that $H \subseteq B$ or $H \subseteq \overline{B}$.   
\end{theorem}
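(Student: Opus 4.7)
The plan is to combine the two preceding propositions (main forcing and witness forcing) through a dichotomy on whether $B$ or $\overline{B}$ lies in the minimal cohesive class at level $n-1$. The key point is that this theorem is strictly more general than the $\Sub{\Delta^0_{n+1}}$ result reproven at the end of \Cref{sect:main-forcing} because $B$ is only $\Sigma^0_{n+1}$, so when the construction must target $\overline{B}$ we can no longer use main forcing and are forced to invoke witness forcing.

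First I would set up the combinatorial scaffolding. By \Cref{lem:scott-tower} there exists a Scott tower $\M_0, \dots, \M_n$ with Scott codes $M_0, \dots, M_n$ such that $M_i$ is low over $\emptyset^{(i)}$ for every $i \leq n$; in particular $M_n' \leq_T \emptyset^{(n+1)}$. By \Cref{lem:largeness-tower} this tower can be completed with sets $C_0, \dots, C_{n-1}$ forming a largeness tower, which is exactly the hypothesis fixed at the start of \Cref{sect:iterated-lowness}.

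Then I would case-split. Since $\langle \U_{C_{n-1}}^{\M_{n-1}} \rangle$ is partition regular and $\NN = B \cup \overline{B}$, at least one of $B$ or $\overline{B}$ belongs to $\langle \U_{C_{n-1}}^{\M_{n-1}} \rangle$. In the first case, $B$ is a $\Sigma^0_{n+1}$ set lying in $\langle \U_{C_{n-1}}^{\M_{n-1}} \rangle$, so the hypotheses of \Cref{sect:main-forcing} are met with $A := B$, and \Cref{prop:main-forcing-lown+1} directly produces an infinite $H \subseteq B$ with $H^{(n+1)} \leq_T M_n' \leq_T \emptyset^{(n+1)}$, so $H$ has low${}_{n+1}$ degree. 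In the second case, we have $B \notin \langle \U_{C_{n-1}}^{\M_{n-1}} \rangle$ and $\overline{B} \in \langle \U_{C_{n-1}}^{\M_{n-1}} \rangle$; the set $\overline{B}$ is $\Pi^0_{n+1}$ and $\overline{\overline{B}} = B \notin \langle \U_{C_{n-1}}^{\M_{n-1}} \rangle$, so the hypotheses of the witness forcing section are satisfied with $A := \overline{B}$. Then \Cref{prop:witness-forcing-lown+1} yields an infinite $H \subseteq \overline{B}$ with $H^{(n+1)} \leq_T M_n' \leq_T \emptyset^{(n+1)}$, again of low${}_{n+1}$ degree.

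There is no real obstacle left at this stage: the whole difficulty was packed into designing the two notions of forcing and their forcing questions so that main forcing handles the $\Sigma^0_{n+1}$ side and witness forcing handles the $\Pi^0_{n+1}$ side. The only subtle point worth emphasizing in the write-up is the asymmetry motivating the dichotomy. If both $B$ and $\overline{B}$ belong to $\langle \U_{C_{n-1}}^{\M_{n-1}} \rangle$ one may apply either branch, but when exactly one of them belongs to the class, the choice is forced, and in particular one cannot simply mimic the $\Sub{\Delta^0_{n+1}}$ argument from \Cref{sect:main-forcing} using main forcing on $\overline{B}$, because $\overline{B}$ is only $\Pi^0_{n+1}$ and the forcing question of \Cref{lem:main-complexity-sigma0n+1-question} crucially uses that the target set is $\Sigma^0_{n+1}$. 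It is precisely this restriction that makes witness forcing necessary.
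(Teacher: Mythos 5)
Your proof is correct and follows essentially the same route as the paper: build a Scott tower with low codes via \Cref{lem:scott-tower}, complete it to a largeness tower via \Cref{lem:largeness-tower}, then split on whether $B \in \langle \U_{C_{n-1}}^{\M_{n-1}} \rangle$, applying \Cref{prop:main-forcing-lown+1} to $B$ in the affirmative case and \Cref{prop:witness-forcing-lown+1} to $\overline{B}$ in the negative case. Your verification of the witness-forcing hypotheses with $A := \overline{B}$ is exactly right, and your closing remark about why main forcing cannot be reused on the $\overline{B}$ side (because \Cref{lem:main-complexity-sigma0n+1-question} requires the target to be $\Sigma^0_{n+1}$) correctly identifies the asymmetry that motivates the two notions of forcing.
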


\begin{proof}
By \Cref{lem:scott-tower}, there is a Scott tower $\M_0, \dots, \M_n$ of height~$n$ with Scott codes~$M_0, \dots, M_n$, such that for every~$i \leq n$, $M_i$ is of low degree over~$\emptyset^{(i)}$.
By \Cref{lem:largeness-tower}, it can be enriched with some sets~$C_0, \dots, C_{n-1}$ to form a largeness tower of height $n$. There are two cases:

Case 1: $B \in \langle \U_{C_{n-1}}^{\M_{n-1}} \rangle$. Then \Cref{prop:main-forcing-lown+1} yields a set $H$ such that $H \subseteq B$ and $H^{(n+1)} \leq_T M_n'$.

Case 2: $B \notin \langle \U_{C_{n+1}}^{\M_{n+1}} \rangle$. Then \Cref{prop:witness-forcing-lown+1} applied on the set $\overline{B}$ yields a set $H \subseteq \overline{B}$ such that $H^{(n+1)} \leq_T M_n'$.

Since $M_n$ is low over $\emptyset^{(n)}$, $M_n' \leq_T \emptyset^{(n+1)}$ and $H$ is low$_{n+1}$.
\end{proof}

We now turn to the $n$th jump control construction to prove that $\Sub{\Sigma^0_n}$ admits a weakly low${}_n$ basis. Due to the asymmetry of $\Sigma^0_{n+1}$ sets, contrary to $\Delta^0_{n+1}$ sets, the construction is more complicated, involving some new combinatorics. Note that the proof of \Cref{thm:main-weakly-low-basis} involves only the core forcing on both sides.

\begin{repmaintheorem}{thm:main-weakly-low-basis}
Fix~$n \geq 1$. For every $\Sigma^0_{n+1}$ set~$B$ and every set~$P$ of PA degree over~$\emptyset^{(n)}$, there is an infinite set~$H \subseteq B$ or $H \subseteq \overline{B}$ such that $H^{(n)} \leq_T P$.
\end{repmaintheorem}
\begin{proof}
By \Cref{lem:scott-tower}, there is a Scott tower $\M_0, \dots, \M_{n-1}$ of height~$n-1$ with Scott codes~$M_0, \dots, M_{n-1}$, such that for every~$i < n$, $M_i$ is of low degree over~$\emptyset^{(i)}$.
Moreover, by \Cref{prop:pa-to-scott}, $P$ computes a Scott code~$M_n$ of a Scott ideal~$\M_n$ containing~$\emptyset^{(n)}$. Thus, $\M_0, \dots, \M_{n-1}, \M_n$ forms a Scott tower of height~$n$.
By \Cref{lem:largeness-tower}, it can be enriched with some sets~$C_0, \dots, C_{n-1}$ to form a largeness tower of height $n$. There are two cases: \\

\textbf{Case 1}: $B \in \langle \U_{C_{n-1}}^{\M_{n-1}} \rangle$. 

\begin{lemma}\label[lemma]{lem:main-weak-decide}
    Let $c$ be a $\PP^B_{n-1}$-condition and $\phi(G,x)$ a $\Pi_{k}^0$ formula for some $k < n$. An index of an extension deciding $(\exists x)\phi(G,x)$ can be found $P$-computably uniformly in an index of~$c$ and $\phi$.
\end{lemma}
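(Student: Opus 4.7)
The plan is to invoke \Cref{lem:core-question-find-extension} (applied to $\PP^B_{n-1}$), which guarantees that an extension forcing either $(\exists x)\phi(G,x)$ or $(\forall x)\neg\phi(G,x)$ always exists, and to show that such an extension can be produced $P$-computably by a parallel search over the two possibilities. Recall the relevant bounds on our parameters: since $M_i$ is low over $\emptyset^{(i)}$ for every $i \leq n-1$, we have $M_i' \leq_T \emptyset^{(i+1)} \leq_T \emptyset^{(n)} \leq_T P$; the Scott code $M_n$ is computed from $P$ by construction; and the $\Sigma^0_{n+1}$ set $B$, being $\emptyset^{(n)}$-c.e., is $P$-enumerable.

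The proof will proceed by induction on $k$. At each level we do not attempt to decide the forcing question directly; instead, we simultaneously $P$-enumerate candidate extensions witnessing each of the two alternatives. For the ``yes'' branch, we look for a pair $(\rho, a)$ with $\rho \subseteq X_{n-1}$ (checkable from $M_{n-1} \leq_T P$), $\rho \subseteq B$ (via $P$-enumeration of $B$), and either $\phi(\sigma \cup \rho, a)$ when $k = 0$ or $\sigma \cup \rho \nqvdash \neg \phi(G, a)$ when $k \geq 1$; the latter is $\Sigma^0_1(\M_k)$, hence decidable using $M_k' \leq_T P$. Once $(\rho, a)$ is found, we recursively apply the lemma at level $k-1$ to the $\Sigma^0_k$ formula $\neg \phi(G, a)$ at the condition $(\sigma \cup \rho, X_{n-1} \setminus \{0, \ldots, |\rho|-1\})$; since $\sigma \cup \rho \nqvdash \neg \phi(G, a)$ by choice, the recursive call is forced into its ``no'' sub-branch and thus yields an extension forcing $\phi(G, a)$, hence $(\exists x)\phi(G, x)$. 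For the ``no'' branch at level $k$, we enumerate finite sets $F \finsub C_{n-1}$ and partitions $Z_0, \ldots, Z_\ell \in \M_{n-1}$, checking the $\Pi^0_1(\M_k)$ condition that $Z_i \notin \U_F^{\M_{n-1}} \cap \U(\sigma, \phi)$ for every $i \leq \ell$; once a valid witness is found, we apply \Cref{lem:complexity-finding-among-partition} to isolate some $Z_i \in \langle \U_{C_{n-1}}^{\M_{n-1}} \rangle$ using $C_{n-1} \oplus M_{n-1}' \leq_T P$, and output the extension $(\sigma, X_{n-1} \cap Z_i)$, which forces $(\forall x)\neg\phi(G, x)$.

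The principal obstacle is at the top level $k = n-1$: there the forcing question itself is $\Pi^0_1(M_n)$ and cannot be decided from $P$ alone, since $M_n' \not\leq_T P$ in general. The parallel-search design sidesteps this difficulty, because we never need to know which of the two branches is the correct one; we merely enumerate candidate witnessed extensions for both, and \Cref{lem:core-question-find-extension} guarantees that exactly one of the two searches terminates. All subsidiary verifications involve only oracles $\leq_T \emptyset^{(n)} \leq_T P$, so the whole algorithm is $P$-computable and uniform in indices of $c$ and $\phi$; the induction closes, producing the index of the desired extension.
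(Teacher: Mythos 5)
Your overall structure — a $P$-computable parallel search between a \emph{yes} search and a \emph{no} search — is the same as the paper's, and your \emph{no} branch is handled correctly. However, the \emph{yes} branch has a genuine gap.

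Once you find $(\rho, a)$ with $\rho \subseteq X_{n-1} \cap B$ and $\sigma \cup \rho \nqvdash \neg\phi(G, a)$, you propose to recursively invoke the lemma itself on $\neg\phi(G,a)$ and assert that the recursive call \emph{``is forced into its no sub-branch.''} This is not justified. The condition $\sigma \cup \rho \nqvdash \neg\phi(G,a)$ means that a certain class of the form $\U_{C_{k-1}}^{\M_{k-1}} \cap \V'$ is not large; it does \emph{not} imply that the specific set $B \cap X_{n-1}$ fails to lie in $\V'$. A non-large class can certainly still contain a member of $\langle \U_{C_{n-1}}^{\M_{n-1}} \rangle$ such as $B \cap X_{n-1}$. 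Consequently the recursive call's own \emph{yes} sub-branch may well terminate, and if so it returns an extension forcing $\neg\phi(G,a)$ — which does \emph{not} decide $(\exists x)\phi(G,x)$, making the output of your \emph{yes} branch incorrect. The paper avoids this entirely: once $(\rho, a)$ is found, it applies the \emph{negative} case of \Cref{lem:core-question-find-extension} directly to $\neg\phi(G,a)$ at the condition $(\sigma \cup \rho, X_{n-1} \setminus \{0,\dots,|\rho|\})$. That argument constructs the extension purely by shrinking the reservoir via the partition argument (the stem never changes, so no further enumeration of $B$ is needed) and is \emph{guaranteed} to force $\phi(G,a)$ — no parallel search, hence no ambiguity.

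Two secondary points. First, your claim that \Cref{lem:core-question-find-extension} guarantees ``exactly one of the two searches terminates'' is too strong — both can terminate, and the correct assertion is that at least one does. Second, and more importantly, the fact that at least one search terminates is \emph{not} a consequence of \Cref{lem:core-question-find-extension} alone: it relies on the combinatorial dichotomy (the paper's Cases 1.a/1.b), namely that if $\U_{C_{n-1}}^{\M_{n-1}} \cap \V$ is large then by $\M_{n-1}$-cohesiveness $\langle \U_{C_{n-1}}^{\M_{n-1}} \rangle \subseteq \V$, and since $B \cap X_{n-1} \in \langle \U_{C_{n-1}}^{\M_{n-1}} \rangle$ by \Cref{lem:core-reservoir-intersection}, a suitable $(\rho, a)$ exists. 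This cohesiveness argument is the crux of the lemma and your proposal omits it.
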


\begin{proof}
Say $c = (\sigma, X_{n-1})$.
Let~$\V = \{Z : (\exists \rho \subseteq Z)(\exists x \in \NN) \sigma \cup \rho \nqvdash \neg \phi(G,x)\}$ if $k > 0$ and otherwise, if $k = 0$, let~$\V = \{Z : (\exists \rho \subseteq Z)(\exists x \in \NN) \phi(\sigma \cup \rho,x)\}$.
We claim that one of the following two statements is true:
\begin{enumerate}
    \item[(1.a)] there is some~$\rho \subseteq X_{n-1} \cap B$ and some~$x \in \NN$ such that $\sigma \cup \rho \nqvdash \neg \phi(G,x)$ holds if $k > 0$ or $\phi(\sigma \cup \rho,x)$ if $k = 0$;
    \item[(1.b)] the class $\U_{C_{n-1}}^{\M_{n-1}} \cap \V$ is not large.
\end{enumerate}
Indeed, if (1.b) fails, then by $\M_{n-1}$-cohesiveness of $\U_{C_{n-1}}^{\M_{n-1}}$, $\langle \U_{C_{n-1}}^{\M_{n-1}} \rangle \subseteq \V$. Since $X_{n-1} \cap B \in \langle \U_{C_{n-1}}^{\M_{n-1}} \rangle$, there exists some $\rho \subseteq X_{n-1} \cap B$ and some $a \in \NN$ such that either $\sigma \cup \rho \nqvdash \neg \phi(G,a)$ holds if $k > 0$ or $\phi(\sigma \cup \rho,a)$ holds if $k = 0$, so we are in case (1.a).

Note that the first statement is $\Sigma^0_{n+1}$ as $\sigma \cup \rho \nqvdash \neg \phi(G,x)$ is $\Sigma^0_1(\M_k)$ by \Cref{lem:core-complexity-question}, and the second statement is $\Sigma^0_1(\M_n)$ by \Cref{lem:complexity-largeness}, so since~$P \geq_T M_n$, both events are $P$-c.e. One can then wait $P$-computably for one of the two facts to be witnessed.

In Case 1.a, by \Cref{lem:core-question-find-extension}, there exists an extension $d \leq (\sigma \cup \rho, X_{n-1} \setminus \{0,\dots,|\rho|\}) \leq c$ forcing $(\exists x)\phi(G,x)$.

In Case 1.b, by \Cref{lem:decreasing-sequence-large}, there is a finite set~$F \finsub C_{n-1}$, such that $\U_{F}^{\M_{n-1}} \cap \V$ is not large. Given~$k \in \NN$, let $\P_k$ be the $\Pi^0_1(\M_{n-1})$ class of all $k$-partitions $Y_0 \sqcup \dots \sqcup Y_{k-1} = \NN$ such that for every~$i < k$, either $Y_i \not \in \U_F^{\M_{n-1}}$ or $Y_i \not \in \V$. By assumption, there is some~$k$ such that $\P_k \neq \emptyset$. Since $\M_{n-1}$ is a Scott ideal, there is a $k$-partition $Y_0 \sqcup \dots \sqcup Y_{k-1} = \NN$ in $\P_k \cap \M_{n-1}$. By construction and  \Cref{lem:complexity-finding-among-partition}, since the $k$-partition belongs to~$\M_{n-1}$, $C_{n-1}\oplus M_{n-1}'$ can computably find some~$i < k$ such that $Y_i \in \U^{\M_{n-1}}_{C_{n-1}}$, uniformly in an $M_{n-1}$-index of the $k$-partition. By $\M_{n-1}$-cohesiveness of~$\U^{\M_{n-1}}_{C_{n-1}}$, since $X_{n-1},Y_i \in \M_{n-1}$ and $X_{n-1}, Y_i \in \U^{\M_{n-1}}_{C_{n-1}}$, then $\U^{\M_{n-1}}_{C_{n-1}} \subseteq \L_{X_{n-1}} \cap \L_{Y_i}$, so $X_{n-1} \cap Y_i \in \langle \U^{\M_{n-1}}_{C_{n-1}} \rangle$. In particular, $Y_i \in \U^{\M_{n-1}}_F$ so $Y_i \not \in \V$. It follows that $(\sigma, X_{n-1} \cap Y_i)$ forces $(\forall x) \neg\phi(G,x)$.
\end{proof}

Using \Cref{lem:main-weak-decide}, we can construct, computably in $P$, a sequence $c_0 \geq c_1 \geq \dots$ of $\PP^B_{n-1}$ conditions such that the set $\F = \{c \in \PP^B_{n-1} : (\exists s) c \geq c_s \}$ is a $n$-generic filter. By \Cref{prop:core-forcing-generic-set}, the corresponding set $G_{\F}$ is infinite, $P$ computable and satisfies $G_{\F} \subseteq B$. Furthermore, by \Cref{prop:core-forcing-imply-truth}, every property forced will hold for $G_{\F}$, hence $G_{\F}^{(n)} \leq P$.  \\

\textbf{Case 2}: $B \notin \langle \U_{C_{n-1}}^{\M_{n-1}} \rangle$. Then, there exists a $\Sigma_1^0(\M_{n-1})$ class $\U$ such that $\langle \U_{C_{n-1}}^{\M_{n-1}} \rangle \subseteq \U$ and $B \notin \U$. 

Let $W_\U$ be the $\Sigma_1^0(\M_{n-1})$ set of chains associated with $\U$. Since $B \notin \U$, for every $\rho \in W_\U$, $\rho \cap \overline{B} \neq \emptyset$.

\begin{lemma}\label[lemma]{lem:witness-weak-decide}
    Let $c$ be a $\PP^{\overline{B}}_{n-1}$-condition and $\phi(G,x)$ a $\Pi_k^0$ formula for some $k < n$. An index of an extension deciding $(\exists x)\phi(G,x)$ can be found $P$-computably uniformly in an index of $c$ and $\phi$. 
\end{lemma}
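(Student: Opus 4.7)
The plan is to mirror the argument of \Cref{lem:main-weak-decide}, adapting the first alternative to accommodate that $\overline{B}$ is $\Pi^0_{n+1}$ rather than $\Sigma^0_{n+1}$: we cannot $P$-enumerate finite subsets of $\overline{B} \cap X_{n-1}$ directly. I will replace the direct search by an over-approximation through the witness $\U$, as in the witness forcing question, and use the PA property of $P$ over $\emptyset^{(n)}$ to select a valid candidate at the end.

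Write $c = (\sigma, X_{n-1})$ and set $\V$ as in \Cref{lem:main-weak-decide}, namely $\V = \{Z : (\exists \rho \subseteq Z)(\exists x)\, \sigma \cup \rho \nqvdash \neg \phi(G,x)\}$ for $k > 0$, with the variant using $\phi(\sigma \cup \rho, x)$ for $k = 0$. I consider two events. $(E_1)$: there exists $\ell \in \NN$ such that for every $\beta \in 2^\ell$ one can produce either some $\gamma \in W_\U$ with $\gamma \subseteq \overline{\beta}$ (so that $[\overline{\beta}] \subseteq \U$), or a pair $(\tau_\beta, x_\beta)$ with $\tau_\beta \subseteq \beta \cap X_{n-1}$ and $\sigma \cup \tau_\beta \nqvdash \neg \phi(G, x_\beta)$. $(E_2)$: some finite $F \finsub C_{n-1}$ witnesses that $\U_F^{\M_{n-1}} \cap \V$ is not large. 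By a compactness argument analogous to \Cref{lem:witness-forcing-question-complexity}, $(E_1)$ is $\Sigma^0_1(\M_{n-1})$, and by \Cref{lem:complexity-largeness}, $(E_2)$ is $\Sigma^0_1(\M_n)$; since $P \geq_T M_n$, both are $P$-c.e. The combinatorial heart of the lemma is that at least one must hold: if $(E_2)$ fails, then $\U_{C_{n-1}}^{\M_{n-1}} \cap \V$ is large by \Cref{lem:decreasing-sequence-large}, and since $\V$ is an upward closed $\Sigma^0_1(\M_{n-1})$ class, $\M_{n-1}$-cohesiveness gives $\langle \U_{C_{n-1}}^{\M_{n-1}} \rangle \subseteq \V$; then for any $B'$ with $\overline{B'} \notin \U \supseteq \langle \U_{C_{n-1}}^{\M_{n-1}} \rangle$, partition regularity and the argument of \Cref{lem:core-reservoir-intersection} yield $B' \cap X_{n-1} \in \langle \U_{C_{n-1}}^{\M_{n-1}} \rangle \subseteq \V$, and compactness extracts a uniform $\ell$ witnessing $(E_1)$.

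$P$-computably wait for one of the two events. If $(E_2)$ is witnessed first, follow the end of \Cref{lem:main-weak-decide} verbatim: pick a $k$-partition in $\M_{n-1}$ from the resulting $\Pi^0_1(\M_{n-1})$ class, use \Cref{lem:complexity-finding-among-partition} to locate a part $Y_i \in \U_{C_{n-1}}^{\M_{n-1}}$, and return the $\PP^{\overline{B}}_{n-1}$-extension $(\sigma, X_{n-1} \cap Y_i)$, which forces $(\forall x) \neg \phi(G,x)$. If instead $(E_1)$ is witnessed with parameter $\ell$, assemble the finite list $L$ of pairs $(\tau_\beta, x_\beta)$ coming from its second alternative. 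Setting $\beta^* := \overline{B} \upharpoonright_\ell$, any $\gamma \subseteq \overline{\beta^*} = B \upharpoonright_\ell$ in $W_\U$ would entail $B \in \U$, contradicting the choice of $\U$; so the first alternative cannot handle $\beta^*$, whence $(\tau_{\beta^*}, x_{\beta^*}) \in L$ with $\tau_{\beta^*} \subseteq \overline{B}$. Thus at least one pair $(\tau, x) \in L$ satisfies the $\Pi^0_{n+1}$ condition $\tau \cap B = \emptyset$, and the set of boolean vectors $b \in 2^{|L|}$ with $\sum_i b_i \geq 1$ and $(b_i = 1 \Rightarrow \tau_i \cap B = \emptyset)$ forms a non-empty $\Pi^0_1(\emptyset^{(n)})$ class. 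Since $P$ has PA degree over $\emptyset^{(n)}$, $P$ computes a member of this class and thereby selects a valid $(\tau, x)$. Finally, $(\sigma \cup \tau, X_{n-1} \setminus \{0,\dots,|\tau|\})$ is a legitimate $\PP^{\overline{B}}_{n-1}$-extension of $c$, and \Cref{lem:core-question-find-extension} yields $P$-computably a further extension $d$ forcing $\phi(G, x)$, hence $(\exists x)\phi(G,x)$.

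The main obstacle is precisely this final selection step: because $\overline{B}$ is $\Pi^0_{n+1}$, $P$ cannot directly decide which candidate $\tau_\beta$ lies in $\overline{B}$, but the finite-disjunction structure reduces the problem to picking a nonzero coordinate in a non-empty $\Pi^0_1(\emptyset^{(n)})$ class, which is exactly the service provided by the hypothesis that $P$ has PA degree over $\emptyset^{(n)}$.
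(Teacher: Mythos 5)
Your proof is correct, and it takes a genuinely different route through the key compactness step. The paper's proof of this lemma builds the over-approximation by first finding a depth $p$ such that every $2$-partition of $\{0,\dots,p-1\} \cap X_{n-1}$ has a side containing both a string from $W_\U$ and a string from $W_\V$; it then collects the relevant strings $\rho_0,\dots,\rho_{\ell-1} \in W_\U$ and observes that every transversal $\mu \in \rho_0\times\dots\times\rho_{\ell-1}$ contains a usable $\rho' \in W_\V$, after which the fact that each $\rho_i$ meets $\overline{B}$ produces a transversal inside $\overline{B}\cap X_{n-1}$. You instead reuse verbatim the compactness shape of the witness forcing question from \Cref{lem:witness-forcing-question-complexity}: a single level $\ell$ such that every $\beta\in 2^\ell$ either has $[\,\overline{\beta}\,]\subseteq\U$ or supplies a pair $(\tau_\beta,x_\beta)$; the special string $\beta^* = \overline{B}\!\uh_\ell$ is then forced into the second alternative because $B\notin\U$. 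Your derivation that case $(E_1)$ must hold when $(E_2)$ fails (partition regularity plus the argument of \Cref{lem:core-reservoir-intersection}, applied to any $B'$ with $\overline{B'}\notin\U$, followed by K\"onig's lemma) is sound, and it avoids the paper's appeal to \Cref{lem:cohesive-compatibility}. Both proofs then converge on the same final step: a finite list of candidate stems, at least one of which lies in $\overline{B}\cap X_{n-1}$, and a selection performed by $P$ via its PA degree over $\emptyset^{(n)}$ (note in passing that the paper's text says \qt{PA over $\emptyset'$} here, which is a typo; your $\emptyset^{(n)}$ is what is actually needed since $\mu\subseteq\overline{B}\cap X_{n-1}$ is $\Pi^0_{n+1}$). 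Your variant arguably makes the parallel with the witness-forcing-question more transparent and slightly shortens the combinatorics, while the paper's transversal formulation makes the positive alternative a purely $\Sigma^0_1(\M_{n-1})$ event without mentioning $\U$ in the first disjunct.
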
 

\begin{proof}
Say $c =(\sigma, X_{n-1})$.
Consider $\V$ the same class as in \Cref{lem:main-weak-decide} and let $W_\V$ be the $\Sigma_1^0(\M_{n-1})$ set of chains associated with $\V$.



We claim that one of the following two cases holds:

\begin{enumerate}
    \item[(2.a)] There is some $\ell \in \NN$ and some $\rho_0, \dots, \rho_{\ell-1} \in W_\U$ such that $\rho_i \subseteq X_{n-1}$ for all $i < \ell$ and for every $\mu \in \rho_0 \times \dots \times \rho_{\ell-1}$ (seen as finite sets of integers), there exists some $\rho \subseteq \mu$ and some $x \in \NN$ such that $\sigma \cup \rho \nqvdash \neg \phi(G,x)$ holds if $k > 0$ or $\phi(\sigma \cup \rho,x)$ holds if $k = 0$ ;
    \item[(2.b)] $\U_{C_{n-1}}^{\M_{n-1}} \cap \V$ is not large.
\end{enumerate}

Indeed, if $(2.b)$ fails, then by \Cref{lem:cohesive-compatibility}, $\U_{C_{n-1}}^{\M_{n-1}} \cap \V \cap \U$ is large. It follows that $\langle \U_{C_{n-1}}^{\M_{n-1}} \rangle \subseteq \V \cap \U$. In particular, as $X_{n-1} \in \langle \U_{C_{n-1}}^{\M_{n-1}} \rangle$ and $\langle \U_{C_{n-1}}^{\M_{n-1}} \rangle$ is partition regular, there exists a depth $p$ such that for every partition $Y_0 \cup Y_1 = \{0,\dots,p-1\} \cap X_{n-1}$, there exists some side $i < 2$ and some $\rho \in W_\U$ and $\rho' \in W_\V$ such that $\rho, \rho' \subseteq Y_i$. Fix $\rho_0, \dots, \rho_{\ell - 1}$ be all such $\rho \in W_\U$. We claim that this family satisfies $(2.a)$. Indeed, let $\mu \in \rho_0 \times \dots \times \rho_{\ell-1}$, and consider the partition $\mu \cup (\overline{\mu} \cap X_{n-1} \cap \{0, \dots, d-1\}) =  X_{n-1} \cap \{0, \dots, d-1\}$, by definition of the $(\rho_k)_{k < \ell}$, there exists some $k < \ell$ and some $\rho' \in W_\V$ such that $\rho_k, \rho' \subseteq \mu$ or $\rho_k, \rho' \subseteq \overline{\mu}$. As $\mu \in \rho_0 \times \dots \times \rho_{\ell-1}$, it is not possible for $\rho_k$ to be a subset of $\overline{\mu}$ for any $k < \ell$. Hence, we are in the first case and, as $\rho' \in W_\V$, we have found some $\rho' \subseteq \mu$ and some $x \in \NN$ such that $\sigma \cup \rho' \nqvdash \neg \phi(G,x)$ holds if $k > 0$ or $\phi(\sigma \cup \rho', x)$ holds if $k = 0$, so we are in case $(2.a)$.

Note that the first case is $\Sigma^0_1(\M_{n-1})$ and the second is $\Sigma^0_1(\M_n)$, so both events are $P$-c.e. since~$P \geq M_n$. One can then $P$-computably wait for one of the two facts to be witnessed.

In Case 2.a, since $\rho \cap \overline{B} \neq \emptyset$ for every $\rho \in W_\U$, there exists some $\mu \in \rho_0 \times \dots \times \rho_{\ell-1}$ such that $\mu \subseteq \overline{B} \cap X_{n-1}$. Hence, there exists some $\rho \subseteq \mu$ such that $\sigma \cup \rho \qvdash (\exists x)\phi(G,x)$, and by \Cref{lem:core-question-find-extension}, there exists an extension $d \leq (\sigma \cup \rho, X_{n-1} \setminus \{0,\dots, |\rho|) \leq c$ forcing $(\exists x)\phi(G,x)$. Such a $\mu$ can be found uniformly in $P$: there are only finitely many elements in $\rho_0 \times \dots \times \rho_{\ell-1}$, hence a PA over $\emptyset'$ can find one satisfying the $\Pi_{n+1}^0$ property $\mu \subseteq \overline{B} \cap X_{n-1}$.

In Case 2.b, as in Case 1.b, there is a set $Y_i \in \M_{n-1}$ such that the extension $(\sigma, X_{n-1} \cap Y_i)$ forces $(\forall x)\neg \phi(G,x)$ and such an extension can be found computably in~$P$.

\end{proof}

Similarly, we can $P$ computably construct an infinite set $G \subseteq \overline{B}$ such that $G^{(n)} \leq P$ using \Cref{lem:witness-weak-decide}. Thus, the theorem is proved.

\end{proof}

\begin{corollary}
Let $n \geq 1$ and let~$A$ be a $\Sigma_{n+1}^0$ set, then there exists an infinite set $H$ of low${}_{n+1}$ degree such that $H \subseteq A$ or $H \subseteq \overline{A}$.
\end{corollary}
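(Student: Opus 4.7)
The plan is to derive this corollary directly from \Cref{thm:main-weakly-low-basis} just proved, by combining it with the relativized low basis theorem of Jockusch and Soare. This is the exact same argument as for \Cref{cor:main-low-basis}, restated here now that the main theorem has been established in full strength for $\Sigma^0_{n+1}$ sets.

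First, I would invoke the low basis theorem relativized to~$\emptyset^{(n)}$: there exists a set~$Q$ of PA degree over~$\emptyset^{(n)}$ such that $Q' \leq_T \emptyset^{(n+1)}$. Next, I would apply \Cref{thm:main-weakly-low-basis} to the given $\Sigma^0_{n+1}$ set~$A$ together with this $Q$, obtaining an infinite set~$H \subseteq A$ or $H \subseteq \overline{A}$ with $H^{(n)} \leq_T Q$. Finally, taking one further jump yields $H^{(n+1)} \leq_T Q' \leq_T \emptyset^{(n+1)}$, so $H$ is of low${}_{n+1}$ degree.

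There is no real obstacle here, since all the combinatorial work has already been absorbed into the proof of \Cref{thm:main-weakly-low-basis}; this corollary is only an easy consequence via the standard trick of using a low PA degree over~$\emptyset^{(n)}$ as the computational bound supplied to the weakly low${}_n$ basis theorem. The only point worth noting is that the case $n=1$ recovers the low${}_2$ basis theorem of Benham et al.~\cite{benham2024ginsburg} for $\Sub{\Sigma^0_2}$, while $n\geq 2$ gives new low${}_{n+1}$ basis results for $\Sub{\Sigma^0_{n+1}}$.
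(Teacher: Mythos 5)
Your proof is correct and is precisely the argument the paper uses for this result (it is the same derivation already spelled out in \Cref{cor:main-low-basis}, stated again here after \Cref{thm:main-weakly-low-basis} has been proved in full). The combination of the relativized low basis theorem with the weakly low${}_n$ basis theorem is exactly how the paper intends the corollary to follow.
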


Recall that the Ginsburg-Sands theorem~\cite{ginsburg1979minimal} restricted to $T_1$-spaces states that every infinite topological space has an infinite subspace homeomorphic to either the discrete or the cofinite topology on~$\NN$. This theorem was studied by Benham et al.~\cite{benham2024ginsburg}, who proved that it is equivalent over~$\RCA_0$ to $\COH + \Sub{\Sigma^0_2}$. Let $\GST$ be the Ginsburg-Sands theorem for $T_1$-spaces. Benham et al.~\cite{benham2024ginsburg} asked whether $\GST$ admits a weakly low basis. We answer positively through the following theorems:

\begin{proposition}\label{prop:weakly-low-jump-model}
Fix~$n \geq 1$. Let $\Psf_0, \dots, \Psf_{k-1}$ be $\Pi^1_2$-problems admitting a weakly low${}_n$ basis.
Then for every set~$Q$ of PA degree over~$\emptyset^{(n)}$, there is an $\omega$-model $\M$ of~$\RCA_0 + \Psf_0 + \dots + \Psf_{k-1}$
such that for every set~$X \in \M$, $X^{(n)} \leq_T Q$.
\end{proposition}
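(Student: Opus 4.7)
The plan is to construct $\M$ as an increasing union of Turing cones $\bigcup_s \{Z : Z \leq_T X_s\}$, where $\emptyset = X_0 \leq_T X_1 \leq_T \dots$ is built so that every iterated jump $X_s^{(n)}$ lies in a fixed Scott ideal $\N$ coded by $Q$. Maintaining the invariant $X_s^{(n)} \in \N$, rather than merely $X_s^{(n)} \leq_T Q$, will be the key device: it allows us at each stage to extract from $\N$ a set of PA degree over $X_s^{(n)}$ to feed into the relativized weakly low${}_n$ basis of the~$\Psf_j$'s.

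First I would apply \Cref{prop:pa-to-scott} to the oracle $\emptyset^{(n)}$: since $Q$ is of PA degree over $\emptyset^{(n)}$, it computes a Scott code $M$ of a Scott ideal $\N$ containing $\emptyset^{(n)}$. By definition, $\N$ is closed under Turing reduction, and for every $A \in \N$ it contains some $R$ of PA degree over $A$, obtained as a completion in $\N$ of the $\Pi^0_1(A)$ class of $\{0,1\}$-valued $A$-DNC functions. I would then enumerate all triples $(j,e,t) \in \{0,\dots,k-1\} \times \NN \times \NN$ as $(j_s,e_s,t_s)_{s \in \NN}$ with $t_s \leq s$, and recursively build the $X_s$: start with $X_0 = \emptyset$, and given $X_s$ with $X_s^{(n)} \in \N$, consider $A = \Phi_{e_s}^{X_{t_s}}$. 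If $A$ is not an instance of $\Psf_{j_s}$, set $X_{s+1} = X_s$; otherwise $A \leq_T X_s$, so pick $R \in \N$ of PA degree over $X_s^{(n)}$, and apply the standard relativization to $X_s$ of the weakly low${}_n$ basis for $\Psf_{j_s}$ to obtain a solution $Y$ to~$A$ with $(Y \oplus X_s)^{(n)} \leq_T R$. Setting $X_{s+1} = Y \oplus X_s$ then gives $X_{s+1}^{(n)} \leq_T R \in \N$, and Turing-closure of $\N$ restores the invariant.

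Finally, setting $\M = \bigcup_s \{Z : Z \leq_T X_s\}$ produces a Turing ideal (hence a model of $\RCA_0$); every $Z \in \M$ satisfies $Z^{(n)} \leq_T X_s^{(n)} \leq_T M \leq_T Q$ for the appropriate $s$; and every instance $A \in \M$ of $\Psf_j$, being of the form $\Phi_e^{X_t}$ for some $e,t$, is handled at the stage $s$ where $(j_s,e_s,t_s) = (j,e,t)$, so a solution is added to $\M$. The main subtlety, which also motivates the shape of the invariant, is that merely requiring $X_s^{(n)} \leq_T Q$ would not suffice: $Q$ is not in general of PA degree over an arbitrary set it computes, whereas membership in the pre-arranged Scott ideal $\N$ gives uniform access to PA degrees over $X_s^{(n)}$ that lie inside $\N$ and are therefore computable from $Q$, which is precisely what the weakly low${}_n$ basis hypothesis requires.
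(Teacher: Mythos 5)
Your proposal is correct and follows essentially the same route as the paper's own proof: both construct a Scott ideal $\N$ computed by $Q$ and containing $\emptyset^{(n)}$, build an increasing chain of topped $\omega$-models whose tops $X_s$ satisfy the invariant $X_s^{(n)} \in \N$, use the Scott ideal to supply PA degrees over $X_s^{(n)}$ for the relativized weakly low${}_n$ basis, and take the union. Your explicit bookkeeping via triples $(j_s,e_s,t_s)$ simply makes precise the dovetailing that the paper states as an invariant, and your observation that $X_s^{(n)} \in \N$ (rather than merely $\leq_T Q$) is the right invariant is the same device the paper uses, via the requirement $D_i^{(n)} \in \N$.
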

\begin{proof}
Fix~$Q$ of PA degree over~$\emptyset^{(n)}$. By Scott~\cite{scott1962algebra}, $Q$ computes a Scott code of a Scott ideal~$\N$ containing~$\emptyset^{(n)}$. Since~$\Psf_0, \dots, \Psf_{k-1}$ admit a weakly low${}_n$ basis, construct a chain~$\M_0 \subseteq \M_1 \subseteq \dots$ of countable $\omega$-models of~$\RCA_0$ such that
\begin{itemize}
    \item $\M_0$ is the $\omega$-model whose second-order part are the computable sets;
    \item $\M_i$ is topped by a set~$D_i$ such that $D_i^{(n)} \in \N$;
    \item For every $s < k$ and every instance $X$ of~$\Psf_s$ in~$\M_i$, there is some~$j \in \NN$ such that $\M_j$
    contains a $\Psf_s$-solution to~$X$.
\end{itemize}
Then $\M = \bigcup_{i \in \NN} \M_i$ is an $\omega$-model of~$\RCA_0 + \Psf_0 + \dots + \Psf_{k-1}$
such that for every set~$X \in \M$, $X^{(n)} \in \N$, hence $X^{(n)} \leq_T Q$.
\end{proof}

\begin{corollary}\label{cor:weakly-low-provability}
Fix~$n \geq 1$. Let $\Psf_0, \dots, \Psf_{k-1}$ and $\Qsf$ be $\Pi^1_2$ problems such that
each $\Psf_i$ admits a weakly low${}_n$ basis for~$i < k$ and $\RCA_0 + \Psf_0 + \dots + \Psf_{k-1} \vdash \Qsf$.
Then $\Qsf$ admits a weakly low${}_n$ basis.
\end{corollary}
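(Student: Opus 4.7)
The plan is to derive the corollary as a direct application of Proposition \ref{prop:weakly-low-jump-model}. Fix a computable instance $X$ of $\Qsf$ and a set $Q$ of PA degree over $\emptyset^{(n)}$. I want to produce a solution $Y$ to $X$ with $Y^{(n)} \leq_T Q$.

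First I invoke Proposition \ref{prop:weakly-low-jump-model} to obtain an $\omega$-model $\M$ of $\RCA_0 + \Psf_0 + \dots + \Psf_{k-1}$ such that every $Z \in \M$ satisfies $Z^{(n)} \leq_T Q$. Since $X$ is computable and every $\omega$-model of $\RCA_0$ contains all computable sets, $X \in \M$. By hypothesis, $\RCA_0 + \Psf_0 + \dots + \Psf_{k-1} \vdash \Qsf$, hence $\M \models \Qsf$. The instance predicate of a $\Pi^1_2$-problem is arithmetic and therefore absolute between $\omega$-models and the real world, so $X$ is an instance of $\Qsf$ in $\M$ as well. Thus $\M$ contains a solution $Y$ to $X$, and by choice of $\M$ we get $Y^{(n)} \leq_T Q$. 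This shows that $\Qsf$ admits a weakly low${}_n$ basis.

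The only subtle point is the absoluteness of the instance and solution predicates between $\M$ and the meta-theory; but since every $\Pi^1_2$-problem studied in reverse mathematics is specified by arithmetic $\phi$ and $\psi$, and $\omega$-models are absolute for arithmetic formulas with set parameters, this is immediate. No new combinatorics is required: all the work is already packaged in Proposition \ref{prop:weakly-low-jump-model}.
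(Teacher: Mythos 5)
Your proof is correct and is essentially identical to the paper's: both invoke \Cref{prop:weakly-low-jump-model} to build an $\omega$-model of $\RCA_0 + \Psf_0 + \dots + \Psf_{k-1}$ whose sets all have $n$th jump computable in $Q$, then apply the provability hypothesis to extract a solution inside that model. The brief remark on absoluteness of arithmetic formulas in $\omega$-models is a harmless (and correct) expansion of a step the paper leaves implicit.
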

\begin{proof}
Let~$X$ be a computable instance of~$\Qsf$ and let~$R$ be of PA degree over~$\emptyset^{(n)}$.
By \Cref{prop:weakly-low-jump-model}, there is an $\omega$-model~$\M$ of $\RCA_0 + \Psf_0 + \dots + \Psf_{k-1}$
such that for every set~$Y \in \M$, $Y^{(n)} \leq_T R$. In particular, $\M \models \Qsf$, so there is a $\Qsf$-solution~$Y$ to~$X$ in~$\M$. In particular, $Y^{(n)} \leq_T R$.
\end{proof}

Note that \Cref{cor:weakly-low-provability} also holds when the implication $\RCA_0 + \Psf_0 + \dots + \Psf_{k-1} \vdash \Qsf$ is restricted to $\omega$-models, also known as \emph{computable entailment}.
The following corollary states that $\GST$ admits a weakly low basis, answering the question of Benham et al.~\cite{benham2024ginsburg}. We refer to Dorais~\cite{dorais2011reverse} for the formalization of countable second-countable spaces in second-order arithmetic.

\begin{corollary}
For every set~$Q$ of PA degree over~$\emptyset'$ and every infinite computable $T_1$ CSC space $\langle X, \U, k\rangle$, $X$ has an infinite subspace~$Y$ which is either discrete or has the cofinite topology, and such that $Y' \leq_T Q$.
\end{corollary}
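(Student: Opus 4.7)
The plan is to combine the Benham et al. equivalence $\RCA_0 \vdash \GST \leftrightarrow \COH + \Sub{\Sigma^0_2}$ with the abstract preservation argument of \Cref{cor:weakly-low-provability}, specialized at $n=1$. It therefore suffices to show that both $\COH$ and $\Sub{\Sigma^0_2}$ admit a weakly low basis, and then invoke \Cref{cor:weakly-low-provability} with $\Psf_0 = \COH$, $\Psf_1 = \Sub{\Sigma^0_2}$ and $\Qsf = \GST$.

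First, I would apply \Cref{thm:main-weakly-low-basis} at level $n=1$: for every $\Sigma^0_2$ set~$A$ and every set~$Q$ of PA degree over~$\emptyset'$, there is an infinite subset of~$A$ or of $\overline{A}$ whose jump is computable in~$Q$. This is exactly the statement that $\Sub{\Sigma^0_2}$ admits a weakly low basis. Second, $\COH$ is well-known to admit a weakly low basis by Cholak, Jockusch and Slaman~\cite{cholak2001strength}: given any PA degree $Q$ over $\emptyset'$ and any computable sequence~$\vec{R}$, one builds a cohesive set~$C$ for~$\vec{R}$ with $C' \leq_T Q$, typically through $\vec{R}$-computable Mathias forcing whose reservoirs are shrunk according to an $\emptyset'$-computable sequence of Boolean combinations of $R_n$'s, using $Q$ to drive the first-jump control.

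Given those two weakly low bases, \Cref{cor:weakly-low-provability} (stated with $\omega$-model entailment) applies directly. Indeed, by Benham et al.~\cite{benham2024ginsburg}, $\RCA_0 + \COH + \Sub{\Sigma^0_2} \vdash \GST$. Fix $Q$ of PA degree over $\emptyset'$ and a computable $T_1$ CSC space $\langle X, \U, k\rangle$, seen as a computable instance of $\GST$. By \Cref{prop:weakly-low-jump-model} applied to $\Psf_0 = \COH$ and $\Psf_1 = \Sub{\Sigma^0_2}$, there is an $\omega$-model $\M$ of $\RCA_0 + \COH + \Sub{\Sigma^0_2}$ containing $\langle X, \U, k\rangle$ and such that every $Z \in \M$ satisfies $Z' \leq_T Q$. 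Since $\M \models \GST$, there is an infinite subspace $Y \in \M$ of $X$ which is either discrete or cofinite, and then $Y' \leq_T Q$ as required.

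The only nontrivial ingredient is the weakly low basis for $\Sub{\Sigma^0_2}$, which is \Cref{thm:main-weakly-low-basis}; everything else is routine transfer through the Benham et al.\ equivalence and the abstract \Cref{cor:weakly-low-provability}. The main obstacle, already overcome in the proof of \Cref{thm:main-weakly-low-basis}, was handling the asymmetry between the two cases $A \in \langle \U_{C_0}^{\M_0}\rangle$ and $A \notin \langle \U_{C_0}^{\M_0}\rangle$ via the witness forcing; in this corollary no further combinatorial work is needed.
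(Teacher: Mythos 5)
Your proposal is correct and mirrors the paper's proof exactly: both cite Cholak--Jockusch--Slaman for the weakly low basis of $\COH$, invoke \Cref{thm:main-weakly-low-basis} at $n=1$ for $\Sub{\Sigma^0_2}$, and then transfer through the Benham et al.\ equivalence $\RCA_0 \vdash \GST \leftrightarrow \COH + \Sub{\Sigma^0_2}$ via \Cref{cor:weakly-low-provability}. The only addition in your write-up is that you unpack \Cref{cor:weakly-low-provability} by explicitly invoking \Cref{prop:weakly-low-jump-model}, which is the same content.
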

\begin{proof}
By Cholak, Jockusch and Slaman~\cite{cholak2001strength}, $\COH$ admits a weakly low basis.
By the \Cref{thm:main-weakly-low-basis}, so does $\Sub{\Sigma^0_2}$. Moreover, by Benham et al.~\cite{benham2024ginsburg}, 
$\RCA_0 \vdash \GST \leftrightarrow (\COH + \Sub{\Sigma^0_2})$. Thus, by \Cref{cor:weakly-low-provability}, $\GST$ admits a weakly low basis.
\end{proof}

\section{Preservation of hyperimmunities}\label{sect:preservation-hyps}

The goal of this section is to prove the following main theorem:

\begin{repmaintheorem}{thm:main-preservation-hyps}
Fix~$n \geq 1$. For every $\Delta^0_{n+1}$ set~$A$, every $\emptyset^{(n-1)}$-hyperimmune function $f : \NN \to \NN$ and every $\emptyset^{(n)}$-hyperimmune function $g : \NN \to \NN$, there is an infinite set~$H \subseteq A$ or $H \subseteq \overline{A}$ such that $f$ is $H^{(n-1)}$-hyperimmune and $g$ is $H^{(n)}$-hyperimmune.
\end{repmaintheorem}

The preservation of multiple hyperimmunities is motivated by the separation of $\Sub{\Delta^0_{n+1}}$ from $\Sub{\Sigma^0_{n+1}}$ over $\omega$-models.
Before proving \Cref{thm:main-preservation-hyps}, we prove this separation assuming the theorem holds. 

\begin{proposition}[Benham et al.~\cite{benham2024ginsburg}]\label{prop:sigman-hyp-deltan-hyp}
Let $n \geq 1$. There exists a $\Sigma^0_{n+1}$ set~$B$ which is $\emptyset^{(n-1)}$-hyperimmune and whose complement is $\emptyset^{(n)}$-hyperimmune.
\end{proposition}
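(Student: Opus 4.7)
The plan is to construct $B$ in the form $B = \bigsqcup_{k \in \NN} [a_k, b_k)$ with $a_0 < b_0 \leq a_1 < b_1 < \cdots$, by defining the sequence $(a_k, b_k)_{k \in \NN}$ in stages with oracle $\emptyset^{(n)}$. This will ensure that $B$ is $\emptyset^{(n)}$-c.e., hence $\Sigma^0_{n+1}$. Writing $c_k := \sum_{j < k}(b_j - a_j)$ and $d_k := a_k - c_k$, one has $p_B(c_k) = a_k$ and, whenever $a_{k+1} > b_k$, also $p_{\overline B}(d_k) = b_k$. Enumerating the partial $\emptyset^{(n-1)}$-computable functions $(\phi_e)_{e \in \NN}$ and the partial $\emptyset^{(n)}$-computable functions $(\psi_e)_{e \in \NN}$, the hyperimmunity requirements become: $R_e$, asking that some $k$ satisfies $a_k > \phi_e(c_k)$; and $S_e$, asking that some $k$ satisfies $b_k > \psi_e(d_k)$. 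Since domination is defined pointwise in the paper, each requirement needs only a single witness.

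I would enumerate the requirements $(Q_k)_{k \in \NN}$ so that every $R_e$ appears at least once and every $S_e$ appears infinitely often, then process them in stages. At stage $k$, knowing $(a_j, b_j)_{j < k}$, I compute $c_k$ and act on $Q_k$. When $Q_k = R_e$, the $\emptyset^{(n)}$-oracle decides the $\Sigma^0_n$ question $\phi_e(c_k) \downarrow$: if so, set $a_k := \max(b_{k-1} + 1,\, \phi_e(c_k) + 1)$ and $b_k := a_k + 1$, satisfying $R_e$ with witness index $c_k$; otherwise $\phi_e$ is partial and $R_e$ holds vacuously. When $Q_k = S_e$, take $a_k := b_{k-1} + 1$ (which determines $d_k$), simulate the $\emptyset^{(n)}$-oracle computation of $\psi_e(d_k)$ for $k$ steps, and set $b_k := v + 1$ if it converges to $v$ within that budget, otherwise $b_k := a_k + 1$ and leave $S_e$ to be retried later.

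The main obstacle is showing that each $S_e$ with $\psi_e$ total is eventually satisfied, since the predicate $\psi_e(d) \downarrow$ is $\Sigma^0_{n+1}$ and not $\emptyset^{(n)}$-decidable, so no single stage can certify convergence. The argument I envision is that, for each fixed $e$, the values $d_{k_i}$ at the infinitely many stages $k_1 < k_2 < \cdots$ revisiting $S_e$ grow only by a bounded amount between consecutive revisits, outside the finitely many $R$-stages of earlier priority where $a_k$ may jump; meanwhile the timeout $k_i$ grows without bound. Therefore, for large enough $i$, the timeout exceeds the convergence time of $\psi_e(d_{k_i})$, and $b_{k_i} > \psi_e(d_{k_i})$ is achieved. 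Given these two points, the remaining verifications---that $B \in \Sigma^0_{n+1}$, that $p_B$ is $\emptyset^{(n-1)}$-hyperimmune, and that $p_{\overline B}$ is $\emptyset^{(n)}$-hyperimmune---are routine consequences of the construction.
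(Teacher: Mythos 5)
Your construction is total: at every stage $k$ the pair $(a_k, b_k)$ is fixed by a terminating $\emptyset^{(n)}$-computation (the $R_e$ case is a $\Sigma^0_n$ oracle query, the $S_e$ case is a bounded simulation). Since $a_k$ is strictly increasing, this makes $B$ itself $\emptyset^{(n)}$-\emph{computable}, not merely $\emptyset^{(n)}$-c.e. But then $\overline{B}$ is also $\emptyset^{(n)}$-computable, so $p_{\overline{B}}$ is an $\emptyset^{(n)}$-computable function, and it is dominated by itself; hence $\overline{B}$ cannot be $\emptyset^{(n)}$-hyperimmune. The requirements $S_e$ are therefore impossible to satisfy with any construction of this shape, and indeed the statement forces $B$ to be properly $\emptyset^{(n)}$-c.e.\ (i.e., not $\emptyset^{(n)}$-decidable).

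This global obstruction also shows up locally in the verification of $S_e$. Since $a_k \geq b_{k-1}+1$ always, we have $d_k - d_{k-1} = a_k - b_{k-1} \geq 1$, so $d_k$ grows at least as fast as $k$; the budget $k$ never overtakes the running time $t(d_k)$ of $\psi_e$ once $t$ grows super-linearly, so the ``retry with growing timeout'' never certifies convergence. Your claim that $d_{k_i}$ ``grows only by a bounded amount between consecutive revisits'' is false, because every intervening stage adds at least one element to $\overline{B}$ below $a_{k_i}$.

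The repair is what the paper sketches: build $B$ by a genuine $\emptyset^{(n)}$-c.e.\ process in the style of Post's hypersimple-set construction. For the complement requirement, pick a marker $d_e$, \emph{wait} indefinitely (a c.e.\ process can defer) for the $\emptyset^{(n)}$-computation $\psi_e(d_e)$ to converge, and only then dump $[0, \psi_e(d_e)]$ minus earlier restraints into $B$, pushing $p_{\overline{B}}(d_e)$ above $\psi_e(d_e)$; finite injury handles the interaction. The $R_e$ side can be interleaved exactly as you propose, since there the oracle $\emptyset^{(n)}$ outright decides the relevant $\Sigma^0_n$ halting question. The essential thing you must not do is commit, at each stage, to the final membership status of a block of integers.
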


\begin{proof}[Proof sketch]
By relativizing to~$\emptyset^{(n)}$ the finite injury priority construction of a c.e. set whose complement is hyperimmune (see Post~\cite[Section 5]{post1944recursively}), and interleaving steps to make~$B$ $\emptyset^{(n-1)}$-hyperimmune.
\end{proof}

The case $n = 1$ of the following corollary was proven by Benham et al.~\cite{benham2024ginsburg}.

\begin{corollary}
Let $n \geq 1$. There exists an $\omega$-model of~$\RCA_0 + \Sub{\Delta^0_{n+1}}$ which is not a model of~$\Sub{\Sigma^0_{n+1}}$.
\end{corollary}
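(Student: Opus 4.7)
The plan is to combine Main Theorem \ref{thm:main-preservation-hyps} with Proposition \ref{prop:sigman-hyp-deltan-hyp} via a standard iterated $\omega$-model construction. Fix a $\Sigma^0_{n+1}$ set $B$ as given by Proposition \ref{prop:sigman-hyp-deltan-hyp}, and let $f := p_B$ and $g := p_{\overline{B}}$ be the principal functions of $B$ and $\overline{B}$. By choice of $B$, $f$ is $\emptyset^{(n-1)}$-hyperimmune and $g$ is $\emptyset^{(n)}$-hyperimmune.

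I would construct a chain $\M_0 \subseteq \M_1 \subseteq \dots$ of countable $\omega$-models of $\RCA_0$ such that each $\M_i$ is topped by some set $D_i$ (with $D_0$ computable), satisfying the invariants that $f$ is $D_i^{(n-1)}$-hyperimmune and $g$ is $D_i^{(n)}$-hyperimmune. Using a standard bookkeeping argument over pairs $\langle e, k\rangle$, at each stage one considers the next $\Delta^0_{n+1}(D_i)$ set $A$ in the enumeration and, by the relativization of Main Theorem \ref{thm:main-preservation-hyps} to the base $D_i$, obtains an infinite $H \subseteq A$ or $H \subseteq \overline{A}$ such that $f$ is $(H \oplus D_i)^{(n-1)}$-hyperimmune and $g$ is $(H \oplus D_i)^{(n)}$-hyperimmune. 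Setting $D_{i+1} := H \oplus D_i$ preserves the invariants and adds the required solution. The union $\M := \bigcup_i \M_i$ is a Turing ideal, so $\M \models \RCA_0$; by design $\M \models \Sub{\Delta^0_{n+1}}$; and for every $X \in \M$ one has $X \leq_T D_i$ for some $i$, so $X^{(n-1)} \leq_T D_i^{(n-1)}$ and $X^{(n)} \leq_T D_i^{(n)}$, whence $f$ is $X^{(n-1)}$-hyperimmune and $g$ is $X^{(n)}$-hyperimmune.

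It remains to show that $B$, viewed as a $\Sigma^0_{n+1}$ instance of $\Sub{\Sigma^0_{n+1}}$, has no solution in $\M$. Suppose $H \in \M$ were an infinite subset of $B$. Enumerating $H = \{h_0 < h_1 < \dots\}$ and $B = \{b_0 < b_1 < \dots\}$, we have $h_k \geq b_k$ for every $k$, i.e.\ $p_H \geq f$ pointwise. Since $n \geq 1$, the function $p_H$ is $H$-computable and thus $H^{(n-1)}$-computable, so $f$ is dominated by an $H^{(n-1)}$-computable function, contradicting the $H^{(n-1)}$-hyperimmunity of $f$. Symmetrically, an infinite $H \subseteq \overline{B}$ would yield $p_H \geq g$, dominated by the $H$-computable (hence $H^{(n)}$-computable) function $p_H$, contradicting the $H^{(n)}$-hyperimmunity of $g$. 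Hence $\M \not\models \Sub{\Sigma^0_{n+1}}$.

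The only nontrivial point is the relativization of Main Theorem \ref{thm:main-preservation-hyps}, but this is standard: the notions of forcing, the forcing questions, and their definability analyses introduced in \Cref{sect:iterated-lowness} and to be used in \Cref{sect:preservation-hyps} all relativize uniformly to an arbitrary base~$D_i$, so that the theorem holds verbatim with $\emptyset$ replaced by $D_i$ throughout. Thus the whole argument reduces to invoking Main Theorem \ref{thm:main-preservation-hyps} at each step of the model-construction.
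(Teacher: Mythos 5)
Your proposal is correct and follows essentially the same route as the paper: fix the $\Sigma^0_{n+1}$ set $B$ from \Cref{prop:sigman-hyp-deltan-hyp}, take the principal functions of $B$ and $\overline{B}$ as the hyperimmune pair, iterate \Cref{thm:main-preservation-hyps} (relativized) to build a chain of topped $\omega$-models whose union preserves both hyperimmunities, and conclude by noting that any solution to $B$ in the model would compute a dominating function via its principal function. The only difference is cosmetic: you spell out the principal-function domination argument and the relativization claim, both of which the paper leaves implicit.
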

\begin{proof}
Let~$B$ be a $\Sigma^0_{n+1}$ set which is $\emptyset^{(n-1)}$-hyperimmune and whose complement is $\emptyset^{(n)}$-hyperimmune. Such as set exists by \Cref{prop:sigman-hyp-deltan-hyp}. Recall that the principal function of an infinite set $X = \{ x_0 < x_1 < \dots \}$ is the function $k \mapsto x_k$. Let~$f_{n-1}$ and $f_n$ be the principal functions of~$B$ and $\overline{B}$, respectively. 

Using \Cref{thm:main-preservation-hyps}, construct a chain $\N_0 \subseteq \N_1 \subseteq \dots$ of countable $\omega$-models of $\RCA_0$ such that 
\begin{enumerate}
    \item $\N_0$ is the $\omega$-model whose second order part are the computable sets ;
    \item $\N_i$ is topped by a set $D_i$ such that $f_{n-1}$ is $D_i^{(n-1)}$-hyperimmune and $f_n$ is $D_i^{(n)}$-hyperimmune ;
    \item For every $\Delta_{n+1}^0(\N_i)$ set $X$, there exists some $j \in \NN$ such that $\N_j$ contains an infinite subset of $X$ or $\overline{X}$.
\end{enumerate}

Then $\N = \bigcup_{i \in \NN} \N_i$ is an $\omega$-model of $\RCA_0 + \Sub{\Delta^0_{n+1}}$ such that for every set $X$ of $\N$, $f_{n-1}$ is $X^{(n-1)}$-hyperimmune and $f_n$ is $X^{(n)}$-hyperimmune. Hence, $\N$ contains no subset of $B$ or $\overline{B}$ and therefore is not a model of $\Sub{\Sigma^0_{n+1}}$.
\end{proof}

The remainder of this section is devoted to the proof of \Cref{thm:main-preservation-hyps}. Fix some~$n \geq 1$. Let $\M_0, \dots, \M_n$ be Scott ideals with Scott codes $M_0, \dots, M_n$, and let $C_0, \dots, C_{n-1}$ be sets forming a largeness tower, that is,  for every $i < n$:
\begin{itemize}
    \item $\U_{C_i}^{\M_i}$ is an $\M_i$-cohesive large class containing only infinite sets ;
    \item $C_i,M_i' \in \M_{i+1}$ ;
    \item $\U_{C_{i+1}}^{\M_{i+1}} \subseteq \langle \U_{C_i}^{\M_i} \rangle$ if $i < n-1$.
\end{itemize}
Let $A$ be a $\Delta^0_{n+1}$ set. In particular, $A$ and $\overline{A}$ are both $\Sigma^0_{n+1}$, and by partition regularity of $\langle \U_{C_{n-1}}^{\M_{n-1}} \rangle$, either $\MM^A_n$ or $\MM^{\overline{A}}_n$ is a valid notion of forcing. Say $\MM^A_n$ is valid. To preserve hyperimmunities at levels $\{n-1,n\}$, one needs to have a $\Sigma^0_{k+1}$-preserving $\Sigma^0_{k+1}$-compact forcing question for $\Sigma^0_{k+1}$-formulas, for~$k \in \{n-1, n\}$. This is the case for $\MM^A_n$-forcing for $\Sigma^0_{n+1}$-formulas, but not for $\Sigma^0_n$-formulas. Indeed, the $\MM^A_n$-forcing question for $\Sigma^0_n$-formulas is neither $\Sigma^0_n$-preserving, nor $\Sigma^0_n$-compact.

We will therefore design a new forcing question for $\Sigma^0_n$-formulas, not formulated in terms of largeness, but by over-approximation of the set~$A$. This over-approximation comes at one cost: this yields a disjunctive forcing question. We must therefore work with a new notion of forcing, whose conditions are of the form $(\sigma_0, \sigma_1, X_{n-1})$ where $(\sigma_0, X_{n-1})$ is an $\MM^A_n$-condition and $(\sigma_1, X_{n-1})$ is an $\MM^{\overline{A}}_n$-condition. This notion of forcing can be defined only in the case $A$ and $\overline{A}$ both belong to $\langle \U_{C_{n-1}}^{\M_{n-1}} \rangle$.
If this is not the case, then we shall use witness forcing $\WW^A_n$ or $\WW^{\overline{A}}_n$, depending on the case.

In what remains, we first introduce the disjunctive notion of forcing in \Cref{sect:disjunctive-forcing}, then we study in \Cref{sect:compact-forcing-questions} which forcing questions are compact among the various notions of forcing. Last, we combine the various frameworks to prove \Cref{thm:main-preservation-hyps} in \Cref{sect:preservation-apps}.


\subsection{Disjunctive forcing}\label{sect:disjunctive-forcing}

The disjunctive notion of forcing is a variant of Mathias forcing commonly used in the reverse mathematics of Ramsey-type theorems, due to the intrinsic disjunctive nature of the statements. Fix a $\Delta^0_{n+1}$ set $A$ such that $A$ and $\overline{A}$ both belong to $\langle \U_{C_{n-1}}^{\M_{n-1}} \rangle$.

We shall see thanks to \Cref{lem:disjunctive-compatible-core} that a disjunctive condition is nothing but two main forcing conditions sharing a reservoir. Thus, disjunctive forcing inherits many properties of the main forcing, and in particular, each side admits a non-disjunctive forcing question for $\Sigma^0_{n+1}$ with the good definitional properties. In particular, if $g : \NN \to \NN$ is $M_n$-hyperimmune, then every sufficiently generic filter~$\F$ will yield two infinite sets~$G_0, G_1$ such that $g$ will be both $G_0^{(n)}$-hyperimmune and $G_1^{(n)}$-hyperimmune. The only purpose of this disjunctive notion of forcing is then to design a disjunctive forcing question for $\Sigma^0_n$-formulas with the good definitional properties. Then, if $f : \NN \to \NN$ is $M_{n-1}$-hyperimmune, it will be either $G_0^{(n-1)}$-hyperimmune, or $G_1^{(n-1)}$-hyperimmune.

\begin{definition}[Condition]
Let $\DD_{n}^A$ be the notion of forcing whose conditions are tuples $(\sigma_0, \sigma_1, X_{n-1})$ such that $(\sigma_0, X_{n-1}) \in \MM_n^A$ and $(\sigma_1, X_{n-1}) \in \MM_n^{\overline{A}}$.
\end{definition}

For $c = (\sigma_0, \sigma_1, X_{n-1}) \in \DD_{n}^A$ and $i < 2$, we write $c^{[i]}$ for the $\MM_n^{A^i}$-condition $(\sigma_i, X_{n-1})$, where $A^0 = A$ and $A^1 = \overline{A}$. 
The notion of $\DD_{n}^A$-extension is naturally induced by the notion of $\MM^A_n$-extension on each side:

\begin{definition}
A $\DD_{n}^A$-condition $d = (\tau_0, \tau_1, Y_{n-1})$ \emph{extends} a $\DD_{n}^A$-condition $c = (\sigma_0, \sigma_1, X_{n-1})$ (and we write $d \leq c$) if $d^{[i]} \leq c^{[i]}$ for both~$i < 2$, that is, $Y_{n-1} \subseteq X_{n-1}$ and $\sigma_i \preceq \tau_i \subseteq \sigma_i \cup X_{n-1}$.
\end{definition}

Since a reservoir forces only negative information, having two Mathias condi\-tions share a common reservoir does not impact their ability to force properties. This is made clear in the following lemma, which plays the same role as \Cref{lem:witness-compatible-core} for witness forcing.

\begin{lemma}\label[lemma]{lem:disjunctive-compatible-core}
Let $c = (\sigma_0, \sigma_1, X_{n-1})$ be a $\DD^A_n$-condition and $(\tau, Y_{n-1}) \in \PP^{A^i}_{n-1}$ such that $(\tau, Y_{n-1}) \leq c^{[i]}$ for some $i < 2$. Then there is a $\DD^A_n$-extension $d \leq c$ such that $d^{[i]} = (\tau, Y_{n-1})$. Furthermore, an index for~$d$ can be found computably uniformly in an index for~$c$ and $(\tau, Y_{n-1})$.
\end{lemma}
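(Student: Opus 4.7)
The plan is to define the extension $d$ by leaving the other side's stem unchanged and adopting the refined reservoir on both sides. Without loss of generality, assume $i = 0$ (the case $i = 1$ is symmetric) and set
\[ d = (\tau,\, \sigma_1,\, Y_{n-1}). \]
I will then verify, in turn, that $d$ is a valid $\DD^A_n$-condition, that $d \leq c$, that $d^{[0]} = (\tau, Y_{n-1})$, and that an index of $d$ is computable uniformly from indices of $c$ and of $(\tau, Y_{n-1})$.

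For validity, the side $(\tau, Y_{n-1}) \in \MM^A_n$ is the hypothesis, recalling that $\MM^A_n$ coincides with $\PP^A_{n-1}$ as a partial order. The other side $(\sigma_1, Y_{n-1}) \in \MM^{\overline{A}}_n$ requires three facts: that $\sigma_1 \subseteq \overline{A}$, inherited from $c$ being a $\DD^A_n$-condition; that $Y_{n-1} \in \M_{n-1} \cap \langle \U^{\M_{n-1}}_{C_{n-1}} \rangle$, inherited from $(\tau, Y_{n-1}) \in \PP^A_{n-1}$; and the Mathias constraint $\min Y_{n-1} > |\sigma_1|$. The last point is the only step that needs mention: since $Y_{n-1} \subseteq X_{n-1}$ and $\min X_{n-1} > |\sigma_1|$ (because $(\sigma_1, X_{n-1}) \in \MM^{\overline{A}}_n$), one gets $\min Y_{n-1} \geq \min X_{n-1} > |\sigma_1|$.

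The remaining points are immediate. One has $d^{[0]} = (\tau, Y_{n-1}) \leq c^{[0]}$ by hypothesis, and $d^{[1]} = (\sigma_1, Y_{n-1}) \leq (\sigma_1, X_{n-1}) = c^{[1]}$ trivially, since the stem is unchanged and $Y_{n-1} \subseteq X_{n-1}$. The equality $d^{[0]} = (\tau, Y_{n-1})$ holds by construction. For indices, if $\langle \sigma_0, \sigma_1, a \rangle$ encodes $c$ (with $a$ an $M_{n-1}$-index of $X_{n-1}$) and $\langle \tau, b \rangle$ encodes $(\tau, Y_{n-1})$ (with $b$ an $M_{n-1}$-index of $Y_{n-1}$), then $\langle \tau, \sigma_1, b \rangle$ encodes $d$, and this tuple is obtained by a trivial computable rearrangement.

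There is no substantive obstacle here: the entire point of defining $\DD^A_n$ with a single shared reservoir is precisely to allow the two sides to be refined independently. This lemma is the structural analogue, on the disjunctive side, of \Cref{lem:witness-compatible-core} for witness forcing, and its role in the remainder of the section will be to transfer density and forcing-question lemmas from the two $\MM^A_n$-components to $\DD^A_n$.
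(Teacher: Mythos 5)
Your proof is correct and takes exactly the same approach as the paper: defining $d = (\tau, \sigma_1, Y_{n-1})$ (resp.\ $(\sigma_0, \tau, Y_{n-1})$ when $i=1$) so that the refined reservoir is shared and the other stem is untouched. The paper's version simply states this choice and declares the verification clear; you spell out the same checks in more detail, but the argument is identical.
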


\begin{proof}
Simply take $d = (\tau,\sigma_1,Y_{n-1})$ if $i = 0$ and $d = (\sigma_0,\tau,Y_{n-1})$ if $i = 1$. It is clear that $d \leq c$ and that $d^{[i]} = (\tau, Y_{n-1})$.
\end{proof}

We now define the disjunctive forcing question to decide $\Sigma^0_n$-formulas with better definitional properties.
Since the set $A$ is too complex with respect to the forcing question, one uses an over-approximation by quantifying universally over all possible sets. By compactness, this universal second-order quantification can be translated into an existential first-order quantification, yielding the appropriate complexity.

\begin{definition}[Forcing question]
Let $c = (\sigma_0, \sigma_1,X_{n-1})$ be a $\DD_{n}^A$-condition and $\phi_{0}(G,x), \phi_1(G,x)$ two $\Pi_{n-1}^0$ formulas, the relation $$c \qvdash (\exists x)\phi_0(G_0,x) \vee (\exists x)\phi_1(G_1,x)$$
holds if for every partition $Z_0,Z_1$ of $X_{n-1}$, there exist some side $i < 2$, some $\rho \subseteq Z_i$ and some $x \in \NN$ such that either $\phi_i(\sigma_i \cup \tau,x)$ holds in the case where $n = 1$ or $\sigma_i \cup \rho \nqvdash \neg \phi_{i}(G_i,x)$ holds if $n > 1$.
\end{definition}

The following lemma shows that the forcing question for $\Sigma^0_n$-formulas has the good definitional complexity.
In particular, if $M_{n-1}$ is low over $\emptyset^{(n-1)}$, that is, $(M_{n-1} \oplus \emptyset^{(n-1)})' \leq_T \emptyset^{(n)}$, then the forcing question is $\emptyset^{(n)}$-decidable.

\begin{lemma}\label[lemma]{lem:disjunctive-forcing-question-complexity}
    The relation $c \qvdash (\exists x)\phi_0(G_0,x) \vee (\exists x)\phi_1(G_1,x)$ is $\Sigma_1^0(\M_{n-1})$.
\end{lemma}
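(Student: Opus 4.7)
The plan is to convert the apparent $\Pi^0_1$-style universal quantification over partitions of the infinite reservoir $X_{n-1}$ into an existential quantification over a finite depth, via a compactness argument. More precisely, the forcing question asserts that a certain $\Pi^0_1(\M_{n-1})$-class of ``bad'' partitions of $X_{n-1}$ is empty, and by König's lemma this is equivalent to the existence of some depth $\ell$ such that every partition of the $\ell$-initial segment already admits a bounded witness.

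Concretely, I would show that $c \qvdash (\exists x)\phi_0(G_0,x) \vee (\exists x)\phi_1(G_1,x)$ is equivalent to the following statement:
\[
(\exists \ell)(\forall Z_0 \sqcup Z_1 = \{0, \dots, \ell-1\} \cap X_{n-1})(\exists i < 2)(\exists \rho \subseteq Z_i)(\exists x \leq \ell)\, \theta_i(\sigma_i \cup \rho, x),
\]
where $\theta_i(\sigma_i \cup \rho, x)$ stands for $\phi_i(\sigma_i \cup \rho, x)$ when $n = 1$, and for $\sigma_i \cup \rho \nqvdash \neg \phi_i(G_i, x)$ when $n > 1$. The nontrivial direction is that if every partition of $X_{n-1}$ yields a witness, then a uniform bound $\ell$ suffices: otherwise, one would produce by König's lemma an infinite partition of $X_{n-1}$ with no witness at any finite stage, contradicting the forcing question.

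Granted this equivalence, the complexity analysis is straightforward. Since $X_{n-1} \in \M_{n-1}$, membership in $X_{n-1}$ is $\M_{n-1}$-decidable, so the quantification over finite partitions $Z_0 \sqcup Z_1$ of $\{0,\dots,\ell-1\} \cap X_{n-1}$, and over $\rho \subseteq Z_i$ and $x \leq \ell$, is bounded and therefore absorbed. For $n = 1$, $\theta_i$ is simply a $\Pi^0_0$ formula, so the whole statement is $\Sigma^0_1(\M_0)$. For $n > 1$, by \Cref{lem:core-complexity-question}, the relation $\sigma \qvdash \neg \phi_i(G_i, x)$ is $\Pi^0_1(\M_{n-1})$, so its negation $\sigma \nqvdash \neg \phi_i(G_i, x)$ is $\Sigma^0_1(\M_{n-1})$, and bounded combinations preserve this class. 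The outer $\exists \ell$ keeps us in $\Sigma^0_1(\M_{n-1})$, yielding the desired bound.

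The main (minor) obstacle is making the compactness argument rigorous when the inner property is itself $\Sigma^0_1(\M_{n-1})$ rather than decidable: the associated tree of partial partitions with no witness seen so far is $\Pi^0_1(\M_{n-1})$, not computable. But this poses no real problem, since the forcing question precisely asks that this $\Pi^0_1(\M_{n-1})$ class be empty, and emptiness of a $\Pi^0_1(\M_{n-1})$ subclass of $2^\NN$ is a $\Sigma^0_1(\M_{n-1})$ fact via the $\M_{n-1}$-enumeration of the complement combined with compactness of $2^\NN$.
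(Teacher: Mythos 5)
Your proof is correct and follows essentially the same route as the paper: both reduce the universal quantification over infinite 2-covers of $X_{n-1}$ to a quantification over finite 2-covers of $X_{n-1}\uh\ell$ via compactness, and then observe that the inner predicate is $\Sigma^0_1(\M_{n-1})$ because $\sigma\cup\rho\nqvdash\neg\phi_i(G_i,x)$ is $\Sigma^0_1(\M_{n-1})$ by \Cref{lem:core-complexity-question}. The paper states the compactness step in one line, while you make it explicit via K\"onig's lemma applied to the $\Pi^0_1(\M_{n-1})$-tree of partial 2-covers admitting no witness, and equivalently as emptiness of a $\Pi^0_1(\M_{n-1})$ class; this is exactly the content that the paper's ``by compactness'' is appealing to, so the two arguments are the same at heart.
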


\begin{proof}
By compactness, the statement $c \qvdash (\exists x)\phi_0(G_0,x) \vee (\exists x)\phi_1(G_1,x)$ is equivalent (in the case where $n > 1$) to the following:
$$(\exists \ell)(\forall Z_0 \cup Z_1 = X_{n-1} \uh \ell)(\exists i < 2)(\exists \rho \subseteq Z_i)(\exists x)\sigma_i \cup \rho \nqvdash \neg \phi_i(G_i,x)$$

which is $\Sigma^0_1(\M_{n-1})$ by \Cref{lem:core-complexity-question}. The case $n = 1$ is similar.
\end{proof}

The following lemma states that the disjunctive forcing meets its specifications.
Note that in the negative case, the negation is forced only on one side of the condition. The disjunction of the question should therefore not be considered as a single formula, but rather as two separate questions.

\begin{lemma}\label[lemma]{lem:disjunctive-question-find-extension}
Let $c = (\sigma_0, \sigma_1, X_{n-1})$ be a $\DD^A_{n}$-condition and let $\phi_0(G_0,x)$ and $\phi_1(G_1,x)$ two $\Pi_{n-1}^0$ formulas. 
\begin{itemize}
    \item If $c \qvdash (\exists x)\phi_0(G_0,x) \vee (\exists x)\phi_1(G_1,x)$ holds, then there exist some side $i < 2$ and some condition $d \leq c$ such that $d^{[i]} \Vdash (\exists x)\phi_{i}(G_i,x)$.
    \item If $c \nqvdash (\exists x)\phi_0(G_0,x) \vee (\exists x)\phi_1(G_1,x)$ holds, then there exist some side $i < 2$ and some condition $d \leq c$ such that $d^{[i]} \Vdash (\forall x)\neg \phi_{i}(G_i,x)$.
\end{itemize}
\end{lemma}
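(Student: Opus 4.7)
The plan is to reduce each case to an $\MM^{A^i}_n$-forcing statement on a single side $i < 2$, and then lift the resulting single-sided extension back to a $\DD^A_n$-extension via \Cref{lem:disjunctive-compatible-core}. Write $c = (\sigma_0, \sigma_1, X_{n-1})$, set $A^0 = A$ and $A^1 = \overline{A}$, and recall that $c^{[i]} = (\sigma_i, X_{n-1})$ is a $\PP^{A^i}_{n-1}$-condition, i.e., a $\MM^{A^i}_n$-condition.

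For the positive case, I apply the disjunctive forcing question to the distinguished partition $Z_0 = X_{n-1} \cap A$, $Z_1 = X_{n-1} \cap \overline{A}$. This yields a side $i$, an $x \in \NN$, and a $\rho \subseteq Z_i \subseteq X_{n-1} \cap A^i$ witnessing the question. When $n = 1$, the witness is $\phi_i(\sigma_i \cup \rho, x)$, so by \Cref{def:core-forcing-relation} the $\PP^{A^i}_0$-condition $e = (\sigma_i \cup \rho, X_0 \setminus \{0, \dots, |\rho|-1\})$ directly forces $(\exists x)\phi_i(G_i, x)$. When $n > 1$, the witness is $\sigma_i \cup \rho \nqvdash \neg \phi_i(G_i, x)$, and \Cref{lem:core-question-find-extension} applied to the $\PP^{A^i}_{n-1}$-condition $(\sigma_i \cup \rho, X_{n-1} \setminus \{0, \dots, |\rho|-1\})$ and to the $\Sigma^0_{n-1}$ formula $\neg \phi_i(G_i, x)$ yields an extension $e$ forcing $\phi_i(G_i, x)$, hence $(\exists x)\phi_i(G_i, x)$. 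In either sub-case, \Cref{lem:disjunctive-compatible-core} lifts $e$ to a $\DD^A_n$-extension $d \leq c$ with $d^{[i]} = e$.

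For the negative case, unfolding $\nqvdash$ produces a partition $Z_0 \cup Z_1 = X_{n-1}$ such that for both sides $i$, every $\rho \subseteq Z_i$ and $x \in \NN$ satisfy $\neg \phi_i(\sigma_i \cup \rho, x)$ (if $n = 1$) or $\sigma_i \cup \rho \qvdash \neg \phi_i(G_i, x)$ (if $n > 1$). By the compactness argument of \Cref{lem:disjunctive-forcing-question-complexity}, the class of such bad partitions of $X_{n-1}$ is a non-empty $\Pi^0_1(\M_{n-1})$ class, so the Scott ideal property of $\M_{n-1}$ yields such a partition with $Z_0, Z_1 \in \M_{n-1}$. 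Partition regularity of $\langle \U^{\M_{n-1}}_{C_{n-1}} \rangle$ applied to $X_{n-1} = Z_0 \cup Z_1$ then gives a side $i$ with $Z_i \in \langle \U^{\M_{n-1}}_{C_{n-1}} \rangle \cap \M_{n-1}$, so $d = (\sigma_0, \sigma_1, Z_i)$ is a valid $\DD^A_n$-extension of $c$. The bad-partition property of $Z_i$ matches verbatim the core forcing definition of $d^{[i]} = (\sigma_i, Z_i) \Vdash (\forall x)\neg \phi_i(G_i, x)$ given in \Cref{def:core-forcing-relation}; note that no intersection with $A^i$ appears at this level, since we are using the core forcing relation rather than the level-$(n+1)$ relation of \Cref{def:main-forcing-relation}.

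No step poses a genuine difficulty: everything reduces to combining the compactness-of-largeness idea of \Cref{lem:disjunctive-forcing-question-complexity}, the deciding-extension property of the core forcing question (\Cref{lem:core-question-find-extension}), and the commutation lemma (\Cref{lem:disjunctive-compatible-core}). The only bookkeeping subtlety is the case split $n = 1$ versus $n > 1$ in the positive case, arising from the two-branch definition of the disjunctive forcing question's witness.
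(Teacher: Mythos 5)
Your proof is correct and takes essentially the same approach as the paper's: in the positive case it feeds the canonical partition $X_{n-1}\cap A$, $X_{n-1}\cap\overline A$ into the disjunctive forcing question, reduces to the core forcing via \Cref{lem:core-question-find-extension}, and lifts back using \Cref{lem:disjunctive-compatible-core}; in the negative case it finds a bad partition in $\M_{n-1}$ via the Scott-ideal property and picks the side with partition regularity. The remark that no intersection with $A^i$ enters at the $\Pi^0_n$ level (core relation vs.\ \Cref{def:main-forcing-relation}) is an accurate observation that the paper leaves implicit.
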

\begin{proof}

Assume $c \qvdash (\exists x)\phi_0(G_0,x) \vee (\exists x)\phi_1(G_1,x)$ holds. Let $Y_0 = A \cap X_{n-1}$ and $Y_1 = \overline{A} \cap X_{n-1}$ and consider the partition $X_{n-1} = Y_0 \cup Y_1$. There exists some side $i < 2$, some $\tau \subseteq Y_i$ and some $a \in \NN$ such that $\phi_i(\sigma_i \cup \tau,a)$ holds if $n = 1$ or $\sigma_i \cup \tau \nqvdash \neg \phi_{i}(G_i,a)$ holds if $n > 1$. As $\tau \subseteq A^i \cap X_{n-1}$, the condition $e_i = (\sigma_i \cup \tau, X_{n-1} \setminus \{0, \dots, |\tau|\}) \leq c^{[i]}$ is in $\PP_{n-1}^{A^i}$, hence, if $n = 1$, $e_i$ already forces $\phi_i(G_i,a)$ and if $n > 1$, using \Cref{lem:core-question-find-extension}, there exists some $d_i \leq e_i$ such that $d_i \Vdash \phi_{i}(G_i,a)$.

Therefore, in both case, using \Cref{lem:disjunctive-compatible-core} we have found some extension $d \leq c$ and some $i < 2$ such that $d^{[i]} \Vdash \phi_{i}(G_i,a)$, hence $d^{[i]} \Vdash (\exists x)\phi_i(G_i,x)$. \\

Assume $c \nqvdash (\exists x)\phi_0(G_0,x) \vee (\exists x)\phi_1(G_1,x)$ holds. The class of all partitions $X_{n-1} = Y_0 \cup Y_1$, such that for every side $i < 2$, every $\tau \subseteq Y_i$ and every $x \in \NN$, $\neg \phi_i(\sigma_i \cup \tau, x)$ holds if $n = 1$ or $\sigma_i \cup \tau \qvdash \neg \phi_{i}(G_i,x)$ holds if $n > 1$ is $\Pi_1^0(\M_{n-1})$ and non-empty. Therefore, as $\M_{n-1}$ is a Scott ideal, there exists such a partition $X_{n-1} = Y_0 \cup Y_1$ in $\M_{n-1}$. As $X_{n-1} \in \langle \U_{C_{n-1}}^{\M_{n-1}} \rangle$, there exists some $i < 2$ such that $Y_i \in \langle \U_{C_{n-1}}^{\M_{n-1}} \rangle$, then $d = (\sigma_0, \sigma_1, Y_i) \leq c$ is an extension duch that $d^{[i]} \Vdash (\forall x) \neg \phi_i(G_i,x)$.
\end{proof}

\subsection{Compact forcing questions}\label{sect:compact-forcing-questions}

As explained in \Cref{sect:iterated-jump-control}, there is a close relationship between preservation of hyperimmunities and the existence of compact forcing questions with good definitional properties.
For the purpose of \Cref{thm:main-preservation-hyps}, one needs to use notions of forcing whose forcing questions for $\Sigma_{k+1}^0$-formulas
are $\Sigma^0_{k+1}$-preserving and $\Sigma^0_{k+1}$-compact for $k \in \{n-1, n\}$.
The proof of \Cref{thm:main-preservation-hyps} uses the disjunctive forcing ($\DD^A_n$) if $A$ and $\overline{A}$ both belong to the appropriate partition regular class, and the witness forcing ($\WW^A_n$ or $\WW^{\overline{A}}_n$) if either fails. The disjunctive forcing inherits its forcing question for $\Sigma^0_{n+1}$-formulas from the main forcing ($\MM^A_n$ and $\MM^{\overline{A}}_n$) and defines a new disjunctive forcing question for $\Sigma^0_n$-formulas. The witness forcing defines new forcing questions for both $\Sigma^0_n$-formulas and $\Sigma^0_{n+1}$-formulas.

In this section, we study the forcing questions of the main forcing, the disjunctive forcing, and the witness forcing, and prove that $\MM^A_n$-forcing question for $\Sigma^0_{n+1}$-formulas, the disjunctive $\DD^A_n$-forcing question for $\Sigma^0_n$-formulas, and the $\WW^A_n$-forcing questions for both $\Sigma^0_n$-formulas and $\Sigma^0_{n+1}$-formulas are all compact.

\begin{remark}\label[remark]{rem:bounded-quantification-question}
$\Sigma^0_{n+1}$-compactness of a forcing question states that if $c \qvdash (\exists x)\phi(G, x)$ for some~$\Pi^0_n$-formula $\phi$, then there is some bound~$k \in \NN$ such that $c \qvdash (\exists x \leq k)\phi(G, x)$. Traditionally, bounded quantification is treated as syntactic sugar, where $(\exists x \leq y)\phi(x)$ is translated into $(\exists x)[x \leq y \wedge \phi(x)]$ and $(\forall x \leq y)\phi(x)$ becomes $(\forall x)[x \leq y \rightarrow \phi(x)]$. However, due to the lack of robustness of the forcing question, it might be that two logically equivalent formulas do not yield the same answer. 

To circumvent this issue, in the case of of $\Sigma^0_{n+1}$-compactness, bounded existentials will be treated natively, requiring to define the forcing question for formulas of the form $(\exists x \leq k)\phi(G, x)$ where $\phi$ is $\Pi^0_n$. The definition of $c \qvdash (\exists x \leq k)\phi(G, x)$ will be very similar to that of $c \qvdash (\exists x)\phi(G, x)$, except that the existential quantifier for~$x$ will be bounded by $k$ accordingly. The lemma stating that the forcing question meets its specifications remains true when working with bounded formulas.
\end{remark}

We start with the main forcing.

\begin{lemma}\label[lemma]{lem:main-question-compact}
In $\MM^A_n$-forcing, the $\qvdash$ relation for $\Sigma_{n+1}^0$ formulas is compact, i.e., if $c \qvdash (\exists x)  \phi(G,x)$ for some $\Pi_{n}^0$ formula $\phi(G,x)$, then there exists some bound $\ell \in \NN$ such that  $c \qvdash (\exists x \leq \ell)\phi(G,x)$.
\end{lemma}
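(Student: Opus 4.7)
The proof plan is quite direct because the $\Sigma^0_{n+1}$-forcing question in $\MM^A_n$ already has a very compact shape: it is an extremal question obtained as the negation of the forcing relation for $\Pi^0_{n+1}$-formulas. Unpacking \Cref{def:main-forcing-relation}, the relation $c \qvdash (\exists x)\phi(G,x)$ holds exactly when there exist a finite $\rho \subseteq X_{n-1} \cap A$ and some $a \in \NN$ with $\sigma \cup \rho \nqvdash \neg \phi(G,a)$. The outermost existential quantifier over $a$ is already a first-order existential, and this is precisely what we need for compactness.

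Thus the plan is: following \Cref{rem:bounded-quantification-question}, I will define $c \qvdash (\exists x \leq \ell)\phi(G,x)$ to hold when there exist $\rho \subseteq X_{n-1} \cap A$ and some $a \leq \ell$ with $\sigma \cup \rho \nqvdash \neg \phi(G,a)$, matching the unbounded definition verbatim except for restricting the range of the existential. Then, assuming $c \qvdash (\exists x)\phi(G,x)$, one obtains a specific witness $(\rho, a)$ as above, and taking $\ell = a$ yields $c \qvdash (\exists x \leq \ell)\phi(G,x)$ witnessed by the same pair $(\rho, a)$. No subtlety arises because the witness $a$ is an honest first-order object, not a parameter buried inside a largeness statement as in the $\Sigma^0_k$ question for $k \leq n$.

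There is no real obstacle here, which is in contrast to the analogous question about $\Sigma^0_k$-compactness for $k \leq n$ in the $\PP^A_{k-1}$-style forcing, where the outermost existential is absorbed into a largeness predicate and compactness requires an additional combinatorial argument. The whole point of switching from the $\PP^A_n$ forcing question to the extremal $\MM^A_n$ forcing question at the top level was precisely to expose the outer existential quantifier, and compactness is the immediate payoff.
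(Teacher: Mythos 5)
Your proposal is correct and matches the paper's proof: both extract the first-order witness $a$ directly from the definition of the $\MM^A_n$ forcing question and take $\ell = a$. Your added commentary on why this is trivial here (the existential is exposed rather than buried in a largeness predicate) accurately reflects the design rationale discussed in \Cref{rem:bounded-quantification-question} and the surrounding text.
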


\begin{proof}
Let $c = (\sigma, X_{n-1})$ and assume $c \qvdash (\exists x)  \phi(G,x)$ holds for some $\Pi_{n}^0$ formula $\phi(G,x)$. There exists some $a \in \NN$ and some $\rho \subseteq X_{n-1} \cap A$ such that $\sigma \cup \rho \nqvdash \neg \phi(G,a)$, therefore  $c \qvdash (\exists x \leq a)\phi(G,x)$ . 
\end{proof}

\begin{lemma}\label[lemma]{lem:main-diagonalization}
For every $\MM^A_{n}$-condition~$c$, every $M_n$-hyperimmune function~$f_n$ and every Turing index~$e \in \NN$, there is an extension~$d \leq c$ forcing $\Phi^{G^{(n)}}_e$ not to dominate~$f_n$.
\end{lemma}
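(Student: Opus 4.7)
The plan is to apply the abstract framework of \Cref{thm:abstract-compact-hyperimmunity} to $\MM^A_n$-forcing. The two prerequisites are in place: by \Cref{lem:main-complexity-sigma0n+1-question} the forcing question for $\Sigma^0_{n+1}$-formulas is $\Sigma^0_1(\M_n)$, hence in particular $\Sigma^0_{n+1}$-preserving; and by \Cref{lem:main-question-compact} it is $\Sigma^0_{n+1}$-compact. Since $\Phi_e^{G^{(n)}}(x)\converge = v$ can be written as a $\Sigma^0_{n+1}$-formula in~$G$, all pieces are available.

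I would split on whether $c$ already sees some input on which $\Phi_e^{G^{(n)}}$ may diverge. If there is some $x \in \NN$ with $c \nqvdash (\exists v)\Phi_e^{G^{(n)}}(x)\converge = v$, then by the second clause of \Cref{lem:main-question-find-extension} there is an extension $d \leq c$ forcing $\Phi_e^{G^{(n)}}(x)\diverge$; in particular $\Phi_e^{G^{(n)}}$ is partial, so cannot dominate~$f_n$, and we are done.

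Otherwise, $c \qvdash (\exists v)\Phi_e^{G^{(n)}}(x)\converge = v$ for every $x \in \NN$. Invoking $\Sigma^0_{n+1}$-compactness, together with the native treatment of bounded existential quantifiers from \Cref{rem:bounded-quantification-question}, for every~$x$ there exists some least~$k_x \in \NN$ such that $c \qvdash (\exists v \leq k_x)\Phi_e^{G^{(n)}}(x)\converge = v$. Define $h(x) = k_x$. Because the underlying forcing question is $\Sigma^0_1(\M_n)$, the function~$h$ is total and $M_n$-computable uniformly in an index of~$c$. Since $f_n$ is $M_n$-hyperimmune, there is some $x \in \NN$ with $h(x) < f_n(x)$. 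By the first clause of \Cref{lem:main-question-find-extension} applied to the bounded $\Sigma^0_{n+1}$-formula $(\exists v \leq h(x))\Phi_e^{G^{(n)}}(x)\converge = v$, there is an extension $d \leq c$ forcing this formula. In particular, $d$ forces $\Phi_e^{G^{(n)}}(x)\converge \leq h(x) < f_n(x)$, so $d$ forces $\Phi_e^{G^{(n)}}$ not to dominate~$f_n$.

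There is no serious obstacle; the proof is essentially a re-run of \Cref{thm:abstract-compact-hyperimmunity} specialized to the main forcing. The only subtlety is cosmetic, namely that bounded existential quantifiers must be treated as primitive in the forcing question (as flagged by \Cref{rem:bounded-quantification-question}) to avoid logical-equivalence issues, so one needs to check, in passing, that the analogue of \Cref{lem:main-question-find-extension} for bounded formulas is immediate from the unbounded version.
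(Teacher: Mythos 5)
Your proof is correct and follows essentially the same two-case argument as the paper's: force divergence if some question fails, otherwise extract an $M_n$-computable bounding function via compactness and use $M_n$-hyperimmunity to force a small value. Your framing as a specialization of \Cref{thm:abstract-compact-hyperimmunity} is a fair gloss, noting only that the relevant level here is $\Sigma^0_1(\M_n)$-preservation paired with $M_n$-hyperimmunity rather than the literal hypotheses of that abstract theorem — but since you re-run the argument at the correct level, this is immaterial.
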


\begin{proof}
If there exists some $x \in \NN$ such that $c \nqvdash \Phi_e^{G^{(n)}}(x) \converge$, then by \Cref{lem:main-question-find-extension}, there exists some extension $d \leq c$ forcing $\Phi_e^{G^{(n)}}(x) \diverge$, hence forcing $\Phi_e^{G^{(n)}}$ to be partial.

If for all $x \in \NN$, $c \qvdash \Phi_e^{G^{(n)}}(x)\converge$, then by \Cref{lem:main-question-compact}, for every~$x$, there is a bound~$\ell_x$ such that $c \qvdash \Phi_e^{G^{(n)}}(x) \converge \leq \ell_x$. By assumption, the function $x \mapsto \ell_x$ is total, and by \Cref{lem:main-complexity-sigma0n+1-question}, it is $M_n$-computable, and therefore does not dominate $f_n$. Take some $x \in \NN$ such that $f_n(x) > \ell_x$ and by \Cref{lem:main-question-find-extension} let $d \leq c$ be an extension forcing $\Phi_e^{G^{(n)}}(x) \converge \leq  \ell_x$. Therefore, $d$ forces $\Phi_e^{G^{(n)}}(x) \converge < f_n(x)$, hence forces $\Phi_e^{G^{(n)}}$ not to dominate $f_n$.    
\end{proof}

We now turn to the study of the disjunctive forcing question for $\Sigma^0_n$-formulas in $\DD^A_n$.

\begin{lemma}\label[lemma]{lem:disjunctive-question-compact}
    Let $c = (\sigma_0, \sigma_1, X_{n-1})$ be a $\DD^A_{n}$-condition and let $\phi_0(G,x)$ and $\phi_1(G,x)$ be two $\Pi_{n-1}^0$ formulas. If 
    $$c \qvdash (\exists x)\phi_0(G_0,x) \vee (\exists x)\phi_1(G_1,x)$$
    then there exists some bound $\ell \in \NN$ such that 
    $$c \qvdash (\exists x \leq \ell)\phi_0(G,x) \vee (\exists x \leq \ell)\phi_1(G_1,x)$$
\end{lemma}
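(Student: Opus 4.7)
The plan is to follow the same compactness template already used in the proof of \Cref{lem:disjunctive-forcing-question-complexity}, and then bound the witness $x$ by taking a finite maximum.

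First, I would invoke the topological compactness reduction from the proof of \Cref{lem:disjunctive-forcing-question-complexity}: the hypothesis $c \qvdash (\exists x)\phi_0(G_0,x) \vee (\exists x)\phi_1(G_1,x)$ is equivalent to the existence of some $\ell \in \NN$ such that for every partition $Z_0 \cup Z_1 = X_{n-1} \uh \ell$ there exist $i < 2$, $\rho \subseteq Z_i$ and $x \in \NN$ with $\phi_i(\sigma_i \cup \rho, x)$ if $n=1$ (resp. $\sigma_i \cup \rho \nqvdash \neg \phi_i(G_i, x)$ if $n>1$). Indeed, otherwise the tree of ``bad'' length-$\ell$ partitions that admit no such witness would be an infinite finitely-branching tree, and König's lemma would yield an infinite partition of $X_{n-1}$ refuting the original forcing question.

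Next, fix such an $\ell$. Since there are only $2^\ell$ partitions of the finite set $X_{n-1} \uh \ell$, I would enumerate them and, for each, choose one triple of witnesses $(i, \rho, x)$ provided by the previous step. Let $L$ be the maximum of all the $x$'s so selected (a finite quantity).

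Finally, I would verify that $c \qvdash (\exists x \leq L)\phi_0(G_0,x) \vee (\exists x \leq L)\phi_1(G_1,x)$ in the sense of \Cref{rem:bounded-quantification-question}. Given an arbitrary partition $Z_0 \cup Z_1 = X_{n-1}$, consider its restriction to $X_{n-1} \uh \ell$. By construction there is a side $i$, a $\rho \subseteq Z_i \cap X_{n-1}\uh\ell \subseteq Z_i$ and an $x \leq L$ satisfying the required condition, which is exactly the definition of the bounded disjunctive forcing question. There is no real obstacle here: the crucial observation is that compactness already bounds the witness $\rho$ (it sits inside $X_{n-1}\uh\ell$), so only finitely many choices of $(i,\rho)$ ever arise, and each one comes with its own witness $x$; bounding $x$ uniformly is then just a finite maximum.
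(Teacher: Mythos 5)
Your proof is correct and follows the paper's argument essentially step for step: extract a finite depth $\ell$ by compactness so that every finite partition of $X_{n-1}\uh\ell$ already has a witness, note that only finitely many such partitions exist, and take the maximum of the finitely many witness values $x$ as the bound. You simply make explicit the K\"onig's lemma step and the final verification that the paper leaves implicit.
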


\begin{proof}
If $c \qvdash (\exists x)\phi_{0}(G_0,x) \vee (\exists x)\phi_{1}(G_1,x)$, then, by compactness, there exists some bound $\ell \in \NN$ such that every partition $X_{n-1} \uh \{0,\dots \ell\} = Y_0 \cup Y_1$ there exists some side $i < 2$, some $\tau \subseteq Y_i$ and some $x \in \NN$ such that $\phi_{i}(\sigma_i \cup \tau,x)$ holds if $n = 1$ or $\sigma_i \cup \rho \nqvdash \neg \phi_{i}(G_i,x)$ holds if $n > 1$. There are only finitely many such partitions, hence finitely many such witnesses $x$. Therefore, there exists some bound $\ell$ such that $c \qvdash (\exists x \leq \ell)\phi_0(G,x) \vee (\exists x \leq \ell)\phi_1(G_1,x)$.
\end{proof}

\begin{lemma}\label[lemma]{lem:disjunctive-diagonalization}
For every $\DD^A_{n}$-condition $c = (\sigma_0, \sigma_1, X_{n-1})$, every $M_{n-1}$-hyper\-immune function $f_{n-1}$ and every pair of Turing indices $e_0,e_1 \in \NN$, there exist a side $i < 2$ and an extension $d \leq c$ forcing $\Phi_{e_i}^{G_i^{(n-1)}}$ not to dominate $f_{n-1}$.    
\end{lemma}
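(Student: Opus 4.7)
The plan is to run the same dichotomy as in Lemma \ref{lem:main-diagonalization}, but one level lower and disjunctively: replace the $\MM^A_n$ forcing question with the $\DD^A_n$ disjunctive forcing question, and use the $\Sigma^0_1(\M_{n-1})$ complexity of the latter (Lemma \ref{lem:disjunctive-forcing-question-complexity}) to invoke $M_{n-1}$-hyperimmunity instead of $M_n$-hyperimmunity. Write the $\Sigma^0_n$ statement $\Phi_{e_i}^{G_i^{(n-1)}}(x) \converge$ in the form $(\exists y) \psi_i(G_i, x, y)$ for a $\Pi^0_{n-1}$ formula $\psi_i$ whose witness $y$ also encodes the returned value, so that bounding $y$ also bounds the value. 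For each $x$, ask the disjunctive question
\[
c \qvdash (\exists y)\, \psi_0(G_0, x, y) \;\vee\; (\exists y)\, \psi_1(G_1, x, y).
\]

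If this question fails at some $x$, the negative case of Lemma \ref{lem:disjunctive-question-find-extension} produces a side $i < 2$ and an extension $d \leq c$ with $d^{[i]} \Vdash (\forall y)\, \neg \psi_i(G_i, x, y)$, i.e., $d^{[i]}$ forces $\Phi_{e_i}^{G_i^{(n-1)}}(x) \diverge$; then $\Phi_{e_i}^{G_i^{(n-1)}}$ is forced to be partial and cannot dominate $f_{n-1}$. Otherwise the question holds at every $x$, and by the compactness Lemma \ref{lem:disjunctive-question-compact} together with the native treatment of bounded existentials in Remark \ref{rem:bounded-quantification-question}, for each $x$ there is a least bound $\ell_x$ with
\[
c \qvdash (\exists y \leq \ell_x)\, \psi_0(G_0, x, y) \;\vee\; (\exists y \leq \ell_x)\, \psi_1(G_1, x, y).
\]
Lemma \ref{lem:disjunctive-forcing-question-complexity} makes $x \mapsto \ell_x$ total and $M_{n-1}$-computable, so $M_{n-1}$-hyperimmunity of $f_{n-1}$ yields some $x$ with $f_{n-1}(x) > \ell_x$. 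The positive case of Lemma \ref{lem:disjunctive-question-find-extension} then gives $i < 2$ and $d \leq c$ with $d^{[i]} \Vdash (\exists y \leq \ell_x)\, \psi_i(G_i, x, y)$, so $d^{[i]}$ forces $\Phi_{e_i}^{G_i^{(n-1)}}(x) \converge$ with value at most $\ell_x < f_{n-1}(x)$; again $\Phi_{e_i}^{G_i^{(n-1)}}$ is forced not to dominate $f_{n-1}$.

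The main obstacle is essentially bookkeeping: I need to make sure the bounded-quantifier variant of Lemma \ref{lem:disjunctive-question-find-extension} holds by the same argument (replaying the original proof with bounds carried through, as signaled by Remark \ref{rem:bounded-quantification-question}), and to choose a $\Pi^0_{n-1}$ encoding of the halting computation that packages the output value into the witness so that bounding the witness simultaneously bounds the convergence stage and the returned value. Both are minor, so the real content of the lemma lies in the disjunctive compactness plus the complexity bound on the disjunctive forcing question, paralleling the non-disjunctive argument of Lemma \ref{lem:main-diagonalization}.
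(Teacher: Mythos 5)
Your proof is correct and follows the same dichotomy as the paper's: if the disjunctive forcing question fails at some $x$, apply the negative half of \Cref{lem:disjunctive-question-find-extension} to force divergence on one side; if it holds everywhere, use compactness (\Cref{lem:disjunctive-question-compact}) and the $\Sigma^0_1(\M_{n-1})$ complexity bound (\Cref{lem:disjunctive-forcing-question-complexity}) to obtain a total $M_{n-1}$-computable bounding function, then invoke hyperimmunity. Your extra discussion of encoding the output value into the existential witness and replaying the bounded-quantifier variant is exactly what \Cref{rem:bounded-quantification-question} signals, and is left implicit in the paper's notation $\Phi_{e_i}^{G_i^{(n-1)}}(x)\converge \leq \ell_x$.
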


\begin{proof}
If there exists some $x \in \NN$ such that $c \nqvdash \Phi_{e_0}^{G_0^{(n-1)}}(x) \converge \vee \Phi_{e_1}^{G_1^{(n-1)}}(x) \converge$, then by \Cref{lem:disjunctive-question-find-extension}, there exist some side $i < 2$ and some extension $d \leq c$ such that $d \Vdash \Phi_{e_i}^{G_i^{(n-1)}}(x) \diverge$, hence, forces $\Phi_{e_i}^{G_i^{(n-1)}}$ to be partial. 

If for all $x \in \NN$, $c \qvdash \Phi_{e_0}^{G_0^{(n-1)}}(x) \converge \vee \Phi_{e_1}^{G_1^{(n-1)}}(x) \converge$, then by \Cref{lem:disjunctive-question-compact}, for every~$x$, there is a bound $\ell_x \in \NN$ such that $c \qvdash \Phi_{e_0}^{G_0^{(n-1)}}(x) \converge \leq \ell_x \vee \Phi_{e_1}^{G_1^{(n-1)}}(x) \converge \leq \ell_x$. The function $x \mapsto \ell_x$ is total and by \Cref{lem:disjunctive-forcing-question-complexity}, it is $M_{n-1}$-computable, and therefore does not dominate $f_{n-1}$. Take some $x \in \NN$ such that $f_{n-1}(x) > \ell_x$ and by \Cref{lem:disjunctive-question-find-extension} let $i < 2$ and let $d \leq c$ be an extension forcing $ \Phi_e^{G_i^{(n-1)}}(x)\converge \leq \ell_x$. Then, $d$ forces $\Phi_{e_i}^{G_i^{(n-1)}}(x) \converge < f_{n-1}(x)$, hence forces $\Phi_{e_i}^{G_i^{(n-1)}}$ not to dominate $f_{n-1}$.  
\end{proof}

Last, we study the forcing questions for $\Sigma^0_n$ and $\Sigma^0_{n+1}$ formulas in the witness forcing.

\begin{lemma}\label[lemma]{lem:witness-question-compact}
In $\WW^A_n$-forcing, the $\qvdash^{\U}$ relation for $\Sigma_{n}^0$ and $\Sigma_{n+1}^0$ formulas is compact, i.e., for $k = n-1$ or $k = n$, for every $\WW^A_n$-condition $c = (\sigma,X_{n-1},X_{n})$, every $\Pi_{k}^0$ formula $\phi(G,x)$, and every $\U$ witness for $c$
, if $c \qvdash^{\U} (\exists x) \phi(G,x)$, then there exists some bound $\ell \in \NN$ such that $c \qvdash^{\U} (\exists x \leq \ell) \phi(G,x)$.
\end{lemma}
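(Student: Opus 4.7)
The plan is to reduce this directly to the compactness reformulation of $\qvdash^{\U}$ that was already used to compute its definitional complexity in \Cref{lem:witness-forcing-question-complexity}. By that argument, if $c = (\sigma, X_{n-1}, X_n) \qvdash^{\U} (\exists x)\phi(G,x)$ holds, then there exists some length $\ell \in \NN$ such that for every $\beta \in 2^\ell$, either $[\overline{\beta}] \subseteq \U$, or else there exist some $\tau \finsub \beta \cap X_k$ and some $x \in \NN$ witnessing the property (that is, $\phi(\sigma \cup \tau, x)$ holds if $k = 0$, or $\sigma \cup \tau \nqvdash \neg \phi(G, x)$ if $k > 0$). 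This equivalence is standard König's lemma applied to the closed class of counterexample $B$'s.

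Fix such an $\ell$. For each $\beta \in 2^\ell$ that falls in the second case, I choose a witness pair $(\tau_\beta, x_\beta)$. Since $2^\ell$ is finite, the collection of chosen witnesses is finite, and I let $m = \max\{x_\beta : \beta \in 2^\ell\} + 1$.

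The claim is then that $c \qvdash^{\U} (\exists x \leq m)\phi(G,x)$. To verify this, take any $B \in 2^\NN$ with $\overline{B} \not\in \U$ and set $\beta = B \uh \ell$, so that $\overline{\beta}$ is a prefix of $\overline{B}$. Since $\overline{B} \in [\overline{\beta}]$ and $\overline{B} \not\in \U$, we cannot have $[\overline{\beta}] \subseteq \U$, so $\beta$ falls in the second case. The chosen witness then satisfies $\tau_\beta \finsub \beta \cap X_k \subseteq B \cap X_k$ with $x_\beta \leq m$, giving the required bounded witness. In the convention of \Cref{rem:bounded-quantification-question}, this is exactly the definition of $c \qvdash^{\U} (\exists x \leq m)\phi(G,x)$.

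There is no real obstacle here: the entire argument is a routine compactness extraction already implicit in the proof of \Cref{lem:witness-forcing-question-complexity}. The only point requiring care is distinguishing $B$ from $\overline{B}$ and checking that restricting to prefixes $\beta = B \uh \ell$ correctly forces one into the ``second case'' of the reformulation, which is handled by the observation that $\overline{\beta}$ prefixes $\overline{B}$.
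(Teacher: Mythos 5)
Your proof is correct and follows essentially the same route as the paper: invoke the compactness reformulation of $\qvdash^{\U}$ from \Cref{lem:witness-forcing-question-complexity}, then note that finiteness of $2^\ell$ lets one cap the witnesses $x$. The paper folds the bound on $x$ directly into the choice of $\ell$ (asserting $x \leq \ell$ already in the compactness step), whereas you extract a separate max $m$ over finitely many chosen witnesses; this is the same argument, just unfolded, and your explicit verification that $\beta = B \uh \ell$ forces the ``second case'' correctly fills in what the paper leaves implicit.
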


\begin{proof}
Assume $c \qvdash^{\U} (\exists x) \phi(G,x)$. By compactness, there exists some bound $\ell \in \NN$ such that for every $\beta \in 2^{\ell}$, letting $\overline{\beta}$ be the bitwise complement of~$\beta$, either we already have  $[\,\overline{\beta}\,] \subseteq \U$ or there is some $x \leq \ell$ and $\rho \finsub \beta \cap X_{k}$ such that $\phi(\sigma \cup \rho,x)$ holds if $k = 1$ or $\sigma \cup \rho \nqvdash \neg \phi(G,x)$ holds if $k > 1$.
Then $c \qvdash^{\U} (\exists x \leq \ell) \phi(G,x)$.
\end{proof}


\begin{lemma}\label[lemma]{lem:witness-diagonalization}
Let $k = n - 1$ or $k = n$, for every $\WW^A_n$-condition $c = (\sigma,X_{n-1},X_n)$ in $\WW^A_n$, every $M_{k}$-hyperimmune function $f_{k}$ and every Turing index $e \in \NN$, there exists an extension $d \leq c$ forcing $\Phi_{e}^{G^{(k)}}$ not to dominate $f_{k}$.    
\end{lemma}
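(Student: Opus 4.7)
The plan is to mimic the proofs of \Cref{lem:main-diagonalization} and \Cref{lem:disjunctive-diagonalization}, applying the abstract pattern from \Cref{thm:abstract-compact-hyperimmunity} to the forcing question $\qvdash^{\U}$ of the witness forcing. First, given the $\WW^A_n$-condition $c = (\sigma, X_{n-1}, X_n)$, I would invoke \Cref{lem:witness-overapproximate} to fix a witness $\U$ for~$c$, so that the forcing question $\qvdash^{\U}$ is well-defined. Then I would split on whether the $\Sigma^0_{k+1}$ formula $(\exists v) \Phi_e^{G^{(k)}}(x)\converge = v$ is forced by $c$ under $\qvdash^{\U}$ for every~$x$ or not.

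In the first case (divergence case), if there exists some $x \in \NN$ such that $c \nqvdash^{\U} (\exists v) \Phi_e^{G^{(k)}}(x)\converge = v$, then by \Cref{lem:witness-question-find-extension} applied to the $\Pi^0_k$ formula witnessing convergence, there is an extension $d \leq c$ forcing $\Phi_e^{G^{(k)}}(x)\diverge$, hence forcing $\Phi_e^{G^{(k)}}$ to be partial, which trivially forces non-domination of~$f_k$.

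In the second case (convergence case), if for every $x \in \NN$ we have $c \qvdash^{\U} (\exists v) \Phi_e^{G^{(k)}}(x)\converge = v$, then by the $\Sigma^0_{k+1}$-compactness of the forcing question (\Cref{lem:witness-question-compact}, and as noted in \Cref{rem:bounded-quantification-question}), for every~$x$ there is a bound~$\ell_x$ with $c \qvdash^{\U} (\exists v \leq \ell_x) \Phi_e^{G^{(k)}}(x)\converge = v$. The function $h : x \mapsto \ell_x$ obtained by searching for the least such bound is total by hypothesis, and by \Cref{lem:witness-forcing-question-complexity} the forcing question is $\Sigma^0_1(\M_k)$, so $h$ is $M_k$-computable. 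Since $f_k$ is $M_k$-hyperimmune, $h$ does not dominate~$f_k$, so pick~$x$ with $f_k(x) > \ell_x$; by \Cref{lem:witness-question-find-extension} applied to $(\exists v \leq \ell_x) \Phi_e^{G^{(k)}}(x)\converge = v$, there is an extension $d \leq c$ forcing this bounded existential, hence forcing $\Phi_e^{G^{(k)}}(x)\converge < f_k(x)$, which forces $\Phi_e^{G^{(k)}}$ not to dominate $f_k$.

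The proof is essentially routine given the already-proved compactness and definability of the witness forcing question; there is no serious obstacle, only a verification that the pattern of \Cref{thm:abstract-compact-hyperimmunity} applies uniformly for both values $k = n-1$ and $k = n$, which is immediate because \Cref{lem:witness-question-compact}, \Cref{lem:witness-forcing-question-complexity} and \Cref{lem:witness-question-find-extension} are stated for both these levels simultaneously.
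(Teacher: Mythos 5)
Your proposal is correct and follows essentially the same argument as the paper: split on whether the convergence question is answered positively for every~$x$; in the negative case force divergence directly, and in the positive case use $\Sigma^0_{k+1}$-compactness, the $\Sigma^0_1(\M_k)$-definability of $\qvdash^{\U}$, and $M_k$-hyperimmunity of~$f_k$ to force a bounded convergence below~$f_k(x)$. The only (minor, and welcome) difference is that you explicitly invoke \Cref{lem:witness-overapproximate} to fix the witness~$\U$, whereas the paper leaves the superscript implicit.
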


\begin{proof}
If there exists some $x \in \NN$ such that $c \nqvdash \Phi_e^{G^{(k)}}(x) \converge$, then by \Cref{lem:witness-question-find-extension}, there exists some extension $d \leq c$ forcing $\Phi_e^{G^{(k)}}(x) \diverge$, hence forcing $\Phi_e^{G^{(k)}}$ to be partial.

If for all $x \in \NN$, $c \qvdash \Phi_e^{G^{(k)}}(x)\converge$, then by \Cref{lem:witness-question-compact}, for every~$x$, there is a bound $\ell_x \in \NN$ such that $c \qvdash \Phi_e^{G^{(k)}}(x) \converge \leq \ell_x$. By assumption, the function $x \mapsto \ell_x$ is total and by \Cref{lem:witness-forcing-question-complexity}, it is $M_{k}$-computable, and therefore does not dominate $f_{k}$. Take some $x \in \NN$ such that $f_{k}(x) > \ell_x$ and by \Cref{lem:witness-question-find-extension} let $d \leq c$ be an extension forcing $\Phi_e^{G^{(k)}}(x) \converge \leq \ell_x$. Therefore, $d$ forces $\Phi_e^{G^{(k)}}(x) \converge < f_{k}(x)$, hence forces $\Phi_e^{G^{(k)}}$ not to dominate $f_{k}$.      
\end{proof}


\subsection{Applications}\label{sect:preservation-apps}

We now have all the necessary ingredients to prove our next main theorem about simultaneous preservation of hyperimmunities, enabling to separate $\Sub{\Delta^0_{n+1}}$ from $\Sub{\Sigma^0_{n+1}}$ over $\omega$-models.

\begin{repmaintheorem}{thm:main-preservation-hyps}
Fix~$n \geq 1$. For every $\Delta^0_{n+1}$ set~$B$, every $\emptyset^{(n-1)}$-hyperimmune function $f : \NN \to \NN$ and every $\emptyset^{(n)}$-hyperimmune function $g : \NN \to \NN$, there is an infinite set~$H \subseteq B$ or $H \subseteq \overline{B}$ such that $f$ is $H^{(n-1)}$-hyperimmune and $g$ is $H^{(n)}$-hyperimmune.
\end{repmaintheorem}
\begin{proof}
For every $X$, let $\C(X)$ be the $\Pi^0_1(X)$ class of \Cref{prop:pa-to-scott}.
By \Cref{lem:scott-tower}, there is a Scott tower $\M_0, \dots, \M_{n-2}$ of height~$n-2$ with Scott codes~$M_0, \dots, M_{n-2}$, such that for every~$i \leq n-2$, $M_i$ is of low degree over~$\emptyset^{(i)}$.
By relativizing \Cref{thm:wkl-hyperimmunity-preservation} to $\emptyset^{(n-1)}$ and applying it on the family $\{(f,0),(g,1)\}$, there is a Scott code~$M_{n-1} \in \C(\emptyset^{(n-1)})$ of a Scott ideal~$\M_{n-1}$ such that $f$ is $M_{n-1}$-hyperimmune and $g$ is $M_{n-1}'$-hyperimmune. In particular, $M_{n-2}' \leq_T \emptyset^{(n-1)} \in \M_{n-1}$.
By the computably dominated basis theorem~\cite{jockusch1972classes} relativized to $M_{n-1}'$, there exists some Scott code $M_n \in C(M_{n-1}')$ of a Scott ideal~$\M_n$ containing $M_{n-1}'$, and such that every $M_n$-computable function is dominated by an $M_{n-1}'$-computable function. In particular, since $g$ is $M_{n-1}'$-hyperimmune, $g$ is $M_n$-hyperimmune. The Scott ideals $\M_0, \dots, \M_n$ therefore form a Scott tower such that $f$ is $M_{n-1}$-hyperimmune, and $g$ is $M_n$-hyperimmune.

By \Cref{lem:largeness-tower}, it can be enriched with some sets~$C_0, \dots, C_{n-1}$ to form a largeness tower of height $n$. There are two cases:
\bigskip

\textbf{Case 1:} Both $B$ and $\overline{B}$ are in $\langle \U_{C_{n-1}}^{\M_{n-1}} \rangle$. We will build an infinite subset of~$B$ or $\overline{B}$ using $\DD_n^B$-forcing.
Given $e_0,e_1 \in \NN$, let $\D_{e_0,e_1}$ be the set 
$$\{c \in \DD_n^B : (\exists i < 2) c \Vdash \Phi_{e_i}^{G_i^{(n-1)}} \mbox{ does not dominate } f_{n-1} \}$$
By \Cref{lem:disjunctive-diagonalization}, $\D_{e_0,e_1}$ is dense for every~$e_0, e_1 \in \NN$.
Therefore, for $\F$ a sufficiently generic $\DD_n^B$-filter, for every $e_0,e_1 \in \NN$, either $\Phi_{e_0}^{G_{0,\F}^{(n-1)}}$ or $\Phi_{e_1}^{G_{1,\F}^{(n-1)}}$ does not dominate $f_{n-1}$. By a standard pairing argument, there exists some side $i < 2$ such that $\Phi_{e}^{G_{i,\F}^{(n-1)}}$ does not dominate $f_{n-1}$ for any $e \in \NN$, hence $f_{n-1}$ is $G_{i,\F}^{(n-1)}$-hyperimmune. 

For $e \in \NN$, let $\D_e$ be the set
$$\{c \in \MM^{B^i}_n : c \Vdash \Phi_{e}^{G_i^{(n)}} \mbox{ do not dominate } f_n \}$$

By \Cref{lem:main-diagonalization}, the set $\D_e$ is dense for every $e \in \NN$. Hence, $\F$ being a sufficiently generic $\DD^B_n$-filter, the $\MM_n^{B^i}$-filter $\F^{[i]} = \{c^{[i]} : c \in \F\}$ is sufficiently generic by \Cref{lem:disjunctive-compatible-core}. Then, by \Cref{lem:core-question-find-extension} and \Cref{lem:main-question-find-extension}, $\F^{[i]}$ is $n$-generic. In particular, by \Cref{prop:core-forcing-generic-set}, $G_{i,\F}$ exists and is an infinite subset of~$B^i$, where~$B^0 = B$ and $B^1 = \overline{B}$. By \Cref{prop:core-forcing-imply-truth} and \Cref{prop:main-forcing-imply-truth}, every property forced for the set $G_{i,\F}$ is true. Thus, $H = G_{i,\F}$ is an infinite subset of $B^i$ such that $f_{n-1}$ is $H^{(n-1)}$-hyperimmune and $f_n$ is $H^{(n)}$-hyperimmune.\\

\textbf{Case 2:} $\overline{B}$ is not in $\langle \U_{C_{n-1}}^{\M_{n-1}} \rangle$ (the case $B \notin \langle \U_{C_{n-1}}^{\M_{n-1}} \rangle$ is symmetrical). We will build an infinite subset of~$B$ using $\WW^B_n$-forcing. For $e \in \NN$, let $\C_e$ be the set
$$\{c \in \WW^B_n : c \Vdash \Phi_{e}^{G^{(n-1)}} \mbox{ do not dominate } f_{n-1} \}$$

And let $\D_e$ be the set
$$\{c \in \WW^B_n : c \Vdash \Phi_{e}^{G^{(n)}} \mbox{ do not dominate } f_{n} \}$$

By \Cref{lem:witness-diagonalization}, $\C_e$ and $\D_e$ are dense for every~$e \in \NN$.
Let $\F$ be a sufficiently generic filter, so $\F$ intersect every $\C_e$ and $\D_e$. By \Cref{lem:core-question-find-extension} (which can be applied to $\WW_n^A$-conditions thanks to \Cref{lem:witness-compatible-core})  and \Cref{lem:witness-question-find-extension}, $\F$ will be $n$-generic. Thus, by \Cref{prop:core-forcing-imply-truth} and \Cref{prop:witness-forcing-imply-truth}, every property forced for the set $G_{\F}$ will hold. Moreover, $G_\F$ is an infinite subset of $B$ by \Cref{prop:core-forcing-generic-set}. Thus, we have found an infinite subset $H = G_{\F}$ of $B$ such that $f_{n-1}$ is $H^{(n-1)}$-hyperimmune and $f_n$ is $H^{(n)}$-hyperimmune.

\end{proof}

\section{Conservation theorems}\label{sect:conservation}

We now turn to the last main application of the previous notions of forcing: conservation theorems.
The goal of this section is to prove the following main theorem:

\begin{repmaintheorem}{thm:main-conservation}
Let $n \geq 1$. $\RCA_0 + \ISig_{n+1} + \Sub{\Sigma^0_{n+1}}$ is $\Pi_1^1$ conservative over $\RCA_0 + \ISig_{n+1}$.
\end{repmaintheorem}

The techniques will be very similar to the standard realm, but working with a formalized version of the notions of forcing in models of weak arithmetic. We start with a short introduction to the techniques of $\Pi^1_1$-conservation over~$\RCA_0 + \ISig_{n+1}$, then prove that the previous notions of forcing satisfy the necessary combinatorial features to preserve induction, and finally prove \Cref{thm:main-conservation}.

\subsection{Conservation over~$\RCA_0 + \ISig_{n+1}$}

The standard model-theoretic approach for proving that a theory $T_2$ is $\Gamma$-conservative over a theory $T_1$ is the following:

\begin{itemize}
    \item First, assume by contrapositive that $T_1$ does not prove some formula $\phi \in \Gamma$.
    \item Using Gödel completeness theorem, there exists some model $\M$ of $T_1 + \neg \phi$.
    \item From this model, construct another model $\N$, this time of $T_2 + \neg \phi$.
    \item Therefore, $T_2$ does not prove $\phi$ either.
\end{itemize} 

The heart of these proofs lies in the construction of the model of $T_2 + \neg \phi$. In this article, $\Gamma$ will be the set of all the $\Pi_1^1$-formulas. This allows to easily ensure that the model constructed will be a model of $\neg \phi$: this came free if the model constructed is an $\omega$-extension of the initial model.

\begin{definition}
A model $\N = (N, T, +^\N, \times^\N, <^\N,0^{\N},1^\N)$ is an \emph{$\omega$-extension}\footnote{The terminology \qt{$\omega$-extension} should not be confused with the notion of \qt{$\omega$-model}. Indeed, if $\N$ is an $\omega$-extension of a non-standard model~$\M$, then neither $\N$ nor $\M$ are $\omega$-models.} of a model $\M = (M, S, +^\M, \times^\M, <^\M,0^\M,1^\M)$ if $N = M$, $+^\N = +^\M$, $\times^\N = \times^\M$, $<^\N = <^\M$, $0^\N = 0^\M$, $1^\N = 1^\M$ and $T \supseteq S$.
\end{definition}

In other words, a model~$\N$ is an $\omega$-extension of~$\M$ if $\N$ is obtained from~$\M$ by adding new sets to the second-order part, and leaving the first-order part unchanged. The following lemma states that $\Sigma^1_1$-formulas are left unchanged by considering $\omega$-extensions.

\begin{lemma}\label{lem:omega-extension-absolute}
    Let $\phi$ be a $\Pi_1^1$-formula and $\M$ of model of $\neg \phi$. If $\N$ is an $\omega$-extension of $\M$, then $\N \models \neg \phi$.      
\end{lemma}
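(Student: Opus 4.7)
The plan is to unfold the definition of $\Pi^1_1$, use the witness provided by $\M \models \neg \phi$, and transport it to~$\N$ via absoluteness of arithmetic formulas.

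First I would write $\phi$ in prenex normal form as $\phi \equiv \forall X\, \psi(X)$ where $\psi$ is an arithmetic formula (possibly with free first-order variables, which we may assume are closed by universal quantifiers since $\Pi^1_1$ allows arbitrary first-order prefixes within $\psi$). Then $\neg \phi$ is logically equivalent to $\exists X\, \neg \psi(X)$. Since $\M \models \neg \phi$, by the semantics of the existential second-order quantifier in~$\M = (M, S, \dots)$, there is some set $X \in S$ such that $\M \models \neg \psi(X)$.

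Next I would invoke absoluteness of arithmetic formulas between $\M$ and its $\omega$-extension~$\N = (M, T, \dots)$. By definition of $\omega$-extension, $\M$ and $\N$ share the same first-order domain~$M$ and the same interpretations of~$+, \times, <, 0, 1$, and $S \subseteq T$. In particular, $X \in T$, so $X$ is available as a second-order parameter in~$\N$. An easy induction on the complexity of the arithmetic formula $\neg \psi(X)$ (the only quantifiers being first-order quantifiers over~$M$, and the atomic predicates depending only on $+, \times, <$ and membership in~$X$) shows that $\M \models \neg \psi(X)$ if and only if $\N \models \neg \psi(X)$.

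Therefore $\N \models \neg \psi(X)$ for this particular $X \in T$, so $\N \models \exists X\, \neg \psi(X)$, i.e., $\N \models \neg \phi$, as desired. There is no real obstacle: the argument is purely a matter of unwinding definitions and noting that the witness set in~$S$ remains a valid witness in~$T$ since the arithmetic formula $\psi$ is evaluated identically in both structures.
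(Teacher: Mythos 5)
Your proof is correct and matches the paper's argument essentially line for line: both unwind $\neg\phi$ to an existential second-order statement, pick a witness $X \in S$, observe that $S \subseteq T$ so $X$ survives into $\N$, and invoke absoluteness of arithmetic formulas between a structure and its $\omega$-extension. The only difference is that you flag the absoluteness step as an induction on formula complexity, which the paper leaves implicit; that is a welcome bit of extra care but does not change the substance.
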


\begin{proof}
Write $\phi = (\forall X)\theta(X)$ for $\theta(X)$ an arithmetical formula and write $\M = (M,S)$. By assumption, $\M \models (\exists X)\neg \theta(X)$, thus there exists some set $A \in S$ such that $\M \models \neg \theta(A)$.

An $\omega$-extension $\N$ of $\M$ can be written as $\N = (M,\hat S)$ with $\hat S \supseteq S$, hence $A \in \hat S$. As $\theta(A)$ is an arithmetical formula, its truth value only depends on the first order part of the model, therefore, we also have $\N \models \neg \theta(A)$ and $\N \models \neg (\forall X) \theta(X)$.
\end{proof}

When the theory $T_1$ is included in the theory $T_2$, we say that $T_2$ is a \emph{conservative extension} of $T_1$. In our case, the theory $T_1$ will be $\RCA_0 + \ISig_n$ and the theory $T_2$ will be equal to $T_1 + \Psf$ for some $\Pi_2^1$-problem~$\Psf$. Having to preserve a $\Pi_2^1$ problem allows us to further break down the construction step.

\begin{proposition}\label{prop:pi12-problem-conservation}
    Let $\Psf$ be a $\Pi_2^1$-problem and $T$ be a theory composed solely of $\Pi_2^1$ axioms. 
    
    Assume that for every countable model $\M = (M,S)$ of $T$ and every instance $X \in S$ of $\Psf$, there exists some countable $\omega$-extension $\M'$ of $\M$ containing a solution to $X$ and such that $\M' \models T$. Then there exists a countable $\omega$-extension $\N$ of $\M$ such that $\N \models T + \Psf$.
\end{proposition}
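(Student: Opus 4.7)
The plan is to iterate the single-step hypothesis through a dovetailing construction, producing a countable $\omega$-chain of models of $T$ whose union contains a solution to every instance of $\Psf$ that ever appears. Fix a computable bijection $\langle \cdot, \cdot\rangle : \NN^2 \to \NN$ and build a chain $\M = \M_0 \subseteq \M_1 \subseteq \dots$ of countable $\omega$-extensions of $\M$, each satisfying $T$, as follows. Write $\M_s = (M, S_s)$ and at each stage fix an enumeration $S_s = \{Z^s_0, Z^s_1, \dots\}$. At stage $s = \langle i, k\rangle$ with $i \leq s$, consider the set $X = Z^i_k \in S_i \subseteq S_s$; if $X$ is an instance of $\Psf$ in $\M_s$, apply the hypothesis to $\M_s$ and $X$ to obtain a countable $\omega$-extension $\M_{s+1}$ of $\M_s$ with $\M_{s+1} \models T$ and containing a solution to $X$. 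Otherwise set $\M_{s+1} = \M_s$.

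Define $\N = (M, \bigcup_{s \in \NN} S_s)$. Then $\N$ is a countable $\omega$-extension of $\M$, as a countable union of countable sets is countable. It remains to verify that $\N \models \Psf$ and $\N \models T$. For $\N \models \Psf$, any set in the second-order part of $\N$ belongs to some $S_i$ and is thus enumerated as some $Z^i_k$; at stage $s = \langle i, k \rangle$ (which satisfies $i \leq s$), if that set is an instance of $\Psf$ in $\M_s$ then by construction a solution to it is placed in $S_{s+1} \subseteq \bigcup_t S_t$. Being an instance is arithmetic, so it is absolute between $\M_s$ and $\N$ by \Cref{lem:omega-extension-absolute}; hence every instance of $\Psf$ in $\N$ already counts as an instance at the relevant stage and so receives a solution.

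For $\N \models T$, any axiom of $T$ has the form $\forall X\, \exists Y\, \theta(X, Y)$ with $\theta$ arithmetic. Given $X$ in the second-order part of $\N$, pick $s$ with $X \in S_s$. Since $\M_s \models T$, there is $Y \in S_s$ such that $\M_s \models \theta(X, Y)$; because $\theta$ is arithmetic and the first-order part of the model is unchanged when passing to $\N$, its truth value is absolute between $\M_s$ and $\N$ (exactly as in \Cref{lem:omega-extension-absolute}). Hence $\N \models \theta(X, Y)$ with $Y$ in the second-order part of $\N$, yielding $\N \models \forall X\, \exists Y\, \theta(X, Y)$.

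There is essentially no obstacle in this argument: it is a routine dovetailing, and the only conceptual point is the arithmetical absoluteness of $\theta$ under $\omega$-extensions. The $\Pi^1_2$ restriction on $T$ enters in exactly one place, namely to ensure that $T$ is preserved under unions of $\omega$-chains of its models; this is what lets the construction return to a model of $T$ after infinitely many steps.
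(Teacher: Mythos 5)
Your proof is correct and is essentially the same as the paper's: both build a countable $\omega$-chain $\M = \M_0 \subseteq \M_1 \subseteq \dots$ of models of $T$ by iterating the single-step hypothesis, take the union $\N = (M, \bigcup_s S_s)$, and verify $\N \models \Psf$ and $\N \models T$ via arithmetical absoluteness under $\omega$-extensions. The paper simply lists the required closure properties of the chain and says it is obtained recursively; you spell out the dovetailing bookkeeping with a pairing function and also explicitly note the absoluteness of ``being an instance,'' both of which the paper leaves implicit — no substantive difference.
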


\begin{proof}
The model $\N$ will be defined as $(M,\bigcup_{n \in \omega} S_n)$ for $\M_0 = (M,S_0) \subseteq \M_1 = (M,S_1) \subseteq \dots$ a sequence of $\omega$-extensions obtained recursively using the assumption and having the following properties:
\begin{enumerate}
    \item $\M_0 = \M$ and  $\M_k \models T$ for every $k \in \omega$. 
    \item $\M_k$ is countable for every $k \in \omega$. 
    \item For every instance $X$ of $\Psf$ contained in some $\M_k$, there exists some $\ell \in \omega$ such that $\M_{\ell}$ contains a solution to $X$.
\end{enumerate}
 

\emph{Claim 1: $\N \models \Psf$.} Let $X$ be an instance of $\Psf$ belonging to $\N$. By definition of $\N$, $X$ belongs to some $\M_k$, thus there exists some index $\ell$ such that $\M_{\ell}$, contains a solution to $X$, therefore, $\N$ also contains it. Hence, $\N \models \Psf$.

\emph{Claim 2: $\N \models T$.} Let $\phi \in T$ be a $\Pi_2^1$ formula of the form $(\forall X)(\exists Y)\theta(X,Y)$ with $\theta(X,Y)$ an arithmetical formula. Let $A \in \M$ be a set, $A$ belongs to some $S_k$ for $k \in \omega$. Since $\M_k \models T$, there exists some $B \in S_k$ such that $\M_k \models \theta(A,B)$, thus $\N \models \theta(A,B)$ as $\theta(X,Y)$ is an arithmetical formula and $\N$ is an $\omega$-extension of $\M_k$. The set $A$ being arbitrary, $\N \models T$.
\end{proof}

The theory $\RCA_0 + \ISig_n$ is composed of $\Pi_2^1$ axioms, so the result may be applied. The assumption that the initial model has to be countable is not a problem thanks to the downward L\"owenheim-Skolem theorem. 

So, the problem of showing that a $\Pi_2^1$-problem $\Psf$ is $\Pi_1^1$-conservative over the base theory $\RCA_0 + \ISig_n$ has been reduced to showing that for any countable model $\M = (M,S)$ of $\RCA_0 + \ISig_n$ and any instance $X \in S$ of $\Psf$, there is a countable $\omega$-extension $\M' = (M,S')$ of $\M$ such that $\M' \models \RCA_0 + \ISig_n$ and $S'$ contains a solution $Y$ to $X$. \\

The only axioms of $\RCA_0 + \ISig_n$ stating the existence of sets are the one from the $\Delta^0_1$-comprehension scheme, so when adding a solution $Y$ to the structure $\M$, the only other subsets that need to be added are the one computable using $Y$ and the elements of $\M$.

\begin{definition}
    For $\M = (M,S)$ a structure and $G \subseteq M$, write $\M \cup \{G\}$ for the $\omega$-extension of~$\M$ whose second-order part is $S \cup \{G\}$, and $\M[G]$ for the $\omega$-extension of $\M$ containing all the sets $\Delta_1^0$-definable using parameters in $\M \cup \{G\}$.
\end{definition}

For $Y \subseteq M$ a solution to the instance, the structure $\M \cup \{G\}$ does not necessarily satisfy the $\Delta^0_1$-comprehension scheme, as it may not contain all the $Y$-computable sets. This is not the case for the structure $\M[Y]$, as shown by the following lemma. 

\begin{lemma}[Folklore]\label{lem:formula-translation}
    Let $\M = (M,S) \models \RCA_0$ and $G \subseteq M$. Then $\M[G]$ satisfies the $\Delta^0_1$-comprehension scheme.
\end{lemma}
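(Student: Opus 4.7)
The plan is to take an arbitrary $\Delta^0_1$-definition over $\M[G]$, witnessed by a $\Sigma^0_1$ formula $\varphi(x)$ and a $\Pi^0_1$ formula $\psi(x)$ with parameters $\vec{A} \in \M[G]$ satisfying $\forall x(\varphi(x) \leftrightarrow \psi(x))$, and ``unfold'' each parameter $A_i$ into its $\Delta^0_1$-definition over $\M \cup \{G\}$ so that the resulting formulas use only parameters in $S \cup \{G\}$. The desired set will then be $\Delta^0_1$-definable over $\M \cup \{G\}$ and will therefore belong to $\M[G]$ by its very definition.

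More precisely, the first step is to observe that by definition of $\M[G]$, each parameter $A_i$ comes equipped with a $\Sigma^0_1$ formula $\alpha_i(y, \vec{C_i}, G)$ and a $\Pi^0_1$ formula $\beta_i(y, \vec{C_i}, G)$, with $\vec{C_i} \in S$, such that $A_i = \{y : \alpha_i(y)\} = \{y : \beta_i(y)\}$ in $\M[G]$. Substituting these definitions for each atomic subformula of the form $t \in A_i$ or $t \notin A_i$ appearing in $\varphi(x)$ and in $\psi(x)$, and using the standard closure properties of the arithmetic hierarchy under conjunction, disjunction, bounded quantification, and substitution of $\Delta^0_1$ predicates, I would obtain an equivalent $\Sigma^0_1$ formula $\varphi'(x)$ and an equivalent $\Pi^0_1$ formula $\psi'(x)$, both having parameters only in $S \cup \{G\}$.

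Since $\M[G]$ is an $\omega$-extension of $\M \cup \{G\}$ and the truth of arithmetic formulas depends only on the first-order part and on the explicit parameters, the equivalence $\forall x(\varphi'(x) \leftrightarrow \psi'(x))$ holds already in $\M \cup \{G\}$. Hence the predicate $\{x : \varphi'(x)\}$ is $\Delta^0_1$-definable with parameters in $\M \cup \{G\}$ and therefore belongs to $\M[G]$ by definition. This set is precisely the comprehension set required, completing the verification of $\Delta^0_1$-comprehension.

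The main obstacle is the careful formalization of the unfolding step: one must verify, by induction on the structure of $\varphi$ and $\psi$, that replacing an atomic predicate by an equivalent $\Delta^0_1$-definable one preserves the $\Sigma^0_1$ (respectively $\Pi^0_1$) complexity. The subtle point is that positive occurrences of $t \in A_i$ inside a $\Sigma^0_1$ formula should be replaced by the $\Sigma^0_1$ form $\alpha_i(t)$, while negative occurrences (i.e.\ $t \notin A_i$) must use the $\Pi^0_1$ form, which becomes $\Sigma^0_1$ after negation; the symmetric bookkeeping applies to $\psi$. Once this routine syntactic manipulation is done, no arithmetic or computability-theoretic content is needed and the conclusion follows directly from the definition of $\M[G]$.
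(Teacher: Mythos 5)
Your proposal is correct and follows essentially the same route as the paper's proof sketch: unfold each new parameter via its $\Delta^0_1(\M \cup \{G\})$-definition, substitute the $\Sigma^0_1$ form at positive occurrences and the $\Pi^0_1$ form at negative occurrences so that the translated formula stays $\Sigma^0_1$ (resp.\ $\Pi^0_1$), and conclude that the defined set is already $\Delta^0_1(\M \cup \{G\})$ and hence in $\M[G]$ by definition. The extra bookkeeping you describe about positive versus negative occurrences is exactly the point the paper leaves implicit in its ``either replaced by $\exists y\,\theta$ or $\forall y\,\zeta$'' step.
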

\begin{proof}[Proof sketch]
It suffices to prove that every $\Sigma^0_1(\M[G])$-formula can be translated into a $\Sigma^0_1(\M \cup \{G\})$-formula. Then, any $\Delta^0_1(\M[G])$ predicate is $\Delta^0_1(\M \cup \{G\})$, hence belongs to~$\M[G]$.

Let $\phi(x)$ be a $\Sigma^0_1(\M[G])$-formula. Let $X$ be a second-order parameter in~$\phi$ belonging to~$\M[G]$. By definition of $\M[G]$, $X$ is $\Delta^0_1(\M \cup \{G\})$-definable, so there are two $\Delta^0_0(\M \cup \{G\})$-formulas $\theta(x, y)$ and $\zeta(x, y)$ such that $\M \cup \{G\} \models \forall x(\exists y\theta(x,y) \leftrightarrow \forall y\zeta(x, y))$. Every occurrence of the atomic formula~$x \in X$ in $\phi$ can be either replaced by $\exists y \theta(x, y)$ or $\forall y \zeta(x, y)$, so that the resulting formula $\hat\phi$ is a again $\Sigma^0_1$. One can iterate the operation for every second-order parameter in $\M[G] \setminus \M \cup \{G\}$ appearing in~$\phi$.
\end{proof}

So, the $\omega$-extension $\M'$ can always be assumed to be of the form $\M[Y]$ for some solution $Y$. \\

The only thing that remains is to find $Y$ such that $\M[Y]$ satisfies $\ISig_n$. To do so, some care is needed, as choosing an inadequate $Y \subseteq M$ may break induction: for example, if $Y$ computes some $\Sigma_1^0$-cut of $M$, no $\omega$-extension of $\M$ can contain $Y$ and be a model of $\ISig_1$. The following lemma shows that the problem of preserving induction for $\M[Y]$ can be reduced to the problem of preserving induction for $\M \cup \{Y\}$.

\begin{lemma}[Folklore]\label{lem:induction-to-generated}
    Let $\M = (M,S) \models \RCA_0 + \ISig_n$ and $G \subseteq M$. If $\M \cup \{G\} \models \ISig_n$, then $\M[G] \models \ISig_n$.
\end{lemma}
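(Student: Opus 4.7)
The plan is to reduce $\Sigma_n$-induction in $\M[G]$ to $\Sigma_n$-induction in $\M \cup \{G\}$ via a formula translation, mirroring the proof of \Cref{lem:formula-translation} but keeping track of arithmetic complexity. Let $\varphi(x)$ be a $\Sigma_n$-formula with parameters in $\M[G]$. The key claim is that there is a $\Sigma_n$-formula $\hat\varphi(x)$ with parameters only in $\M \cup \{G\}$ such that
$$
\M[G] \models \forall x(\varphi(x) \leftrightarrow \hat\varphi(x)).
$$
Given this, the proof finishes as follows: since $\M \cup \{G\} \models \ISig_n$, induction holds in $\M \cup \{G\}$ for $\hat\varphi$. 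Arithmetic formulas with parameters in $\M \cup \{G\}$ have the same truth values in $\M \cup \{G\}$ as in its $\omega$-extension $\M[G]$, so induction for $\hat\varphi$ also holds in~$\M[G]$, which by the equivalence above transfers to induction for~$\varphi$.

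To produce $\hat\varphi$, I would first reduce $\varphi$ to prenex normal form $\exists x_1 \forall x_2 \dots Q x_n\ \psi(x_1,\dots,x_n, x)$, where $\psi$ is $\Sigma^0_0$ with parameters in~$\M[G]$. Each second-order parameter $X \in \M[G]$ is by construction $\Delta^0_1(\M \cup \{G\})$: there exist $\Sigma^0_0(\M \cup \{G\})$-formulas $\theta_X(u,y)$ and $\zeta_X(u,y)$ such that $\M \cup \{G\} \models \forall u(\exists y\ \theta_X(u,y) \leftrightarrow \forall y\ \zeta_X(u,y))$, and both sides define membership in~$X$. As in \Cref{lem:formula-translation}, I replace every occurrence of an atom $t \in X$ inside $\psi$: positive occurrences get replaced by $\exists y\ \theta_X(t,y)$ and negative occurrences by $\forall y\ \zeta_X(t,y)$ (flipping sides under negations). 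After doing this for all second-order parameters from $\M[G] \setminus (\M \cup \{G\})$, the resulting matrix is a Boolean combination of $\Sigma^0_1$ and $\Pi^0_1$ predicates over $\M \cup \{G\}$, hence in particular $\Sigma^0_2$.

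The remaining technical step, and the main obstacle, is to re-prenex the modified formula without exceeding $\Sigma_n$ complexity. Since the bounded quantifiers of~$\psi$ commute (up to logical equivalence over the Robinson axioms) with unbounded existentials and universals, I can pull the newly introduced unbounded quantifiers outwards through the bounded quantifier structure. This turns the matrix back into a $\Sigma^0_0$ formula with parameters in $\M \cup \{G\}$, while prepending an alternation of the form $\exists \forall$ to the outside. For $n \geq 1$, these extra quantifiers can be absorbed into the innermost existential and universal blocks of the original $\Sigma_n$ prefix, so the resulting formula $\hat\varphi(x)$ is again $\Sigma_n$ with parameters in $\M \cup \{G\}$. (If $\Sigma^0_0$ comprehension of the bounded quantifiers around an atom needs attention, it suffices to use that $\RCA_0$ proves bounded collection and thus makes these prenex manipulations valid.) Logical equivalence with $\varphi$ in $\M[G]$ is immediate from the choice of $\theta_X$ and $\zeta_X$. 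This yields the claim, and the proof concludes as outlined above.
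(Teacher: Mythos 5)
Your strategy matches the paper's proof sketch: translate $\varphi(x)$ into a $\Sigma^0_n$-formula $\hat\varphi(x)$ with parameters in $\M \cup \{G\}$, invoke $\ISig_n$ in $\M \cup \{G\}$, and transfer along the $\omega$-extension. The gap is in the last prenexing step. After replacing each atom $t \in X$ in the $\Sigma^0_0$-matrix $\psi$ and pulling the newly introduced unbounded quantifiers out through $\psi$'s bounded quantifiers (which requires $\Sigma^0_0$-collection, available since $\M\cup\{G\}\models\ISig_1$, but \emph{not} provable from the Robinson axioms alone as you first assert; also you wrote ``comprehension'' where you mean ``collection''), the translated matrix, with your choice of replacement, is $\Sigma^0_1$ over $\M \cup \{G\}$ --- not $\Sigma^0_2$. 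But prepending this existential block and absorbing it into the tail of the $\Sigma^0_n$ prefix only preserves $\Sigma^0_n$ when the innermost quantifier $Q x_n$ is $\exists$: if $Q x_n = \forall$, then $\forall x_n\,\exists\vec u\,\psi'$ is $\Pi^0_2$, so $\hat\varphi$ ends up $\Sigma^0_{n+1}$; prepending an $\exists\forall$ block, as your text actually proposes, is worse still.

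The missing observation is that the translated matrix is in fact $\Delta^0_1$ over $\M\cup\{G\}$: running the same replacement with the $\Pi^0_1$ side of the $\Delta^0_1$-definition (replace positive occurrences of $t\in X$ by $\forall y\,\zeta_X(t,y)$ and dualize under negations, so that negated atoms become $\forall y\,\neg\theta_X(t,y)$ in NNF) yields a $\Pi^0_1$ form. One then chooses the form matching the sign of $Q x_n$ --- the $\Sigma^0_1$ form if $Q=\exists$, the $\Pi^0_1$ form if $Q=\forall$ --- so the new quantifiers merge into the innermost block and $\hat\varphi(x)$ stays $\Sigma^0_n$. With this correction your argument is complete and is the same route the paper sketches.
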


\begin{proof}[Proof sketch]
Let $\phi(x)$ be a $\Sigma_n^0$ formula with set parameters from $\M[G]$. A similar proof to that of \Cref{lem:formula-translation} yields that $\phi(x)$ is equivalent in $\M$ to a formula $\psi(x)$ with parameters from $\M \cup \{G\}$.
Then, as $\M \cup \{G\} \models \ISig_n$, $\psi(x)$ will satisfy that 
$$\psi(0) \wedge (\forall x \psi(x) \to \psi(x+1)) \to (\forall y \psi(y))$$
thus this is also the case for $\phi(x)$ and $\M[G] \models \ISig_n$.
\end{proof}

Using \Cref{thm:abstract-preservation-isig1}, finding such a $Y$ can be done with a $\Sigma^0_n$-preserving $(\Sigma^0_n,\Pi^0_n)$-merging forcing question. \\

Thanks to the following theorem amalgamation theorem of Yokoyama~\cite{yokoyama2009conservativity}, the proofs that some $\Pi_2^1$-theorem $\Psf$ is a $\Pi^1_1$-conservative extensions of $\RCA_0 + \ISig_n$ are quite modular and can easily be combined.

\begin{theorem}[\cite{yokoyama2010conservativity}]\label{thm:amalgamation-pi11}
Let~$T_0, T_1, T_2$ be $\Pi^1_2$ theories such that $T_0 \supseteq \RCA_0$ and $T_1$ and $T_2$ are both $\Pi^1_1$-conservative extensions of~$T_0$. Then $T_1 + T_2$ is a $\Pi^1_1$-conservative extension of~$T_0$.
\end{theorem}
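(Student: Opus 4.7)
The plan is to proceed by contrapositive and build a model of $T_1 + T_2 + \neg\phi$ via iterated $\omega$-extensions. Suppose $T_1 + T_2 \vdash \phi$ for some $\Pi^1_1$ sentence $\phi$ and, for contradiction, that $T_0 \not\vdash \phi$. By G\"odel completeness there is a countable model $\M_0 \models T_0 + \neg \phi$. The goal is to produce a countable $\omega$-extension of $\M_0$ satisfying $T_1 + T_2$; combined with \Cref{lem:omega-extension-absolute} applied to $\neg \phi \in \Sigma^1_1$, this yields a model of $T_1 + T_2 + \neg \phi$, contradicting $T_1 + T_2 \vdash \phi$.

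The technical core is the standard model-theoretic characterization of $\Pi^1_1$-conservativity, implicit in Yokoyama~\cite{yokoyama2010conservativity} and in the setup of~\Cref{prop:pi12-problem-conservation}: \emph{for $T_0 \supseteq \RCA_0$ and $T \supseteq T_0$ both $\Pi^1_2$-axiomatizable, $T$ is $\Pi^1_1$-conservative over $T_0$ if and only if every countable model of $T_0$ admits a countable $\omega$-extension modeling $T$.} The easy direction ($\Leftarrow$) is immediate from~\Cref{lem:omega-extension-absolute}: any $\Pi^1_1$ consequence of $T$ is reflected down to any countable $\M \models T_0$ through an $\omega$-extension $\M' \models T$. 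The hard direction ($\Rightarrow$) is a Henkin-style construction in the language augmented by constants for every first-order element and every set of the given countable $\M$, combined with an omitting-types step forbidding new first-order elements; this ensures the resulting model is an $\omega$-extension of $\M$ rather than a mere elementary extension. Consistency of the Henkin theory at each stage follows from $\Pi^1_1$-conservativity, since any obstruction would translate into a $\Pi^1_1$ sentence provable in $T$ but failing in $\M$.

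Granted this characterization, I would recursively build a chain of countable $\omega$-extensions $\M_0, \M_1, \M_2, \dots$ alternating between $T_1$ and $T_2$: each $\M_{2k+1}$ is chosen as a countable $\omega$-extension of $\M_{2k}$ satisfying $T_1$, and each $\M_{2k+2}$ as a countable $\omega$-extension of $\M_{2k+1}$ satisfying $T_2$. The inductive step is legal because each $\M_k$ models $T_0$ (since $T_1, T_2 \supseteq T_0$) and both $T_1$ and $T_2$ are $\Pi^1_1$-conservative over $T_0$. Let $\M_\omega$ denote the structure with the same first-order part as $\M_0$ and with second-order part $\bigcup_k S_{\M_k}$. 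For each $\Pi^1_2$ axiom $\forall X \exists Y\, \theta(X,Y)$ of $T_1$ and each $X$ in $\M_\omega$, pick the least $k$ with $X \in \M_{2k+1}$; a witness $Y \in \M_{2k+1} \subseteq \M_\omega$ exists because $\M_{2k+1} \models T_1$, and the truth of the arithmetical formula $\theta(X,Y)$ transfers to $\M_\omega$ since the first-order part is constant along the chain. Hence $\M_\omega \models T_1$, and symmetrically $\M_\omega \models T_2$. Finally $\M_\omega \models \neg \phi$ by~\Cref{lem:omega-extension-absolute}, completing the contradiction.

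The main obstacle is the forward direction of the characterization lemma: na\"ive compactness yields only elementary extensions, whereas one needs the stronger property that the first-order part is preserved exactly. The hypothesis $T_0 \supseteq \RCA_0$ is essential at this point, as the $\Delta^0_1$-comprehension and $\Sigma^0_1$-induction schemes are needed to carry out the Henkin--omitting-types construction coherently with the second-order axiomatization of $T$.
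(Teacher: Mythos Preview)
The paper does not prove this theorem; it is stated with a citation to Yokoyama~\cite{yokoyama2010conservativity} and used as a black box. Your proposal reconstructs the standard argument correctly: the contrapositive setup, the alternating chain of countable $\omega$-extensions, and the verification that the union satisfies each $\Pi^1_2$ axiom all go through as you describe.

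The one place where your sketch is thin is the forward direction of the characterization lemma (every countable model of $T_0$ admits a countable $\omega$-extension to a model of $T$, assuming $\Pi^1_1$-conservativity). You correctly flag this as the main obstacle and gesture at a Henkin construction with an omitting-types step, but the details deserve a bit more care. The standard route is to work in the language expanded by constants for all elements and sets of $\M$, take $T$ together with the full elementary diagram of $\M$, and show that the non-isolated type $\{x \neq c_m : m \in M\}$ can be omitted. Consistency of each finite stage reduces, via the $\Pi^1_1$-conservativity hypothesis, to the truth of a $\Pi^1_1$ sentence in $\M$; non-isolation of the type uses that $\M \models \RCA_0$ (in particular enough induction to ensure that any formula isolating a ``new'' element would already be witnessed in $M$). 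This is exactly the content of Yokoyama's lemma, and once granted, your chain argument is complete.
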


\subsection{Merging forcing questions}

As mentioned in \Cref{sect:iterated-jump-control}, preservation of $\Sigma^0_n$-induction is closely related to the existence of a $\Sigma^0_n$-preserving, $(\Sigma^0_n, \Pi^0_n)$-merging forcing question. A forcing question is \emph{left $\Sigma^0_n$-extremal} if for every condition~$c$ and every $\Sigma^0_n$-formula~$\varphi(G)$, $c \qvdash \varphi(G)$ if and only if $c$ forces $\varphi(G)$. It is \emph{right $\Sigma^0_n$-extremal} if for every condition~$c$ and every $\Sigma^0_n$-formula~$\varphi(G)$, $c \qvdash \varphi(G)$ if and only if $c$ does not force $\neg \varphi(G)$.
Any left or right $\Sigma^0_n$-extremal forcing question is $(\Sigma^0_n, \Pi^0_n)$-merging. 

The goal being to prove that $\RCA_0 + \ISig_{n+1} + \Sub{\Sigma^0_{n+1}}$ is $\Pi^1_1$-conservative over~$\RCA_0 + \ISig_{n+1}$, one needs to consider the merging properties of the forcing question for $\Sigma^0_{n+1}$-formulas for the main and witness forcing.
The forcing question for $\Sigma^0_{n+1}$-formulas in the $\MM^A_n$-forcing is right $\Sigma^0_{n+1}$-extremal, hence is $(\Sigma^0_{n+1}, \Pi^0_{n+1})$-merging. On the other hand, the forcing question for $\Sigma^0_{n+1}$-formulas in the $\WW^A_n$-forcing is not $\Sigma^0_{n+1}$-extremal. The following lemma shows that it is however somehow $(\Sigma^0_{n+1}, \Pi^0_{n+1})$-merging, by considering the appropriate witnesses.

\begin{definition}
    Given a class~$\A \subseteq \cs$, let~$\L_2(\A)$ be the class 
$$\{X : (\forall X_0 \cup X_1 \supseteq X)(\exists i < 2) X_i \in \A\}.$$
\end{definition}

Note that if the class $\A$ is open, then so is $\L_2(\A)$.
Moreover, there is a computable function $g : \NN \to \NN$ such that $\U_{g(e)}^X = \L_2(\U_e^X)$ for every index~$e \in \NN$ and every oracle~$X$. Also note that if $\A$ is large, then so is $\L_2(\A)$. Indeed, $\L(\A) \subseteq \L_2(\A)$.

\begin{lemma}\label[lemma]{lem:witness-question-pair}
Let $c = (\sigma, X_{n-1},X_{n})$ be a $\WW^A_n$-condition and let $\U$ be a witness for $c$.
Let $\varphi(G), \psi(G)$ be two $\Sigma^0_{n+1}$ formulas such that
$c \qvdash^{\L_2(\U)} \varphi(G)$ and $c \nqvdash^{\U} \psi(G)$. Then there exists an extension~$d \leq c$
such that $d \Vdash \varphi(G)$ and $d \Vdash \neg \psi(G)$.
\end{lemma}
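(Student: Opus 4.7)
The plan is to write $\varphi(G) = (\exists x)\varphi'(G,x)$ and $\psi(G) = (\exists y)\psi'(G,y)$ with $\varphi', \psi'$ being $\Pi^0_n$ formulas, then assemble a common extension by mixing ingredients from the negative case of the $\U$-question and the positive case of the $\L_2(\U)$-question. The negative side, applied as in the proof of \Cref{lem:witness-question-find-extension}, produces a set $B^* \in \M_n$ with $\overline{B^*} \notin \U$ and such that $\sigma \cup \tau \qvdash \neg \psi'(G,y)$ for every $\tau \finsub B^* \cap X_n$ and every $y$. Any $\WW^A_n$-condition extending $c$ whose top reservoir lies inside $X_n \cap B^*$ and whose stem grows only inside that set will automatically force $\neg \psi$.

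Next, I observe that $\overline{A} \notin \U$, because $\overline{A} \subseteq \overline{X_n} \cup \overline{A} \notin \U$ and $\U$ is upward closed. Decomposing $\overline{A \cap B^*} = \overline{A} \cup \overline{B^*}$ as a two-cover, neither part lies in $\U$, so $\overline{A \cap B^*} \notin \L_2(\U)$. Applying the hypothesis $c \qvdash^{\L_2(\U)} \varphi$ to $B = A \cap B^*$ then yields a finite $\tau \subseteq A \cap B^* \cap X_n$ and some $x \in \NN$ with $\sigma \cup \tau \nqvdash \neg \varphi'(G,x)$. Since $\neg \varphi'(G,x)$ is $\Sigma^0_n$, the negative case of \Cref{lem:core-question-find-extension} applied to the $\PP^A_{n-1}$-condition $(\sigma \cup \tau, X_{n-1} \setminus \{0,\dots,\max\tau\})$ produces, \emph{without growing the stem}, a shrunk reservoir $Y_{n-1} \subseteq X_{n-1}$ such that $(\sigma \cup \tau, Y_{n-1}) \Vdash \varphi'(G,x)$ in $\PP^A_{n-1}$. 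Set $d = (\sigma \cup \tau,\, Y_{n-1},\, Y_{n-1} \cap X_n \cap B^*)$.

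The verification that $d$ works is routine: validity follows from partition regularity of $\langle \U_{C_{n-1}}^{\M_{n-1}} \rangle$ applied to the three-cover $\overline{Y_{n-1}} \cup \overline{X_n} \cup \overline{B^*}$ (each piece lies outside the class, respectively by $\M_{n-1}$-cohesiveness applied to $Y_{n-1}$, by $c$ being a $\WW^A_n$-condition, and because $\U \supseteq \langle \U_{C_{n-1}}^{\M_{n-1}} \rangle$); the relation $d \leq c$ uses precisely that $\tau \subseteq X_n$; $d \Vdash \varphi(G)$ is inherited from $(\sigma \cup \tau, Y_{n-1}) \Vdash \varphi'(G,x)$ through \Cref{lem:witness-compatible-core}; and $d \Vdash \neg \psi(G)$ because any $\tau' \subseteq Y_{n-1} \cap X_n \cap B^*$ yields $\tau \cup \tau' \finsub X_n \cap B^*$, on which the $B^*$ property applies to give $\sigma \cup \tau \cup \tau' \qvdash \neg \psi'(G,y)$ for every $y$.

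The main technical obstacle is the asymmetry of the two hypotheses: the positive side uses $\L_2(\U)$ while the negative side merely gives $\overline{B^*} \notin \U$, which is strictly weaker than $\overline{B^*} \notin \L_2(\U)$. The resolution is the combinatorial identity $\overline{A \cap B^*} \notin \L_2(\U)$, which exploits precisely the gap between $\U$ and $\L_2(\U)$ by furnishing the two-part cover $\overline{A}, \overline{B^*}$ that fails $\U$ piecewise---this is why the hypothesis needs $\L_2(\U)$ rather than $\U$ on the positive side. A secondary subtlety is confining the stem growth to $X_n$, which is handled by invoking the non-stem-growing negative branch of the core forcing lemma rather than its positive branch.
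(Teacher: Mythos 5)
Your proof is correct and mirrors the paper's: both extract the same set $B^*\in\M_n$ with $\overline{B^*}\notin\U$ from the negative branch, then exploit the observation $\overline{A}\cup\overline{B^*}\notin\L_2(\U)$ (neither part is in $\U$) to apply the $\L_2(\U)$-question and produce a stem $\tau$ inside $A\cap B^*\cap X_{n}$, after which the reservoir is shrunk via the core forcing lemma. The only packaging difference is that the paper first promotes $B^*$ to a condition $e=(\sigma,X_{n-1},X_n\cap B^*)$ forcing $\neg\psi$, proves the general claim $e\qvdash^\U\varphi$, and invokes \Cref{lem:witness-question-find-extension} as a black box, whereas you inline that positive branch with the specific instantiation $D=A$ and assemble the extension $d$ directly.
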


\begin{proof}
Since $c \nqvdash^{\U} \psi(G)$, the proof of \Cref{lem:witness-question-find-extension} gives us a set $B \in \M_n$ such that $\overline{B} \notin \U$ and such that $e = (\sigma,X_{n-1},X_n \cap B)$ is a valid condition extending $c$ and forcing $\neg \psi(G)$. \\

Write $\varphi(G) = (\exists x)\theta(G,x)$ for some $\Pi_n^0$ formula $\theta(G,x)$. We claim that $e \qvdash^{\U} \varphi(G)$: let $D \in 2^{\NN}$ be such that $\overline{D} \notin \U$. For every such $D$, $\overline{D} \cup \overline{B} \notin \L_2(\U)$ (otherwise, either $\overline{D}$ or $\overline{B}$ would be in $\U$), hence, as $c \qvdash^{\L_2(\U)} \varphi(G)$, there exist some finite $\tau \subseteq D \cap (X_n \cap B)$ and some $x \in \NN$ such that $\sigma \cup \tau \nqvdash \neg \theta(G,x)$. Therefore, $e \qvdash^{\U} \varphi(G)$ and by \Cref{lem:witness-question-find-extension}, there exists some extension $d \leq e$ forcing $\varphi(G)$.
\end{proof}

\subsection{Applications}

We are now ready to prove \Cref{thm:main-conservation}. As explained above, thanks to \Cref{prop:pi12-problem-conservation}, it is reduced to proving that any countable model of $\RCA_0 + \ISig_{n+1}$ can be $\omega$-extended into another model of~$\RCA_0 + \ISig_{n+1}$ containing a solution to a fixed instance of $\Sub{\Sigma^0_{n+1}}$.

\begin{proposition}\label[proposition]{prop:cohesive-class-conservation}
Let $n \geq 1$ and consider a countable model $\M = (M,S) \models \RCA_0 + \ISig_{n+1}$ topped by a set $Y \in S$. There exist some $M_0,M_1, \dots, M_n \subseteq M$ and some $C_0, \dots, C_{n-1} \subseteq M$ such that:
    \begin{itemize}
        \item $\M[M_i] \models \RCA_0 + \ISig_{n+1-i}$ for $i \leq n$ ;
        \item $M_i$ is a Scott code of $\M_i \models \WKL_0 + \ISig_{n+1-i}$ for $i \leq n$ ;
        \item $Y \in \M_0$ and $M_i' \oplus C_i \in \M_{i+1}$ for $i < n$ ;
        \item $\U_{C_i}^{\M_i}$ is $\M_i$-cohesive for $i < n$ ;
        \item $\U_{C_{i+1}}^{\M_{i+1}} \subseteq \langle \U_{C_i}^{\M_i} \rangle$ for $i < n-1$.
    \end{itemize}
\end{proposition}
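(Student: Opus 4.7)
The plan is to prove the proposition by induction on $i$, constructing $M_0, C_0, M_1, C_1, \dots, M_n$ in order as successive $\omega$-extensions of the ground model $\M$ while carefully tracking the induction budget. The essential tool is the formalized Hájek--Harrington theorem: $\WKL_0$ is $\Pi^1_1$-conservative over $\RCA_0 + \ISig_k$ for every $k \geq 1$; equivalently, every countable $\N \models \RCA_0 + \ISig_k$ admits an $\omega$-extension $\N[M] \models \ISig_k$ in which $M$ is a Scott code of a Scott ideal satisfying $\WKL_0$ and containing any prescribed set $X \in \N$. A second basic fact is that adjoining the Turing jump $X'$ of a set $X \in \N$ to an $\omega$-extension costs exactly one level of induction, since $\Sigma^0_k(X')$-formulas translate to $\Sigma^0_{k+1}(X)$-formulas; this requires $\BSig_2$, hence $\ISig_2$, in the base model.

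For the base case $i = 0$, apply the formalized low basis theorem to $\M$ with prescribed set $Y$ to obtain an $\omega$-extension containing a Scott code $M_0$ of a Scott ideal $\M_0 \ni Y$ with $\M_0 \models \WKL_0$ and $\M[M_0] \models \ISig_{n+1}$. The $\ISig_{n+1}$-induction transfers to the Scott ideal $\M_0$ itself, since any $\Sigma^0_k$-formula over $\M_0$ is a $\Sigma^0_k$-formula over $\M[M_0]$ with second-order parameters in $\M_0$.

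For the inductive step, suppose $M_0, C_0, \dots, M_i$ have been built inside a countable $\omega$-extension of $\M$ satisfying $\ISig_{n+1-i}$, with $i < n$. Since $n+1-i \geq 2$, adjoin $M_i'$ to obtain an $\omega$-extension satisfying $\ISig_{n-i}$. Apply the formalized low basis theorem with prescribed set $M_i'$ to extend further to an $\omega$-extension containing a Scott code $M_{i+1}$ of a Scott ideal $\M_{i+1} \ni M_i'$ with $\M_{i+1} \models \WKL_0$, preserving $\ISig_{n-i}$. Since $\M_{i+1} \models \WKL_0$, it contains a set $P$ of PA degree over $M_i'$. Apply the formal version of \Cref{lem:computation-m-cohesive} to compute $C_i \leq_T P \oplus D_i$ (with $D_i \leq_T M_i$ obtained, when $i \geq 1$, by translating via \Cref{rem:index-translation} the $(C_{i-1} \oplus M_{i-1}')'$-computable enumeration of \Cref{lem:computation-m-minimal} giving the $\M_{i-1}$-minimal refinement of $\U_{C_{i-1}}^{\M_{i-1}}$ into $\M_i$-indices; take $D_0 = \emptyset$) so that $\U_{C_i}^{\M_i}$ is $\M_i$-cohesive and, for $i \geq 1$, satisfies $\U_{C_i}^{\M_i} \subseteq \langle \U_{C_{i-1}}^{\M_{i-1}} \rangle$. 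Since $C_i \leq_T P \oplus M_i \leq_T M_{i+1}$, we obtain $M_i' \oplus C_i \in \M_{i+1}$ and $\M[M_{i+1}] \models \ISig_{n+1-(i+1)}$.

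The main obstacle is verifying that every combinatorial fact used in the construction---notably \Cref{lem:decreasing-sequence-large}, \Cref{lem:complexity-largeness}, \Cref{lem:cohesive-compatibility}, \Cref{lem:computation-m-cohesive}, and \Cref{lem:computation-m-minimal}---remains provable inside the models of $\ISig_{n+1-i}$ that appear at each stage. These statements reduce to $\Pi^0_2$-compactness and arithmetic comprehension relative to Scott codes, both available in $\RCA_0 + \ISig_2$; the adjunction of $M_i'$ occurs only for $i \leq n-1$, where $\ISig_{n+1-i} \geq \ISig_2$, and no further construction is demanded from $\M_n$ (which lives at $\ISig_1$). Finally, putting everything into a single countable $\omega$-extension of $\M$ yields the desired tower $M_0, \dots, M_n$ and $C_0, \dots, C_{n-1}$.
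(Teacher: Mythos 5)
Your proof is correct and takes essentially the same approach as the paper: both use the Hájek theorem (for levels $k \geq 2$) and Harrington's theorem (for $k = 1$) to build the Scott tower while tracking the induction budget, then construct the $C_i$'s by formalizing the $\M$-cohesive class construction. The only organizational difference is that you interleave the construction of the $M_i$'s and $C_i$'s level by level, whereas the paper builds the full Scott tower first and then invokes a formalized version of \Cref{lem:largeness-tower} for the $C_i$'s all at once — both are valid, and your version makes the induction budget bookkeeping slightly more explicit. One small imprecision: you say the combinatorial lemmas reduce to ``arithmetic comprehension relative to Scott codes, both available in $\RCA_0 + \ISig_2$,'' but arithmetic comprehension is not available there; what you mean (and what the argument actually needs) is bounded comprehension as provided by the induction scheme together with computability relative to the Scott codes, which is indeed sufficient.
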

\begin{proof}
By a formalization of \Cref{prop:pa-to-scott} in second-order arithmetic (see for example Fiori-Carones et al.~\cite[Lemma 3.2]{fiori2021isomorphism}), for every set~$X$, there is an infinite $\Delta^0_1(X)$-definable tree~$T(X) \subseteq 2^{<M}$ such that if~$M_0 \in [T(X)]$, then, $M_0$ is a Scott code of an ideal containing~$X$.

By H\'ajek~\cite{hajek1993interpretability},
there is a path~$M_0 \in [T(Y)]$ such that $\M[M_0] \models \RCA_0 + \ISig_{n+1}$. Let $\M_0$ be the Scott ideal coded by~$M_0$, since $\M[M_0] \models \ISig_{n+1}$, then $\M_0 \models \WKL_0 + \ISig_{n+1}$. 
Assume $M_i$ has been defined for $i < n$. Note that $\M[M_i'] \models \ISig_{n-i}$.
Again by H\'ajek~\cite{hajek1993interpretability} if $i < n-1$ and by Harrington (see \cite[Lemma 8.2]{cholak2001strength} if $i = n-1$, there is a set~$M_{i+1} \in [T(M_i')]$ coding a Scott ideal~$\M_{i+1}$
such that $\M[M_{i+1}] \models \ISig_{n-i}$. In particular, $\M_{i+1} \models \WKL_0 + \ISig_{n-i}$ and $M_i' \in \M_{i+1}$.

By a formalization of \Cref{lem:largeness-tower}, there are sets $C_0, \dots, C_{n_1}$ such that
$C_i \in \M_{i+1}$ and $\U_{C_i}^{\M_i}$ is $\M_i$-cohesive for every~$i < n$, and $\U_{C_{i+1}}^{\M_{i+1}} \subseteq \langle \U_{C_i}^{\M_i} \rangle$ for $i < n-1$.





\end{proof}

Given a condition~$c$ and a formula~$\varphi(G, x)$, we say that $c$ \emph{forces $\varphi(G, x)$ to satisfy induction} if either $c$ forces $\forall x \varphi(G, x)$, or $c$ forces $\neg \varphi(G, 0)$, or there is some~$a > 0$ such that $c$ forces $\neg \varphi(G, a)$ and forces $\varphi(G, a-1)$.

\begin{proposition}\label[proposition]{lem:sigman+1-conservation-step}
    Let $n > 0$ and consider a countable model $\M = (M,S) \models \RCA_0 + \ISig_{n+1}$ topped by a set $Y$, and let $A \subseteq M$ be $\Sigma_{n+1}^0$ in $\M$. Then there exists some infinite set $H \subseteq M$ such that $H \subseteq A$ or $H \subseteq \overline{A}$ and such that $\M[H] \models \RCA_0 + \ISig_{n+1}$.
\end{proposition}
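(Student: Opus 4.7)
The plan is to formalize the two notions of forcing from \Cref{sect:iterated-lowness}, namely main forcing $\MM^A_n$ and witness forcing $\WW^{\overline{A}}_n$, inside the countable model $\M$, and use them to construct a generic set $H$ whose adjunction to $\M$ preserves $\ISig_{n+1}$. First I would apply \Cref{prop:cohesive-class-conservation} to obtain Scott ideals $\M_0,\dots,\M_n \subseteq S$ with codes $M_0,\dots,M_n$ and sets $C_0,\dots,C_{n-1}$ forming a largeness tower in $\M$, with the crucial feature that $\M[M_i]\models\RCA_0+\ISig_{n+1-i}$ and $M_i' \oplus C_i \in \M_{i+1}$. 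This ensures that the formalized forcing questions have the expected definitional complexity inside $\M$: in particular, a $\Sigma^0_1(\M_n)$ relation is $\Sigma^0_{n+1}$ in $\M$.

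Next, I would split on whether $A \in \langle \U_{C_{n-1}}^{\M_{n-1}}\rangle$ (a $\Pi^0_1(\M_n)$, hence $\Pi^0_{n+1}(\M)$, condition). In Case~1 ($A$ belongs), I use the formalized $\MM^A_n$-forcing. Using countability of $\M$, I construct a descending sequence of conditions $c_0 \geq c_1 \geq \dots$ inside $\M$ meeting, for each $k \leq n+1$ and each $\Sigma^0_k$ formula $\varphi(x,G)$, the dense set $\D_\varphi$ of conditions forcing $\varphi(x,G)$ to satisfy induction in the sense of \Cref{thm:abstract-preservation-isig1}. Density of $\D_\varphi$ uses: (i) for $k\leq n$, the core forcing question of \Cref{lem:core-question-find-extension}, whose complexity (\Cref{lem:core-complexity-question}) fits inside $\M_k$; (ii) for $k=n+1$, the main forcing question of \Cref{lem:main-question-find-extension}, which by \Cref{lem:main-complexity-sigma0n+1-question} is $\Sigma^0_1(\M_n)$ hence $\Sigma^0_{n+1}$ in $\M$, and which is right $\Sigma^0_{n+1}$-extremal, hence trivially $(\Sigma^0_{n+1},\Pi^0_{n+1})$-merging. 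Applying $\ISig_{n+1}$ of $\M$ exactly as in the proof of \Cref{thm:abstract-preservation-isig1} yields the required merging extension.

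In Case~2 ($A \notin \langle \U_{C_{n-1}}^{\M_{n-1}}\rangle$), partition regularity gives $\overline{A} \in \langle \U_{C_{n-1}}^{\M_{n-1}}\rangle$, and since $A$ itself is $\Sigma^0_{n+1}$ with $A = \overline{\overline{A}} \notin \langle\cdot\rangle$, the formalized witness forcing $\WW^{\overline{A}}_n$ applies. The construction proceeds as in Case~1, but for the $\Sigma^0_{n+1}$ induction step I argue as follows: given a current condition $c$, I choose a witness $\U$ for $c$ via a formalization of \Cref{lem:witness-overapproximate}, form the set $A_\varphi=\{x\in M : c \qvdash^\U \varphi(x,G)\}$, which is $\Sigma^0_1(\M_n)$ and therefore $\Sigma^0_{n+1}(\M)$ (\Cref{lem:witness-forcing-question-complexity}). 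If $A_\varphi = M$, then $c$ already forces $\forall x\,\varphi(x,G)$ by \Cref{lem:witness-question-find-extension}; otherwise $\ISig_{n+1}$ in $\M$ produces a transition point $a$ with $c \qvdash^\U \varphi(a-1,G)$ and $c \nqvdash^\U \varphi(a,G)$. Since $\L_2(\U)\subseteq\U$, the positive answer upgrades to $c \qvdash^{\L_2(\U)}\varphi(a-1,G)$, and \Cref{lem:witness-question-pair} supplies an extension forcing $\varphi(a-1,G)\wedge\neg\varphi(a,G)$.

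In both cases, \Cref{prop:core-forcing-generic-set} (respectively \Cref{prop:witness-forcing-generic-set}) provides an infinite $H=G_\F$ with $H\subseteq A$ in Case~1 and $H\subseteq\overline{A}$ in Case~2. The induction-preservation properties built into $\F$, together with \Cref{prop:core-forcing-imply-truth} and \Cref{prop:main-forcing-imply-truth} (resp. \Cref{prop:witness-forcing-imply-truth}), give $\M\cup\{H\}\models\ISig_{n+1}$, and \Cref{lem:induction-to-generated} upgrades this to $\M[H]\models\RCA_0+\ISig_{n+1}$. The main obstacle is Case~2, where the forcing question is not extremal and a single uniform $(\Sigma^0_{n+1},\Pi^0_{n+1})$-merging relation is unavailable; the condition-local choice of witness $\U$ and the $\L_2(\U)$ vs.\ $\U$ asymmetry of \Cref{lem:witness-question-pair} are precisely what compensates, and verifying that this still fits inside the axiomatic framework of \Cref{thm:abstract-preservation-isig1} is the delicate point.
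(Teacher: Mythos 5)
Your overall plan matches the paper's: formalize the Scott/largeness tower via \Cref{prop:cohesive-class-conservation}, split on whether $A\in\langle\U_{C_{n-1}}^{\M_{n-1}}\rangle$, use $\MM^A_n(\M)$ in Case~1 and $\WW^{\overline{A}}_n(\M)$ in Case~2, densely force $\Sigma^0_{n+1}$-induction, and conclude via \Cref{prop:core-forcing-imply-truth}, \Cref{prop:main-forcing-imply-truth} or \Cref{prop:witness-forcing-imply-truth}, \Cref{lem:induction-to-generated}, and \Cref{lem:formula-translation}. Your Case~1 is essentially the paper's \Cref{lem:main-forcing-induction}: the $\MM^A_n$-forcing question for $\Sigma^0_{n+1}$-formulas is extremal (a negative answer literally forces the $\Pi^0_{n+1}$ negation), so locating a minimal element of the $\Sigma^0_1(\M_n)$ set $\{x\le b : e\qvdash\phi(G,x)\}$ produces the merging extension.

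Case~2, however, has a genuine gap. You assert that because $\L_2(\U)\subseteq\U$, a positive answer $c\qvdash^{\U}\varphi(a-1,G)$ ``upgrades'' to $c\qvdash^{\L_2(\U)}\varphi(a-1,G)$. The monotonicity runs the opposite way: $c\qvdash^{\V}(\exists x)\theta(G,x)$ is a universal statement over all $B$ with $\overline{B}\notin\V$, and shrinking $\V$ to $\L_2(\V)$ enlarges the set of such $B$, so $\qvdash^{\L_2(\U)}$ is a \emph{stronger} assertion than $\qvdash^{\U}$. Your transition point $a$ therefore delivers $c\qvdash^{\U}\varphi(a-1,G)$ and $c\nqvdash^{\U}\varphi(a,G)$, which is not the hypothesis of \Cref{lem:witness-question-pair}: that lemma needs the positive answer at the smaller class $\L_2(\U)$ together with the negative answer at $\U$, and your argument never supplies the former. (Your ``$A_\varphi = M$'' case is also stated too loosely --- no single condition forces the $\Pi^0_{n+2}$ statement $\forall x\,\varphi(x,G)$; the paper instead handles this case by applying the forcing question to the $\Sigma^0_{n+1}$ formula $(\exists x)\phi(G,x)$ directly --- but that is a presentation issue, not a gap.)

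The paper's \Cref{lem:witness-forcing-induction} repairs the merging step with a moving witness. Define $\U_0 = \U$ and $\U_{k+1} = \L_2(\U_k)$, a decreasing sequence of $\Sigma^0_1(\M_{n-1})$ large classes, and set $I = \{x \le b : e \qvdash^{\U_x}\phi(G,x)\}$. Since $\qvdash^{\U_x}$ becomes monotonically harder as $x$ grows, the minimal element $a$ of the nonempty $\Sigma^0_1(\M_n)$ set $I$ automatically satisfies $e \qvdash^{\U_a}\phi(G,a)$, that is $e \qvdash^{\L_2(\U_{a-1})}\phi(G,a)$, together with $e \nqvdash^{\U_{a-1}}\phi(G,a-1)$ --- exactly the asymmetry \Cref{lem:witness-question-pair} needs, with $\U_{a-1}$ playing the role of the witness. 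The complexity of $I$ is unchanged, so the minimal-element argument still applies, and the rest of your construction goes through.
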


\begin{proof}
Let $M_0, \dots, M_n \subseteq M$ and $C_0, \dots, C_{n-1} \subseteq M$ be the sets obtained from \Cref{prop:cohesive-class-conservation}. There are two cases:
\bigskip

\textbf{Case 1:} $A \in \langle \U_{C_{n-1}}^{\M_{n-1}} \rangle$. Let $\MM^A_n(\M)$ be a formal version of $\MM^A_n$, where conditions $(\sigma, X_{n-1})$ are such that $\sigma$ is $M$-coded, and $X \in \M_{n-1}$. We need the following density lemma.

\begin{lemma}\label[lemma]{lem:main-forcing-induction}
For every $\Sigma_{n+1}^0$ formula $\phi(G,x)$ and every condition~$c \in \MM^A_n(\M)$, there is an extension~$d \leq c$
forcing $\neg \phi(G, x)$ to satisfy induction.
\end{lemma}
\begin{proof}
Let $c \in \MM^A_n(\M)$ be a condition. If $c \nqvdash (\exists x)\phi(G,x)$, then by \Cref{lem:main-question-find-extension}, there exists some extension $d \leq c$ forcing $(\forall x) \neg \phi(G,x)$, and we are done. So suppose  $c \qvdash (\exists x)\phi(G,x)$ and let $e \leq c$ forcing $\phi(G,b)$ for some $b \in M$. Let $I = \{ x \leq b : e \qvdash \phi(G, x) \}$. The set is $\Sigma_1^0(\M_n)$ and $b \in I$. Since $\M_n \models \ISig_1$, there exists some minimal element $a$ of $I$. If $a = 0$, then by \Cref{lem:main-question-find-extension}, there exists some extension $d \leq e$ forcing $\phi(G,0)$. If $a \neq 0$, then $e \nqvdash \phi(G,a-1)$ hence $e \Vdash  \neg \phi(G,a-1)$ and $e \qvdash \phi(G,a)$. By \Cref{lem:main-question-find-extension}, there exists an extension $d \leq e$ forcing $\phi(G,a)$. By \Cref{lem:main-forcing-closed-downwards}, $d$ still forces $\neg \phi(G,a-1)$.
\end{proof}

Let $\F$ be a sufficiently generic $\MM^A_n(\M)$-filter, and let $H = G_\F$. Every sufficiently generic filter is $n$-generic by \Cref{lem:core-question-find-extension} and \Cref{lem:main-question-find-extension}, thus $\F$ is $n$-generic.
By construction, $H \subseteq A$, and by \Cref{prop:core-forcing-generic-set}, the set $H$ is $\M$-infinite.
By \Cref{lem:main-forcing-induction}, $\F$ being sufficiently generic, it forces every $\Pi_{n+1}^0$ formula to satisfy induction.
Finally, by \Cref{prop:core-forcing-imply-truth}, every $\Sigma_{n+1}^0$ or $\Pi_{n+1}^0$ property forced by $\F$ will hold for $H$, thus $\M \cup \{H\} \models \IPi_{n+1}$, which is equivalent to~$\ISig_{n+1}$. Thus, by \Cref{lem:induction-to-generated}, $\M[H] \models \ISig_{n+1}$. Last, by \Cref{lem:formula-translation}, $\M[H] \models \RCA_0$.
\bigskip

\textbf{Case 2:} $A \notin \langle \U_{C_{n+1}}^{\M_{n+1}} \rangle$. Let $\WW^{\overline{A}}_n(\M)$ be a formal version of $\WW^{\overline{A}}_n$ where conditions $(\sigma,X_{n-1},X_n)$ are such that $\sigma$ is $M$-coded, $X_{n-1} \in \M_{n-1}$ and $X_n \in \M_n$. 

\begin{lemma}\label[lemma]{lem:witness-forcing-induction}
For every $\Sigma_{n+1}^0$ formula $\phi(G,x)$ and every condition~$c \in \WW^{\overline{A}}_n(\M)$, there is an extension~$d \leq c$
forcing $\neg \phi(G, x)$ to satisfy induction.
\end{lemma}
\begin{proof}
Let $c = (\sigma,X_{n-1},X_n) \in \WW^{\overline{A}}_n(\M)$ be a condition and let $\U$ be a witness for~$c$. If $c \nqvdash^{\U} (\exists x)\phi(G,x)$, then by \Cref{lem:witness-question-find-extension}, there exists some extension $d \leq c$ forcing $(\forall x) \neg \phi(G,x)$, otherwise, if $c \qvdash^{\U} (\exists x)\phi(G,x)$, there exists some extension $e \leq c$ forcing $\phi(G,b)$ to hold for some $b \in M$. Note that $e$ can be chosen to have the same reservoirs as $c$ up to finite changes, hence $\U$ is also a witness for~$c$. Define inductively the following sequence of $\Sigma_1^0(\M_{n-1})$ classes $(\U_n)_{n \leq b}$ by letting $\U_0 = \U$ and $\U_{n+1} = \L_2(\U_n)$ and let $I = \{ x \leq b : e \qvdash^{\U_x} \phi(G, x) \}$. The set $I$ is $\Sigma_1^0(\M_n)$, and non-empty (it contains $b$). As $\M_n \models \ISig_1$, there exists some minimal element $a$ of $I$. If $a = 0$, then by \Cref{lem:witness-question-find-extension}, there exists some extension $d \leq e$ forcing $\phi(G,0)$ and if $a \neq 0$, then $e \nqvdash^{\U_{a-1}} \phi(G,a-1)$ and $e \qvdash^{\U_a} \phi(G,a)$. By \Cref{lem:witness-question-pair}, there exists an extension $d \leq e$ such that $d \Vdash \phi(G,a)$ and $d \Vdash \neg \phi(G,a-1)$.
\end{proof}

Let $\F$ be a sufficiently generic filter for the $\WW^{\overline{A}}_n(\M)$ forcing, and let $H = G_{\F}$. $H$ is an $M$-infinite subset of $\overline{A}$ and, just as in case $1$, $\M[H] \models \RCA_0 + \ISig_{n+1}$
\end{proof}

\begin{repmaintheorem}{thm:main-conservation}
Let $n \geq 1$. $\RCA_0 + \ISig_{n+1} + \Sub{\Sigma^0_{n+1}}$ is $\Pi_1^1$ conservative over $\RCA_0 + \ISig_{n+1}$.
\end{repmaintheorem}

\begin{proof}
Assume $\RCA_0 + \ISig_{n+1} \not \vdash \forall X \phi(X)$ for $\phi(X)$ an arithmetic formula. By completeness, and the L\"owenheim-Skolem theorem, there exists some countable model $\M = (M,S) \models \RCA_0 + \ISig_{n+1} + \neg \phi(B)$ for some $B \in S$.

\Cref{prop:pi12-problem-conservation} cannot be applied on its current form to \Cref{lem:sigman+1-conservation-step} in order to build an $\omega$-extension $\N$ of $\M$ such that $\N \models \RCA_0 + \ISig_{n+1} + \Sub{\Sigma^0_{n+1}}$, this is because \Cref{lem:sigman+1-conservation-step} has the added assumption that the model considered is topped. 

This added assumption is not a problem: the initial model $\M$ can be assumed to be topped by $B$ (by restricting it to keep only the elements $\Delta_1^0$-definable using $B$), and the property of being topped is preserved by every application of \Cref{lem:sigman+1-conservation-step} (If a model $\hat{\M}$ is topped by a set $Y$, then $\hat{\M}[H]$ is topped by $Y \oplus H$). Therefore, in the proof of \Cref{prop:pi12-problem-conservation}, all the models can be assumed to be topped. \\

Therefore, there exists an $\omega$-extension $\N$ of $\M$ such that $\N \models \RCA_0 + \ISig_{n+1} + \Sub{\Sigma^0_{n+1}}$. By \Cref{lem:omega-extension-absolute}, $\N \models (\exists X) \neg \phi(X)$. Hence, $\RCA_0 + \ISig_{n+1} + \Sub{\Sigma^0_{n+1}} \not \vdash (\forall X)\phi(X)$.
\end{proof}

Thanks to the characterization of the Ginsburg-Sands theorem for $T_1$-spaces in terms of $\Sub{\Sigma^0_2} + \COH$ by Benham et al.~\cite{benham2024ginsburg} and the amalgamation theorem from Yokoyama~\cite{yokoyama2010conservativity}, we deduce the following corollary.

\begin{corollary}
$\RCA_0 + \ISig_2 + \GST$ is $\Pi^1_1$-conservative over~$\RCA_0 + \ISig_2$.
\end{corollary}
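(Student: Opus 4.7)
The plan is to combine three ingredients. First, by Benham et al., $\RCA_0 \vdash \GST \leftrightarrow (\COH + \Sub{\Sigma^0_2})$, so $\RCA_0 + \ISig_2 + \GST$ and $\RCA_0 + \ISig_2 + \COH + \Sub{\Sigma^0_2}$ prove the same $\Pi^1_1$ sentences. It therefore suffices to show that $\RCA_0 + \ISig_2 + \COH + \Sub{\Sigma^0_2}$ is $\Pi^1_1$-conservative over $\RCA_0 + \ISig_2$.

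Next, I would invoke \Cref{thm:main-conservation} at the level $n=1$ to obtain that $\RCA_0 + \ISig_2 + \Sub{\Sigma^0_2}$ is $\Pi^1_1$-conservative over $\RCA_0 + \ISig_2$. For the cohesive half, Cholak, Jockusch and Slaman~\cite{cholak2001strength} established that $\RCA_0 + \ISig_2 + \COH$ is $\Pi^1_1$-conservative over $\RCA_0 + \ISig_2$ (in fact, $\COH$ has the same first-order consequences as $\RCA_0$ itself, as mentioned in the introduction). So both $\RCA_0 + \ISig_2 + \COH$ and $\RCA_0 + \ISig_2 + \Sub{\Sigma^0_2}$ are $\Pi^1_2$ theories that are $\Pi^1_1$-conservative over the common base $T_0 = \RCA_0 + \ISig_2$.

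Finally, I would apply Yokoyama's amalgamation theorem (\Cref{thm:amalgamation-pi11}) with $T_0 = \RCA_0 + \ISig_2$, $T_1 = T_0 + \COH$, and $T_2 = T_0 + \Sub{\Sigma^0_2}$. The hypotheses are verified: both $\COH$ and $\Sub{\Sigma^0_2}$ are $\Pi^1_2$ statements, $T_0 \supseteq \RCA_0$, and by the previous paragraph each $T_i$ is a $\Pi^1_1$-conservative extension of $T_0$. The conclusion gives that $T_1 + T_2 = \RCA_0 + \ISig_2 + \COH + \Sub{\Sigma^0_2}$ is $\Pi^1_1$-conservative over $\RCA_0 + \ISig_2$, and transferring through the equivalence with $\GST$ completes the proof.

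There is essentially no obstacle: everything is already packaged. The only routine check is ensuring that the provable equivalence $\GST \leftrightarrow \COH + \Sub{\Sigma^0_2}$ holds over $\RCA_0$ alone (hence over $\RCA_0 + \ISig_2$), so that the $\Pi^1_1$-conservation of one side transfers to the other; this is precisely what Benham et al.~\cite{benham2024ginsburg} established.
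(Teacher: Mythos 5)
Your proposal is correct and follows exactly the same route as the paper: decompose $\GST$ into $\COH + \Sub{\Sigma^0_2}$ via Benham et al., invoke \Cref{thm:main-conservation} for the $\Sub{\Sigma^0_2}$ half, cite Cholak–Jockusch–Slaman for the $\COH$ half, and combine with Yokoyama's amalgamation theorem (\Cref{thm:amalgamation-pi11}). The minor index bookkeeping you note ($n=1$ in the statement of the form $\ISig_{n+1} + \Sub{\Sigma^0_{n+1}}$ used in Section 7) is handled correctly.
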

\begin{proof}
By Cholak, Jockusch and Slaman~\cite{cholak2001strength}, $\RCA_0 + \ISig_2 + \COH$ is $\Pi^1_1$-conservative over~$\RCA_0 + \ISig_2$. By \Cref{thm:main-conservation}, so is $\RCA_0 + \ISig_2 + \Sub{\Sigma^0_2}$.
It follows by the amalgamation theorem (\Cref{thm:amalgamation-pi11}) that $\RCA_0 + \ISig_2 + \Sub{\Sigma^0_2} + \COH$ is $\Pi^1_1$-conservative over~$\RCA_0 + \ISig_2$. We conclude as $\RCA_0 \vdash \GST \leftrightarrow (\Sub{\Sigma^0_2} + \COH)$ by Benham et al.~\cite{benham2024ginsburg}.
\end{proof}

It is currently unknown whether $\RCA_0 + \Sub{\Sigma^0_2}$ is $\Pi^1_1$-conservative over~$\RCA_0 + \BSig_2$. We now prove that this is not the case for $\Sub{\Delta^0_3}$.

\begin{proposition}\label{prop:delta3-bsig2-conservation}
    Let $n \geq 1$. $\RCA_0 + \Sub{\Delta^0_{n+2}}$ is not $\Pi_1^1$ conservative over $\RCA_0 + \BSig_{n+1}$.
\end{proposition}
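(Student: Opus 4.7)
The plan is to exhibit a first-order (hence $\Pi^1_1$) consequence of $\Sub{\Delta^0_{n+2}}$ that is not provable from $\BSig_{n+1}$. Specifically, I will first show that $\RCA_0 + \Sub{\Delta^0_{n+2}} \vdash \BSig^0_{n+2}$, and then invoke the Paris--Kirby strict hierarchy of collection schemes to produce a first-order instance of $\BSig_{n+2}$ separating the two theories.

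First, I would relativize the theorem of Chong, Lempp and Yang~\cite{chong2010role}, $\RCA_0 \vdash \Sub{\Delta^0_2} \to \IDelta_2$, to obtain $\RCA_0 \vdash \Sub{\Delta^0_{n+2}} \to \IDelta_{n+2}$. Their proof proceeds by contraposition: a failure of $\IDelta_2$ (equivalently $\BSig_2$ over $\RCA_0$) yields a $\Sigma_1$-definable function with $\NN$-bounded domain and unbounded range, from which one constructs a $\Delta^0_2$-partition of $\NN$ with no infinite homogeneous subset, contradicting $\Sub{\Delta^0_2}$. The construction is syntactic and uniform in the level of the arithmetic hierarchy: replacing each occurrence of $\Sigma_1$, $\Sigma^0_2$, and $\Delta^0_2$ by $\Sigma_{n+1}$, $\Sigma^0_{n+2}$, and $\Delta^0_{n+2}$ respectively, one obtains an analogous proof that a failure of $\IDelta_{n+2}$ contradicts $\Sub{\Delta^0_{n+2}}$. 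Since $\IDelta_{n+2}$ is equivalent to $\BSig^0_{n+2}$ over $\RCA_0$ (using $n+2 \geq 2$), this establishes $\RCA_0 + \Sub{\Delta^0_{n+2}} \vdash \BSig^0_{n+2}$.

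Next, by the classical Paris--Kirby theorem on the strict hierarchy of first-order collection principles, there is a first-order instance $\varphi$ of the scheme $\BSig_{n+2}$ that is not provable in $\Robinson + \BSig_{n+1}$. Since the first-order part of $\RCA_0 + \BSig^0_{n+1}$ coincides with $\Robinson + \BSig_{n+1}$ (a standard fact about the first-order parts of weak second-order theories), we have $\RCA_0 + \BSig^0_{n+1} \nvdash \varphi$. On the other hand, $\varphi$ is a parameter-free first-order instance of $\BSig_{n+2}$, so $\RCA_0 + \Sub{\Delta^0_{n+2}} \vdash \BSig^0_{n+2} \vdash \varphi$. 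As $\varphi$ is a first-order arithmetic sentence, it contains no second-order quantifier and is trivially $\Pi^1_1$, witnessing the failure of $\Pi^1_1$-conservation.

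The main obstacle is the verification that the Chong--Lempp--Yang construction relativizes cleanly to the $(n+2)$-th level. This should be routine, as their combinatorial core---the passage from a $\Sigma_k$-definable function with bounded domain and unbounded range to a pathological $\Delta^0_{k+1}$-coloring---does not depend on the arithmetic complexity of the objects manipulated, only on the ability to define and reason about them. Some care is needed in checking that the coloring constructed genuinely lies at the $\Delta^0_{n+2}$ level (rather than, say, $\Delta^0_{n+3}$), but this is a standard verification given the shape of the original argument.
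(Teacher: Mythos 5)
Your proposed lemma that $\RCA_0 + \Sub{\Delta^0_{n+2}} \vdash \BSig_{n+2}$ is not established, and the gap is precisely the subtlety that motivates the paper's more roundabout proof. When you relativize the Chong--Lempp--Yang argument one level up, the combinatorial core needs to manipulate $\Sigma_{n+1}$-definable objects and iterated jumps, which requires $\ISig_{n+1}$ in the background theory, not just the $\ISig_1$ supplied by $\RCA_0$. The base case $n=0$ works unconditionally precisely because $\ISig_1 \subseteq \RCA_0$; for $n \geq 1$ the correct relativization is the conditional $\RCA_0 + \Sub{\Delta^0_{n+2}} \vdash \ISig_{n+1} \to \BSig_{n+2}$, which is exactly the form in which the paper cites Chong--Lempp--Yang. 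Whether $\Sub{\Delta^0_{n+2}}$ (or even the stronger $\Sub{\Sigma^0_{n+2}}$) implies $\ISig_2$, let alone $\ISig_{n+1}$, over $\RCA_0$ is listed as an open question in the paper, so you cannot discharge the extra hypothesis, and your plan of producing a first-order $\BSig_{n+2}$-consequence of the theory does not go through.

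The paper circumvents this by replacing $\BSig_{n+2}$ with the weaker scheme $\CSig_{n+2}$ (no $\Sigma_{n+2}$-definable total injection with bounded range) and arguing by dichotomy on whether $\ISig_{n+1}$ holds. If $\ISig_{n+1}$ holds, the conditional Chong--Lempp--Yang implication gives $\BSig_{n+2}$, hence $\CSig_{n+2}$. If $\ISig_{n+1}$ fails, one invokes the Ko\l odziejczyk et al.\ result that $\BSig_{n+1} \wedge \neg\ISig_{n+1} \vdash \CSig_{n+2}$. Either way $\RCA_0 + \BSig_{n+1} + \Sub{\Delta^0_{n+2}} \vdash \CSig_{n+2}$, while $\ISig_{n+1} \nvdash \CSig_{n+2}$ (Groszek--Slaman), a fortiori $\BSig_{n+1} \nvdash \CSig_{n+2}$, which gives the desired failure of $\Pi^1_1$-conservativity. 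To repair your proposal you would need to supply the missing case analysis or establish $\RCA_0 + \Sub{\Delta^0_{n+2}} \vdash \ISig_{n+1}$, which is currently open.
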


\begin{proof}
Given a family of formulas $\Gamma$, let $\mathsf{C}\Gamma$ be the statement that no $\Gamma$-formula defines a total injection with bounded range. This scheme was introduced by Seetapun et Slaman~\cite{seetapun1995strength}.
The principle $\CSig_{n+2}$ satisfies the following properties:

\begin{enumerate}
    \item $\ISig_{n+1} \not \vdash \CSig_{n+2}$, see Groszek and Slaman \cite{groszek1994turing}
    \item $\BSig_{n+2} \vdash \CSig_{n+2}$. Indeed, by \cite[Theorem 2.23]{hajek2017metamathematics}, $\BSig_{n+2}$ is equivalent to the pigeonhole principle for $\Sigma^0_{n+2}$ ($\mathsf{PHP}(\Sigma^0_{n+2})$) which immediately implies $\CSig_{n+2}$.
    \item $\BSig_{n+1} \wedge \neg \ISig_{n+1} \vdash \CSig_{n+2}$, see  Ko\l odziejczyk et al. \cite{kolodziejczyk2023strong} 
\end{enumerate}

The first item yields that $\RCA_0 + \ISig_{n+1} \not \vdash \CSig_{n+2}$, the first-order part of $\RCA_0 + \ISig_{n+1}$ being $\mathsf{I}\Sigma_{n+1}$, hence $\RCA_0 + \BSig_{n+1} \not \vdash \CSig_{n+2}$. 
The two others items yield that $\BSig_{n+2} \vee (\BSig_{n+1} \wedge \neg \ISig_{n+1}) \vdash \CSig_{n+2}$, hence $\BSig_{n+1} \wedge (\ISig_{n+1} \to \BSig_{n+2}) \vdash \CSig_{n+2}$. By Chong, Lempp and Yang~\cite{chong2010role}, $\RCA_0 + \Sub{\Delta^0_{n+2}} \vdash \ISig_{n+1} \to \BSig_{n+2}$, so $\RCA_0 + \BSig_{n+1} + \Sub{\Delta^0_{n+2}} \vdash \BSig_{n+1} \wedge (\ISig_{n+1} \to \BSig_{n+2})$, hence $\RCA_0 + \BSig_{n+1} + \Sub{\Delta^0_{n+2}} \vdash \CSig_{n+2}$.
Therefore, $\Sub{\Delta^0_{n+2}}$ is not $\Pi_1^1$ conservative over $\RCA_0 + \BSig_{n+1}$.
\end{proof}

\section{Open questions}\label{sect:open-questions}

Many questions remain open concerning the pigeonhole hierarchy in reverse mathematics. Due to its connections with Ramsey's theorem for pairs, $\Sub{\Delta^0_2}$ was significantly more studied than the other levels. It was proven not to imply $\COH$ over non-standard models by Chong, Slaman and Yang~\cite{chong2014metamathematics}, and more recently over $\omega$-models by Monin and Patey~\cite{monin2021srt}. The question is open for higher levels of the hierarchy:

\begin{question}
Does $\Sub{\Sigma^0_n}$ imply~$\COH$ over~$\RCA_0$ for some~$n \in \NN$?
\end{question}

The first-order part of $\Sub{\Delta^0_2}$ received a lot of attention. It is known to follow strictly from~$\ISig_2$ and to imply $\BSig_2$. However, the following question is one of the most important questions of modern reverse mathematics:

\begin{question}
Is $\RCA_0 + \Sub{\Delta^0_2}$ $\Pi^1_1$-conservative over~$\RCA_0 + \BSig_2$?
\end{question}

The answer is conjectured to be negative. By \Cref{prop:delta3-bsig2-conservation}, $\RCA_0 + \Sub{\Delta^0_{3}}$ is not $\Pi_1^1$ conservative over $\RCA_0 + \BSig_2$, but it might still be the case for $\Sub{\Sigma^0_2}$.

\begin{question}\label{quest:sigma2-conservative-bsig2}
Is $\RCA_0 + \Sub{\Sigma^0_2}$ $\Pi^1_1$-conservative over~$\RCA_0 + \BSig_2$?
\end{question}

In \Cref{sect:conservation}, we proved that $\RCA_0 + \ISig_n + \Sub{\Sigma^0_n}$ is $\Pi^1_1$-conservative over~$\RCA_0 + \ISig_n$, for~$n \geq 2$. It is however still unknown whether any sufficiently high level of the pigeonhole hierarchy even implies $\ISig_2$.

\begin{question}\label{quest:sigman-imply-isig2}
Does $\RCA_0 + \Sub{\Sigma^0_n}$ imply $\ISig_2$ for some~$n \in \NN$?
\end{question}

The well-foundedness principle $\WF(\alpha)$ states that there is no infinite decreasing sequence of ordinals smaller than~$\alpha$. In particular, $\WF(\omega^\omega)$ admits several characterizations, among which the statement of the totality of Ackermann's function (see Kreuzer and Yokoyama~\cite{kreuzer2016principles}). The principle $\WF(\omega^\omega)$ is known to follow strictly from $\ISig_2$ and be incomparable with~$\BSig_2$ (see Simpson~\cite{simpson2015comparing}).
Patey and Kokoyama~\cite{patey2018proof} proved that $\RCA_0 + \Sub{\Delta^0_2}$ is $\forall \Pi^0_3$-conservative over $\RCA_0$, hence does not imply $\WF(\omega^\omega)$ over~$\RCA_0$.

\begin{question}\label{quest:sigman-imply-wfomegaomega}
Does $\RCA_0 + \Sub{\Sigma^0_n}$ imply $\WF(\omega^\omega)$ for some~$n \in \NN$?
\end{question}

A positive answer to \Cref{quest:sigman-imply-isig2} would yield a positive answer to \Cref{quest:sigman-imply-wfomegaomega}, while a positive answer to \Cref{quest:sigman-imply-wfomegaomega} for $n = 2$ would yield a negative answer to \Cref{quest:sigma2-conservative-bsig2}.

\section*{Acknowledgment}

The authors are thankful to the anonymous referee for his improvement suggestions.

\bibliographystyle{plain}
\bibliography{biblio}

\end{document}